\newcommand{\bc}{{\bf c}}
\newcommand{\h}{\mathfrak{h}}
\newcommand{\p}{\mathfrak{p}}
\newcommand{\C}{\mathbb{C}}
\newcommand{\GL}{\operatorname{GL}}
\newcommand{\Sym}{\mathfrak{S}}
\newcommand{\Z}{\mathbb{Z}}
\newcommand{\gr}{\operatorname{gr}}
\newcommand{\g}{\mathfrak{g}}
\newcommand{\Irr}{\operatorname{Irr}}
\newcommand{\Part}{\mathcal{P}}
\newcommand{\OCat}{\mathcal{O}}
\newcommand{\KZ}{\mathsf{KZ}}
\newcommand{\cont}{\operatorname{cont}}
\newcommand{\End}{\operatorname{End}}
\newcommand{\Hom}{\operatorname{Hom}}
\newcommand{\Cat}{\mathcal{C}}
\newcommand{\Supp}{\operatorname{Supp}}
\newcommand{\Ext}{\operatorname{Ext}}
\newcommand{\Res}{\operatorname{Res}}
\newcommand{\Ind}{\operatorname{Ind}}
\newcommand{\param}{\mathfrak{p}}
\newcommand{\B}{\mathcal{B}}
\newcommand{\WC}{\mathfrak{WC}}
\newcommand{\wc}{\mathfrak{wc}}
\newcommand{\Q}{\mathbb{Q}}
\newcommand{\slf}{\mathfrak{sl}}
\newcommand{\gl}{\mathfrak{gl}}
\newtheorem{Thm}{Theorem}[section]
\newtheorem{Prop}[Thm]{Proposition}
\newtheorem{Cor}[Thm]{Corollary}
\newtheorem{Lem}[Thm]{Lemma}
\theoremstyle{definition}
\newtheorem{Ex}[Thm]{Example}
\newtheorem{defi}[Thm]{Definition}
\newtheorem{Rem}[Thm]{Remark}
\newtheorem{Conj}[Thm]{Conjecture}
\numberwithin{equation}{section}
\numberwithin{table}{section} \oddsidemargin=0cm
\title{Rational Cherednik algebras and categorification}
\author{Ivan Losev}
\address{Department
of Mathematics, Northeastern University, Boston MA 02115 USA}
\email{i.loseu@neu.edu}
\thanks{MSC 2010: 05E10, 16G99, 17B67, 20C08}
\begin{document}
\begin{abstract}
In this survey article we review Kac-Moody and Heisenberg algebra actions on the categories $\mathcal{O}$ of the rational Cherednik algebras associated to groups  $G(\ell,1,n)$. Using these actions we  solve basic representation
theoretic problems for these categories such as the classification of finite dimensional irreducible
representations and computation of characters of the irreducibles.
\end{abstract}
\maketitle
\section{Introduction}
In this survey we study categorical actions of Lie algebras on categories $\mathcal{O}$
for rational Cherednik algebras. Rational Cherednik algebras were introduced by
Etingof and Ginzburg  in \cite{EG}. These are associative algebras $H_\bc(W)$ constructed
from  complex reflection groups $W$, where $\bc$ is a parameter. They share many common
features with the universal enveloping algebras of semisimple Lie algebras. In particular,
one can consider the category $\mathcal{O}_\bc(W)$ for $H_\bc(W)$ that is similar in many
aspects to the BGG categories $\mathcal{O}$.

For the purposes of this paper, the most important family of complex reflection
groups is the infinite series $G(\ell,1,n)\cong \Sym_n\ltimes (\Z/\ell\Z)^n$.
This is because the category $\mathcal{O}_\bc:=\bigoplus_{n\geqslant 0}\mathcal{O}_\bc(G(\ell,1,n))$
(for some nice, in a sense, most interesting choices of the parameter $\bc$)
carries categorical actions of the Kac-Moody algebras $\hat{\slf}_e$ (\cite{Shan})
and of the Heisenberg algebra $\mathfrak{Heis}$ (\cite{SV}). These actions
provide a categorification of the classical actions of the aforementioned
algebras on the level $\ell$ Fock space $\mathcal{F}^\ell$. In the other direction,
these categorical actions play a crucial role in the study of the categories
$\mathcal{O}_\bc$. Namely, they are of great help in determining the multiplicity
formulas, \cite{GL,RSVV,VV_proof,Webster}, and in the computation of supports
of simple objects in $\mathcal{O}_\bc$ (and, in particular, in classifying the
finite dimensional irreducible $H_\bc(G(\ell,1,n))$-modules), see
\cite{cryst,SV,Cher_supp}.

This paper consists of four sections. In Section \ref{S_Cher} we deal with
the general rational Cherednik algebras. We recall the definitions
of rational Cherednik algebras and of their categories $\mathcal{O}$, recall
a connection between the categories $\mathcal{O}_\bc(W)$ and the Hecke
algebra $\mathcal{H}_{\bf q}(W)$. Then we recall induction and restriction functors
for categories $\mathcal{O}$ constructed in \cite{BE}. We finish by producing
abelian and derived equivalences between categories $\mathcal{O}_\bc(W),
\mathcal{O}_{\bc'}(W)$ for different parameters $\bc,{\bc'}$.
Here we follow \cite{rouqqsch,rouq_der}.

In Section \ref{S_cyclot} we concentrate on the categories $\mathcal{O}_\bc(G(\ell,1,n))$.
We introduce a Kac-Moody categorical action on $\mathcal{O}_\bc$
and its crystal following \cite{Shan}. We explain results from \cite{cryst} that compute the
crystal. Then we explain an equivalence of $\mathcal{O}_{c}(\Sym_n)$ and the category
of modules over a $q$-Schur algebra $S_\epsilon(n,n)$, \cite{rouqqsch,VV_proof}.
Using this equivalence we construct a categorical Heisenberg action on
$D^b(\mathcal{O}_\bc)$ following \cite{SV}.

In Section \ref{S_supp} we will apply categorical actions and wall-crossing
bijections from Section \ref{SS_der_equi} to study the supports
of simple modules. We will see that the supports are described
by two integers that are computed using crystals associated to Kac-Moody
and Heisenberg actions. We will compute the filtration by support on $K_0$
following \cite{SV}. We will also explain combinatorial rules to compute
supports that were found in \cite{Cher_supp}.

In Section \ref{S_cat_equi} we explain a proof of a conjecture of Varagnolo and
Vasserot, \cite{VV}, on an equivalence of the categories $\mathcal{O}_\bc(G(\ell,1,n))$
and certain truncations of affine parabolic categories of type A.
We follow \cite{VV_proof} -- the conjecture was first proved in \cite{RSVV}
but the approach from \cite{VV_proof} relies on the categorical actions
much more than that of \cite{RSVV}. The category equivalence allows
to compute the multiplicities in  $\mathcal{O}_\bc(G(\ell,1,n))$
proving a conjecture of Rouquier, \cite{rouqqsch}.

{\bf Acknowledgements}. I would like to thank Sasha Kleshchev for stimulating discussions.
This work was partially supported by the NSF under grants DMS-1161584, DMS-1501558.

\section{Rational Cherednik algebras and categories $\mathcal{O}$}\label{S_Cher}
\subsection{Rational Cherednik algebras}
In this section we will introduce Rational Cherednik algebras following the
seminal paper of Etingof and Ginzburg, \cite{EG}.

\subsubsection{Complex reflection groups}
Let $\h$ be a finite dimensional vector space over $\C$ and let $W$ be a finite
subgroup of $\GL(\h)$. By a (complex) reflection in $W$ we mean an element
$s$ such that $\operatorname{rk}(s-1)=1$. We write $S$ for the set of all
reflections in $W$. We say that $W$ is a {\it complex reflection group}
if $W$ is generated by $S$. For example, every Coxeter group is a complex
reflection group. Here is another family of examples that is of importance
for this paper.

\begin{Ex}\label{Ex:cyclot_group}
Let $\ell$ and $n$ be positive integers. We form the group $G(\ell,1,n):=\Sym_n\ltimes \mu_\ell^n$,
where $\Sym_n$ stands for the symmetric group on $n$ letters and $\mu_\ell\subset \C^\times$
is the group of $\ell$th roots of $1$. The group $\Sym_n$ acts on $\mu_\ell^n$
by permuting the factors and we use this action to form the semi-direct product. The group $G(\ell,1,n)$
acts on $\h:=\C^n$ as follows. The group $\Sym_n$ permutes the coordinates. Each factor $\mu_\ell$
acts on its own copy of $\C$ by $\eta.z=\eta z, \eta\in \mu_\ell, z\in \C$.

Let us describe the subset $S$ of complex reflections. For $\eta\in \mu_\ell$,
let $\eta_{(k)}$ denote $\eta$ in the $k$th copy of $\mu_\ell\subset G(\ell,1,n)$.
All these elements are reflections. The other reflections are $(ij)\eta_{(i)}\eta_{(j)}^{-1}$,
where $i<j\in \{1,\ldots,n\}$ and $\eta\in \mu_\ell$. Here we write $(ij)$ for the transposition
in $\Sym_n$ that permutes $i$ and $j$. Clearly, $S$ generates $W$ and so $W$ is a complex
reflection group.

We note that, for $\ell=1$, we get $G(1,1,n)=\Sym_n$, while, for $\ell=2$, we get the Weyl group of
type $B$. All other groups $G(\ell,1,n)$ are not Coxeter groups.
\end{Ex}

The classification of complex reflection groups is known thanks to \cite{ST}. There is one
infinite series, $G(\ell,r,n)$, where $r$ divides $\ell$. The group $G(\ell,r,n)$
is the normal subgroup of $G(\ell,1,n)$ consisting of all elements of the form
$\sigma\prod_{i=1}^n \eta_{j,(i)}$, where $\sigma\in \Sym_n$ and $\prod_{i=1}^n \eta_{j}^r=1$.
For example, $G(2,2,n)$ is the Weyl group of type $D_n$. And then there are several exceptional
groups.

\subsubsection{Definition of RCA}
A Rational Cherednik algebra (RCA) depends on a parameter $\bc$, where
$\bc:S\rightarrow \C$ is a conjugation invariant function. We write
$\param$ for the space of all possible parameters $\bc$. Clearly,
$\param$ is a vector space whose dimension equals $|S/W|$, the number
of $W$-conjugacy classes in $S$.

For $s\in S$, let $\alpha_s\in \h^*$ and $\alpha_s^\vee\in \h$ be eigenvectors
for $s$ with eigenvalues different from $1$ (analogs of roots and coroots
for Weyl groups). We partially normalize them by requiring
$\langle\alpha_s,\alpha_s^\vee\rangle=2$.

Recall that for an algebra $A$ and a finite group $\Gamma$ acting on $A$
by automorphisms we can form the {\it smash-product} algebra $A\#\Gamma$.
As a vector space, this algebra is $A\otimes \C\Gamma$, while the product
is given by $(a_1\otimes \gamma_1)\cdot (a_2\otimes \gamma_2)=a_1\gamma_1(a_2)\otimes
\gamma_1\gamma_2$, where $\gamma_1(a_2)$ denotes the image of $a_2$ under the action
of $\gamma_1$. The definition is given in such a way that an $A\#\Gamma$-module
is the same thing as a $\Gamma$-equivariant $A$-module.

Now we define the RCA $H_{\bc}$ as the quotient of $T(\h\oplus \h^*)\#W$
by the following relations:
\begin{equation}\label{eq:RCA_relns}
[x,x']=[y,y']=0,\quad [y,x]=\langle y,x\rangle-\sum_{s\in S}\bc(s)\langle \alpha_s,x\rangle\langle\alpha_s^\vee,y\rangle s,
\quad x,x'\in \h^*, y,y'\in \h.
\end{equation}

Note that we get $H_{0}=D(\h)\#W$.

When we need to indicate the dependence of $H_{\bc}$ on $W$ or on $\h$ and $W$,
we write $H_{\bc}(W)$ or $H_{\bc}(W,\h)$.

Note that we can define the algebra $H_{R,\bc}$ over a $\C$-algebra $R$. For $\bc$
we take a $W$-invariant map $S\rightarrow R$.

\subsubsection{Examples}
Let us give two concrete examples: for groups $\mu_\ell$ and $\Sym_n$.

\begin{Ex}\label{Ex:RCA_cyclic}
Let $W=\Z/\ell \Z$ and $\dim \h=1$. Set $C:=1-2\sum_{s\in \mu_\ell\setminus \{1\}}\bc(s)s$.
Then $H_\bc=\C\langle x,y\rangle\#W/([y,x]=C)$.
\end{Ex}

\begin{Ex}\label{Ex:RCA_symm}
Let $W=\Sym_n$ and $\h$ be its reflection representation, $\h=\{(x_1,\ldots,x_n)\in \C^n| x_1+\ldots+x_n=0\}$.
We have one class of complex reflections, and so $\bc$ is a single complex number
(to be denoted by $c$).
Then $H_c$ is the quotient of $\C\langle x_1,\ldots,x_n,y_1,\ldots,y_n\rangle\#\Sym_n$ by the following relations:
\begin{align*}
&\sum_{i=1}^n x_i=\sum_{i=1}^n y_i=0,\\
&[x_i,x_j]=[y_i,y_j]=0, \forall i,j\in \{1,\ldots,n\},\\
&[y_i,x_j]=c(ij), i\neq j,\\
&[y_i,x_i]=1-c\sum_{j\neq i}(ij).
\end{align*}
\end{Ex}

\subsubsection{PBW property and triangular decomposition}
The algebra $H_{\bc}$ is filtered with $\deg \h^*=\deg W=0, \deg \h=1$
(the filtration by the ``order of a differential operator''). We have
a natural epimorphism $S(\h\oplus \h^*)\#W\twoheadrightarrow \gr H_{\bc}$.

The following fundamental result (the PBW property for Rational Cherednik algebras)
is due to Etingof and Ginzburg, see \cite[Theorem 1.3]{EG}.

\begin{Thm}\label{Thm:PBW_RCA}
The epimorphism $S(\h\oplus \h^*)\#W\twoheadrightarrow \gr H_{\bc}$ is an isomorphism.
\end{Thm}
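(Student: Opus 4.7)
The surjectivity of $S(\h\oplus\h^*)\#W \twoheadrightarrow \gr H_\bc$ is automatic: the relations (\ref{eq:RCA_relns}) express every commutator $[y,x]$, $[y,y']$, $[x,x']$ as an element of strictly lower filtration degree, so ordered monomials span $H_\bc$. The content of the theorem is injectivity of this map.

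My plan is to construct a faithful \emph{polynomial} (or Dunkl) representation of $H_\bc$ on $\C[\h]$. Let $W$ act naturally, let $x\in\h^*$ act by multiplication, and define the action of $y\in\h$ by the Dunkl operator
\[
D_y \;=\; \partial_y \;-\; \sum_{s\in S} \bc(s)\,\langle\alpha_s,y\rangle\,\frac{1-s}{\alpha_s},
\]
where $\frac{1-s}{\alpha_s}$ makes sense on $\C[\h]$ since $(1-s)f$ vanishes on the reflecting hyperplane $\{\alpha_s=0\}$ and is therefore divisible by $\alpha_s$. I would then verify that these operators satisfy the defining relations (\ref{eq:RCA_relns}). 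The $W$-equivariance is immediate, and the identity for $[D_y,x]$ reduces to the elementary $[\partial_y,x]=\langle y,x\rangle$ together with the direct commutator $[\frac{1-s}{\alpha_s},x]=\langle x,\alpha_s^\vee\rangle\,s$, which together reproduce the right-hand side of the Cherednik relation.

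Once the filtered homomorphism $\Phi\colon H_\bc \to D(\h)\#W$ is in place (with the order filtration on the target, legitimate because $D_y=\partial_y+(\text{order }0)$), the theorem follows from a commutative triangle
\[
S(\h\oplus\h^*)\#W \;\twoheadrightarrow\; \gr H_\bc \;\xrightarrow{\;\gr\Phi\;}\; \gr(D(\h)\#W) \;\cong\; S(\h\oplus\h^*)\#W,
\]
whose composite is the identity (it sends $y\in\h$ to the constant-coefficient symbol of $\partial_y$, and fixes $\h^*$ and $W$). Hence the surjection on the left is in fact an isomorphism.

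The main obstacle is the verification that $[D_y,D_{y'}]=0$, which is one of the relations (\ref{eq:RCA_relns}) and the technical heart of the polynomial representation. Expanding the commutator gives a sum over pairs of reflections; the single-reflection contributions cancel using the symmetry $\partial_y\partial_{y'}=\partial_{y'}\partial_y$, but the double-reflection terms require a delicate cancellation. I would reduce to the rank-$2$ case, since each such mixed term involves only two reflections which together generate a rank-$\leq 2$ parabolic subgroup, and then dispatch rank $2$ directly by a residue computation on $\h^{\mathrm{reg}}$.
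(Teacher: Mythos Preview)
The paper does not actually prove this theorem; it simply attributes the result to Etingof and Ginzburg and cites \cite[Theorem~1.3]{EG}. Your approach via the Dunkl (polynomial) representation is precisely the Etingof--Ginzburg argument, so in spirit you have reproduced the intended proof.

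One technical correction is worth making. The Dunkl operators $D_y$ do preserve $\C[\h]$, but as elements of a differential-operator algebra they live in $D(\h^{\mathrm{reg}})\#W$, not in $D(\h)\#W$: the term $\frac{1-s}{\alpha_s}$ has a genuine pole along $\{\alpha_s=0\}$ when regarded as an element of the localized Weyl algebra. (The paper itself records this later as the \emph{Dunkl homomorphism} $H_\bc\to D(\h^{\mathrm{reg}})\#W$; see the discussion preceding Lemma~\ref{Lem:localization}.) The fix to your filtration argument is painless: the associated graded of $D(\h^{\mathrm{reg}})\#W$ with respect to the order filtration is $\C[T^*\h^{\mathrm{reg}}]\#W \cong \C[\h^{\mathrm{reg}}]\otimes S(\h)\#W$, and the composite
\[
S(\h\oplus\h^*)\#W \twoheadrightarrow \gr H_\bc \xrightarrow{\gr\Phi} \C[\h^{\mathrm{reg}}]\otimes S(\h)\#W
\]
is the natural localization map $S(\h^*)\hookrightarrow \C[\h^{\mathrm{reg}}]$ tensored with the identity, which is injective. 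Hence so is the first arrow. Your plan for $[D_y,D_{y'}]=0$ (reduce to rank~$2$ parabolics) is the standard route and is correct.
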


The following corollary of Theorem \ref{Thm:PBW_RCA} was observed in \cite[Section 3]{GGOR}.

\begin{Cor}\label{Cor:triang}
The multiplication map $S(\h^*)\otimes \C W\otimes S(\h)\rightarrow H_{\bc}$
is an isomorphism of vector spaces.
\end{Cor}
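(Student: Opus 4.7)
The plan is to deduce the statement from the PBW theorem (Theorem~\ref{Thm:PBW_RCA}) by a standard filtered-to-graded argument. The point is that both source and target carry compatible non-negative filtrations, so bijectivity at the associated graded level will propagate back to the filtered level.

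First, I would equip the tensor product $S(\h^*)\otimes \C W\otimes S(\h)$ with the filtration in which $\h^*$ and $W$ sit in degree $0$ and $\h$ sits in degree $1$, matching the ``order of differential operator'' filtration on $H_{\bc}$ recalled just before Theorem~\ref{Thm:PBW_RCA}. By inspection of the defining relations (\ref{eq:RCA_relns}), the multiplication map $m:S(\h^*)\otimes \C W\otimes S(\h)\to H_{\bc}$ is a morphism of filtered vector spaces.

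Next, I would pass to the associated graded map $\gr m: S(\h^*)\otimes \C W\otimes S(\h)\to \gr H_{\bc}$, and compose with the PBW isomorphism $\gr H_{\bc}\cong S(\h\oplus\h^*)\#W$ provided by Theorem~\ref{Thm:PBW_RCA}. Under the canonical vector space decomposition $S(\h\oplus\h^*)\#W\cong S(\h^*)\otimes S(\h)\otimes \C W$ (classical PBW for the commutative polynomial algebra combined with the tensor structure of the smash product), the map $\gr m$ is sent to $f\otimes w\otimes g\mapsto f\otimes w(g)\otimes w$. This map is visibly a vector space isomorphism, with inverse $f\otimes g\otimes w\mapsto f\otimes w\otimes w^{-1}(g)$.

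Finally, since both filtrations are bounded below and exhaustive with finite-dimensional graded pieces in each $W$-isotypic component, an isomorphism at the associated graded level implies that $m$ itself is an isomorphism, by the standard inductive argument on filtration degree for surjectivity and injectivity. All of the real difficulty is absorbed into Theorem~\ref{Thm:PBW_RCA}; the only genuine step here is the bookkeeping that identifies $\gr m$ with the PBW map, where one must be careful that the smash-product reordering $w\cdot g = w(g)\cdot w$ does not destroy bijectivity, which it does not since $W$ acts invertibly on $S(\h)$.
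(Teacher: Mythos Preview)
Your argument is correct and is exactly the standard filtered-to-graded reduction one would expect; the paper does not actually prove the corollary but simply records it as an immediate consequence of Theorem~\ref{Thm:PBW_RCA}, citing \cite[Section 3]{GGOR}. Your write-up fills in precisely the bookkeeping that the paper omits.
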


The isomorphism $S(\h^*)\otimes \C W\otimes S(\h)\xrightarrow{\sim} H_{\bc}$
is often called the {\it triangular decomposition}. The reader should
compare it to the triangular decomposition $U(\mathfrak{n}^-)\otimes U(\mathfrak{t})
\otimes U(\mathfrak{n})\xrightarrow{\sim} U(\g)$ for a complex semisimple
Lie algebra $\g$.

\subsubsection{Spherical subalgebras}
Let $e=|W|^{-1}\sum_{w\in W}w$ be the trivial
idempotent in $\C W\subset H_\bc$. Consider the subspace $e H_\bc e\subset H_\bc$.
It is closed under multiplication, and $e$ is a unit. We call $e H_\bc e$ the {\it spherical
subalgebra} of $H_\bc$. Note that $\gr e H_\bc e=e (S(\h\oplus \h^*)\# W)e\cong
S(\h\oplus \h^*)^W$.

We have a functor $H_\bc\operatorname{-mod}\rightarrow e H_{\bc}e\operatorname{-mod},
M\mapsto eM(=M^W)$. This functor is an equivalence if and only if $H_\bc=H_{\bc}e H_\bc$.
In this case, we say that the parameter $\bc$ is {\it spherical}.

It was shown in \cite[Corollary 3.5]{rouq_der} that there is a non-empty  Zariski open subset of spherical
parameters in $\param$.

\subsection{Categories $\mathcal{O}$}
Here we will define the category $\mathcal{O}_\bc(W)$ for the RCA $H_\bc$ following
\cite{GGOR}.

\subsubsection{Definition of the category and Verma modules}\label{SSS_O_Verma}
By definition, the category $\mathcal{O}_\bc(W)$ consists of all modules
with locally nilpotent action of $\h\subset H_\bc$ that are finitely
generated over $H_\bc$ or, equivalently (under the condition
that $\h$ acts locally nilpotently),
over $S(\h^*)\subset H_\bc$.

The triangular decomposition allows one to define Verma modules over
$H_\bc$ that give examples of objects in $\OCat_\bc$. The subalgebra
$S(\h)\# W\subset H_\bc$ plays the role of $U(\mathfrak{b})\subset U(\g)$
and $\C W\subset S(\h)\# W$ plays the role of $U(\mathfrak{t})$.
So the Verma modules are parameterized by the irreducible representations
of $W$: given $\tau\in \Irr(W)$, we set $\Delta_\bc(\tau):=H_\bc\otimes_{S(\h)\#W}\tau$,
where $\h$ acts on $\tau$ by $0$. Thanks to the triangular decomposition,
the Verma module $\Delta_\bc(\tau)$ is naturally isomorphic to $S(\h^*)\otimes\tau$
as a $S(\h^*)\#W$-module (where $W$ acts diagonally and $S(\h^*)$ acts by
multiplications on the left).

We identify $K_0(\OCat_\bc(W))$ with $K_0(W\operatorname{-mod})$ by sending
the class $[\Delta_\bc(\tau)]$ to $[\tau]$.

Let us now give a combinatorial description of the irreducible representations
of $G(\ell,1,n)$.

\begin{Ex}\label{Ex:cyclot_irrep}
The set $\Irr(G(\ell,1,n))$ is in a natural bijection with the set $\Part_\ell(n)$
of $\ell$-multipartitions $\lambda$ of $n$, i.e., the set of $\ell$ partitions $\lambda^{(0)},\ldots,\lambda^{(\ell-1)}$
with $\sum_{i=0}^{\ell-1} |\lambda^{(i)}|=n$. An irreducible module $V_\lambda$
corresponding to $\lambda\in \Part_\ell(n)$ is constructed as follows.

The product $G(\ell,1,\lambda):=\prod_{i=0}^{\ell-1} G(\ell,1,|\lambda^{(i)}|)$ naturally embeds into $G(\ell,1,n)$. Let $V_{\lambda^{(i)}}$ denote the irreducible
$\Sym_{|\lambda^{(i)}|}$-module labeled by the partition $\lambda^{(i)}$. We equip
$V_{\lambda^{(i)}}$ with the structure of a $G(\ell,1,|\lambda^{(i)}|)$-module by making
all $\eta_{(j)}$ act by $\eta^i$. Denote the resulting $G(\ell,1,|\lambda^{(i)}|)$-module
by $V^{(i)}_{\lambda^{(i)}}$. Let $V_\lambda$ denote the $G(\ell,1,n)$-module
induced from the $G(\ell,1,\lambda)$-module $V^{(0)}_{\lambda^{(0)}}\boxtimes
V^{(1)}_{\lambda^{(1)}}\boxtimes\ldots\boxtimes V^{(\ell)}_{\lambda^{(\ell-1)}}$.
The modules $V_{\lambda}$ form a complete collection of the irreducible
$G(\ell,1,n)$-modules.
\end{Ex}

\subsubsection{Euler element and $c$-function}\label{SSS_Euler}
There is a so called {\it Euler element} $h\in H_\bc$ satisfying $[h,x]=x, [h,y]=-y, [h,w]=0$. It is constructed as follows.
Pick a basis $y_1,\ldots,y_n\in \h$ and let $x_1,\ldots,x_n\in \h^*$ be the dual basis. For $s\in S$, let
$\lambda_s$ denote the eigenvalue of $s$ in $\h^*$ different from $1$. Then
\begin{equation}\label{eq:Euler}
h=\sum_{i=1}^n x_i y_i+\frac{n}{2}-\sum_{s\in S} \frac{2\bc(s)}{1-\lambda_s}s.\end{equation}

Every finite dimensional module $M$ lies in $\OCat_\bc(W)$. Indeed, the number of $h$-eigenvalues
in $V$ is finite and so $\h$ acts nilpotently on $V$.

The Euler element acts on $\tau\subset \Delta_\bc(\tau)$ by a scalar denoted by
$c_\tau$ (and called the $c$-function of $\tau$).

We will need to compute the function $c_\tau$ for the groups $G(\ell,1,n)$
(up to a summand independent of $\tau$).  For this, it is convenient to introduce new parameters.

Let $H$ denote a hyperplane of the form $\h^s, s\in S$.
We can find elements $h_{H,j}\in \C$  with $j=0,\ldots,\ell_{H}-1$
and $h_{H,j}=h_{H',j}$ for $H'\in WH$  such that
\begin{equation}\label{eq:c_to_h}c(s)=\sum_{j=1}^{\ell-1}\frac{1-\lambda_s^j}{2}(h_{\h^s,j}-h_{\h^s,j-1}).\end{equation}
Clearly, for fixed $H$, the numbers $h_{H,0},\ldots, h_{H,\ell_H-1}$ are defined up to a common summand.

We can recover the elements $h_{H,i}$ by the formula
\begin{equation}\label{eq:h_to_c} h_{H,i}=\frac{1}{\ell_H}\sum_{s\in W_H\setminus \{1\}}\frac{2c(s)}{\lambda_s-1}\lambda_s^{-i}.
\end{equation}
Note that $\sum_{i=0}^{\ell_H-1}h_{H,i}=0$ in this case.

Note that, for $W=G(\ell,1,n)$ with $\ell,n>1$, we have two conjugacy classes of hyperplanes,
their representatives are given by the equations $x_1=0$ and $x_1=x_2$.
Set $\kappa:=-c(s)$, where $s$ is a reflection in $\Sym_n$, and $h_i:=h_{H,i}$,
where $H$ is the hyperplane $x_1=0$.
%The case when $\kappa=0$
%is easy and not particularly interesting, here $H_c(G(\ell,1,n))=H_{\underline{c}}(\Z/\ell\Z)^{\otimes n}\# \Sym_n$,
%where we write $\underline{c}$ for the restriction of $c$ to $S\cap \Z/\ell\Z$. From now one we
%are going to assume that $\kappa\neq 0$. We define $s_i$ by $h_{H,i}=\kappa s_i-i/\ell$, where
%$H$ is given by $x_1=0$.
%It turns out that the parameters $\kappa$ and ${\bf s}=(s_0,\ldots,s_\ell)$
%(note that the numbers $s_1,\ldots,s_\ell$ are defined up to a common summand) are very convenient
%for the study of the categories $\mathcal{O}$.

Let us get back to computing the functions $c_\lambda$, where $\lambda\in \Part_\ell(n)\cong \Irr(G(\ell,1,n))$.
We view elements of $\Part_\ell(n)$ as $\ell$-tuples of Young diagrams. Let $b$ be a box of $\lambda$.
It can be characterized by three numbers $x,y,i$, where $x$ is the number of column, $y$
is the number of row, and $i$ is the number of the diagram $\lambda^{(i)}$ containing $b$.
Further, we set $c_b:=\kappa\ell (x-y)+\ell h_i$ and $c_\lambda:=\sum_{b\in \lambda}c_b$.
Up to a summand independent of $\lambda$, the number $c_\lambda$ coincides with
$c_{V^\lambda}$ defined above. This is proved in \cite[Section 6.1]{rouqqsch}.

\subsubsection{Simple objects and finiteness properties}
Using the Euler element one can establish many basic structural results
about the categories $\mathcal{O}_\bc(W)$. For example, let us describe the
simple objects in $\mathcal{O}_\bc(W)$.

\begin{Prop}\label{Prop:simples}
Every Verma module $\Delta_\bc(\tau)$ has a unique simple quotient,
to be denoted by $L_\bc(\tau)$. The map $\tau\mapsto L_\bc(\tau)$ is a
bijection between $\Irr(W)$ and $\Irr(\mathcal{O}_\bc(W))$.
\end{Prop}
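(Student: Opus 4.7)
The plan is to mimic the classical argument for the BGG category $\OCat$, with the Euler element $h$ of Section~\ref{SSS_Euler} playing the role of the Cartan subalgebra. First I would use the triangular decomposition to give a weight-space decomposition of $\Delta_\bc(\tau)=S(\h^*)\otimes\tau$ with respect to $h$: since $[h,x]=x$ for $x\in\h^*$ and $h$ acts on $\tau$ by $c_\tau$, the subspace $S(\h^*)_k\otimes\tau$ is the $h$-eigenspace of eigenvalue $c_\tau+k$. Thus the eigenvalues of $h$ on $\Delta_\bc(\tau)$ lie in $c_\tau+\Z_{\geq 0}$, the minimal eigenspace is exactly $\tau$, and every submodule is automatically $h$-stable, so decomposes as a direct sum of its intersections with these eigenspaces.

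Next I would show that $\Delta_\bc(\tau)$ has a unique maximal proper submodule. A submodule meeting $\tau$ nontrivially must contain all of $\tau$ (since $\tau$ is $W$-irreducible) and hence must equal $\Delta_\bc(\tau)$ (because $H_\bc\cdot\tau=\Delta_\bc(\tau)$). Consequently every proper submodule lies in $\bigoplus_{k>0}S(\h^*)_k\otimes\tau$, so the sum $M_\bc(\tau)$ of all proper submodules still misses $\tau$ and is itself proper. Hence $L_\bc(\tau):=\Delta_\bc(\tau)/M_\bc(\tau)$ is the unique simple quotient.

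For injectivity of $\tau\mapsto L_\bc(\tau)$, observe that since $M_\bc(\tau)\cap\tau=0$, the image of $\tau$ in $L_\bc(\tau)$ is isomorphic to $\tau$ as a $W$-module and is precisely the $h$-eigenspace of minimal eigenvalue $c_\tau$. So $\tau$ is recovered from $L_\bc(\tau)$ up to isomorphism, and non-isomorphic $\tau$ yield non-isomorphic simples. For surjectivity, given a simple $L\in\OCat_\bc(W)$, I would argue that the subspace $L^\h$ of vectors annihilated by $\h$ is nonzero: for any $0\neq v\in L$, the space $S(\h)\cdot v$ is finite-dimensional by local nilpotence of $\h$, it is $S(\h)$-stable, and $\h$ acts on it nilpotently, so its kernel is nonzero. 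The subspace $L^\h$ is $W$-stable, hence contains some irreducible $W$-submodule $\tau$. The inclusion $\tau\hookrightarrow L^\h$ corresponds under the adjunction defining $\Delta_\bc(\tau)=H_\bc\otimes_{S(\h)\#W}\tau$ to a nonzero map $\Delta_\bc(\tau)\to L$, which is surjective by simplicity of $L$ and therefore identifies $L$ with $L_\bc(\tau)$.

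The main obstacle is establishing that $L^\h$ is nonzero for an arbitrary simple object, since this is the one place the argument truly uses the finiteness hypothesis in the definition of $\OCat_\bc(W)$; everything else is a formal consequence of the $h$-grading on Verma modules and the universal property of $\Delta_\bc(\tau)$.
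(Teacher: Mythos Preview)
Your proposal is correct and matches the approach the paper indicates: the paper does not spell out a proof but simply remarks that the Euler element allows one to establish such basic structural results, and your argument is the standard implementation of that idea (originating in \cite{GGOR}). Your identification of the one nontrivial step---the nonvanishing of $L^{\h}$ via the finite-dimensionality of $S(\h)\cdot v$ for commuting locally nilpotent operators---is accurate and correctly handled.
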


Also we can establish the following result about the structure
of $\OCat_\bc(W)$. The proof is in \cite[Section 2]{GGOR}, see,
in particular, \cite[Corollaries 2.8,2.16]{GGOR}.

\begin{Prop}\label{Prop:cat_O_prop}
The category $\mathcal{O}_\bc(W)$ has enough projectives and all objects there
have finite length.
\end{Prop}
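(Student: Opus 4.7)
The plan is to use the Euler element $h$ of Section 2.1, which acts semisimply on the Verma $\Delta_\bc(\tau)=S(\h^*)\otimes\tau$ with eigenvalues $c_\tau+\Z_{\geq 0}$, to equip every $M\in\OCat_\bc(W)$ with a weight decomposition. Finite generation over $H_\bc$ together with local nilpotence of $\h$ produces a finite-dimensional $S(\h)\#W$-submodule $V\subset M$ with $M=S(\h^*)V$, and it follows that every generalized $h$-eigenspace $M_a$ is finite-dimensional and that the spectrum of $h$ on $M$ is contained in a finite union of shifts $c_i+\Z_{\geq 0}$, $c_i\in\C$.

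For finite length I would first treat Vermas. If $L_\bc(\tau')$ is a simple subquotient of $\Delta_\bc(\tau)$, its lowest weight subspace is an $\h$-invariant copy of $\tau'$ on which $h$ acts by $c_{\tau'}$, so $c_{\tau'}\in c_\tau+\Z_{\geq 0}$. Since $\Irr(W)$ is finite, only finitely many $\tau'$ can satisfy this, and the multiplicity of each is bounded above by
\[
\dim\Hom_W\bigl(\tau',\,S^{c_{\tau'}-c_\tau}(\h^*)\otimes\tau\bigr).
\]
Hence $\Delta_\bc(\tau)$ has finite length. For an arbitrary $M\in\OCat_\bc(W)$, I filter the generating submodule $V$ by $W$-submodules whose successive quotients are killed by $\h$ (using nilpotence of $\h$ on $V$); inducing via $H_\bc\otimes_{S(\h)\#W}-$, which is exact by flatness from the triangular decomposition of Corollary \ref{Cor:triang}, yields a module surjecting onto $M$ with a finite filtration whose subquotients are direct sums of Vermas and is hence of finite length. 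Therefore $M$ has finite length as well.

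For enough projectives, for each $\tau\in\Irr(W)$ I would construct the projective cover $P_\bc(\tau)$ of $L_\bc(\tau)$ as an inverse limit. For each integer $N$, let $\OCat_\bc^{\leq N}$ be the Serre subcategory of $\OCat_\bc(W)$ whose objects have all $h$-eigenvalues $a$ with $\operatorname{Re}(a)\leq N$; by the first paragraph such objects are finite-dimensional, so $\OCat_\bc^{\leq N}$ is a finite Artinian category that admits a projective cover $P_\bc^{\leq N}(\tau)$ of each simple. The canonical truncation functors organize $\{P_\bc^{\leq N}(\tau)\}_N$ into an inverse system, and I set $P_\bc(\tau):=\varprojlim_N P_\bc^{\leq N}(\tau)$; since any morphism from $P_\bc(\tau)$ into an object of $\OCat_\bc(W)$ factors through some truncation, projectivity transfers to the limit. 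The main obstacle is verifying that $P_\bc(\tau)$ actually lies in $\OCat_\bc(W)$, i.e., is finitely generated over $H_\bc$. This reduces to showing that the multiplicities $[P_\bc^{\leq N}(\tau):L_\bc(\tau')]$ stabilize for large $N$, and is the quantitative heart of the proof: a BGG-type reciprocity identifies $P_\bc(\tau)$ as a standardly filtered module with $[P_\bc(\tau):\Delta_\bc(\tau')]=[\Delta_\bc(\tau'):L_\bc(\tau)]$, and the finite-length result for Vermas established above ensures that only finitely many such layers are nonzero.
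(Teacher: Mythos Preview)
The paper does not actually prove this statement; it refers to \cite[Corollaries 2.8, 2.16]{GGOR}. Your argument for finite length is correct and is essentially the standard one: the Euler element gives a grading with finite-dimensional generalized eigenspaces, the lowest-weight constraint forces any simple subquotient $L_\bc(\tau')$ of $\Delta_\bc(\tau)$ to satisfy $c_{\tau'}\in c_\tau+\Z_{\geq 0}$ with multiplicity bounded by $\dim\Hom_W(\tau',S^{c_{\tau'}-c_\tau}(\h^*)\otimes\tau)$, and any $M$ is covered by a finite iterated extension of Vermas via the nilpotent filtration on a generating subspace.

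Your construction of projectives, however, has a genuine gap. The truncation $\OCat_\bc^{\leq N}$ you define---objects all of whose $h$-eigenvalues have real part at most $N$---consists entirely of \emph{finite-dimensional} modules. Consequently an infinite-dimensional simple $L_\bc(\tau)$ lies in no $\OCat_\bc^{\leq N}$ whatsoever; for Weil-generic $\bc$ this applies to \emph{every} $\tau$, since then $L_\bc(\tau)=\Delta_\bc(\tau)$, and your inverse system $\{P_\bc^{\leq N}(\tau)\}_N$ is simply empty. Even when $L_\bc(\tau)$ happens to be finite-dimensional, your claim that a map from $\varprojlim_N P_\bc^{\leq N}(\tau)$ into an arbitrary $M\in\OCat_\bc(W)$ factors through a finite-dimensional truncation fails as soon as $M$ is infinite-dimensional, which is the typical case.

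The approach in \cite{GGOR} avoids this by working instead with the induced modules
\[
Q_n(\tau):=H_\bc\otimes_{S(\h)\#W}\bigl(\tau\otimes S(\h)/S(\h)_+^{\,n}\bigr),
\]
which genuinely lie in $\OCat_\bc(W)$ and surject onto $\Delta_\bc(\tau)$. By Frobenius reciprocity $\Hom_{\OCat_\bc}(Q_n(\tau),M)\cong\Hom_W(\tau,M[S(\h)_+^n])$, and one shows that for $n$ large relative to the finite set $\{\operatorname{Re}(c_{\tau'}-c_\tau):\tau'\in\Irr(W)\}$ a suitable quotient of $Q_n(\tau)$ is projective in $\OCat_\bc(W)$ and independent of $n$; this is the projective cover $P_\bc(\tau)$. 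Your closing remark that BGG reciprocity then exhibits $P_\bc(\tau)$ as standardly filtered with finitely many layers $\Delta_\bc(\tau')$---hence finitely generated---is correct and is exactly how finite generation is verified in \cite{GGOR}.
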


The two propositions imply that $\mathcal{O}_\bc(W)$ is equivalent to the category of
modules over the finite dimensional algebra $\End_{\OCat_\bc(W)}(P)^{opp}$, where
$P:=\bigoplus_{\tau\in \Irr(W)}P_\bc(\tau)$, and $P_\bc(\tau)$ denotes the projective
cover of $L_\bc(\tau)$.

\subsubsection{Highest weight structure}\label{SSS_HW}
The classical BGG category $\mathcal{O}$ has certain upper triangularity properties
that are formalized in the notion of a highest weight category. Here we will see that the categories $\mathcal{O}_\bc(W)$ are highest weight as well, this result was established in \cite{GGOR}.

Let us start by recalling the general notion of a highest weight category (over a field).
Let $\Cat$ be a $\C$-linear abelian category equivalent to $A\operatorname{-mod}$
for some finite dimensional algebra $A$. For $L\in \Irr(\Cat)$, let $P_L$ denote
the projective cover of $L$. Equip $\Irr(\Cat)$ with a partial order $\leqslant$.
For $L\in \Irr(\Cat)$, consider the subcategory $\Cat_{\leqslant L}$,
the Serre span of $L'\in \Irr(\Cat)$ with $L'\leqslant L$. Let $\Delta_L$
denote the projective cover of $L$ in $\Cat_{\leqslant L}$. The object
$\Delta_L$ coincides with the maximal
quotient of $P_L$ lying in $\Cat_{\leqslant L}$.

\begin{defi}\label{defi_HW}
We say that $\Cat$ is a highest weight category (with respect to the order $\leqslant$)
if, for every $L$,   the kernel of $P_L\twoheadrightarrow \Delta_L$ is filtered by $\Delta_{L'}$ with $L'>L$.
\end{defi}

The objects $\Delta_L$ are called {\it standard}.

Let us get back to the categories $\mathcal{O}_\bc(W)$.
Now fix a parameter $\bc$. Define a partial order $\leqslant_\bc$ on $\Irr(W)$ as follows.
Recall that to $\tau\in \Irr(W)$ we can assign the complex number $c_\tau$ depending on
$\bc$. We set $\tau\leqslant_\bc \xi$ if $\tau=\xi$ or $c_\tau-c_{\xi}\in \Z_{>0}$.

The following result is established in \cite[Theorem 2.19]{GGOR}.

\begin{Prop}\label{Prop:HW}
The category $\OCat_\bc(W)$ is highest weight with respect to the order $\leqslant_\bc$.
The standard $\Delta_{L(\tau)}$ coincides with the Verma module $\Delta_\bc(\tau)$.
\end{Prop}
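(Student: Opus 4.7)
The plan is to exploit the Euler element $h$ of Section \ref{SSS_Euler} to track $h$-eigenvalues on objects of $\OCat_\bc(W)$, use this to pin down composition structures of Verma modules, and then verify the two conditions of Definition \ref{defi_HW}.

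First I would set up the eigenvalue bookkeeping. Since $[h,x]=x$ for $x\in\h^*$ and $[h,w]=0$ for $w\in W$, the Euler element acts on $\Delta_\bc(\tau)=S(\h^*)\otimes\tau$ as $c_\tau$ plus the polynomial degree, so its $h$-spectrum equals $c_\tau+\Z_{\geqslant 0}$ with the minimum attained on $\tau$ itself. On an arbitrary $M\in\OCat_\bc(W)$, the element $h$ acts locally finitely ($M$ is finitely generated over $S(\h^*)$, a sum of $h$-weight spaces), while $\h$ acts locally nilpotently and lowers $h$-eigenvalues by one; hence the $h$-spectrum of $M$ is bounded below in every coset $a+\Z\subset\C$. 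In particular the $h$-spectrum of $L_\bc(\xi)$ sits in $c_\xi+\Z_{\geqslant 0}$ with the minimum attained on the $\xi$-isotypic top. Consequently, if $L_\bc(\xi)$ is a composition factor of $\Delta_\bc(\tau)$ then $c_\xi\in c_\tau+\Z_{\geqslant 0}$, and equality would force $\xi$ to be a $W$-subrepresentation of the eigenspace $\tau$, hence $\xi=\tau$. Therefore $[\Delta_\bc(\tau):L_\bc(\tau)]=1$, every other composition factor $L_\bc(\xi)$ satisfies $c_\xi-c_\tau\in\Z_{>0}$, i.e.\ $\xi<_\bc\tau$, and $\Delta_\bc(\tau)\in\OCat_\bc(W)_{\leqslant L_\bc(\tau)}$.

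Next I would show $\Delta_\bc(\tau)$ is projective in $\OCat_\bc(W)_{\leqslant L_\bc(\tau)}$ and hence equals the standard object $\Delta_{L_\bc(\tau)}$. Frobenius reciprocity for $\Delta_\bc(\tau)=H_\bc\otimes_{S(\h)\#W}\tau$ gives
\[
\Hom_{\OCat_\bc(W)}(\Delta_\bc(\tau),M)=\Hom_W\bigl(\tau,\{m\in M:\h\cdot m=0\}\bigr).
\]
For $M\in\OCat_\bc(W)_{\leqslant L_\bc(\tau)}$ the previous paragraph shows the $h$-spectrum of $M$ lies in $c_\tau+\Z_{\geqslant 0}$, so the $c_\tau$-generalized eigenspace of $M$ is automatically killed by $\h$, and the right-hand side equals the $\tau$-isotypic part of that eigenspace. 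Both operations (generalized eigenspace projection and $W$-isotypic extraction) are exact, so $\Hom(\Delta_\bc(\tau),-)$ is exact on $\OCat_\bc(W)_{\leqslant L_\bc(\tau)}$. Combined with the uniqueness of the simple quotient $L_\bc(\tau)$ of $\Delta_\bc(\tau)$ from Proposition \ref{Prop:simples}, this identifies $\Delta_\bc(\tau)$ as the projective cover of $L_\bc(\tau)$ in the truncated category.

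The main obstacle is the second clause of Definition \ref{defi_HW}: the kernel of $P_\bc(\tau)\twoheadrightarrow\Delta_\bc(\tau)$ must be filtered by Verma modules $\Delta_\bc(\xi)$ with $\xi>_\bc\tau$. I would attack this via Ringel's criterion for standard filtrations, which reduces the task to establishing Ext-orthogonality against costandards. One first constructs costandard modules $\nabla_\bc(\tau)$ via a contragredient duality on $\OCat_\bc(W)$ coming from a suitable anti-involution of $H_\bc$ (exchanging $\h$ and $\h^*$ up to an identification and fixing $W$). A composition-factor analysis dual to the one above shows $\nabla_\bc(\tau)$ has socle $L_\bc(\tau)$ and remaining composition factors $L_\bc(\xi)$ with $\xi<_\bc\tau$. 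A direct computation with a Koszul-type projective resolution of $\Delta_\bc(\tau)$ built from the PBW triangular decomposition then verifies $\Ext^i_{\OCat_\bc(W)}(\Delta_\bc(\tau),\nabla_\bc(\xi))=\delta_{i,0}\delta_{\tau,\xi}\C$. Ringel's theorem thereupon delivers a $\Delta$-filtration of every projective in $\OCat_\bc(W)$, and for $P_\bc(\tau)$ the $c$-function eigenvalue bound of the first paragraph forces the top piece of such a filtration to be $\Delta_\bc(\tau)$ (matching the head $L_\bc(\tau)$) and the remaining layers to be $\Delta_\bc(\xi)$'s with $c_\xi<c_\tau$, i.e.\ $\xi>_\bc\tau$. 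The delicate point throughout is the construction of the contragredient duality and the Koszul-resolution Ext computation, which are the content of the GGOR argument.
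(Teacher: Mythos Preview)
The paper gives no proof of its own here; it simply cites \cite[Theorem 2.19]{GGOR}. Your proposal is a correct outline of essentially the GGOR argument: the Euler-element bookkeeping to control composition factors of Verma modules, Frobenius reciprocity to identify $\Delta_\bc(\tau)$ as the projective cover of $L_\bc(\tau)$ in the truncated category $\OCat_\bc(W)_{\leqslant L_\bc(\tau)}$, the construction of costandards via naive duality (exactly what the paper later recalls in \ref{SSS_cost_tilt}), and the Ext-orthogonality $\Ext^i(\Delta_\bc(\tau),\nabla_\bc(\xi))=\delta_{i,0}\delta_{\tau,\xi}\C$ to deduce standard filtrations of projectives.

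One small organisational remark. In GGOR the final step is arranged somewhat differently from your ``Ringel's criterion'' phrasing: rather than first proving Ext-orthogonality and then invoking an abstract criterion, they build projectives concretely (as direct summands of suitable induced modules $H_\bc\otimes_{S(\h)\#W}(\tau\otimes(S(\h)/\mathfrak{m}^k))$ for $k\gg 0$) and read off the Verma filtration and BGG-type reciprocity directly from that construction. Your route via the Koszul resolution $\Delta_\bc(\tau\otimes\wedge^\bullet\h)\to\Delta_\bc(\tau)$ and Ext-orthogonality is equivalent and arguably cleaner; it is the approach one finds, for instance, in later treatments of highest weight categories with a triangular decomposition. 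Either way, the ``delicate point'' you flag---the duality and the Ext computation---is indeed where the work lies, and your sketch identifies it accurately.
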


\subsubsection{Costandard and tilting objects}\label{SSS_cost_tilt}
Recall that in any highest weight category $\Cat$ one has costandard objects $\nabla_L, L\in \Irr(\Cat),$
with $\dim \Ext^i(\Delta_{L},\nabla_{L'})=\delta_{i,0}\delta_{L,L'}$.

In the case of the category $\OCat_\bc(W)$ one can construct the costandard objects $\nabla_\bc(\tau)$ as follows. Consider the parameter $\bc^*$ defined by $ c^*(s):=-c(s^{-1})$. There is an isomorphism $H_\bc(W,\h)\xrightarrow{\sim} H_{\bc^*}(W,\h^*)^{opp}$ that is the identity on $\h^*,\h$  and is the inversion on $W$.
This isomorphism allows to define a contravariant equivalence (the {\it naive duality})
$\OCat_{\bc^*}(W,\h^*)\xrightarrow{\sim}\OCat_{\bc}(W,\h)$ that maps $M$
to its restricted dual $M^\vee:=\bigoplus_{a\in \C} M_a^*$, where we write
$M_a$ for the generalized eigenspace with eigenvalue $a$ of $h$ in $M$.
We set $\nabla_\bc(\tau):=\Delta_{\bc^*}(\tau^*)^\vee$, where $\tau^*$
is the dual to $\tau$.

%Take the Verma module
%$\Delta_{\bc^*}(\lambda^*)\in \OCat_{\bc^*}(W,\h^*)$. Its restricted dual $\Delta_{\bc^*}(\tau^*)^\vee:=
%\bigoplus_k \Delta_{\bc^*}(\tau^*)_k^*$ is a left $H_\bc(W,\h)$-module and it lies in the category $\mathcal{O}$.
%This module is $\nabla_\bc(\tau)$. The category $\OCat_\bc^{opp}$ is highest weight with the same
%order and with standard objects $\nabla_\bc(\tau)$.

Recall that by a tilting object in a highest weight category one means an object
that is both standardly filtered (i.e., admits a filtration with standard quotients)
and is costandardly filtered. The indecomposable tilting objects are in bijection
with $\Irr(\Cat)$: for any $L\in \Cat$, there is a unique indecomposable
tilting object $T_L$ that admits an inclusion $\Delta_L\hookrightarrow T_L$
with standardly filtered cokernel. We write $T_\bc(\tau)$ instead of $T_{L_\bc(\tau)}$.

\subsubsection{Generic semisimplicity}\label{SSS_gen_ss}
The $c$-function and the highest weight structure give a sufficient (but not necessary)
criterium for $\OCat_\bc(W)$ to be semisimple.  Let us write $\varpi_{\xi\tau}$
for the element of $\param^*$ given by $\bc\mapsto c_\xi-c_\tau$.

\begin{Lem}\label{Lem:O_ss}
Suppose that  $\varpi_{\xi\tau}(\bc)\not\in \Z\setminus \{0\}$ for all $\xi,\tau\in \Irr(W)$. Then the category $\OCat_\bc(W)$ is semisimple.
\end{Lem}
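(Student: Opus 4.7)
The plan is to show that the hypothesis forces the partial order $\leqslant_\bc$ on $\Irr(W)$ to be trivial, and then exploit the highest weight structure of Proposition \ref{Prop:HW} to conclude that every standard object is simple and every simple object is projective.

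First I would unpack the hypothesis. By definition $\varpi_{\xi\tau}(\bc)=c_\xi-c_\tau$, so the assumption says $c_\xi-c_\tau \not\in \Z\setminus\{0\}$ for every pair $\xi,\tau\in \Irr(W)$. In particular, for $\xi\neq\tau$, neither $c_\tau-c_\xi$ nor $c_\xi-c_\tau$ lies in $\Z_{>0}$. Recalling that $\tau <_\bc \xi$ was defined by $c_\tau-c_\xi\in \Z_{>0}$, this means that no two distinct elements of $\Irr(W)$ are comparable, i.e.\ $\leqslant_\bc$ is the trivial order.

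Next I would feed this into Definition \ref{defi_HW} and Proposition \ref{Prop:HW}. For each $\tau\in \Irr(W)$, the Serre subcategory $\OCat_{\leqslant_\bc L_\bc(\tau)}$ has only one simple object, namely $L_\bc(\tau)$ itself. The highest weight axiom says that the kernel of $P_\bc(\tau)\twoheadrightarrow \Delta_\bc(\tau)$ is filtered by standards $\Delta_\bc(\xi)$ with $\xi >_\bc \tau$; since no such $\xi$ exists, this kernel is zero, so $P_\bc(\tau)=\Delta_\bc(\tau)$. Similarly, $\Delta_\bc(\tau)$, being projective in a Serre subcategory whose only simple is $L_\bc(\tau)$, has all of its composition factors isomorphic to $L_\bc(\tau)$; but Proposition \ref{Prop:HW} together with the general fact that $[\Delta_L:L]=1$ (which is built into the highest weight formalism) forces $\Delta_\bc(\tau)=L_\bc(\tau)$. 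Hence $P_\bc(\tau)=L_\bc(\tau)$ for every $\tau$.

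Finally, since every simple object $L_\bc(\tau)$ is its own projective cover, $\OCat_\bc(W)$ has the property that every simple is projective. Combined with Proposition \ref{Prop:cat_O_prop} (finite length and enough projectives), this implies that $\OCat_\bc(W)$ is semisimple, as claimed. The only mild subtlety is the appeal to $\Delta_\bc(\tau)=L_\bc(\tau)$ when the order is trivial, but this is a standard consequence of the highest weight axioms rather than something requiring Cherednik-specific input.
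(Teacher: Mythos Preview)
Your argument is correct and is exactly the one the paper has in mind: the sentence introducing the lemma says ``The $c$-function and the highest weight structure give a sufficient (but not necessary) criterium for $\OCat_\bc(W)$ to be semisimple,'' and then states the lemma with no further proof. Your unpacking of the trivial order and the highest weight axioms is the intended justification, so there is nothing to add.
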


\subsubsection{Supports}\label{SSS_Supports}
Every object  $M\in\OCat_\bc(W)$ is finitely generated over $S(\h^*)=\C[\h]$.  Thanks to this we can
define the support $\Supp(M)$ of $M$ in $\h$, this will be the support of $M$ viewed as  a
coherent sheaf on $\h$. By definition, this is a closed subvariety in $\h$.

It turns out that $\Supp(M)$ is the union of strata for the {\it stabilizer stratification} of $\h$.
The strata are numbered by the conjugacy
classes of possible stabilizers for the $W$-action on $\h$ (these stabilizers are called
{\it parabolic subgroups} of $W$). Namely, to a parabolic subgroup $\underline{W}\subset W$
we assign the locally closed subvariety $X(\underline{W}):=\{b\in \h| W_b=\underline{W}\}$.
Note that $\overline{X(\underline{W})}:=\bigsqcup_{W'}X(W')$, where the union is taken over
the conjugacy classes of all parabolic subgroups $W'$ containing a
conjugate of $\underline{W}$.

Clearly, for an exact sequence $0\rightarrow M'\rightarrow M\rightarrow M''\rightarrow 0$,
we have $\Supp(M)=\Supp(M')\cup \Supp(M'')$. This, in principle, reduces
the computation of supports to the case of simple modules.  The following result was
proved in \cite[Section 3.8]{BE}.

\begin{Lem}\label{Lem:supp_simple}
Let $L\in \Irr(\OCat_\bc(W))$. Then there is a parabolic subgroup $\underline{W}\subset W$
such that $\Supp(M)=\overline{X(\underline{W})}$.
\end{Lem}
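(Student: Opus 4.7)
My plan is to use the Bezrukavnikov--Etingof restriction and induction functors $\Res_b\colon \OCat_\bc(W)\to\OCat_{\bc'}(\underline{W},\underline{\h})$ and $\Ind_b\colon \OCat_{\bc'}(\underline{W},\underline{\h})\to\OCat_\bc(W)$ associated to a point $b\in\h$ with stabilizer $\underline{W}:=W_b$, from \cite{BE}. The three properties I need are: $\Res_b$ is exact with $b\in\Supp(M)\iff \Res_b(M)\neq 0$; more generally $\Supp(\Res_b M)\subset\underline{\h}$ records the transverse part of $\Supp(M)$ near $b$, in the sense that $\underline{b}'\in\Supp(\Res_b M)$ iff a small lift $b+\tilde{\underline{b}}'$ lies in $\Supp(M)$; and $\Ind_b$ is left adjoint to $\Res_b$ with $\Supp\Ind_b(N)\subset\overline{X(\underline{W})}=W\cdot\h^{\underline{W}}$ whenever $N$ is supported at $0\in\underline{\h}$.

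First I would select $\underline{W}$ to be minimal (up to $W$-conjugacy) among parabolic subgroups for which $X(\underline{W})\cap\Supp(L)\neq\emptyset$, and fix $b$ in this intersection. Then $\Res_b(L)\neq 0$. If some $\underline{b}'\neq 0$ lay in $\Supp(\Res_b L)$, the corresponding point $b+\tilde{\underline{b}}'$ would belong to $\Supp(L)$ but would have stabilizer properly contained in $\underline{W}$ (since $\underline{W}$ acts faithfully on $\underline{\h}$ and $\underline{b}'\neq 0$), contradicting minimality. Hence $\Supp(\Res_b L)=\{0\}$, so $\Res_b L$ is a nonzero finite-dimensional object of $\OCat_{\bc'}(\underline{W},\underline{\h})$.

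Next, Frobenius reciprocity applied to the identity of $\Res_b L$ yields a nonzero morphism $\Ind_b(\Res_b L)\to L$, which is surjective because $L$ is simple. The support of $\Ind_b(\Res_b L)$ is contained in $\overline{X(\underline{W})}$ by the third property above, giving the inclusion $\Supp(L)\subset\overline{X(\underline{W})}$.

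For the reverse inclusion I would invoke the completion statement of \cite{BE}: the completion of $H_\bc(W,\h)$ along $Wb$ is Morita equivalent to $H_{\bc'}(\underline{W},\underline{\h})^{\wedge_0}\hat{\otimes}\C[[\h^{\underline{W}}]]$, and under this identification the completion of $L$ at $Wb$ becomes $\Res_b(L)\hat{\otimes}\C[[\h^{\underline{W}}]]$. Since $\Res_b L$ is nonzero, this completion has support equal to the entire formal neighborhood of $b$ in $\h^{\underline{W}}$; as $\Supp(L)$ is closed and $\h^{\underline{W}}$ is irreducible, $\h^{\underline{W}}\subset\Supp(L)$, and $W$-stability then forces $\overline{X(\underline{W})}\subset\Supp(L)$. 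The main difficulty is precisely this lower bound: the adjunction alone gives only the upper bound, because a simple quotient can have strictly smaller support than its parent, and the completion theorem is needed to pin down that the $\C[[\h^{\underline{W}}]]$ factor fills out the stratum $\h^{\underline{W}}$ near $b$ inside $\Supp(L)$.
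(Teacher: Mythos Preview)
Your argument is correct and is essentially the argument of \cite[Section~3.8]{BE}, which is exactly what the paper cites (without reproducing a proof). One small remark: in \cite{BE} the functor $\Ind_b$ is constructed as the \emph{right} adjoint of $\Res_b$, so the natural map is the unit $L\hookrightarrow \Ind_b\Res_b L$ rather than a counit; this gives the same support inclusion $\Supp(L)\subset\Supp(\Ind_b\Res_b L)\subset\overline{X(\underline{W})}$, and of course biadjointness (Proposition~\ref{Prop:ResInd_biadj}) lets you use either direction. Your completion argument for the lower bound is the right one and is precisely how \cite{BE} controls the support locally: the point is that $\Supp(L)\cap\h^{\underline{W}}$ is a closed subvariety of the irreducible affine space $\h^{\underline{W}}$ whose formal completion at $b$ is full, forcing $\h^{\underline{W}}\subset\Supp(L)$ and hence $\overline{X(\underline{W})}=W\h^{\underline{W}}\subset\Supp(L)$ by $W$-stability.
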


It turns out that $\dim \Supp(L)$ can be recovered from the highest weight structure
on $\mathcal{O}_\bc(W)$. Let $T$ denote the sum of all indecomposable tilting objects in
$\mathcal{O}_\bc(W)$.

\begin{Lem}\label{Lem:supp_tilt}
The number $\dim \h-\dim \Supp(L)$ coincides with the minimal number $i$
such that $\Ext^i_{\mathcal{O}_\bc(W)}(L,T)\neq 0$. It also coincides
with the minimal number $j$ such that $\Ext^i_{\mathcal{O}_\bc(W)}(T,L)\neq 0$.
\end{Lem}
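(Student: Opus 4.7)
The plan is to reduce, via Bezrukavnikov--Etingof restriction, to the case where $L$ is a finite-dimensional simple module, compute the Ext groups there with a Koszul-type resolution, and handle the symmetric claim using the naive duality from Section~\ref{SSS_cost_tilt}.

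First, I would use Lemma~\ref{Lem:supp_simple} to obtain a parabolic subgroup $\underline{W}\subset W$ with $\Supp(L)=\overline{X(\underline{W})}$, so that $d:=\dim\h-\dim\Supp(L)=\dim(\h/\h^{\underline{W}})$. Pick $b\in X(\underline{W})$ and apply the BE restriction functor $\Res_b\colon \OCat_\bc(W,\h)\to \OCat_{\underline{\bc}}(\underline{W},\h/\h^{\underline{W}})$. This functor is exact, biadjoint to an exact induction functor $\Ind_b$, and sends tiltings to tiltings (so $\Res_b T$ contains all indecomposable tiltings of the smaller category as summands), while sending $L$ to a finite-dimensional simple $\underline{L}$ (since the transverse slice of $\Supp(L)$ at $b$ is the point $0\in \h/\h^{\underline{W}}$). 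The crucial identification
\[
\min\{i:\Ext^i_{\OCat_\bc(W)}(L,T)\neq 0\}=\min\{i:\Ext^i_{\OCat_{\underline{\bc}}(\underline{W})}(\underline{L},\Res_b T)\neq 0\}
\]
then follows from the interpretation of $\Res_b$ as localization at the generic point of $\overline{X(\underline{W})}$, together with the observation that only those tilting summands whose standard (equivalently costandard) factors have support meeting $X(\underline{W})$ can contribute to $\Ext^\bullet(L,T)$. This reduces the first claim to the finite-dimensional case in a category whose ambient space has dimension $d$.

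In the finite-dimensional case, I would use the costandard filtration of $T$ to rewrite $\Ext^i(\underline{L},T)$ as a sum of groups $\Ext^i(\underline{L},\nabla_\xi)$. The triangular decomposition of Corollary~\ref{Cor:triang} then provides a Koszul-type projective resolution of $\underline{L}$ of length exactly $d$, built from the action of the $d$ polynomial generators of $S((\h/\h^{\underline{W}})^*)$ on a standardly filtered projective cover. Combined with the orthogonality $\Ext^\bullet(\Delta_\tau,\nabla_\xi)=\C\cdot \delta_{\tau,\xi}\cdot \delta_{\bullet,0}$ from the highest weight structure (Propositions \ref{Prop:HW} and the costandard discussion in Section~\ref{SSS_cost_tilt}), this yields $\Ext^i(\underline{L},\nabla_\xi)=0$ for $i<d$ and non-vanishing at $i=d$ from the top term of the resolution paired against a tilting summand containing the relevant standard subquotient.

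For the second assertion, apply the naive duality $(-)^\vee\colon \OCat_\bc(W,\h)\to \OCat_{\bc^*}(W,\h^*)$: it is a contravariant equivalence, sends simples to simples and tiltings to tiltings, and preserves the invariant $d$ since the controlling parabolic $\underline{W}$ is the same on both sides. Hence
\[
\Ext^i_{\OCat_\bc(W,\h)}(T,L)\cong \Ext^i_{\OCat_{\bc^*}(W,\h^*)}(L^\vee,T^\vee),
\]
and the first half of the argument, applied now in $\OCat_{\bc^*}(W,\h^*)$, gives the conclusion. The main obstacle lies in the second paragraph: establishing both the sharp vanishing below $d$ and the precise non-vanishing at $d$ requires genuinely Cherednik-algebraic input (the explicit Koszul-type resolution provided by the triangular decomposition, rather than merely the formal highest weight structure), and the reduction via $\Res_b$ in the first paragraph needs the localization interpretation to be set up carefully so that the minimal Ext-vanishing degree is preserved.
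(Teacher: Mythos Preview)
The paper gives no proof here; it simply cites \cite[Lemma~6.2]{RSVV}. So the comparison is with your argument on its own merits, and there are two genuine gaps.

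\textbf{The reduction only yields one inequality.} Biadjunction gives
\[
\Ext^i_{\OCat_\bc(W)}(L,\Ind_{\underline{W}}^{W}\underline{T})\;\cong\;\Ext^i_{\OCat_{\underline{\bc}}(\underline{W})}(\Res^{W}_{\underline{W}}L,\underline{T}),
\]
and since $\Ind_{\underline{W}}^{W}\underline{T}$ is tilting (a summand of a power of $T$), this shows $\min\{i:\Ext^i(L,T)\neq 0\}\le\min\{i:\Ext^i(\Res L,\underline{T})\neq 0\}$. That is the \emph{non-vanishing} direction. For the vanishing below $d$ you need the reverse inequality, and your ``localization at the generic point of $\overline{X(\underline{W})}$'' does not deliver it: the groups $\Ext^i_{\OCat_\bc}(L,T)$ carry no evident module structure over a ring one could localize at $b$, and the Bezrukavnikov--Etingof isomorphism lives only after completion. (A minor point: $\Res^W_{\underline{W}}L$ is finite-dimensional by Lemma~\ref{Lem:Supp_Restr} but is \emph{not} simple in general.)

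\textbf{There is no Koszul resolution in $\OCat$.} The generators $x_1,\dots,x_d\in(\h/\h^{\underline{W}})^*$ are not central in $H_{\underline{\bc}}(\underline{W})$---they fail to commute with $\underline{\h}$---so left multiplication by $x_j$ is not an $H_{\underline{\bc}}$-module map, and the Koszul complex you describe is not a complex in $\OCat_{\underline{\bc}}(\underline{W})$. The triangular decomposition by itself does not fix this. Even granting some projective resolution of length $d$, note that this would give $\Ext^i(\underline{L},\nabla_\xi)=0$ for $i>d$, whereas you need vanishing for $i<d$; the latter is a depth statement (a local-duality phenomenon for the free $\C[\underline{\h}]$-module $\nabla_\xi$), and passing it from $\C[\underline{\h}]$-Ext to $\OCat$-Ext requires an argument you have not supplied. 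The actual proof in \cite{RSVV} proceeds via the homological input from \cite{GGOR} (freeness of costandard objects over $\C[\h]$ and the resulting grade/depth bound) rather than an explicit projective resolution of $L$. Your naive-duality reduction of the second assertion to the first is fine once the first is in place.
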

This is \cite[Lemma 6.2]{RSVV}.

\subsubsection{Example}
As an example, consider the category $\mathcal{O}_\bc(W)$ for the group $W=\mu_\ell$.
First of all, note that all Verma modules are isomorphic to $\C[x]$ as $S(\h^*)=\C[x]$-modules.

For $j=0,1,\ldots,\ell-1$, let $\Delta_j$ be the Verma module defined by the representation
$\tau:\eta\mapsto \eta^j$. We are going to compute $y\cdot x^n$ for $x^n\in \Delta_j$. Let
$\eta\in \mu_\ell$ is a generator of $\mu_\ell$. We get
\begin{align*}
y\cdot x^n=&[y,x]\cdot x^{n-1}+x(y\cdot x^{n-1})=(1-2\sum_{i=1}^{\ell-1} c(\eta^i)\eta^i)\cdot x^{n-1}+
x(y\cdot x^{n-1})=\\
&(1-2\sum_{i=1}^{\ell-1}c(\eta^i)\eta^{i(1-n)}\eta^{ij})x^{n-1}+x(y\cdot x^{n-1})=\\
&(n-2\sum_{i=1}^{\ell-1}c(\eta^i)(\eta^{i(1-n)}+\eta^{i(2-n)}+\ldots+1)\eta^{ij})x^{n-1}=\\
&(n+\sum_{i=1}^{\ell-1}\frac{2 c(\eta^i)}{1-\eta^{-i}}\eta^{i(j-n)}-
\sum_{i=1}^{\ell-1}\frac{2c(\eta^i)}{1-\eta^{-i}}\eta^{ij})x^{n-1}=\\
&(n+\ell h_{j-n}-\ell h_j)x^{n-1}.
\end{align*}
Here we write $h_j$ for $h_{H,j}$ and we assume that $h_j$ is numbered mod $\ell$.
In particular, we see that $\Hom_{\mathcal{O}_\bc(W)}(\Delta_k,\Delta_j)$
is one dimensional provided there is $n\geqslant 0$ congruent to
$j-k$ modulo $\ell$ such that $h_j=h_k+n/\ell$, and is zero else.
In particular, either $\Delta_j=L_j$ or $L_j$ is finite dimensional.

\subsection{KZ functor}
Here we will recall the KZ functor introduced in \cite{GGOR}. It connects
the category $\mathcal{O}_\bc(W)$ to the category of modules over the Hecke
algebra of $W$ and is a crucial tool to study $\mathcal{O}_\bc(W)$.

\subsubsection{Localization lemma}
Let $\h^{reg}$ denote the open subset of $\h$ consisting of all $v$ with $W_v=\{1\}$,
equivalently $\h^{reg}=\h\setminus \bigcup_{s\in S}\ker \alpha_s$.
Consider an element $\delta\in \C[\h]^W$ whose set of zeroes in $\h$ coincides
with $\h\setminus \h^{reg}$. We can take $\delta=\left(\prod_{s\in S}\alpha_s\right)^k$,
where $k$ is a suitable integer so that $\delta\in \C[\h]^W$.

Note that $[\delta,x]=[\delta,w]=0$ for all $x\in \h^*, w\in W$. Also note that $[\delta,y]\in
S(\h^*)\#W$ and hence $[\delta,[\delta,y]]=0$. It follows that the endomorphism
$[\delta,\cdot]$ of $H_{\bc}$ is locally nilpotent. So the set $\{\delta^k, k\geqslant 0\}$
satisfies the Ore conditions and we have the localization $H_\bc[\delta^{-1}]$ consisting of
right fractions.

We have an algebra homomorphism $H_\bc\rightarrow D(\h^{reg})\#W$ (the Dunkl homomorphism)
defined on generators $x\in \h^*, w\in W, y\in \h$ as follows:
$$x\mapsto x, \, w\mapsto w, \, y\mapsto y+\sum_{s\in S}\frac{2c(s)\langle\alpha_s,y\rangle}{(1-\lambda_s)\alpha_s}(s-1).$$
This homomorphism factors through $H_\bc[\delta^{-1}]\rightarrow D(\h^{reg})\#W$
because $\delta$ is invertible in $D(\h^{reg})\#W$. The following lemma is easy.

\begin{Lem}\label{Lem:localization}
The homomorphism $H_\bc[\delta^{-1}]\rightarrow D(\h^{reg})\#W$ is an isomorphism.
\end{Lem}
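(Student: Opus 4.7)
The plan is to reduce the statement to a computation at the level of associated graded algebras, using the PBW theorem (Theorem \ref{Thm:PBW_RCA}) on the source side and the order filtration on the target side. Both $H_\bc[\delta^{-1}]$ (with the filtration extended from $H_\bc$ by putting $\deg \delta^{-1}=0$) and $D(\h^{reg})\#W$ (with the filtration by order of differential operator, with $\deg W = 0$) carry natural filtrations, and I will check that the Dunkl homomorphism respects them.

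First I would verify that the Dunkl homomorphism is well-defined, that is, that the relations \eqref{eq:RCA_relns} are preserved. The relations $[x,x']=0$ are obvious since the images commute in $\C[\h^{reg}]\subset D(\h^{reg})\#W$, and $[y,y']=0$ requires a short direct computation showing that the correction terms $\sum_s \tfrac{2c(s)\langle\alpha_s,y\rangle}{(1-\lambda_s)\alpha_s}(s-1)$ commute among themselves (this is the classical verification that Dunkl operators commute). The key relation is $[y,x] = \langle y,x\rangle - \sum_s c(s)\langle\alpha_s,x\rangle\langle\alpha_s^\vee,y\rangle s$; one expands the commutator of the Dunkl images, uses $[y,x]=\langle y,x\rangle$ in $D(\h^{reg})$, and regroups the terms involving $s$ and $1$ separately, checking the coefficients agree via the normalization $\langle\alpha_s,\alpha_s^\vee\rangle=2$.

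Next I would verify that the map is filtration-preserving. Under the order filtration, $y\in\h$ has degree $1$ both in $H_\bc$ and, through its Dunkl image, in $D(\h^{reg})\#W$, since the correction $\tfrac{1}{\alpha_s}(s-1)$ is a zeroth-order operator (it maps regular functions on $\h^{reg}$ to regular functions because $(s-1)f$ vanishes on $\ker\alpha_s$). Hence the associated graded map $\gr(H_\bc[\delta^{-1}])\to \gr(D(\h^{reg})\#W)$ is well-defined. By Theorem \ref{Thm:PBW_RCA} the source is $S(\h\oplus\h^*)[\delta^{-1}]\#W$, and the target is $\C[T^*\h^{reg}]\#W\cong S(\h\oplus\h^*)[\delta^{-1}]\#W$; the associated graded map is the identity on the generators $x,w$ and sends the class of $y$ to the class of $y$ (the correction term has strictly smaller order). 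Therefore it is an isomorphism, and by the standard filtered-to-graded lifting argument the original map is an isomorphism as well.

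The main obstacle is the first step, namely the verification of the commutation relation $[y,x]$ in $D(\h^{reg})\#W$: the computation mixes differentiation of $\tfrac{1}{\alpha_s}$ with the action of $s-1$, and one must carefully balance the two contributions to produce exactly the reflection term $-c(s)\langle\alpha_s,x\rangle\langle\alpha_s^\vee,y\rangle s$ together with cancellation of the $(s-1)$ part that would otherwise spoil the formula. Once this computation is in place, the remainder of the argument is formal.
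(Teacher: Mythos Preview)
Your argument is correct and is exactly the standard one; the paper itself omits the proof entirely, declaring the lemma ``easy,'' and your filtered-to-graded reduction via Theorem~\ref{Thm:PBW_RCA} is precisely what is intended.

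Two minor remarks. First, the well-definedness of the Dunkl homomorphism is already granted in the paragraph preceding the lemma, so your ``main obstacle'' is not actually part of what the lemma asks you to prove; you may simply cite it. Second, your parenthetical justification that $\tfrac{1}{\alpha_s}(s-1)$ has order zero ``because $(s-1)f$ vanishes on $\ker\alpha_s$'' is beside the point here: in $D(\h^{reg})\#W$ the function $\alpha_s$ is already a unit, so $\tfrac{1}{\alpha_s}(s-1)\in\C[\h^{reg}]\#W$ sits in filtration degree~$0$ tautologically. The divisibility observation you make is what one uses to show Dunkl operators preserve $\C[\h]$, which is a different (and here unnecessary) statement.
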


\subsubsection{Localization functor}
Let $M\in \OCat_\bc(W)$. Thanks to Lemma \ref{Lem:localization}, we can view $M[\delta^{-1}]$
as a module over $D(\h^{reg})\#W$. This module is finitely generated over $\C[\h^{reg}]\#W$.
So $M[\delta^{-1}]$ is a $W$-equivariant local system over $\h^{reg}$. Computing $\Delta_\bc(\tau)[\delta^{-1}]$
explicitly, we see that the latter has regular singuularities, see
\cite[Proposition 5.7]{GGOR}. Therefore the same is true for
any $M$. So we get an exact functor $M\mapsto M[\delta^{-1}]$ from $\OCat_\bc(W)$ to the category
$\operatorname{Loc}_{rs}^W(\h^{reg})$ of $W$-equivariant regular local systems on $\h^{reg}$.

Pick a point $p\in \h^{reg}/W$ and let $\pi$ denote the quotient morphism
$\h^{reg}\rightarrow \h^{reg}/W$.  According to Deligne, the category $\operatorname{Loc}_{rs}^W(\h^{reg})$
is equivalent to $\pi_1(\h^{reg}/W,p)\operatorname{-mod}_{fin}$ (the category of finite dimensional
modules) via $N\mapsto [\pi_*(N)^W]_p$.

The group $\pi_1(\h^{reg}/W,p)$ is known as the braid group of $W$
and is denoted by $B_W$. When $W$ is  a Coxeter group, we get the classical
braid group. In general, the structure of  $B_W$ was studied in \cite{BMR}.
The group $B_W$ is generated by elements $T_H$, where $H$ runs over the
set of reflection hyperplanes for $W$. Roughly speaking, $T_H$ is a curve
obtained by rotating a point in $\h^{reg}$ close to $H$ by the angle
of $2\pi/|W_H|$ (here $W_H$ is the pointwise stabilizer of $H$).
Note that if $H,H'$ are $W$-conjugate, then $T_H,T_{H'}$ are conjugate in
$B_W$.

\begin{Ex}\label{Ex:braid_cyclot}
Let us consider the case when $W=G(\ell,1,n)$. When $\ell=1$, we get the usual
type A braid group. When $\ell>1$, we get the affine braid group of type A.
It is given by generators $T_0,\ldots,T_{n-1}$ subject to the following
relations: $T_iT_j=T_jT_i$ when $|i-j|>1$, $T_iT_{i+1}T_i=T_{i+1}T_iT_{i+1}$
for $i>1$, and $T_0T_1T_0T_1=T_1T_0T_1T_0$.
\end{Ex}

\subsubsection{Hecke algebras}
We want to determine the essential image of the functor
$\OCat_\bc(W)\rightarrow B_W\operatorname{-mod}_{fin}$. It turns out
that this image coincides with $\mathcal{H}_{\bf q}(W)\operatorname{-mod}_{fin}
\hookrightarrow B_W\operatorname{-mod}_{fin}$, where $\mathcal{H}_{\bf q}(W)$
is a quotient of $\C B_W$ called the Hecke algebra of $W$, and ${\bf q}$
is a parameter recovered from $\bc$.

First, let us explain how the parameter ${\bf q}$ is computed. It is a collection
$q_{H,i}$ of nonzero complex numbers, where $H$ is a reflection hyperplane
for $W$, $i=0,1,\ldots,|W_H|-1,$ and $q_{H,i}=q_{H',i}$ if $H$ and $H'$
are $W$-conjugate. We set
\begin{equation}\label{eq:h_to_q}q_{H,j}:=\exp(2\pi\sqrt{-1}(h_{H,j}+j/\ell_H)),\end{equation}
where $h_{H,j}$ is recovered from (\ref{eq:c_to_h}). Note that the parameters
$q_{H,0},\ldots, q_{H,|W_H|-1}$ are defined up to  a common multiple.

Following \cite[4C]{BMR}, define the Hecke algebra $\mathcal{H}_{\bf q}(W)$ as the quotient of
$\C B_W$ by the following relations
\begin{equation}\label{eq:Hecke_relns}
\prod_{i=1}^{|W_H|} (T_{H}-q_{H,i})=0,
\end{equation}
where $H$ runs over the set of reflection hyperplanes for $W$.

When $W$ is a Coxeter  group, we get the usual Iwahori-Hecke algebra.

\begin{Ex}\label{Ex:cyclot_Hecke}
Let us consider the case of $W=G(\ell,1,n)$. Then $\mathcal{H}_{\bf q}(W)$ is the cyclotomic Hecke
algebra (a.k.a. Ariki-Koike algebra). This algebra is the quotient of $\C B_W$ by
the following relations: $(T_i+1)(T_i-q)=0, i=1,\ldots,n-1,$ and $\prod_{i=0}^{\ell-1}(T_0-Q_i)$,
where $q=\exp(2\pi\sqrt{-1}\kappa)$ and $Q_i=\exp(2\pi\sqrt{-1}(h_i+i/\ell))$. Recall
that the parameters $\kappa,h_0,\ldots,h_{\ell-1}$ were introduced in
\ref{SSS_Euler}.   Note that $\dim \mathcal{H}_{\bf q}(W)=|W|$
in this case, \cite{AK}.
\end{Ex}

It was shown in \cite[Theorem 5.13]{GGOR} that the functor $\OCat_\bc(W)\rightarrow B_W\operatorname{-mod}_{fin}$
decomposes as the composition of $\operatorname{KZ}_\bc:\OCat_\bc(W)\rightarrow \mathcal{H}_{\bf q}(W)\operatorname{-mod}_{fin}$
(called the KZ functor) and the inclusion of $\mathcal{H}_{\bf q}(W)\operatorname{-mod}_{fin}
\hookrightarrow B_W\operatorname{-mod}_{fin}$.

\begin{Prop}\label{Prop:KZ_surj}
The functor $\operatorname{KZ}_\bc:\OCat_\bc(W)\rightarrow \mathcal{H}_{\bf q}(W)\operatorname{-mod}_{fin}$
is essentially surjective.
\end{Prop}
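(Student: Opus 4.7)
The plan is to exhibit $\KZ_\bc$ as a representable functor $\KZ_\bc(-) \cong \Hom_{\OCat_\bc(W)}(P_{\KZ},-)$ for a projective object $P_{\KZ} \in \OCat_\bc(W)$ whose opposite endomorphism algebra is $\mathcal{H}_{\bf q}(W)$. Once this is set up, essential surjectivity becomes formal: any $\mathcal{H}_{\bf q}(W)$-module is a cokernel of a map between finite free modules, and that map lifts to $\OCat_\bc(W)$ via the identification $\End(P_{\KZ})^{opp} = \mathcal{H}_{\bf q}(W)$.

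First I would verify that $\KZ_\bc$ is exact and cocontinuous. Localization $M \mapsto M[\delta^{-1}]$ is exact by Lemma \ref{Lem:localization}, and Deligne's Riemann--Hilbert correspondence identifying $\operatorname{Loc}_{rs}^W(\h^{reg})$ with finite-dimensional $B_W$-modules is an equivalence of abelian categories, so the composition is exact and commutes with direct sums. Since $\OCat_\bc(W)$ is equivalent to modules over a finite-dimensional algebra by Proposition \ref{Prop:cat_O_prop}, any exact cocontinuous functor out of it is representable by a projective object (Eilenberg--Watts), which furnishes $P_{\KZ}$.

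The core step is the identification $\End_{\OCat_\bc(W)}(P_{\KZ})^{opp} \cong \mathcal{H}_{\bf q}(W)$. The Dunkl presentation shows that $\KZ_\bc(\Delta_\bc(\tau)) \cong \tau$ as a vector space, so $\dim \KZ_\bc(P_{\KZ}) = \sum_{\tau \in \Irr(W)} (\dim \tau)^2 = |W| = \dim \mathcal{H}_{\bf q}(W)$. A local analysis of the Dunkl connection near a reflection hyperplane $H$ (the rank-one computation from the example at the end of Section 2.2.8, transplanted to the local reflection subgroup $W_H$) shows that the monodromy generator $T_H$ acts on every $\KZ_\bc(M)$ with eigenvalues exactly the $q_{H,j}$ given by (\ref{eq:h_to_q}). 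Thus the $B_W$-action factors through the relations (\ref{eq:Hecke_relns}), exhibiting $\End(P_{\KZ})^{opp}$ as a quotient of $\mathcal{H}_{\bf q}(W)$. At a generic spherical parameter, Lemma \ref{Lem:O_ss} makes $\OCat_\bc(W)$ semisimple while $\mathcal{H}_{\bf q}(W)$ is semisimple of dimension $|W|$, so the dimension match forces this quotient to be an isomorphism; a flatness argument over a Zariski open neighborhood of the generic locus then propagates the equality to all $\bc$.

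Given the identification, essential surjectivity is immediate: for $N \in \mathcal{H}_{\bf q}(W)\operatorname{-mod}_{fin}$ pick a finite presentation $\mathcal{H}_{\bf q}(W)^{\oplus m} \xrightarrow{\varphi} \mathcal{H}_{\bf q}(W)^{\oplus n} \to N \to 0$; the map $\varphi$ lifts to $\tilde\varphi : P_{\KZ}^{\oplus m} \to P_{\KZ}^{\oplus n}$ in $\OCat_\bc(W)$, and exactness of $\KZ_\bc$ yields $\KZ_\bc(\operatorname{coker}\tilde\varphi) \cong N$. The main obstacle is the monodromy computation for $T_H$: translating the algebraic eigenvalues of the Euler element into analytic monodromy data of the KZ connection, matching them with the $q_{H,j}$ via (\ref{eq:c_to_h})--(\ref{eq:h_to_q}), and reducing from $W$ to the pointwise stabilizer $W_H$ is conceptually clear but combinatorially delicate, and it is where the bulk of the work in \cite{GGOR} is concentrated.
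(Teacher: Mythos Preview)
Your outline follows exactly the strategy of \cite{GGOR}: represent $\KZ_\bc$ by a projective $P_{\KZ}$, identify $\End(P_{\KZ})^{opp}$ with $\mathcal{H}_{\bf q}(W)$, and then essential surjectivity is formal. The paper does not reprove the result; it simply cites \cite[Theorem~5.15]{GGOR} and \cite{Hecke_fin_dim}. So the comparison is really between your sketch and those references.

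There is, however, a genuine gap in your deformation step. You write $\dim \mathcal{H}_{\bf q}(W)=|W|$ and then invoke a ``flatness argument over a Zariski open neighborhood of the generic locus'' to propagate the isomorphism $\mathcal{H}_{\bf q}(W)\xrightarrow{\sim}\End(P_{\KZ})^{opp}$ to all $\bc$. The equality $\dim \mathcal{H}_{\bf q}(W)=|W|$ is precisely the Brou\'e--Malle--Rouquier freeness conjecture for the Hecke algebra, and at the time of \cite{GGOR} it was not known for arbitrary complex reflection groups. Your flatness argument needs $\mathcal{H}_{\bf q}(W)$ to be a flat (equivalently, free of rank $|W|$) family over the parameter space; without that, the kernel of $\mathcal{H}_{\bf q}(W)\to\End(P_{\KZ})^{opp}$ is not a coherent sheaf and nothing prevents it from jumping at special $\bc$. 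The paper says this explicitly right after the statement: the proposition ``was proved in \cite[Theorem~5.15]{GGOR} under that assumption. The assumption was removed in \cite{Hecke_fin_dim}.'' So your argument reproduces the GGOR proof, including its hypothesis, rather than the unconditional statement.

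The fix in \cite{Hecke_fin_dim} sidesteps the issue by working with the \emph{maximal finite-dimensional quotient} $\mathcal{H}^f_{\bf q}(W)$ of $\mathcal{H}_{\bf q}(W)$ (this quotient reappears later in the paper, e.g.\ around Corollary~\ref{Cor:KZ_Ind} and in \ref{SSS_deform_cher}). One shows $\End(P_{\KZ})^{opp}\cong \mathcal{H}^f_{\bf q}(W)$ directly; since $\mathcal{H}_{\bf q}(W)\operatorname{-mod}_{fin}=\mathcal{H}^f_{\bf q}(W)\operatorname{-mod}$, this yields essential surjectivity without ever needing $\dim\mathcal{H}_{\bf q}(W)=|W|$. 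For the groups $G(\ell,1,n)$ that dominate the rest of the paper the dimension equality is known by \cite{AK}, so your argument is complete there; the gap is only for general $W$.
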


It is expected (and is shown in many cases) that $\dim \mathcal{H}_{\bf q}(W)=|W|$.
Proposition \ref{Prop:KZ_surj} was proved in \cite[Theorem 5.15]{GGOR} under that assumption.
The assumption was removed in \cite{Hecke_fin_dim}.

\subsubsection{Properties of KZ functor}
Let us list some properties of the KZ functor obtained in \cite[Section 5]{GGOR}.

\begin{Prop}\label{Prop:KZ_properties} The following is true.
\begin{enumerate}
\item The KZ functor $\OCat_\bc(W)\twoheadrightarrow \mathcal{H}_{\bf q}(W)\operatorname{-mod}_{fin}$ is a quotient functor.
Its kernel is the subcategory $\OCat_{\bc,tor}(W)\subset \OCat_\bc(W)$ consisting of all modules in
$\OCat_\bc(W)$ that are torsion over $\C[\h]$ (equivalently, whose support is a proper subvariety in
$\h$).
\item The functor $\operatorname{KZ}_\bc$ is defined by a projective object $P_{KZ}$
in $\mathcal{O}_\bc(W)$ that is also injective. The multiplicity of $\Delta_\bc(\tau)$
in $P_{KZ}$ equals $\dim \tau$.
 \item $\operatorname{KZ}_\bc$ is fully faithful on the projective objects in $\OCat_\bc(W)$.
It is also faithful on the tilting objects in $\OCat_\bc(W)$.
\item Suppose that the parameter ${\bf q}$ satisfies the following condition:
for any reflection hyperplane $H$, we have $q_{H,i}\neq q_{H,j}$
for $i\neq j$. Then $\operatorname{KZ}_\bc$ is fully faithful on all standardly
filtered objects.
\end{enumerate}
\end{Prop}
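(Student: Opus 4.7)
The plan is to handle the four assertions in order, using the construction of $\operatorname{KZ}_\bc$ as localization at $\delta$ followed by the Deligne--Hecke equivalence.

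For (1), I unwind the construction: $\operatorname{KZ}_\bc$ is the composition of the exact localization $M\mapsto M[\delta^{-1}]$ (using Lemma \ref{Lem:localization} to identify $H_\bc[\delta^{-1}]$ with $D(\h^{reg})\#W$) with the Deligne equivalence to $\mathcal{H}_{\bf q}(W)\operatorname{-mod}_{fin}\hookrightarrow B_W\operatorname{-mod}_{fin}$. Its kernel consists of $M\in \OCat_\bc(W)$ with $M[\delta^{-1}]=0$, equivalently of $M$ annihilated by a power of $\delta$, equivalently (since $\delta$ vanishes precisely on $\h\setminus \h^{reg}$) of modules whose support is a proper subvariety --- exactly $\OCat_{\bc,tor}(W)$. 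Because $\operatorname{KZ}_\bc$ is exact with Serre kernel and, by Proposition \ref{Prop:KZ_surj}, essentially surjective, the universal property of Serre quotients gives that $\operatorname{KZ}_\bc$ is a quotient functor.

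For (2), since $\mathcal{H}_{\bf q}(W)$ is finite-dimensional and $\OCat_\bc(W)$ has enough projectives by Proposition \ref{Prop:cat_O_prop}, Watts-style representability produces a projective $P_{KZ}$ with $\operatorname{KZ}_\bc\cong \Hom_{\OCat_\bc(W)}(P_{KZ},-)$. Using $\Ext^i(\Delta_\bc(\sigma),\nabla_\bc(\tau))=\delta_{i,0}\delta_{\sigma,\tau}$ from the highest weight structure (Proposition \ref{Prop:HW}), one computes $[P_{KZ}:\Delta_\bc(\tau)]=\dim\Hom(P_{KZ},\nabla_\bc(\tau))=\dim \operatorname{KZ}_\bc(\nabla_\bc(\tau))$. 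Since $\nabla_\bc(\tau)$ has generic $\C[\h]$-rank $\dim\tau$, dually to $\Delta_\bc(\tau)\cong S(\h^*)\otimes\tau$, its localization is a rank-$\dim\tau$ local system, so $\dim \operatorname{KZ}_\bc(\nabla_\bc(\tau))=\dim\tau$. For injectivity of $P_{KZ}$, I transport through the naive duality $\OCat_{\bc^*}(W,\h^*)\xrightarrow{\sim}\OCat_\bc(W,\h)^{opp}$: the Dunkl homomorphism is compatible with the antiautomorphism producing this duality, so $\operatorname{KZ}_\bc$ intertwines naive duality with an analogous duality on Hecke modules, using the identification $\mathcal{H}_{\bf q}(W)^{opp}\cong \mathcal{H}_{{\bf q}^*}(W)$. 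Thus $P_{KZ}$ corresponds under $\vee$ to the analogous projective for $\OCat_{\bc^*}(W,\h^*)$, hence is injective.

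For (3), the double centralizer property applies to the projective-injective $P_{KZ}$: since $\End_{\OCat_\bc(W)}(P_{KZ})^{opp}=\mathcal{H}_{\bf q}(W)$, the functor $\Hom(P_{KZ},-)$ is fully faithful on $\operatorname{add}(P_{KZ})$ by Yoneda. Every indecomposable projective $P_\bc(\tau)$ lies in $\operatorname{add}(P_{KZ})$: being projective-injective, $P_{KZ}$ has socle covering every simple $L_\bc(\tau)$ with nonzero $\operatorname{KZ}_\bc$-image (by the multiplicity count in (2) and essential surjectivity), and the torsion simples are killed by $\operatorname{KZ}_\bc$, so a standard argument with the projective cover of the socle of $P_{KZ}$ produces every $P_\bc(\tau)$ as a summand. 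Faithfulness on tiltings reduces to $\operatorname{KZ}_\bc(T_\bc(\tau))\neq 0$, which follows because $T_\bc(\tau)$ contains a standard submodule $\Delta_\bc(\tau)$ whose localization is nonzero.

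For (4), upgrading full faithfulness from projectives to standardly filtered modules proceeds via the section functor $S$ right adjoint to $\operatorname{KZ}_\bc$: it suffices to show that the unit $M\to S(\operatorname{KZ}_\bc(M))$ is an isomorphism for standardly filtered $M$. Under the distinct-Hecke-eigenvalue hypothesis one establishes $\Ext^i_{\OCat_\bc(W)}(\Delta_\bc(\sigma),N)=0$ for $N\in \OCat_{\bc,tor}(W)$ and $i=0,1$, which gives the unit isomorphism on standards and hence, by devissage along a standard filtration, on all standardly filtered objects. The hardest step is precisely this $\Ext^1$-vanishing against the infinite torsion subcategory: I expect one has to deform the parameter $\bc$ within $\param$, use Lemma \ref{Lem:O_ss} to get generic semisimplicity, and then specialize so that the distinct-eigenvalue hypothesis controls where nonvanishing Ext's could arise and rules them out.
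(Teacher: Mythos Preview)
The paper does not prove this proposition; it simply records it as a list of properties established in \cite[Section 5]{GGOR}. So there is no in-paper argument to compare against, and your proposal should be read as an attempt to reconstruct the GGOR proof.

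Your treatment of (1) and (2) is essentially correct and follows the standard line. However, your argument for (3) contains a genuine error. You claim that every indecomposable projective $P_\bc(\tau)$ lies in $\operatorname{add}(P_{KZ})$. This is false: the summands of $P_{KZ}$ are exactly the $P_\bc(\tau)$ for which $L_\bc(\tau)$ has full support, and whenever $\OCat_\bc(W)$ is not semisimple there exist torsion simples $L_\bc(\tau)$ whose projective covers $P_\bc(\tau)$ are \emph{not} summands of $P_{KZ}$ (yet still have $\operatorname{KZ}_\bc(P_\bc(\tau))\neq 0$, since they are standardly filtered and Vermas are $\C[\h]$-free). Full faithfulness on projectives is instead a double centralizer statement: because $P_{KZ}$ is injective and every Verma module is torsion-free, every projective admits a copresentation $0\to P\to P_{KZ}^{a}\to P_{KZ}^{b}$, and applying $\Hom(P',-)$ and $\Hom(\operatorname{KZ}_\bc(P'),-)$ together with left exactness gives the isomorphism on all $\Hom(P',P)$. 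Your justification for faithfulness on tiltings is also not quite the right one: the point is not merely that $\operatorname{KZ}_\bc(T)\neq 0$, but that tiltings are standardly filtered, hence torsion-free over $\C[\h]$, so any morphism between tiltings with torsion image must be zero.

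For (4), your outline via the unit $M\to S(\operatorname{KZ}_\bc M)$ and the vanishing of $\Ext^{0,1}(\Delta,N)$ for torsion $N$ is the correct strategy, but the phrase ``I expect one has to deform the parameter'' is not a proof. The actual GGOR argument uses the hypothesis on ${\bf q}$ to show that $\operatorname{KZ}_\bc(\Delta_\bc(\tau))\cong \operatorname{KZ}_\bc(\nabla_\bc(\tau))$ (both are identified with the same Hecke cell module), from which the required $\Ext^1$-vanishing and hence $0$-faithfulness follow. Your deformation idea could perhaps be made to work, but as written it is a gap.
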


For $W=G(\ell,1,n)$, the condition in 4) means that $q\neq -1$ and $Q_i\neq Q_j$ for $i\neq j$.

%Let us finish this section by explaining how to compute $\operatorname{KZ}(\Delta_\bc(\operatorname{triv})))$,
%where we write $\operatorname{triv}$ for the trivial representation.

\subsection{Induction and restriction functors}
In this section we review another major tool to study the categories $\OCat_\bc(W)$,
the induction and restriction functors introduced by Bezrukavnikov and Etingof
in \cite{BE}. These functors relate categories $\mathcal{O}_\bc(W)$ and $\mathcal{O}_\bc(\underline{W})$,
where $\underline{W}$ is a parabolic subgroup in $W$. More precisely, we have functors
$\Res_W^{\underline{W}}:\OCat_\bc(W)\rightarrow \OCat_\bc(\underline{W})$ and
$\Ind_{\underline{W}}^W:\OCat_\bc(\underline{W})\rightarrow \OCat_\bc(W)$.
When we write $\OCat_\bc(\underline{W})$, by $\bc$ we mean the restriction
of $\bc$ to $\underline{W}\cap S$. Since we also  consider restriction/induction
functors for other categories, we will sometimes write $\,^{\OCat}\Res_W^{\underline{W}},
\,^{\OCat}\Ind_{\underline{W}}^W$.

The construction of the  functors is technical. We are not going to explain the construction,
we will just explain the properties.

\subsubsection{Exactness and adjointness properties}
We have the following properties of $\Res$ and $\Ind$.

\begin{Prop}\label{Prop:ResInd_biadj}
The functors $\Res^W_{\underline{W}}$ and $\Ind^{\underline{W}}_W$ are biadjoint.
Hence they are exact.
\end{Prop}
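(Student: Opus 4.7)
The strategy is to exploit the construction of $\Res$ and $\Ind$ via the completion machinery of Bezrukavnikov--Etingof. Fix a point $b\in\h$ whose stabilizer is $\underline{W}$ and consider the completion $\widehat{H_\bc(W)}$ of $H_\bc(W)$ along the $W$-orbit $Wb\subset \h$. A theorem in \cite{BE} identifies this completion, up to Morita equivalence, with the analogous completion of $H_\bc(\underline{W},\h)$ at $0$; concretely, there is an idempotent $e\in \widehat{H_\bc(W)}$ with $e\widehat{H_\bc(W)}e \cong \widehat{H_\bc(\underline{W},\h)}$ which generates $\widehat{H_\bc(W)}$ as a two-sided ideal. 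The functor $\Res_W^{\underline{W}}$ is then obtained by completing a module, applying the Morita equivalence, and descending from $H_\bc(\underline{W},\h)$ to $H_\bc(\underline{W},\h/\h^{\underline{W}})$ by taking the subspace annihilated by a large power of $\h^{\underline{W}}$. The functor $\Ind_{\underline{W}}^W$ reverses the procedure: extend along $\h^{\underline{W}}$, then tensor back through the Morita bimodule and truncate to $\OCat_\bc(W)$.

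Once both functors are expressed in this bimodule-plus-truncation language, the two adjunctions reduce to standard $\Hom$--$\otimes$ adjunctions on the completed algebras, combined with the fact that the finite-type-along-$\h^{\underline{W}}$ truncation is self-dual between the two sides. Concretely, the Morita bimodule together with the completion realizes $\Ind$ as a tensor product and $\Res$ as the corresponding $\Hom$, yielding the adjunction $(\Ind,\Res)$; swapping the roles of the two sides of the bimodule gives $(\Res,\Ind)$. An alternative route, employed by Shan, is to verify directly that $\Res$ and $\Ind$ intertwine the naive duality $M\mapsto M^\vee$ recalled in Section~\ref{SSS_cost_tilt} (together with the analogous duality for $\underline{W}$); a single adjunction then transports to the opposite-sided adjunction by dualizing.

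The main obstacle is the bookkeeping between the completed and the non-completed pictures: one must check that the truncation to the finite-type-along-$\h^{\underline{W}}$ part genuinely lands in $\OCat_\bc(\underline{W})$, with local nilpotence of $\h$ and finite generation over $S(\h^*)$ preserved, and that the inverse operation produces a well-defined object of $\OCat_\bc(W)$ rather than merely a module over the completion. This is the technical heart of the construction in \cite{BE}; once biadjointness is established, exactness is a purely formal consequence, since any functor admitting both a left and a right adjoint is automatically both left and right exact.
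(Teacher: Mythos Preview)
Your outline of the Bezrukavnikov--Etingof completion picture is accurate, and the adjunction $(\Res,\Ind)$ (i.e.\ $\Ind$ right adjoint to $\Res$) does fall out of the $\Hom$--$\otimes$ formalism essentially as you describe; this is what the paper means by ``a consequence of the construction in {\it loc.\ cit.}'' However, your claim that ``swapping the roles of the two sides of the bimodule gives $(\Res,\Ind)$'' in the other direction is precisely the step that does \emph{not} go through formally. The completion/truncation procedure is asymmetric: going from $\OCat_\bc(W)$ to the completion and then cutting down by $e$ is not the formal inverse of tensoring back and passing to the locally-$\h$-nilpotent part, so the second adjunction is genuinely extra work. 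The paper records this explicitly: that adjunction was first obtained by Shan in \cite{Shan} only under restrictions on $W$, and in full generality only later in \cite{fun_iso}.

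You also misattribute the duality route. Shan's argument in \cite[Section 2.4]{Shan} is not via the naive duality; the statement that $\Res$ and $\Ind$ intertwine $M\mapsto M^\vee$ is the main result of \cite{fun_iso} (stated here as Proposition~\ref{Prop:ResInd_dual}), and it is this that yields biadjointness in general by transporting one adjunction across the duality. So your ``alternative route'' is in fact the route, and it is Losev's, not Shan's. Finally, note that the paper's logical order is the reverse of yours: exactness is established directly in \cite[Section 3.5]{BE} (each constituent step of the construction is visibly exact), and biadjointness comes afterwards. Your deduction of exactness from biadjointness is of course formally valid, but it is not how the paper, or the literature, arrives at it.
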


The claims that $\Res^W_{\underline{W}}$ and $\Ind^{\underline{W}}_W$ are exact
was checked in \cite[Section 3.5]{BE}. The claim that $\Ind^{\underline{W}}_W$
is right adjoint to $\Res^W_{\underline{W}}$ is a consequence of the construction
in {\it loc.cit.}. The other adjointness was established in \cite[Section 2.4]{Shan}
under some restrictions on $W$ and in \cite{fun_iso}, in general.

In fact, \cite{fun_iso} proved a stronger statement. Recall the parameter $\bc^*$,  contravariant
equivalences $M\mapsto M^\vee$ (taking the restricted duals) $\OCat_\bc(W,\h)\xrightarrow{\sim}
\OCat_{\bc^*}(W,\h^*)$ and $\OCat_\bc(\underline{W},\underline{\h})\xrightarrow{\sim}
\OCat_{\bc^*}(\underline{W},\underline{\h}^*)$ from  \ref{SSS_cost_tilt}. Here $\underline{\h}(=\h/\h^{\underline{W}})$
is the reflection representation of $W$.

\begin{Prop}\label{Prop:ResInd_dual} The functors $\Res^W_{\underline{W}},\Ind_W^{\underline{W}}$ intertwine the duality
functors, i.e., we have functorial isomorphisms $\Res^W_{\underline{W}}(M^\vee)\cong
\Res^W_{\underline{W}}(M)^\vee$ and $\Ind_W^{\underline{W}}(N^\vee)\cong
\Ind_W^{\underline{W}}(N)^\vee$.
\end{Prop}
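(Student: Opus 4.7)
The plan is to first establish the intertwining for $\Res^W_{\underline{W}}$ directly from the Bezrukavnikov--Etingof construction, and then deduce the statement for $\Ind_{\underline{W}}^W$ formally from the biadjointness in Proposition \ref{Prop:ResInd_biadj} together with uniqueness of adjoints.

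For $\Res^W_{\underline{W}}$, I would unpack the BE construction. One picks a point $b \in \h^{\underline{W}}$ with $W_b = \underline{W}$ and considers the completion of $H_\bc(W,\h)$ at the $W$-orbit of $b$ inside $\h$; the BE theorem identifies this completion with a matrix algebra built from $H_\bc(\underline{W}, \underline{\h})^{\wedge_0}$, and $\Res(M)$ is extracted from the completed module $M^{\wedge_b}$ via an explicit idempotent followed by passage to the Euler-locally-finite part. The anti-isomorphism $\sigma_W: H_\bc(W, \h) \xrightarrow{\sim} H_{\bc^*}(W,\h^*)^{\mathrm{opp}}$ that defines the duality $M \mapsto M^\vee$ is the identity on $\h^*$, so it preserves the maximal ideal cutting out the orbit of $b$ and extends to the completion. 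The key step is to verify that the BE isomorphism intertwines $\sigma_W$ with the analogous anti-isomorphism $\sigma_{\underline{W}}$ on $H_{\bc}(\underline{W},\underline{\h})^{\wedge_0}$; once this is done on generators (including the ``twisted'' Dunkl operators appearing in the BE formulas), one matches the Euler element $h_W$ with $h_{\underline{W}}$ up to the scalar shift built into the definition of $\Res$, and the duality statement $\Res(M^\vee) \cong \Res(M)^\vee$ follows from dualizing the generalized Euler eigenspaces term-by-term.

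For $\Ind_{\underline{W}}^W$, define $G(N) := \Ind_{\underline{W}}^W(N^\vee)^\vee$. Using contravariance of $(-)^\vee$, the right adjointness of $\Ind$ (which is part of Proposition \ref{Prop:ResInd_biadj}), and the intertwining for $\Res$ just proved, one computes
\[
\Hom(G(N), M) \cong \Hom(M^\vee, \Ind(N^\vee)) \cong \Hom(\Res(M^\vee), N^\vee) \cong \Hom(N, \Res(M^\vee)^\vee) \cong \Hom(N, \Res(M)),
\]
so $G$ is left adjoint to $\Res$. By uniqueness of left adjoints, $G \cong \Ind$, i.e.\ $\Ind(N^\vee)^\vee \cong \Ind(N)$, which is the desired statement after dualizing. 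The main obstacle is the first step: verifying that the BE Morita-type isomorphism is $\sigma$-equivariant. This is a concrete calculation on generators, but tracking the compatibility through the exponential factors that twist the Dunkl operators, as well as the shift matching the two Euler elements, is the technical heart of the argument and is where the careful analysis of \cite{fun_iso} is required.
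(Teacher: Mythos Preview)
The paper does not prove this proposition; it simply records it as the main result of \cite{fun_iso}. Your outline for $\Res$---verifying that the Bezrukavnikov--Etingof isomorphism of completed algebras intertwines the anti-isomorphisms $\sigma_W$ and $\sigma_{\underline{W}}$ defining duality on the two sides---is exactly the strategy of that reference, and you have correctly located the technical heart in tracking $\sigma$-compatibility through the twisted Dunkl operators and the Euler shift.

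One caution on the $\Ind$ step. Your conclusion ``by uniqueness of left adjoints, $G\cong\Ind$'' presupposes that $\Ind$ is already known to be a \emph{left} adjoint to $\Res$. But the paper attributes that direction of Proposition~\ref{Prop:ResInd_biadj}, in full generality, to \cite{fun_iso} itself---so read as a reconstruction of \cite{fun_iso}, your argument is circular. In that reference the flow is reversed: the Hom computation you wrote down exhibits $G=(\Ind(\bullet^\vee))^\vee$ as a left adjoint to $\Res$, and this is precisely how the missing half of biadjointness is \emph{produced}; the identification $G\cong\Ind$ is then obtained by a parallel direct analysis of the BE description of $\Ind$, not by appealing to a biadjointness already in hand. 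Within the survey's logical order, however, Proposition~\ref{Prop:ResInd_biadj} is stated first as an available input (and for $W=G(\ell,1,n)$, the case of primary interest here, it has an independent proof in \cite{Shan}), so your deduction is legitimate as written in that context.
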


This is the main result of \cite{fun_iso}.

%1) and 2) have the following standard consequence.
%
%3) The functors $\Res^W_{\underline{W}}$ and $\Ind^{\underline{W}}_W$ are biadjoint.

\subsubsection{Relation to KZ}
We have a natural homomorphism $\mathcal{H}_{\bf q}(\underline{W})\rightarrow \mathcal{H}_{\bf q}(W)$.
This gives rise to an exact restriction functor $\,^{\mathcal{H}}\Res^W_{\underline{W}}:
\mathcal{H}_{\bf q}(W)\operatorname{-mod}_{fin}\rightarrow
\mathcal{H}_{\bf q}(\underline{W})\operatorname{-mod}_{fin}$.

The following proposition is \cite[Theorem 2.1]{Shan}.

\begin{Prop}\label{Prop:KZ_Res}
The KZ functors intertwine the restriction functors: $\underline{\operatorname{KZ}}_\bc\circ
\,^{\OCat}\Res^W_{\underline{W}}\cong \,^{\mathcal{H}}\Res^W_{\underline{W}}\circ \operatorname{KZ}_\bc$.
Here we write $\underline{\operatorname{KZ}}_\bc$ for the KZ functor $\OCat_\bc(\underline{W})\rightarrow
\mathcal{H}_{\bf q}(\underline{W})$.
\end{Prop}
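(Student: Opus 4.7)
The plan is to reduce the algebraic statement to a topological comparison of monodromy representations on hyperplane complements, using the Dunkl localization to translate both functors into the language of local systems.

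First, I would recall that $\KZ_\bc$ is obtained by localizing $M \rightsquigarrow M[\delta^{-1}]$, which by Lemma \ref{Lem:localization} is a $W$-equivariant $D(\h^{reg})$-module and thus a $W$-equivariant regular local system, and then taking its monodromy at a base point $p \in \h^{reg}/W$. The inclusion $\mathcal{H}_{\bf q}(\underline{W}) \hookrightarrow \mathcal{H}_{\bf q}(W)$ used in $\,^{\mathcal{H}}\Res^W_{\underline{W}}$ originates from a topological embedding $B_{\underline{W}} \hookrightarrow B_W$ constructed as follows: pick $b \in \h$ with $W_b = \underline{W}$ and a small $\underline{W}$-stable open neighborhood $U$ of $b$. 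Since the reflection hyperplanes for $W$ passing through $b$ are exactly those for $\underline{W}$ (after translating to $0 \in \underline{\h}$), one has a homotopy equivalence $(U\cap \h^{reg})/W \simeq \underline{\h}^{reg}/\underline{W}$, and after choosing a path to $p$, this furnishes the required embedding of braid groups.

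Next, I would unpack BE's construction of $\,^{\OCat}\Res^W_{\underline{W}}$: the functor is built from the formal completion $\widehat{M}_b$, using the isomorphism of completions that identifies $\widehat{H_\bc(W)}_b$ with $Z(W,\underline{W},\widehat{H_\bc(\underline{W})}_0)$. The essential point is that after inverting $\delta$, this algebraic completion corresponds to the analytic restriction of the local system $M[\delta^{-1}]$ to the punctured neighborhood $U\cap \h^{reg}$. Under the homotopy equivalence $(U\cap \h^{reg})/W \simeq \underline{\h}^{reg}/\underline{W}$, this restricted local system is naturally identified with $\,^{\OCat}\Res^W_{\underline{W}}(M)[\delta_{\underline{W}}^{-1}]$, viewed as a local system on a neighborhood of $0$ in $\underline{\h}^{reg}/\underline{W}$. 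Computing monodromy on both sides: on the $\,^{\mathcal{H}}\Res \circ \KZ$ side, one restricts the $B_W$-representation $\KZ_\bc(M)$ along $B_{\underline{W}} \hookrightarrow B_W$; on the $\underline{\KZ} \circ \,^{\OCat}\Res$ side, one takes the $B_{\underline{W}}$-monodromy of the restricted local system. These coincide by the topological identification.

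The main obstacle is making the compatibility between BE's formal-algebraic completion and the analytic restriction of local systems precise. Concretely, one must verify that the Dunkl isomorphism $H_\bc(W)[\delta^{-1}] \cong D(\h^{reg})\# W$ is compatible (through BE's identification of completions) with the corresponding Dunkl isomorphism for $\underline{W}$, so that the two local systems on $U\cap \h^{reg}$ really do agree. This uses the regular-singularity property established in \cite{GGOR} together with the fact that a regular holonomic $D$-module on $U \cap \h^{reg}$ is determined by its formal germ at $b$. Once this compatibility is in place, the natural isomorphism $\underline{\KZ}_\bc \circ \,^{\OCat}\Res^W_{\underline{W}} \cong \,^{\mathcal{H}}\Res^W_{\underline{W}} \circ \KZ_\bc$ follows by the monodromy identifications above.
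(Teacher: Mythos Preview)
The paper does not give its own proof of this proposition; it simply cites \cite[Theorem 2.1]{Shan}. Your sketch is essentially the approach taken there: reduce to a comparison of local systems via the Dunkl localization, use a tubular neighborhood of a point $b$ with $W_b=\underline{W}$ to identify the braid groups, and check that Bezrukavnikov--Etingof's formal completion, after localization, matches the analytic restriction of the local system. One small slip: since $U$ is only $\underline{W}$-stable, the relevant quotient is $(U\cap\h^{reg})/\underline{W}$, not $(U\cap\h^{reg})/W$; the embedding $B_{\underline{W}}\hookrightarrow B_W$ then comes from mapping this into $\h^{reg}/W$ along a chosen path. With that correction, your outline matches Shan's argument.
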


As was explained in \cite{Hecke_fin_dim}, the algebra $\mathcal{H}_{\bf q}(W)$ has the maximal
finite dimensional quotient to be denoted by $\mathcal{H}^f_{\bf q}(W)$ here. The same,
of course, applies to $\mathcal{H}_{\bf q}(\underline{W})$. We get induction,
$\,^{\mathcal{H}}\Ind^W_{\underline{W}}$, and coinduction,
$\,^{\mathcal{H}}\operatorname{Coind}^W_{\underline{W}}$, functors
$\mathcal{H}_{\bf q}(\underline{W})\operatorname{-mod}_{fin}\rightarrow
\mathcal{H}_{\bf q}(W)\operatorname{-mod}_{fin}$ associated to the homomorphism
$\mathcal{H}^f_{\bf q}(\underline{W})\rightarrow \mathcal{H}^f_{\bf q}(W)$.
As explained in \cite[Section 2.8]{rouq_der}, 4) has the following corollary.

\begin{Cor}\label{Cor:KZ_Ind}
We have an isomorphism of functors $\,^{\mathcal{H}}\Ind^W_{\underline{W}}\cong \,^{\mathcal{H}}\operatorname{Coind}^W_{\underline{W}}$. The KZ functors intertwine
the induction functors.
\end{Cor}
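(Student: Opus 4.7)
My plan is to deduce both assertions simultaneously by constructing two natural transformations that between them encode the desired Hecke-side identification.

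The strategy leverages the biadjointness of $(\,^{\OCat}\Ind^W_{\underline{W}}, \,^{\OCat}\Res^W_{\underline{W}})$ from Proposition \ref{Prop:ResInd_biadj} together with the restriction intertwiner $\underline{\operatorname{KZ}}_\bc \circ \,^{\OCat}\Res^W_{\underline{W}} \cong \,^{\mathcal{H}}\Res^W_{\underline{W}} \circ \operatorname{KZ}_\bc$ from Proposition \ref{Prop:KZ_Res}. Since $\operatorname{KZ}_\bc$ is a quotient functor by the Serre subcategory $\OCat_{\bc, tor}$ (Proposition \ref{Prop:KZ_properties}(1)), it is exact and admits both a left and a right adjoint, and the same holds for $\underline{\operatorname{KZ}}_\bc$. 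Forming the mate of the restriction intertwiner via the left adjoints of restriction (namely $\,^{\OCat}\Ind$ on the Cherednik side and $\,^{\mathcal{H}}\Ind$ on the Hecke side) produces a canonical natural transformation
\[
\alpha \colon \,^{\mathcal{H}}\Ind^W_{\underline{W}} \circ \underline{\operatorname{KZ}}_\bc \longrightarrow \operatorname{KZ}_\bc \circ \,^{\OCat}\Ind^W_{\underline{W}},
\]
while passing to right adjoints (namely $\,^{\mathcal{H}}\operatorname{Coind}$ on the Hecke side, and $\,^{\OCat}\Ind$ again on the Cherednik side, by biadjointness) produces
\[
\beta \colon \operatorname{KZ}_\bc \circ \,^{\OCat}\Ind^W_{\underline{W}} \longrightarrow \,^{\mathcal{H}}\operatorname{Coind}^W_{\underline{W}} \circ \underline{\operatorname{KZ}}_\bc.
\]

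The main work is to show that both $\alpha$ and $\beta$ are isomorphisms. I would check this on a projective $P \in \OCat_\bc(\underline{W})$, using that $\,^{\OCat}\Ind^W_{\underline{W}}$ preserves projectives (its right adjoint $\,^{\OCat}\Res^W_{\underline{W}}$ being exact by Proposition \ref{Prop:ResInd_biadj}) and that $\operatorname{KZ}_\bc$ is fully faithful on projectives by Proposition \ref{Prop:KZ_properties}(3); under these reductions both sides of $\alpha$ and $\beta$ compute Hom-spaces that the mate construction and the biadjunction identify directly with one another. The delicate point—and the part I expect to be the main obstacle—is keeping careful track of the passage to the maximal finite-dimensional quotient $\mathcal{H}^f_{\bf q}$ on the Hecke side, since $\,^{\mathcal{H}}\Ind^W_{\underline{W}}$ and $\,^{\mathcal{H}}\operatorname{Coind}^W_{\underline{W}}$ are defined relative to the morphism $\mathcal{H}^f_{\bf q}(\underline{W}) \to \mathcal{H}^f_{\bf q}(W)$, and one must verify that the Hom-computation above factors through this quotient compatibly. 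Once $\alpha$ and $\beta$ are shown to be isomorphisms, their composite yields $\,^{\mathcal{H}}\Ind^W_{\underline{W}} \circ \underline{\operatorname{KZ}}_\bc \cong \,^{\mathcal{H}}\operatorname{Coind}^W_{\underline{W}} \circ \underline{\operatorname{KZ}}_\bc$, and essential surjectivity of $\underline{\operatorname{KZ}}_\bc$ (Proposition \ref{Prop:KZ_surj}) promotes this to the first assertion $\,^{\mathcal{H}}\Ind^W_{\underline{W}} \cong \,^{\mathcal{H}}\operatorname{Coind}^W_{\underline{W}}$; the isomorphism $\alpha$ (equivalently $\beta$) then expresses the second assertion that KZ intertwines the induction functors.
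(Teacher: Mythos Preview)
Your approach is essentially the same as the one the paper defers to (the mate/adjunction argument in \cite[Section 2.8]{rouq_der}): pass from the restriction intertwiner of Proposition~\ref{Prop:KZ_Res} to induction and coinduction by taking left and right adjoints, using the biadjointness of the Bezrukavnikov--Etingof functors on the Cherednik side.

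One small imprecision: in your final step you invoke essential surjectivity of $\underline{\operatorname{KZ}}_\bc$ (Proposition~\ref{Prop:KZ_surj}) to promote the isomorphism $\,^{\mathcal{H}}\Ind^W_{\underline{W}} \circ \underline{\operatorname{KZ}}_\bc \cong \,^{\mathcal{H}}\operatorname{Coind}^W_{\underline{W}} \circ \underline{\operatorname{KZ}}_\bc$ to $\,^{\mathcal{H}}\Ind^W_{\underline{W}} \cong \,^{\mathcal{H}}\operatorname{Coind}^W_{\underline{W}}$. Essential surjectivity alone does not guarantee naturality of the resulting isomorphism; what you actually need is that $\underline{\operatorname{KZ}}_\bc$ is a Serre quotient functor (Proposition~\ref{Prop:KZ_properties}(1)), so that its right adjoint $R$ satisfies $\underline{\operatorname{KZ}}_\bc \circ R \cong \operatorname{id}$, and then precomposing with $R$ gives the desired natural isomorphism directly. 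With that correction, your argument goes through.
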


We have one more  corollary of Proposition \ref{Prop:KZ_Res},
see \cite[Corollary 2.5]{Shan}.

\begin{Cor}\label{Cor:Res_trans} Let $\underline{W}'\subset \underline{W}$ be a parabolic subgroup.
Then we have an isomorphism of functors $\Res^W_{\underline{W}'}\cong
\Res^{\underline{W}}_{\underline{W}'}\circ \Res^W_{\underline{W}}$.
An analogous claim holds for the induction functors.
\end{Cor}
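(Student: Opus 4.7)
The plan is to deduce the transitivity of $\Res$ directly from the geometric construction of Bezrukavnikov--Etingof, and then derive the corresponding statement for $\Ind$ from uniqueness of adjoints. Recall that $\Res^W_{\underline{W}}$ is built by choosing a base point $b\in\h$ with $W_b=\underline{W}$, completing $H_\bc(W)$ along the orbit $Wb$, using a Morita-type isomorphism to identify this completion with (a matrix extension of) $\widehat{H}_\bc(\underline{W},\underline{\h})_0\,\widehat{\otimes}\,\widehat{\C[\h^{\underline{W}}]}_b$, and then extracting the part with locally nilpotent $\underline{\h}$-action after reducing along the maximal ideal $\mathfrak{m}_b$. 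Different admissible choices of $b$ yield canonically isomorphic functors.

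To obtain the transitivity for restriction, I would pick $b\in X(\underline{W})\subset\h$ and a lift $\tilde{b}'\in\h$ of some $b'\in\underline{\h}$ with $\underline{W}_{b'}=\underline{W}'$. Then $W_{b+\tilde{b}'}=\underline{W}'$, since any element of $W$ fixing $b$ must lie in $\underline{W}$, and its induced action on $\tilde{b}'\bmod\h^{\underline{W}}$ must then fix $b'$. The formal neighborhood of $W(b+\tilde{b}')$ in $\h$ nests inside that of $Wb$, and the formal neighborhood of $\underline{W}b'$ in $\underline{\h}$ is captured inside the $\underline{\h}$-factor of the Morita trivialization of $\widehat{H}_\bc(W)_{Wb}$. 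Iterating the Morita isomorphism thus identifies the completion of $H_\bc(W)$ at $W(b+\tilde{b}')$ with the two-step completion that first passes through $H_\bc(\underline{W})$ and then completes along $\underline{W}b'$; propagating this identification through the reduction-and-locally-nilpotent-part recipe produces the natural isomorphism $\Res^W_{\underline{W}'}\cong\Res^{\underline{W}}_{\underline{W}'}\circ\Res^W_{\underline{W}}$.

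For the induction statement I would invoke Proposition \ref{Prop:ResInd_biadj}: each induction functor is simultaneously left and right adjoint to the corresponding restriction. Since left adjoints of a composition compose in the reverse order and adjoints are unique up to canonical isomorphism, the isomorphism of restriction composites transfers directly to $\Ind^W_{\underline{W}'}\cong\Ind^W_{\underline{W}}\circ\Ind^{\underline{W}}_{\underline{W}'}$.

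The main obstacle is the middle paragraph: one has to keep track of two nested Morita trivializations and verify that no extra twist is introduced when comparing the single-step restriction at $W(b+\tilde{b}')$ with the iterated one. This amounts to a careful unwinding of the explicit isomorphism from \cite{BE} inside the nested formal neighborhoods, which is the content of \cite[Section 2.4]{Shan}; the geometric picture is transparent but the algebraic bookkeeping is what makes the statement non-trivial.
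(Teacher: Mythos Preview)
Your approach is correct in outline but takes a genuinely different route from the paper's. The paper does not unwind the Bezrukavnikov--Etingof construction at all; instead it presents Corollary~\ref{Cor:Res_trans} as a formal consequence of Proposition~\ref{Prop:KZ_Res} (following Shan). The argument the paper has in mind is: on the Hecke side the transitivity $\,^{\mathcal H}\Res^W_{\underline{W}'}\cong \,^{\mathcal H}\Res^{\underline W}_{\underline{W}'}\circ\,^{\mathcal H}\Res^W_{\underline W}$ is tautological (module restriction along a composite of algebra maps); by Proposition~\ref{Prop:KZ_Res} both $\,^{\OCat}\Res^W_{\underline{W}'}$ and $\,^{\OCat}\Res^{\underline W}_{\underline{W}'}\circ\,^{\OCat}\Res^W_{\underline W}$ become isomorphic after composing with $\KZ$; both functors are exact and send projectives to projectives (having exact right adjoints), and since $\KZ$ is fully faithful on projectives (Proposition~\ref{Prop:KZ_properties}(3)), the isomorphism lifts to $\OCat$. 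The induction statement then follows by adjunction, exactly as you say.

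So your geometric argument---nesting formal neighborhoods at $b$ and $b+\tilde{b}'$ and iterating the Morita trivialization---is a valid alternative, and arguably more conceptual since it explains \emph{why} the composite is the single restriction rather than inferring it from the Hecke shadow. The price is the bookkeeping you flag in your last paragraph. The paper's route trades that bookkeeping for the black box of Proposition~\ref{Prop:KZ_Res}, which is cleaner once that proposition is available. One small correction: what you cite as \cite[Section 2.4]{Shan} for the direct completion argument is not what Shan actually does there; Shan's Corollary~2.5 is proved via the $\KZ$ method just described, so your approach is in fact different from both the paper and its source.
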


\subsubsection{Behavior on $K_0$ and on (co)standardly filtered objects}
\begin{Lem}\label{Lem:IndRes_stand}
The object $\Res^W_{\underline{W}}(\Delta_\bc(\tau))$ (resp.,
$\Res^W_{\underline{W}}(\nabla_\bc(\tau))$) is  filtered
with quotients $\Delta_\bc(\tau')$ (resp., $\nabla_\bc(\tau')$) for
$\tau'\in \Irr(\underline{W})$. The multiplicity of $\Delta_\bc(\tau')$
(or of $\nabla_\bc(\tau')$) in the filtration equals
$\dim \Hom_{\underline{W}}(\tau',\tau)$. The similar claims are true for
$\Ind_W^{\underline{W}}(\Delta_\bc(\tau'))$ and
$\Ind_W^{\underline{W}}(\nabla_\bc(\tau'))$.
\end{Lem}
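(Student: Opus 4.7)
The plan is to handle the restriction case for Verma modules first, and derive the other three assertions from it by the duality of Proposition \ref{Prop:ResInd_dual} and the biadjunction of Proposition \ref{Prop:ResInd_biadj}.

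For $\Res^W_{\underline{W}}\Delta_\bc(\tau)$ I would unwind the construction of \cite{BE}. Recall that $\,^{\OCat}\Res^W_{\underline{W}}$ is defined by completing $H_\bc(W)$ at a point $b\in X(\underline{W})$, using the BE centralizer-type isomorphism
\[ H_\bc(W)^\wedge_b\cong \C[[\h^{\underline{W}}]]\widehat{\otimes} H_\bc(\underline{W},\underline{\h}), \]
and taking the subspace of $\underline{\h}$-locally nilpotent vectors. Since $\Delta_\bc(\tau)=S(\h^*)\otimes\tau$ via the triangular decomposition, applying this construction while keeping track of the PBW factorization produces a natural identification
\[ \,^{\OCat}\Res^W_{\underline{W}}\Delta_\bc(\tau)\cong S(\underline{\h}^*)\otimes \Res^W_{\underline{W}}\tau \]
as $S(\underline{\h}^*)\#\underline{W}$-modules. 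Decomposing the finite-group restriction $\Res^W_{\underline{W}}\tau=\bigoplus_{\tau'\in\Irr(\underline{W})}(\tau')^{\oplus m(\tau,\tau')}$ with $m(\tau,\tau')=\dim\Hom_{\underline{W}}(\tau',\tau)$ via Frobenius reciprocity then exhibits the right class on $K_0$.

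The main obstacle is upgrading this $S(\underline{\h}^*)\#\underline{W}$-isomorphism to a genuine $\Delta$-filtration as an $H_\bc(\underline{W})$-module. For this I would exploit the Euler element $\underline{h}\in H_\bc(\underline{W})$: it acts on $\,^{\OCat}\Res\Delta_\bc(\tau)$ with generalized eigenvalues lying in a finite union of cosets $c_{\tau'}+\Z_{\geqslant 0}$ (after a common shift coming from the isomorphism above), and filtering by the real part of the eigenvalue yields a finite filtration in $\OCat_\bc(\underline{W})$ whose successive quotients are direct sums of Vermas $\Delta_\bc(\tau')^{\oplus m(\tau,\tau')}$. The delicate point is verifying that each graded piece is a sum of Vermas rather than a more exotic standardly filtered module, which requires the explicit form of the $\underline{\h}$-action inherited from the BE isomorphism. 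For $\Res^W_{\underline{W}}\nabla_\bc(\tau)$, one then applies Proposition \ref{Prop:ResInd_dual}: since $\nabla_\bc(\tau)=\Delta_{\bc^*}(\tau^*)^\vee$ and $\,^{\OCat}\Res$ intertwines the naive duality, $\,^{\OCat}\Res\nabla_\bc(\tau)\cong(\,^{\OCat}\Res\Delta_{\bc^*}(\tau^*))^\vee$; dualizing a $\Delta$-filtration produces a $\nabla$-filtration, and the identity $\dim\Hom_{\underline{W}}(\tau'^*,\tau^*)=\dim\Hom_{\underline{W}}(\tau',\tau)$ preserves multiplicities.

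Finally, for $\Ind_W^{\underline{W}}$ I would use the standard Ext-vanishing criterion for $\Delta$-filtrations in a highest weight category. By Proposition \ref{Prop:ResInd_biadj}, for every $\xi\in\Irr(\underline{W})$ and $\tau\in\Irr(W)$,
\[ \Ext^i_{\OCat_\bc(W)}(\Ind_W^{\underline{W}}\Delta_\bc(\xi),\nabla_\bc(\tau))\cong \Ext^i_{\OCat_\bc(\underline{W})}(\Delta_\bc(\xi),\Res^W_{\underline{W}}\nabla_\bc(\tau)). \]
The right-hand side vanishes for $i\geqslant 1$, since $\Res^W_{\underline{W}}\nabla_\bc(\tau)$ is $\nabla$-filtered by the previous step and $\Ext^{\geqslant 1}(\Delta,\nabla)=0$ in any highest weight category; for $i=0$ it equals the multiplicity of $\nabla_\bc(\xi)$ in $\Res^W_{\underline{W}}\nabla_\bc(\tau)$, namely $\dim\Hom_{\underline{W}}(\xi,\tau)$. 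Hence $\Ind_W^{\underline{W}}\Delta_\bc(\xi)$ is $\Delta$-filtered with the claimed multiplicities, and the corresponding statement for $\Ind_W^{\underline{W}}\nabla_\bc(\xi)$ follows from a further application of Proposition \ref{Prop:ResInd_dual}.
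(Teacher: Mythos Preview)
Your proposal is correct and follows essentially the same route as the paper: the $\Res$-on-$\Delta$ case comes from unwinding the Bezrukavnikov--Etingof construction (the paper defers to \cite[Proposition 1.9]{Shan} for precisely the Euler-grading argument you sketch), the $\Res$-on-$\nabla$ case from Proposition~\ref{Prop:ResInd_dual}, and the $\Ind$ cases from biadjointness. Your explicit use of the $\Ext^{\geqslant 1}(\Delta,\nabla)=0$ criterion is exactly how ``follows from biadjointness'' is meant to be read, so there is no substantive difference.
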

\begin{proof}
The claim about $\Res^W_{\underline{W}}(\Delta_\bc(\tau))$ is quite straightforward
from the construction of the functor, see \cite[Proposition 1.9]{Shan}. The claim about
$\Res^W_{\underline{W}}(\nabla_\bc(\tau))$ follows from Proposition \ref{Prop:ResInd_dual}.
The claims about the induction
functors follow from the biadjointness.
\end{proof}

In particular, the map $[\Res^W_{\underline{W}}]:K_0(\OCat_\bc(W))=K_0(W\operatorname{-mod})
\rightarrow K_0(\OCat_\bc(\underline{W}))=K_0(\underline{W}\operatorname{-mod})$
coincides with the usual restriction $K_0(W\operatorname{-mod})
\rightarrow K_0(\underline{W}\operatorname{-mod})$. The similar
claim holds for the induction. This was observed already in \cite[Section 3.6]{BE}.

%The following was observed in \cite[Section 3.6]{BE}.
%
%7) Under the identifications of $K_0(\OCat_\bc(W))\cong K_0(W\operatorname{-mod})$
%and $K_0(\OCat_\bc(\underline{W}))\cong K_0(\underline{W}\operatorname{-mod})$
%explained in \ref{SSS_O_Verma},
%the map $[\Res^W_{\underline{W}}]$ becomes the group restriction, while
%$[\Ind_W^{\underline{W}}]$ becomes the group induction.%
%
%This claim can be refined as follows.

\subsubsection{Behavior on supports}
Recall that the support of a module from category $\mathcal{O}_\bc(W)$ was defined in
\ref{SSS_Supports}. Here we will investigate the interplay between the supports
and the induction and restriction functors.

Let us start with the restriction functor. The following result was established
in \cite[Proposition 2.2]{SV}.

\begin{Lem}\label{Lem:Supp_Restr}
Let $M\in \OCat_\bc(W)$ be such that $\Supp(L)=\overline{X(W')}$.
Let $\underline{W}$ be a parabolic subgroup in $W$. Then
$$\Supp(\Res^W_{\underline{W}}(M))=\bigcup_{\underline{W}'}\underline{X}(\underline{W}').$$
Here the union is taken over all ($\underline{W}$-conjugacy classes of) parabolic
subgroups $\underline{W}'\subset \underline{W}$ that are conjugate to $W'$ in $W$.
By $\underline{X}(\underline{W}')$, we denote the stratum of $\underline{\h}$
(the reflection representation of $\underline{W}$) corresponding to the stabilizer $\underline{W}'$.
\end{Lem}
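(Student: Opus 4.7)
The plan is to reduce the support computation to a local statement near a generic point $b \in X(\underline{W})$, using the construction of the Bezrukavnikov--Etingof restriction functor from \cite{BE}. Recall that $\Res^W_{\underline{W}}$ is defined via a completion isomorphism: fixing such a $b$ (so $W_b = \underline{W}$) and writing $\h = \h^{\underline{W}} \oplus \underline{\h}$ as $\underline{W}$-modules, there is an identification of the completion of $H_\bc(W)$ at the orbit $Wb \subset \h$ with a matrix algebra over the completion of $\C[\h^{\underline{W}}] \otimes H_\bc(\underline{W})$ at $(b,0)$; the functor $\Res^W_{\underline{W}}(M)$ is then obtained as the $\h^{\underline{W}}$-locally nilpotent part of the corresponding $H_\bc(\underline{W})$-module.

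First, I would analyze the stabilizer stratification locally at $b$. For $v \in \underline{\h}$ sufficiently small, any element $w \in W$ fixing $b+v$ must also fix $b$, hence lies in $W_b = \underline{W}$; conversely, $w \in \underline{W}$ fixes $b+v$ iff $w \in \underline{W}_v$. Thus $W_{b+v} = \underline{W}_v$, and a stratum $X(W'')$ intersects a formal neighborhood of $b$ in $\h$ non-trivially if and only if $W''$ is contained in $\underline{W}$, in which case the intersection is identified with (a formal neighborhood in) $\underline{X}(W'')$ inside $\underline{\h}$.

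Second, I would transfer the support statement through the BE construction. Completion is faithfully flat and preserves support as a coherent sheaf, so the completion of $\Supp(M)$ at $Wb$ coincides with $\Supp(M^{\wedge_{Wb}})$. Under the BE identification, this completed support becomes $(\h^{\underline{W}})^{\wedge_b} \times \Supp(\Res^W_{\underline{W}}(M))^{\wedge_0}$, because extracting the $\h^{\underline{W}}$-locally nilpotent part corresponds to passing to the fiber over $b$ in the $\h^{\underline{W}}$-direction. Hence $\Supp(\Res^W_{\underline{W}}(M))$ in $\underline{\h}$ is precisely the transverse slice of $\Supp(M) = \overline{X(W')}$ at $b$ along $\underline{\h}$.

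Combining the two steps: the transverse slice at $b$ of $\overline{X(W')}$ consists of those $v \in \underline{\h}$ for which $W_{b+v}$ contains a $W$-conjugate of $W'$, and by the local identification this is the same as $\underline{W}_v$ containing some $\underline{W}' \subset \underline{W}$ that is $W$-conjugate to $W'$. This yields exactly the claimed union $\bigcup_{\underline{W}'} \underline{X}(\underline{W}')$, with the closure relations among such $\underline{W}'$ accounting for the higher-codimension strata in the closure of each $\underline{X}(\underline{W}')$. The main obstacle is step two: one must verify that the coherent support behaves as expected through the BE completion and its Morita factor, and through the extraction of the $\h^{\underline{W}}$-nilpotent part; both are standard once the matrix algebra $\C[W/\underline{W}]$-factor is unwound, but require care.
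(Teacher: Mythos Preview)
The paper does not supply its own proof of this lemma; it simply cites \cite[Proposition 2.2]{SV}. Your outline is precisely the argument used there: one unwinds the Bezrukavnikov--Etingof construction of $\Res^W_{\underline{W}}$ via completion at a point $b\in X(\underline{W})$, identifies the local stabilizer stratification $W_{b+v}=\underline{W}_v$ for small $v\in\underline{\h}$, and reads off $\Supp(\Res^W_{\underline{W}}M)$ as the transverse slice of $\Supp(M)$ at $b$ through the Morita equivalence. Your cautionary remark about step two is apt---the passage from the completed module to the $\h^{\underline{W}}$-locally nilpotent part is not literally ``taking the fiber,'' but since the intermediate module is finitely generated and free over $\C[[\h^{\underline{W}}]]$ (a consequence of the BE isomorphism applied to an object of $\mathcal{O}_\bc(W)$), the support in the $\underline{\h}$-direction is unchanged by this extraction; this is exactly the point handled in \cite{SV}.
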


In particular, this lemma implies that $\Res^W_{\underline{W}}(M)=0$ if and only
if $W'$ is not conjugate to a subgroup of $\underline{W}$, while $\Res^W_{\underline{W}}(M)$
is finite dimensional if and only if $W'$ is conjugate to $\underline{W}$.

Let us proceed to the induction functors. The following result is an easy consequence
of  \cite[Proposition 2.7]{SV}.

\begin{Lem}\label{Lem:Supp_Ind}
Let $\underline{L}$ be a simple in $ \OCat_\bc(\underline{W})$ and $\underline{W}'\subset \underline{W}$
be such that $\Supp(\underline{L})=\underline{X}(\underline{W}')$.
Then for every  quotient/sub $M$ of $\Ind_W^{\underline{W}}(\underline{L})$,
we have $\Supp(M)=X(\underline{W}')$.
\end{Lem}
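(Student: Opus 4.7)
The plan is to prove the two inclusions $\Supp(M)\subseteq \overline{X(\underline{W}')}$ and $\Supp(M)\supseteq \overline{X(\underline{W}')}$ separately. The upper bound I would take from \cite[Proposition 2.7]{SV}, which (as the statement of the lemma emphasizes) is the geometric input providing $\Supp(\Ind_W^{\underline{W}}(\underline{L}))\subseteq \overline{X(\underline{W}')}$. Since support is additive on short exact sequences and therefore monotone on subobjects and quotients, the same containment is inherited by any sub or quotient $M$ of $\Ind_W^{\underline{W}}(\underline{L})$. The substantive work left is the reverse inclusion.

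For the lower bound I would use the biadjointness of $\Ind$ and $\Res$ (Proposition \ref{Prop:ResInd_biadj}). If $M$ is a quotient of $\Ind_W^{\underline{W}}(\underline{L})$, the surjection $\Ind_W^{\underline{W}}(\underline{L})\twoheadrightarrow M$ corresponds by adjunction to a nonzero morphism $\underline{L}\to \Res^W_{\underline{W}}(M)$, which is injective because $\underline{L}$ is simple. Dually, if $M$ is a sub of $\Ind_W^{\underline{W}}(\underline{L})$, one obtains a nonzero surjection $\Res^W_{\underline{W}}(M)\twoheadrightarrow \underline{L}$. In either case $\underline{L}$ is a subquotient of $\Res^W_{\underline{W}}(M)$, so $\overline{\underline{X}(\underline{W}')}=\Supp(\underline{L})\subseteq \Supp(\Res^W_{\underline{W}}(M))$.

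I then combine this with Lemma \ref{Lem:Supp_Restr} applied to each simple constituent of $M$. That lemma shows that the strata appearing in $\Supp(\Res^W_{\underline{W}}(M))$ are precisely the $\underline{X}(\underline{W}'')$ with $\underline{W}''\subseteq \underline{W}$ that are $W$-conjugate to one of the parabolics $W''$ realizing the supports $\overline{X(W'')}$ of the simple constituents of $M$. For the open stratum $\underline{X}(\underline{W}')$ of $\Supp(\underline{L})$ to appear, some constituent of $M$ must have a parabolic $W''$ that is $W$-conjugate to $\underline{W}'$. The already-established upper bound forces $\underline{W}'$ to be $W$-conjugate to a subgroup of every such $W''$; together these pin down $W''=\underline{W}'$ up to $W$-conjugacy, producing a simple constituent of $M$ with support exactly $\overline{X(\underline{W}')}$ and hence the required lower bound.

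The main obstacle is the upper bound, which is genuinely geometric and is exactly the content of \cite[Proposition 2.7]{SV}; once that input is granted, the remainder is a formal adjunction argument combined with Lemma \ref{Lem:Supp_Restr}, and nothing beyond the simplicity of $\underline{L}$ and the stratum-theoretic description of $\Supp$ on $\OCat_\bc$ is needed.
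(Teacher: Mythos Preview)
Your argument is correct and is precisely the kind of deduction the paper has in mind: the paper gives no proof of its own, only the pointer ``an easy consequence of \cite[Proposition 2.7]{SV}'', and your two-step argument (upper bound from \cite[Proposition 2.7]{SV} plus monotonicity, lower bound via biadjunction and Lemma~\ref{Lem:Supp_Restr}) is exactly how one unpacks that reference using the tools already assembled in the paper.

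One small wording issue: the sentence ``some constituent of $M$ must have a parabolic $W''$ that is $W$-conjugate to $\underline{W}'$'' is stated as if it follows directly from the appearance of the stratum $\underline{X}(\underline{W}')$ in $\Supp(\Res^W_{\underline{W}} M)$, but a priori Lemma~\ref{Lem:Supp_Restr} only yields that $W''$ is $W$-conjugate to a parabolic \emph{contained in} $\underline{W}'$. You do immediately supply the missing inclusion via the upper bound in the next sentence, so the logic is complete; just rephrase so the two containments are presented symmetrically before concluding $W''\sim_W \underline{W}'$ by a cardinality argument.
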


\subsection{Abelian equivalences}\label{SS_abelian_equi}
Here we will establish some equivalences between categories $\mathcal{O}_\bc(W)$
and $\mathcal{O}_{\bc'}(W)$ with different parameters $\bc,\bc'$.
\subsubsection{Twist by a one-dimensional character}
Now let $\chi$ be a one-dimensional character of $W$.
Given $\bc\in \param$, define $\bc^\chi\in \param$ by $ c^\chi(s)=\chi(s)^{-1}c(s)$.
We have an isomorphism $\psi_\chi: H_\bc\xrightarrow{\sim}H_{\bc^\chi}$ given on
the generators by $x\mapsto x, y\mapsto y, w\mapsto \chi(w)w$. This gives rise
to an equivalence $\psi_{\chi*}:\OCat_\bc(W)\xrightarrow{\sim}\OCat_{\bc^\chi}(W)$
that maps $\Delta_\bc(\tau)$ to $\Delta_{\bc^\chi}(\chi\otimes \tau)$.

\subsubsection{Chambers and walls}\label{SSS_chamb_walls}
From now on, in this section we will consider parameters $\bc,\bc'$ giving the same parameter
${\bf q}$ for the Hecke algebra. In fact, there
is the lattice $\param_{\Z}\subset \param$ such that    parameters $\bc,\bc'\in\param$
give the same parameter ${\bf q}$ if and only if $\bc'-\bc\in \param_{\Z}$. We will
see that $\mathcal{O}_\bc(W)$ and $\mathcal{O}_{\bc'}(W)$ are equivalent provided
$\bc,\bc'$ lie in the same ``chamber''.  Let us explain what kind of chambers we consider.

Consider the shifted lattice $\bc+\param_{\Z}$. Set $\Pi_{\xi\tau}:=\ker\varpi_{\xi\tau}$
(recall that the functions $\varpi_{\xi\tau}\in \param^*$ were defined in \ref{SSS_gen_ss}).
Consider the hyperplanes $\Pi_{\xi\tau}$ such that $\varpi_{\xi\tau}(\bc)\in \Q$.
We call them {\it walls} for $\bc+\param_{\Z}$ (note that the walls do not
need to intersect $\bc+\param_{\Z}$). In any case, the hyperplanes $\Pi_{\xi\tau}$
split $\bc+\param_{\Z}$ into the union of chambers that are polyhedral cones.

\subsubsection{Main results}
We have the following.

\begin{Prop}\label{Prop:cat_equi}
Suppose that $\bc'\in \bc+\param_{\Z}$ lies in the closure of the chamber
containing $\bc$. Then there is an abelian equivalence $\Phi_{\bc'\leftarrow \bc}:\OCat_\bc(W)
\xrightarrow{\sim} \OCat_{\bc'}(W)$ intertwining the KZ functors.
\end{Prop}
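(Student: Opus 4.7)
The plan is to invoke Rouquier's uniqueness theorem for highest weight covers. Since $\bc'-\bc\in\param_{\Z}$, the two parameters produce the same Hecke parameter $\mathbf{q}$ (by formula (\ref{eq:h_to_q})), so the KZ functors of Proposition \ref{Prop:KZ_properties} exhibit both $\OCat_\bc(W)$ and $\OCat_{\bc'}(W)$ as highest weight covers of the \emph{same} category $\mathcal{H}_{\mathbf{q}}(W)\operatorname{-mod}_{fin}$. The uniqueness theorem asserts that two $1$-faithful highest weight covers of a common base, equipped with compatible highest weight orders and the same labelling of simples, must be equivalent via an equivalence intertwining the covering functors.

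First I would verify compatibility of the highest weight orders. The function $\varpi_{\xi\tau}$ is linear in $\bc$, and the chamber of $\bc$ in $\bc+\param_{\Z}$ is cut out by specifying on which side of each wall $\Pi_{\xi\tau}$ (with $\varpi_{\xi\tau}(\bc)\in\Q$) a parameter lies. If $\tau<_\bc\xi$, i.e.\ $\varpi_{\xi\tau}(\bc)\in\Z_{>0}$, then the hypothesis that $\bc'$ lies in the closure of the chamber forces $\varpi_{\xi\tau}(\bc')\in\Z_{\geqslant 0}$: it cannot become negative since that would place $\bc'$ across the wall. Hence $\leqslant_\bc$ refines $\leqslant_{\bc'}$, and both $\OCat_\bc(W)$ and $\OCat_{\bc'}(W)$ are highest weight with respect to the coarser order $\leqslant_{\bc'}$, the labelling by $\Irr(W)$ being the same on both sides.

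Next, to apply Rouquier's theorem one needs the KZ functor to be $1$-faithful. When the Hecke parameter satisfies the condition in Proposition \ref{Prop:KZ_properties}(4) this is automatic; otherwise one proceeds by deformation. Work over $R=\C[[t]]$ and deform $\bc$ to $\bc_R:=\bc+t\beta$ for a generic direction $\beta\in\param$ making the Hecke algebra of the generic fiber semisimple. The algebra $H_{R,\bc_R}$ is flat over $R$ by the PBW theorem (Theorem \ref{Thm:PBW_RCA}), so $\OCat_{R,\bc_R}(W)$ is a well-defined $R$-linear highest weight category. At the generic fiber it is semisimple (Lemma \ref{Lem:O_ss}), forcing the $R$-linear KZ functor to be an equivalence there, and from this one deduces $1$-faithfulness of the $R$-linear cover. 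The parallel construction for $\bc'+t\beta$ yields a second $R$-linear cover of the same $R$-linear Hecke algebra. Rouquier's uniqueness theorem applied over $R$ produces an equivalence of the two $R$-linear categories intertwining KZ, and specialization at $t=0$ yields the required $\Phi_{\bc'\leftarrow\bc}$.

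The main obstacle is the deformation-theoretic setup: one must show that the KZ projective $P_{\KZ}$ admits a flat $R$-deformation whose endomorphism algebra is canonically the deformed Hecke algebra $\mathcal{H}^f_{\mathbf{q}_R}$, and that the functor $\Phi_{\bc'\leftarrow\bc,R}$ specializes correctly at $t=0$. This is the technical core of \cite{rouqqsch,rouq_der}; it rests on flatness of $H_{R,\bc_R}$ over $R$, the $R$-linear Dunkl localization, and the fact that $R$-linear KZ preserves the defining properties (projectivity, multiplicities of standards given by $\dim\tau$) of $P_{\KZ}$.
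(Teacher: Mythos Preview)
Your overall strategy—Rouquier's uniqueness for highest weight covers, applied after a one-parameter deformation—is exactly the paper's route. The order-compatibility check is fine, and the deformation framework you describe is the right one.

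There is, however, a genuine gap in your faithfulness argument. You write that at the generic fibre $\operatorname{KZ}$ becomes an equivalence, ``and from this one deduces $1$-faithfulness of the $R$-linear cover.'' That inference is not valid: generic-fibre equivalence alone does not force $1$-faithfulness over $R$. Rouquier's mechanism (\cite[Proposition~4.42]{rouqqsch}) upgrades $0$-faithfulness \emph{at the closed point} to $1$-faithfulness over $R$; it needs that input. When condition~(4) of Proposition~\ref{Prop:KZ_properties} holds, $\operatorname{KZ}_\bc$ is $0$-faithful (not $1$-faithful, as you say) and the upgrade applies. When (4) fails, $\operatorname{KZ}_\bc$ need not be $0$-faithful at all, and your deformation step does not repair this. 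The paper's fix is to enlarge the quotient: one replaces $\mathcal{H}_{\mathbf q}(W)\operatorname{-mod}$ by the intermediate quotient $\tilde{\Cat}_\bc$ defined by adjoining the projectives $P_\bc(\tau)$ with $\operatorname{codim}_\h\Supp L_\bc(\tau)=1$; the resulting quotient functor $\overline{\pi}_\bc$ \emph{is} $0$-faithful (Lemma~\ref{Lem:0-faithful_extension}), and one then identifies $\tilde{\Cat}_{R,\bc}\cong\tilde{\Cat}_{R,\bc'}$ by a direct computation reducing to rank one (Proposition~\ref{Prop:ext_quot_equiv}). This extended-quotient step is the missing ingredient in your argument.

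A smaller point: your claim that ``the labelling by $\Irr(W)$ is the same on both sides'' is exactly what the KZ twist $\mathsf{tw}(\bc'-\bc)$ measures, and it need not be the identity unless $\bc'-\bc\in\underline{\param}_\Z$. Since Proposition~\ref{Prop:cat_equi} only asks for an abelian equivalence (not a highest weight one), this does not obstruct the conclusion—you still get an equivalence, just one that may permute the standard labels—but your write-up should not assert the labellings agree.
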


In fact, under an additional condition one can also ensure that an equivalence
from Proposition \ref{Prop:cat_equi} maps $\Delta_\bc(\tau)$ to $\Delta_{\bc'}(\tau)$.
Namely, according to \cite[Section 7.2]{BC}, there is a group homomorphism
$\param_{\Z}\rightarrow \operatorname{Bij}(\Irr(W))$ called
the KZ twist and denoted by $\mathsf{tw}$. Set $\underline{\param}_{\Z}:=\ker\mathsf{tw}$.
For example, for $W=G(\ell,1,n)$, the KZ twist is trivial, see \cite[6.4.7]{GL}
for explanation and references, while for some other
groups, such as $G_2$, it is not.

\begin{defi}\label{def:hw_equi}
For two highest weight categories $\OCat^1,\OCat^2$ with identified sets of irreducibles
$\Irr(\OCat^1)\cong \Irr(\OCat^2)\cong \Lambda$, by a {\it highest weight} equivalence
$\OCat^1\xrightarrow{\sim}\OCat^2$, we mean an equivalence $\OCat^1\xrightarrow{\sim}\OCat^2$
of abelian categories that maps $\Delta^1_L$ to $\Delta^2_L$, for every $L\in \Lambda$.
\end{defi}

\begin{Prop}\label{Prop:cat_equi_hw}
Under the assumptions of Proposition \ref{Prop:cat_equi}, suppose, in addition, that
$\bc'-\bc\in \underline{\param}_{\Z}$. Then one can take a highest weight
equivalence $\Phi_{\bc'\leftarrow \bc}$ in Proposition \ref{Prop:cat_equi}.
\end{Prop}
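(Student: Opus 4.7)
The strategy is to show that the equivalence $\Phi_{\bc'\leftarrow\bc}$ from Proposition \ref{Prop:cat_equi} necessarily sends each $\Delta_\bc(\tau)$ to $\Delta_{\bc'}(\sigma(\tau))$ for a bijection $\sigma:\Irr(W)\to\Irr(W)$, and to identify $\sigma$ with $\mathsf{tw}(\bc'-\bc)$. The hypothesis $\bc'-\bc\in\underline{\param}_\Z=\ker\mathsf{tw}$ will then force $\sigma=\mathrm{id}$, making $\Phi_{\bc'\leftarrow\bc}$ highest weight.

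First I would revisit the deformation construction of $\Phi_{\bc'\leftarrow\bc}$ from \cite{rouq_der}. Choose a DVR $R$ with residue field $\C$ and fraction field $K$, together with a morphism $\operatorname{Spec}R\to\param$ whose closed point maps to $\bc$ and whose generic point maps to a parameter $\bc_K$ lying in the interior of the chamber of $\bc$ and away from every hyperplane $\Pi_{\xi\tau}$; by Lemma \ref{Lem:O_ss}, $\OCat_{\bc_K}(W)$ is semisimple. Lifting the KZ-projective (cf.\ Proposition \ref{Prop:KZ_properties}(2)) yields a projective generator $P_R\in\OCat_{\bc_R}(W)$ with $\End(P_R)$ flat over $R$. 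Performing the parallel construction along a path ending at $\bc'$ (possible because $\bc'$ lies in the closure of the same chamber) produces $P'_R$ whose endomorphism ring is identified with $\End(P_R)$ through the KZ functor, and the specialisation of this identification at the closed fibre is $\Phi_{\bc'\leftarrow\bc}$.

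Next I would track standards through this construction. In the semisimple generic fibre each $\Delta_{\bc_K}(\tau)=L_{\bc_K}(\tau)$ is an indecomposable summand of $P_R\otimes_RK$, occurring with multiplicity $\dim\tau$ by Proposition \ref{Prop:KZ_properties}(2). Its specialisation at $\bc$ is an indecomposable projective $P_\bc(\sigma_\bc(\tau))$ for a unique bijection $\sigma_\bc$; since $P_R$ carries an $R$-standard filtration compatible with the generic decomposition, the standard at the top of this summand is $\Delta_\bc(\sigma_\bc(\tau))$. A parallel analysis at $\bc'$ gives $\sigma_{\bc'}$, and matching summands through the identification above forces
$$\Phi_{\bc'\leftarrow\bc}(\Delta_\bc(\mu))=\Delta_{\bc'}(\sigma(\mu)),\qquad\sigma:=\sigma_{\bc'}\circ\sigma_\bc^{-1}.$$

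Finally, the assignment $(\bc'-\bc)\mapsto\sigma$ descends to a homomorphism $\param_\Z\to\operatorname{Bij}(\Irr(W))$: path independence within a chamber holds because two semisimple generic points are connected by a loop in the semisimple locus, and rigidity of semisimple categories makes such loops act trivially on labels; additivity under concatenation of paths is immediate. By the characterisation of $\mathsf{tw}$ in \cite[Section 7.2]{BC}, this homomorphism is exactly the KZ twist, so $\sigma=\mathsf{tw}(\bc'-\bc)$, which is trivial under our hypothesis. The main obstacle is the deformation step when $\bc$ or $\bc'$ sits on a wall of the chamber: one has to deform away from the wall in a transverse direction and check that the resulting equivalence is independent of this choice, which ultimately uses parts (3)--(4) of Proposition \ref{Prop:KZ_properties} to transfer the required faithfulness information between the categorical and Hecke sides.
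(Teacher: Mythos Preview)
Your overall strategy—deform to a semisimple generic fibre, match labels through the Hecke side, and identify the resulting bijection with $\mathsf{tw}(\bc'-\bc)$—is the same as the paper's. But the middle of your argument has a genuine gap, and your diagnosis of the remaining obstacle is off.

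The gap is in your second and third paragraphs. The lift $P_R$ of $P_{KZ}$ is \emph{not} a projective generator of $\OCat_{\bc,R}(W)$: $P_{KZ}$ only defines a quotient functor, and its lift does the same. So the identification $\End(P_R)\cong\End(P'_R)$ only records an equivalence of the Hecke quotients, not of the Cherednik categories themselves; specialising it cannot by itself produce $\Phi_{\bc'\leftarrow\bc}$. Relatedly, your claim that ``its specialisation at $\bc$ is an indecomposable projective $P_\bc(\sigma_\bc(\tau))$'' is not justified: direct-sum decompositions over $K$ do not specialise to indecomposable pieces over the residue field, and nothing in your argument forces the equivalence to send standards to standards. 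The paper does not track summands; instead it uses Rouquier's faithfulness machinery: one shows that $\pi^i_R$ is $1$-faithful (condition (L2)), which forces $\pi^i_R$ to restrict to an equivalence $\OCat^{i,\Delta}_R\xrightarrow{\sim}\Cat_R^{\pi^i(\Delta)}$, and then (L1) matches the images of standards. That is the step doing the real work, and it is absent from your sketch.

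Your final paragraph misidentifies the residual difficulty. The problem is not that $\bc$ might sit on a chamber wall; it is that $\mathsf{KZ}_\bc$ may fail to be $0$-faithful (exactly when the hypothesis of Proposition~\ref{Prop:KZ_properties}(4) fails), and then Rouquier's $0$-faithful $\Rightarrow$ $1$-faithful boost is unavailable. The fix in the paper is not ``deform transversely away from the wall'' but to replace the Hecke quotient by an \emph{extended} quotient $\tilde{\Cat}_\bc$, defined by $\tilde{P}=P_{KZ}\oplus P^1$ where $P^1$ collects the projectives whose simple quotients have support of codimension~$1$. One proves directly that $\overline{\pi}_\bc:\OCat_\bc(W)\twoheadrightarrow\tilde{\Cat}_\bc$ is always $0$-faithful, and that $\tilde{\Cat}_{R,\bc}\cong\tilde{\Cat}_{R,\bc'}$ compatibly with the KZ functors (reducing to the rank-one case via induction). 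This is the content of Lemma~\ref{Lem:0-faithful_extension} and Proposition~\ref{Prop:ext_quot_equiv}, and it is what you would need to add.
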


Proposition \ref{Prop:cat_equi} is a consequence of Proposition \ref{Prop:cat_equi_hw}.
In fact, for $\psi\in \param_{\Z}$, we have {\it shift functors} $\mathsf{Sh}_\psi:
\OCat_\bc(W)\rightarrow \OCat_{\bc+\psi}(W)$, they are given by taking tensor products
with shift bimodules, see, e.g., \cite[Section 3.1]{rouq_der}. For a fixed $\psi$, this functor is an
equivalence for a Zariski generic $\bc$, this can be deduced from  \cite[Corollary 3.5]{rouq_der}.

\subsubsection{Highest weight covers}\label{SS_hw_covers}
A general technique of proving results like Proposition \ref{Prop:cat_equi_hw} is
due to Rouquier, \cite[Section 4.2]{rouqqsch}, who proved Proposition \ref{Prop:cat_equi_hw}
under the assumption that $\operatorname{KZ}_\bc, \operatorname{KZ}_{\bc'}$ are fully faithful
on all standardly filtered objects, see (4) of Proposition \ref{Prop:KZ_properties}.
An extension of Rouquier's technique found in \cite[Section 7]{VV_proof} allows to remove
the additional restriction on $\bc$.

One obvious obstruction to an existence of an equivalence $\mathcal{O}_\bc(W)\rightarrow
\mathcal{O}_{\bc'}(W)$ with $\Delta_\bc(\lambda)\mapsto \Delta_{\bc'}(\lambda)$
is that highest weight orders  may be different. The condition that ${\bf c}'$ lies in the closure
of the chamber containing ${\bf c}$ guarantees  that $\leqslant^{\bf c}$ refines
$\leqslant^{\bf c'}$ (strictly speaking, here we need to modify the order
and require $\tau<^\bc \xi$ if $c_\tau-c_\xi\in \mathbb{Q}_{>0}$, not in
$\mathbb{Z}_{>0}$, but this does not matter). So there is a common order on
$\Irr(\OCat_\bc(W))\cong \Irr(\OCat_{\bc'}(W))$ making $\OCat_\bc(W),\OCat_{\bc'}(W)$
into highest weight categories.

Now let us consider a more general setting. Let $\Cat$ be a $\C$-linear
abelian category equivalent to $A\operatorname{-mod}$, where $A$
is a finite dimensional associative algebra. Let $\OCat^1,\OCat^2$
be two highest weight categories with identified posets
$\Irr(\OCat^1),\Irr(\OCat^2)$, let us denote this common poset by $\Lambda$.
Assume that there are quotient functors $\pi^i:\OCat^i\twoheadrightarrow \Cat$.

{\bf General speculation}: If $\Cat$ is ``large enough'', then there is
a highest weight equivalence $\varphi:\OCat^1\rightarrow \OCat^2$
and $\pi^2\circ\varphi\cong \pi^1$.

The condition that $\pi^1,\pi^2$ are fully faithful on standardly filtered objects (below
we will say that $\pi^i$ is {\it $0$-faithful}), morally, says that
$\Cat$ is ``large enough'', but is not sufficient to establish the existence
of $\varphi$. On the other hand, what is sufficient is the following two conditions:
\begin{itemize}
\item[(L1)] $\pi^1(\Delta^1_L)=\pi^2(\Delta^2_L)$ for all $L\in \Lambda$.
\item[(L2)] For any standardly filtered objects $M,N\in \OCat^i, i=1,2$,
we have $\Ext^j_{\OCat^i}(M,N)=\Ext^j_{\Cat}(M,N)$ for $j=0,1$. In this situation
we say that $\pi^i$ is {\it $1$-faithful}.
\end{itemize}
Indeed, (L2) guarantees that $\pi^i$ restricts to an equivalence
$\OCat^{i\Delta}\xrightarrow{\sim} \Cat^{\pi^i(\Delta)}$, where $\OCat^{i\Delta}$
is the full subcategory of standardly filtered objects in $\OCat^i$ and
$\Cat^{\pi^i(\Delta)}$ denotes the full subcategory of all objects in $\Cat$
filtered by $\pi^i(\Delta_L), L\in \Lambda$. (L1) guarantees that
$\Cat^{\pi^1(\Delta)}=\Cat^{\pi^2(\Delta)}$. This gives an equivalence
$\OCat^{1\Delta}\xrightarrow{\sim}\OCat^{2\Delta}$ that uniquely
extends to an equivalence $\OCat^1\xrightarrow{\sim}\OCat^2$ with
desired properties. Unfortunately, it is almost never possible to
establish (L1) and (L2) directly.

\subsubsection{Deformations}\label{SSS:deform}
A solution found by Rouquier was to use deformations. Let $R$ be a
formal power series ring over $\C$. Suppose that we have $R$-linear
categories $\OCat^i_R, \Cat_R$ (for example, if we have an $R$-algebra
$A_R$ that is a free $R$-module specializing to $A$, then we take
$\Cat_R:=A_R\operatorname{-mod}$). Standard objects in $\OCat^i$ do not
have higher self-extensions and so uniquely deform to $\OCat^i_R$.
So do the projective objects. The deformation of the projective object
defining $\pi^i$ gives the
quotient functor $\pi^i_R:\OCat^i_R\twoheadrightarrow \Cat_R$.
Suppose that the base change $\Cat_{\operatorname{Frac}(R)}$ is
semisimple and that $\pi^i_{\operatorname{Frac}(R)}$ is an equivalence.
This gives rise to  identifications $\Irr(\OCat^i)=\Irr(\OCat^i_{\operatorname{Frac}(R)})=
\Irr(\Cat_{\operatorname{Frac}(R)})$. We further assume that the resulting
identification $\Irr(\OCat^1)\cong \Irr(\OCat^2)$ coincides with our initial
identification.

As Rouquier checked in \cite[Lemma 4.48]{rouqqsch}, an analog of (L1)
holds for $\OCat^1_R,\OCat^2_R$. Further, in \cite[Proposition 4.42]{rouqqsch}
he has checked that if $\pi^i$ is $0$-faithful, then $\pi^i_R$
is $1$-faithful. So (L1) and (L2) hold for the deformed categories,
and we get an equivalence $\varphi_R:\OCat^1_R\xrightarrow{\sim}\OCat^2_R$
that then specializes to an equivalence $\varphi:\OCat^1\rightarrow \OCat^2$
with the desired properties.

\subsubsection{Cherednik categories $\mathcal{O}$}\label{SSS_deform_cher}
Let us return to the situation when the categories of interest are
$\OCat_\bc(W),\OCat_{\bc'}(W)$. Let us explain how to deform them.
We can form the $\C[\param]$-algebra $H_{\param}$ that is the quotient
of $T(\h\oplus \h^*)\#W[\param]$ by the relations similar to above
but where we replace $c(s)\in \C$ with the basis element $c(s)\in \param^*$
corresponding to the conjugacy class of $s$ in $S$. The specialization of
$H_{\param}$ to $\bc\in \param$ coincides with $H_\bc$. We can define the category
$\OCat_{\param}(W)$ and the Verma modules there as before.

Now pick a sufficiently generic line $\ell$ through $\bc$. Let $R$ denote the completion $\C[\ell]^{\wedge_\bc}$,
this a formal power series algebra in 1 variable that is also an algebra over $\C[\param]$. We can consider the specialization $H_{R,\bc}:=R\otimes_{\C[\param]}H_{\param}$ and consider the correspoding category
$\OCat_{R,\bc}$. The algebra $R$ is naturally identified with the completion of $\C[\ell+\bc'-\bc]$
at $\bc$. So we can also form the algebra $H_{R,\bc'}$ and the category $\OCat_{R,\bc'}$.

On the other hand, we also have the $R$-algebra $\mathcal{H}_{R,{\bf q}}$. It still has the maximal
quotient of finite rank over $R$, the algebra that we denote by $\mathcal{H}^{f}_{R,{\bf q}}$.
The KZ functors extend to quotient functors $\OCat_{R,\bc}\twoheadrightarrow \mathcal{H}^{fin}_{R,{\bf q}}\operatorname{-mod}, \OCat_{R,\bc'}\twoheadrightarrow \mathcal{H}^{f}_{R,{\bf q}}\operatorname{-mod}$.
We take $\Cat_R:=\mathcal{H}^{f}_{R,{\bf q}}\operatorname{-mod}, \OCat^1_R:=\OCat_{R,\bc},
\OCat^2_R:=\OCat_{R,\bc'}$.

We need to check that the categories $\OCat^i_{\operatorname{Frac}(R),\bc}$ are semisimple.
This has to do with the choice of $\ell$ -- it is generic. Then the categories $\mathcal{O}_{\operatorname{Frac}R,\bc}(W),
\mathcal{O}_{\operatorname{Frac}R,\bc'}(W)$ are semisimple for the reasons explained in \ref{SSS_gen_ss}.
The KZ functor does not kill $\Delta_{R,\bc}(\tau)$ because this module is not torsion.
So $\pi^i_{\operatorname{Frac}(R)}$ is a category equivalence.

We need to show that the chain of identifications
$$\operatorname{Irr}(W)\xrightarrow{\sim} \operatorname{Irr}(\OCat^1_{\operatorname{Frac}(R)})
\xrightarrow{\sim} \operatorname{Irr}(\Cat_{\operatorname{Frac}(R)})
\xrightarrow{\sim} \operatorname{Irr}(\OCat^2_{\operatorname{Frac}(R)})
\xrightarrow{\sim } \operatorname{Irr}(W)$$
gives the identity. It is a general fact that it gives the bijection $\mathsf{tw}(\bc'-\bc)$,
more or less by the definition of $\mathsf{tw}$. By our choice of $\bc',\bc$, this
bijection is the identity, \cite[Theorem 7.2]{BC}.

This completes the proof of Proposition \ref{Prop:cat_equi_hw}
in the case when the functors $\mathsf{KZ}_\bc, \mathsf{KZ}_{\bc'}$
are $0$-faithful.

\subsubsection{Extended quotients}\label{SSS_ext_quot}
Let us explain how to modify the argument above  when the functors $\mathsf{KZ}_\bc, \mathsf{KZ}_{\bc'}$
are not $0$-faithful. The idea is to consider
some intermediate quotient between $\OCat_{\bc}(W)$ and $\mathcal{H}_{\bf q}^{f}(W)\operatorname{-mod}$.
This idea has appeared in \cite[Section 7]{VV_proof} in a different situation and was
used in \cite[Section 4.2]{rouq_der} to prove Proposition \ref{Prop:cat_equi_hw}.

Recall that $\mathsf{KZ}_\bc$ is the quotient functor defined by the sum of the projectives
$P_\bc(\tau)$ such that $\Supp(L_\bc(\tau))=\h$. Now consider the sum $P^1$ of all projectives
$P_\bc(\tau)$ such that $\operatorname{codim}_{\h}\Supp(L_\bc(\tau))= 1$.
Set $\tilde{P}:=P_{KZ}\oplus P^1$.   Let $\tilde{\Cat}_\bc$ denote the quotient category
of $\OCat_\bc(W)$ defined by $\tilde{P}$. The functor $\mathsf{KZ}_\bc: \OCat_\bc(W)
\twoheadrightarrow \mathcal{H}_{\bf q}^{f}(W)\operatorname{-mod}$
factorizes as $\mathsf{KZ}_\bc=\underline{\pi}_\bc\circ \overline{\pi}_\bc$,
where $\overline{\pi}_\bc$ is the quotient functor $\OCat_\bc(W)\twoheadrightarrow
\tilde{\Cat}_\bc$ defined by $\tilde{P}$ and $\underline{\pi}_\bc:
\tilde{\Cat}_\bc\twoheadrightarrow \mathcal{H}_{\bf q}^{f}(W)\operatorname{-mod}$
is the quotient functor defined by the direct summand $P_{KZ}$ of $\tilde{P}$.
We will write $\tilde{P}_\bc,\tilde{P}_{\bc'}$ when we need to indicate the
dependence on the parameter.

The following lemma was proved in \cite{VV_proof} (the proof is contained
in that of Proposition 8.1 of {\it loc.cit.}). See also \cite[Lemma 2.8]{RSVV}.

\begin{Lem}\label{Lem:0-faithful_extension}
The quotient functors $\overline{\pi}_\bc,\overline{\pi}_{\bc'}$ are $0$-faithful.
\end{Lem}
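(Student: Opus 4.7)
The plan is to verify the two standard criteria for a Gabriel (Serre) quotient functor to be fully faithful on a subcategory. First, identify $\ker\overline{\pi}_\bc$: by construction of $\tilde P = P_{KZ}\oplus P^1$, its kernel is the Serre subcategory $\OCat_\bc^{\geq 2}(W)\subset \OCat_\bc(W)$ spanned by the simples $L_\bc(\tau)$ with $\operatorname{codim}_\h \Supp L_\bc(\tau)\geq 2$. By the calculus of Gabriel quotients, $\overline{\pi}_\bc$ is fully faithful on the full subcategory of standardly filtered objects if and only if, for every standardly filtered $N$:
\begin{enumerate}
\item[(i)] $N$ admits no nonzero subobject in $\OCat_\bc^{\geq 2}(W)$, and
\item[(ii)] $\Ext^1_{\OCat_\bc(W)}(K,N)=0$ for every $K\in \OCat_\bc^{\geq 2}(W)$.
\end{enumerate}

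For (i), each Verma $\Delta_\bc(\tau)\cong \C[\h]\otimes \tau$ is free over $\C[\h]=S(\h^*)$, hence any iterated extension of Vermas, and in particular $N$, is free over $\C[\h]$. Thus $N$ carries no $\C[\h]$-submodule whose support has positive codimension in $\h$, and a fortiori no subobject in $\OCat_\bc^{\geq 2}(W)$. For (ii) one reduces by dévissage to the case $K=L_\bc(\tau)$ with $\operatorname{codim}_\h\Supp L\geq 2$ and $N=\Delta_\bc(\sigma)$. The tool is the BE restriction functor to a codimension-one parabolic $\underline{W}=W_H$ (stabilizer of a reflection hyperplane $H$). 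By Lemma~\ref{Lem:Supp_Restr}, $\Res^W_{W_H}(L)=0$, while by Lemma~\ref{Lem:IndRes_stand} the module $\Res^W_{W_H}(\Delta_\bc(\sigma))$ is standardly filtered over $W_H$. Using biadjointness of $\Res$ and $\Ind$ (Proposition~\ref{Prop:ResInd_biadj}) one sets up a long exact sequence of $\Ext$ groups in which every extension class of $L$ by $\Delta_\bc(\sigma)$ is detected upon restriction to some codimension-one parabolic; on $W_H$ the relevant category is essentially the cyclic-group Cherednik category computed in the example of~\ref{SSS_Supports}, where the vanishing is transparent.

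The main obstacle is precisely step (ii). Unlike (i), the Ext vanishing is not a direct commutative-algebra consequence of freeness over $\C[\h]$, since $\Ext^1$ in $\OCat_\bc(W)$ is not controlled by $\Ext^1$ over $\C[\h]$. One therefore has to show that an $\OCat_\bc(W)$-extension class is already zero as soon as all its restrictions to codimension-one parabolics vanish, which is the technical heart of the argument in the proof of Proposition~8.1 of~\cite{VV_proof} (and of Lemma~2.8 of~\cite{RSVV}). A natural alternative, should the direct restriction argument become unwieldy, is to deform $\bc$ along a sufficiently generic formal disc and invoke generic semisimplicity (Lemma~\ref{Lem:O_ss}): the deformed kernel becomes $R$-torsion while standardly filtered objects deform to $R$-flat modules, so a Nakayama-style argument descends the generic vanishing to the special fiber.
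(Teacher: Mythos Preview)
Your framework is correct: $0$-faithfulness of $\overline{\pi}_\bc$ amounts to showing that every standardly filtered $N$ is closed with respect to the Serre subcategory $\OCat_\bc^{\geq 2}(W)$, i.e., conditions (i) and (ii) hold. Your argument for (i) via freeness over $\C[\h]$ is fine.

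The gap is (ii), and your restriction-to-parabolics sketch does not close it. From $\Res^W_{W_H}L=0$ and biadjointness you get $\Ext^1(L,\Ind^{W_H}_W N')=0$ for any $N'$, but $\Delta_\bc(\sigma)$ is not of this form, and the unit/counit maps $\Delta_\bc(\sigma)\to \Ind^{W_H}_W\Res^W_{W_H}\Delta_\bc(\sigma)$ give no control on the kernel of the induced map on $\Ext^1$. There is no general principle that an extension class in $\OCat_\bc(W)$ vanishes once all its restrictions to rank-one parabolics vanish; this is exactly where your sketch stops being an argument. The deformation alternative you mention is also off target here: Nakayama-type descent produces upper-semicontinuity of $\dim\Ext$, not the vanishing you need at the special fibre.

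The argument that actually works, and is the one underlying the references the paper cites, uses tiltings and Lemma~\ref{Lem:supp_tilt}. Embed $\Delta_\bc(\sigma)\hookrightarrow T_\bc(\sigma)$ with standardly filtered cokernel $C$ (this is the defining property of $T_\bc(\sigma)$, see \ref{SSS_cost_tilt}). The long exact sequence for $\Hom(L,-)$ gives
\[
\Hom(L,C)\longrightarrow \Ext^1(L,\Delta_\bc(\sigma))\longrightarrow \Ext^1(L,T_\bc(\sigma)).
\]
The left term vanishes by your own argument for (i), since $C$ is standardly filtered and hence $\C[\h]$-free while $L$ is torsion. The right term vanishes by Lemma~\ref{Lem:supp_tilt}: for $L$ with $\operatorname{codim}_\h\Supp L\geq 2$ one has $\Ext^i(L,T)=0$ for $i=0,1$ and any tilting $T$. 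Hence $\Ext^1(L,\Delta_\bc(\sigma))=0$, which is (ii).
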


Note that the condition in 4) of Proposition \ref{Prop:KZ_properties} precisely
means that the category $\OCat_\bc(W_H)$ is semisimple for every reflection
hyperplane $H$, equivalently, there are no finite dimensional modules in this category.
Using Lemma \ref{Lem:Supp_Restr}, we see that  $\operatorname{codim}_{\h}\Supp(L_\bc(\tau))\leqslant 1$
implies $\Supp L_\bc(\tau)=\h$. So Lemma \ref{Lem:0-faithful_extension} can be regarded
as a generalization of 4) of Proposition \ref{Prop:KZ_properties}.

Now let us explain how to modify the deformation argument in order to establish
a desired equivalence $\OCat_\bc(W)\xrightarrow{\sim}\OCat_{\bc'}(W)$.
Let $R$ have the same meaning as before. We can consider the deformation $P^1_R\in \OCat_{R,\bc}(W)$
of the projective object $P^1$. So we get the deformation  $\tilde{\Cat}_{R,\bc}$ of
$\tilde{\Cat}_\bc$ that comes with quotient functors $\overline{\pi}_{R,\bc}:
\OCat_{R,\bc}(W)\twoheadrightarrow \tilde{\Cat}_{R,\bc}, \underline{\pi}_{R,\bc}:
\tilde{\Cat}_{R,\bc}\twoheadrightarrow \mathcal{H}_{R,{\bf q}}^{f}(W)$.

\begin{Prop}\label{Prop:ext_quot_equiv}
We have an equivalence $\tilde{\Cat}_{R,\bc}\xrightarrow{\sim} \tilde{\Cat}_{R,\bc'}$
that intertwines the quotient functors $\underline{\pi}_{R,\bc}$ and $\underline{\pi}_{R,\bc'}$.
\end{Prop}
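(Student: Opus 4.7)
The plan is to apply the highest weight cover formalism recalled in Subsection \ref{SS_hw_covers} to the intermediate quotient functors $\underline{\pi}_{R,\bc}\colon \tilde{\Cat}_{R,\bc}\twoheadrightarrow \mathcal{H}^{f}_{R,{\bf q}}(W)\operatorname{-mod}$ and $\underline{\pi}_{R,\bc'}\colon \tilde{\Cat}_{R,\bc'}\twoheadrightarrow \mathcal{H}^{f}_{R,{\bf q}}(W)\operatorname{-mod}$. Once both are shown to be $1$-faithful highest weight covers of the same target and the identifications of simples at the generic fibre match, conditions (L1) and (L2) will produce the desired equivalence intertwining the quotients, exactly as in \ref{SSS_deform_cher}.

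To set the stage, one first observes that $\tilde{\Cat}_{R,\bc}$ inherits the structure of a highest weight $R$-category. The Serre subcategory $\ker\overline{\pi}_{R,\bc}$ is generated by those simples whose specialization has support of codimension at least $2$ in $\h$, and by Lemma \ref{Lem:supp_simple} together with the compatibility of support codimension with the $\bc$-order this is a poset coideal; its standard objects are the images $\overline{\pi}_{R,\bc}(\Delta_{R,\bc}(\tau))$ for $\tau$ satisfying $\operatorname{codim}_{\h}\Supp L_{\bc}(\tau)\leqslant 1$. By Lemma \ref{Lem:O_ss} and genericity of the line $\ell$, the generic fibre $\OCat_{\operatorname{Frac}(R),\bc}(W)$ is semisimple; every simple is then its own Verma module and has full support $\h$, so $\tilde{\Cat}_{\operatorname{Frac}(R),\bc}=\OCat_{\operatorname{Frac}(R),\bc}(W)$ and $\underline{\pi}_{\operatorname{Frac}(R),\bc}$ coincides with the generic KZ functor, which is an equivalence. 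The bijection $\Irr(W)\cong\Irr(W)$ obtained by composing the identifications from both fibres equals the KZ twist of $\bc'-\bc$, which is the identity under the standing hypothesis $\bc'-\bc\in\underline{\param}_{\Z}$. This establishes condition (L1).

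The main technical step is condition (L2): $\underline{\pi}_{R,\bc}$ is $1$-faithful. The input is Lemma \ref{Lem:0-faithful_extension}, which states that $\overline{\pi}_{\bc}$ is $0$-faithful at the special fibre. By Rouquier's deformation principle (\cite[Proposition 4.42]{rouqqsch}), this upgrades to $1$-faithfulness of $\overline{\pi}_{R,\bc}$ over $R$. From there, given standardly filtered $M,N\in\tilde{\Cat}_{R,\bc}$, lift them via $\overline{\pi}_{R,\bc}$ to standardly filtered $\tilde M,\tilde N\in\OCat_{R,\bc}(W)$ so that $1$-faithfulness of $\overline{\pi}_{R,\bc}$ gives $\Ext^{\leqslant 1}_{\tilde{\Cat}_{R,\bc}}(M,N)=\Ext^{\leqslant 1}_{\OCat_{R,\bc}}(\tilde M,\tilde N)$. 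It then remains to match the right-hand side with $\Ext^{\leqslant 1}_{\mathcal{H}^{f}_{R,{\bf q}}}(\underline{\pi}_{R,\bc}M,\underline{\pi}_{R,\bc}N)$; equivalently, to show that no non-zero extensions of standardly filtered objects of $\tilde{\Cat}_{R,\bc}$ lie in the kernel of $\underline{\pi}_{R,\bc}$ in degrees $\leqslant 1$. The kernel specializes to a Serre subcategory generated by simples of support codimension exactly one in $\h$, and these simples are by construction projectively represented by the direct summand $P^1$ of $\tilde P$, forcing the low-degree Ext groups in question to vanish or to be already visible at the Hecke-algebra level. An analogous argument applies to $\bc'$.

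With (L1) and (L2) in hand for both covers, the formal argument of Subsection \ref{SS_hw_covers} yields an $R$-linear highest weight equivalence $\tilde{\Cat}_{R,\bc}\xrightarrow{\sim}\tilde{\Cat}_{R,\bc'}$ intertwining $\underline{\pi}_{R,\bc}$ with $\underline{\pi}_{R,\bc'}$. The main obstacle is the penultimate paragraph: one cannot simply factor $\KZ_{R,\bc}=\underline{\pi}_{R,\bc}\circ\overline{\pi}_{R,\bc}$ and transfer faithfulness from $\overline{\pi}_{R,\bc}$, since $\KZ_{R,\bc}$ is itself not $0$-faithful by assumption. Instead one must argue directly inside $\tilde{\Cat}_{R,\bc}$, carefully controlling extensions whose middle term lies in the kernel of $\underline{\pi}_{R,\bc}$, and it is precisely the codimension-at-most-one condition built into the definition of $\tilde{\Cat}_{\bc}$ that makes this control possible.
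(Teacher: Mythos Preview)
Your approach differs substantially from the paper's and contains a genuine gap that you yourself flag in the final paragraph. You aim to show that $\underline{\pi}_{R,\bc}$ is a $1$-faithful highest weight cover and then invoke (L1)--(L2). But the crucial step---passing from $1$-faithfulness of $\overline{\pi}_{R,\bc}$ to $1$-faithfulness of $\underline{\pi}_{R,\bc}$---amounts to asking that $\KZ_{R,\bc}=\underline{\pi}_{R,\bc}\circ\overline{\pi}_{R,\bc}$ be $1$-faithful on the relevant standardly filtered objects, which is precisely what fails in the regime we are trying to handle (this is the whole reason the extended quotient was introduced). Your sentence about ``forcing the low-degree Ext groups to vanish'' is not an argument, and the closing paragraph is an honest acknowledgement of the gap rather than a resolution. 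A secondary issue: asserting that $\tilde{\Cat}_{R,\bc}$ is highest weight requires the kernel of $\overline{\pi}_{R,\bc}$ to be spanned by simples indexed by a poset coideal, and support codimension is not monotone for the $c$-order in general, so this claim also needs work.

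The paper sidesteps all of this. It neither equips $\tilde{\Cat}_{R,\bc}$ with a highest weight structure nor proves any faithfulness of $\underline{\pi}_{R,\bc}$. Since $\tilde{\Cat}_{R,?}$ is, by definition, the module category of $\End_{\OCat_{R,?}}(\tilde{P}_{R,?})^{opp}$, it suffices to prove two things: (1) $\KZ_{R,?}$ is fully faithful on $\tilde{P}_{R,?}$, and (2) $\KZ_{R,\bc}(\tilde{P}_{R,\bc})\cong\KZ_{R,\bc'}(\tilde{P}_{R,\bc'})$ as Hecke modules. Claim (1) is immediate from part (3) of Proposition~\ref{Prop:KZ_properties} (KZ is fully faithful on all projectives, no $0$-faithfulness hypothesis needed). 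For (2), one replaces $\tilde{P}_{R,?}$ by the object $\bigoplus_H\Ind^{W_H}_W(P_{R,H})$, where $P_{R,H}$ is a projective generator of $\OCat_{R,?}(W_H)$; these have the same indecomposable summands, and by Corollary~\ref{Cor:KZ_Ind} the comparison reduces to the rank-one case $\dim\h=1$, where $\tilde{\Cat}_?=\OCat_?(W)$ and the statement can be checked directly. This route is both shorter and complete.
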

\begin{proof}[Sketch of proof]
The first case to consider is when $\dim \h=1$ so that
$\tilde{\Cat}_?=\OCat_?(W)$. This is a very easy case that can be done by hand,
compare to \cite[Lemma 7.3]{VV_proof} or \cite[2.4.4]{RSVV}.

In the general case, we can argue as follows. It is sufficient to show  that
\begin{enumerate}
\item
$\mathsf{KZ}_{R,?}$ is fully faithful on $\tilde{P}_{R,?}$ (the deformation of $\tilde{P}_?$),
where $?=\bc,\bc'$.
\item $\mathsf{KZ}_{R,\bc}(\tilde{P}_{R,\bc})\cong \mathsf{KZ}_{R,\bc'}(\tilde{P}_{R,\bc'})$.
\end{enumerate}
(1) follows from (3) in Proposition \ref{Prop:KZ_properties}.
In the proof of (2) we can replace $\tilde{P}_{R,\bc}$ with $\bigoplus_H \operatorname{Ind}^{W_H}_W(P_{R,H})$,
where $P_{R,H}$ is a projective generator of $\OCat_{R,\bc}(W_H)$, these two projectives
have the same indecomposable summands (perhaps, with different multiplicities). We modify
$\tilde{P}_{R,\bc'}$ similarly. Then we use  the case of $\dim \h=1$ and Corollary \ref{Cor:KZ_Ind}.
Note that a similar argument combined with biadjointness of $\Res$ and $\Ind$
deduces (1) from the case of $\dim \h=1$ (we just need to know  that $\mathsf{KZ}_{R,\bc}$
is fully faithful on the projective objects when $\dim\h=1$).
\end{proof}

Now we can complete the proof of Proposition \ref{Prop:cat_equi_hw}
as in \ref{SSS:deform}.

\subsection{Derived equivalences}\label{SS_der_equi}
Again, we pick $\bc\in \param$ and consider the shifted lattice $\bc+\param_{\Z}$.
As we have seen in the previous section, the categories $\mathcal{O}_\bc(W)$
and $\mathcal{O}_{\bc'}(W)$ are equivalent provided $\bc,\bc'$ lie in the same chamber
and the equivalence intertwines the KZ functors. A natural question is what happens
when $\bc$ and $\bc'$ lie in different chambers. Here is the most basic version of
an answer.

\begin{Thm}\label{Thm:derived_equi}
Let $\bc,\bc'\in \param$ satisfy $\bc'-\bc\in \param_{\Z}$. Then there is a derived equivalence
$D^b(\OCat_\bc(W))\xrightarrow{\sim}D^b(\OCat_{\bc'}(W))$ that intertwine the KZ functors.
\end{Thm}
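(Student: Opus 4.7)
The plan is to realize the desired derived equivalence as a \emph{derived shift functor}, and deduce that it is an equivalence by a deformation to Zariski-generic parameters. Set $\psi:=\bc'-\bc\in \param_\Z$. As recalled in \ref{SS_abelian_equi}, there is a shift bimodule $B_\psi(\bc)$, an $(H_{\bc+\psi},H_\bc)$-bimodule producing a shift functor $\mathsf{Sh}_\psi := B_\psi(\bc)\otimes_{H_\bc}(-)$ from $\OCat_\bc(W)$ to $\OCat_{\bc'}(W)$. The first step is to show that $B_\psi(\bc)$ has finite Tor dimension over $H_\bc$; this can be extracted from the flat family structure of the shift bimodules across $\param$ together with finite global dimension of $H_\bc$ on $\OCat_\bc(W)$. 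This gives the derived functor $L\mathsf{Sh}_\psi\colon D^b(\OCat_\bc(W))\to D^b(\OCat_{\bc'}(W))$, and the proof reduces to showing that $L\mathsf{Sh}_\psi$ is an equivalence and intertwines the KZ functors.

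For the equivalence, I would apply the deformation recipe of \ref{SSS_deform_cher}: pick a sufficiently generic formal arc $R=\C[\ell]^{\wedge_\bc}$ through $\bc$, form the $R$-algebras $H_{R,\bc}$, $H_{R,\bc'}$, the $R$-deformed bimodule $B_{R,\psi}$, and the derived shift functor $L\mathsf{Sh}_{R,\psi}\colon D^b(\OCat_{R,\bc})\to D^b(\OCat_{R,\bc'})$. By Lemma \ref{Lem:O_ss} and the genericity of the arc, the fraction-field categories $\OCat_{\operatorname{Frac}(R),\bc}$ and $\OCat_{\operatorname{Frac}(R),\bc'}$ are semisimple, and for Zariski generic parameters the shift functor is already an abelian equivalence (cf.\ \cite[Corollary 3.5]{rouq_der}). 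Hence $L\mathsf{Sh}_{\operatorname{Frac}(R),\psi}$ is an equivalence with inverse $L\mathsf{Sh}_{\operatorname{Frac}(R),-\psi}$. To descend this to $R$, I would check that the unit and counit maps of the putative adjunction $(L\mathsf{Sh}_{R,\psi},L\mathsf{Sh}_{R,-\psi})$ become quasi-isomorphisms after base change to $\operatorname{Frac}(R)$; their cones have bounded, $R$-flat cohomology (by the uniform Tor-dimension bound), so a Nakayama-type argument forces them to vanish already over $R$. Specialization to the closed point of $\operatorname{Spec} R$ then yields the desired equivalence $D^b(\OCat_\bc(W))\xrightarrow{\sim} D^b(\OCat_{\bc'}(W))$.

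For the KZ intertwining, Lemma \ref{Lem:localization} gives $H_\bc[\delta^{-1}]=D(\h^{reg})\#W=H_{\bc'}[\delta^{-1}]$, and a direct inspection of the construction of $B_\psi(\bc)$ shows that $B_\psi(\bc)[\delta^{-1}]$ is canonically isomorphic to the regular $D(\h^{reg})\#W$-bimodule. Hence $L\mathsf{Sh}_\psi$ commutes with the localization functor $M\mapsto M[\delta^{-1}]$, and since $\KZ_\bc$ factors through that localization followed by taking horizontal sections over $\h^{reg}/W$, the derived equivalence intertwines $\KZ_\bc$ with $\KZ_{\bc'}$.

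The main obstacle, in my view, is the descent step from $\operatorname{Frac}(R)$ to $R$. One must not only control the Tor dimension of $B_{R,\psi}$ uniformly in the base, but also verify that the derived shift bimodules $B_{R,\psi}$ and $B_{R,-\psi}$ are mutually inverse in the derived sense; since the abelian-level argument of \ref{SS_abelian_equi} breaks down on walls, a purely derived argument is essential here. Thus the bulk of the technical work resides in the formal-deformation machinery for Cherednik shift bimodules, following \cite[Section 3--4]{rouq_der}.
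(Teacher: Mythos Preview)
Your descent step from $\operatorname{Frac}(R)$ to the closed point is the genuine gap. Nakayama goes the wrong direction: vanishing of a complex after base change to $\operatorname{Frac}(R)$ never forces vanishing at the closed point (the residue field itself is a counterexample). You try to patch this by asserting that the cones of the unit and counit have $R$-flat cohomology ``by the uniform Tor-dimension bound,'' but a bound on the Tor-amplitude of $B_{R,\psi}$ over $H_{R,\bc}$ says nothing about $R$-flatness of the cohomology of $B_{R,-\psi}\otimes^L_{H_{R,\bc'}}B_{R,\psi}$. Without that flatness, knowing the functor is an equivalence over the semisimple generic fibre gives no control at $\bc$. (There is also a smaller issue: even where $\mathsf{Sh}_\psi$ and $\mathsf{Sh}_{-\psi}$ are abelian equivalences, you have not argued they are mutually inverse.)

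The paper avoids this entirely by never specializing from generic to special. It works over the affine span $\param_0$ of a face $F$ of the chamber containing $\bc$, constructs a Harish-Chandra bimodule $\B_{\param_0}(\psi)$ over that family, and proves (by separate arguments in \cite{rouq_der}) that $\B_{\hat{\bc}}(\psi)\otimes^L_{H_{\hat{\bc}}}\bullet$ is a derived equivalence for \emph{Weil} generic $\hat{\bc}\in\param_0$; note that these $\hat{\bc}$ lie on walls, so the categories are not semisimple and this step is substantive. Generic flatness then upgrades ``Weil generic'' to ``Zariski generic.'' The key trick is Remark~\ref{Rem:change_c}: since $\bc$ and $\bc+\bc^0$ (for $\bc^0\in F\cap\underline{\param}_\Z$) lie in the same chamber, Proposition~\ref{Prop:cat_equi_hw} gives an abelian equivalence $\OCat_\bc(W)\cong\OCat_{\bc+\bc^0}(W)$, so one may \emph{move} $\bc$ to a Zariski generic point of $\param_0$ rather than trying to specialize the equivalence down to it. Your one-parameter arc through $\bc$ gives you no such freedom, because generic points on the arc lie in no wall and the resulting semisimplicity is too coarse to see the derived structure at $\bc$.
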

This result was conjectured in \cite{rouqqsch} and proved in \cite[Section 5]{GL}
for $W=G(\ell,1,n)$ and in \cite{rouq_der} for an arbitrary $W$. The crucial
role in the latter is played by so called wall-crossing functors.

\subsubsection{Wall-crossing functors}
Let $F$ be a face in an open chamber $C$ with $\bc\in C$. Let $C'$ be a chamber
that is opposite to $C$ with respect to $F$ (for example, if $F$ is a codimension
one face, then we take the unique other chamber adjacent to $F$). Pick
$\bc'\in (\bc+\underline{\param}_{\Z})\cap C'$. Let $\param_0:=\bc+\operatorname{Span}_{\C}(F)$, this is an affine subspace in $\param$.

We have the following important remark that follows from
Proposition \ref{Prop:cat_equi_hw}.

\begin{Rem}\label{Rem:change_c} For every
$\bc^0\in F\cap \underline{\param}_{\Z}$, we can  replace $\bc,\bc'$ with $\bc+\bc^0,
\bc'+\bc^0$
without changing the categories $\mathcal{O}$. In particular, and this is going to be very important in what follows, in studying an interplay between $\OCat_\bc,\OCat_{\bc'}$, we may assume that that $\bc$ is Zariski generic in $\param$.
\end{Rem}

The following is one of the main results of \cite{rouq_der}. It implies Theorem \ref{Thm:derived_equi}.

\begin{Prop}\label{Prop:WC_functors}
There is a derived equivalence $\WC_{\bc'\leftarrow \bc}:D^b(\OCat_\bc(W))
\rightarrow D^b(\OCat_{\bc'}(W))$ intertwining the KZ functors.
\end{Prop}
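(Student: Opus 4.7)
The plan is to construct $\WC_{\bc'\leftarrow \bc}$ as the derived tensor product with an appropriate shift bimodule, and to prove it is an equivalence by a deformation argument in the spirit of \ref{SSS_deform_cher}--\ref{SSS_ext_quot}. By Remark \ref{Rem:change_c}, I first reduce to the case where $\bc$ lies at a Zariski generic point of the affine line $\bc + \C\psi$, where $\psi := \bc' - \bc \in \param_\Z$; this guarantees that after a mild formal deformation the parameter leaves the wall containing $F$ in every direction transverse to $\param_0$.

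Next I construct the relevant bimodule. Recall from \ref{SS_abelian_equi} the shift bimodule $B_\psi$ attached to $\psi$, an $(H_{\bc'}, H_\bc)$-bimodule whose tensor product realizes the shift functor $\mathsf{Sh}_\psi : \OCat_\bc(W) \to \OCat_{\bc'}(W)$. For generic $\bc$, $\mathsf{Sh}_\psi$ is an abelian equivalence, and it always intertwines the KZ functors because $B_\psi$ becomes the regular bimodule after localization to $\h^{\mathrm{reg}}$. I then pass to a formal deformation: setting $R = \C[[t]]$, form $H_{R,\bc}$, $H_{R,\bc'}$ and $B_{R,\psi}$ as in \ref{SSS_deform_cher}, choosing the deformation direction generically so that over $\mathrm{Frac}(R)$ the bimodule $B_{\mathrm{Frac}(R),\psi}$ implements an abelian equivalence of the (already semisimple) categories $\OCat_{\mathrm{Frac}(R),?}$. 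I then define
\[
\WC_{\bc'\leftarrow \bc} := \C \otimes^{\mathbf{L}}_R \bigl( B_{R,\psi} \otimes^{\mathbf{L}}_{H_{R,\bc}} - \bigr),
\]
as a functor $D^b(\OCat_\bc(W)) \to D^b(\OCat_{\bc'}(W))$. The KZ-intertwining is then automatic from the bimodule-level statement, which survives derivation and specialization.

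The main obstacle is proving that this specialized derived functor is in fact an equivalence landing in bounded complexes over $\OCat_{\bc'}(W)$. I would produce a candidate quasi-inverse by applying the same recipe to $-\psi$ and check that the compositions are isomorphic to the identity on standardly filtered objects, where everything is flat over $R$ and the shift bimodule is well-behaved. The delicate input is the vanishing of $\mathrm{Tor}^{>0}_{H_{R,\bc}}(B_{R,\psi}, \Delta_{R,\bc}(\tau))$: over $\mathrm{Frac}(R)$ these higher Tors vanish because there we have an abelian equivalence, so they are set-theoretically supported at $t = 0$, and I expect to control them by a Rouquier-style $1$-faithfulness argument for the extended quotient functor of Lemma \ref{Lem:0-faithful_extension} combined with the highest weight order on $\OCat_\bc(W)$, arguing inductively on this order. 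This cohomological analysis of the shift bimodule across the wall is where the bulk of the technical work from \cite{rouq_der} lies.
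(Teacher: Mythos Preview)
Your overall shape—define $\WC_{\bc'\leftarrow\bc}$ as a derived tensor product with a shift/Harish--Chandra bimodule—matches the paper, but two aspects of your setup are off in ways that matter.

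First, your use of Remark \ref{Rem:change_c} is geometrically mistaken. That remark lets you translate $\bc$ by lattice elements $\bc^0$ lying in $\operatorname{Span}_{\C}(F)$, i.e.\ \emph{parallel to the face}, so that $\bc$ becomes Zariski generic in $\param_0=\bc+\operatorname{Span}_{\C}(F)$. You instead claim to move $\bc$ to a Zariski generic point of $\bc+\C\psi$, but $\psi=\bc'-\bc$ is transverse to $F$ (it crosses the wall), so this direction is exactly the one you \emph{cannot} move in without changing the chamber of $\bc$. The correct freedom is along the wall, not across it.

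Second, and more seriously, your deformation scheme and the equivalence argument diverge from what actually works. The paper does not take a formal one-parameter deformation and then try to control $\operatorname{Tor}$ at the closed point. Instead it constructs the Harish--Chandra bimodule $\B_{\param_0}(\psi)$ over the entire algebraic family $\param_0$, proves directly that for a \emph{Weil generic} $\hat{\bc}\in\param_0$ the functor $\B_{\hat{\bc}}(\psi)\otimes^L_{H_{\hat{\bc}}}\bullet$ is a derived equivalence, and then uses a generic-flatness argument to pass from Weil generic to Zariski generic $\hat{\bc}$; the reduction of Remark \ref{Rem:change_c} then finishes. Your proposed mechanism—$\operatorname{Tor}$-vanishing on standards via the $1$-faithfulness of the extended quotient of Lemma \ref{Lem:0-faithful_extension} and induction on the highest weight order—is the toolkit for producing \emph{abelian} highest weight equivalences (matching images of standards under a quotient functor), and does not give you control over whether a derived tensor product with a bimodule is a triangulated equivalence. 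Knowing only that over $\operatorname{Frac}(R)$ the shift functor is an abelian equivalence is too weak an input here; you need the derived-equivalence statement at Weil generic points of $\param_0$, which is the genuine content imported from \cite{rouq_der}.
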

\begin{proof}[Sketch of proof]
Set $\psi:=\bc'-\bc$.   Consider the specializations $H_{\param_0+\psi}, H_{\param_0}$
of $H_{\param}$. In \cite[Section 5.2]{rouq_der}, we have produced  a $H_{\param_0+\psi}$-$H_{\param_0}$-bimodule
to be denoted by $\B_{\param_0}(\psi)$. It is {\it Harish-Chandra}  in the sense
of \cite[Section 3]{BEG1} meaning that it is finitely generated as a bimodule, and the operators
$[a,\cdot]:\B_{\param_0}(\psi)\rightarrow \B_{\param_0}(\psi)$ are locally
nilpotent for all $a\in S(\h)^W\cup S(\h^*)^W$ (note that $S(\h)^W, S(\h^*)^W$
are included into both $H_{\param_0+\psi},H_{\param_0}$ and so it makes sense
to consider the adjoint operators above). Now pick $\hat{\bc}\in \param_0$
and consider the specialization $\B_{\hat{\bc}}(\psi)$ of $\B_{\param_0}(\psi)$.
It defines a functor
\begin{equation}\label{eq:WC_functor}\B_{\hat{\bc}}(\psi)\otimes^L_{H_{\hat{\bc}}}\bullet:D^b(\OCat_{\hat{\bc}}(W))\rightarrow D^b(\OCat_{\hat{\bc}+\psi}(W)).\end{equation}
A crucial property of this functor is that if $\hat{\bc}$ is Weil generic
(=lies outside of countably many algebraic subvarieties in $\param_0$),
then (\ref{eq:WC_functor}) is a derived equivalence. Using generic flatness kind arguments
we then show that our functor is an equivalence for a Zariski generic $\hat{\bc}\in
\param_0$. Since we can achieve that $\bc$ is Zariski generic by replacing $\bc$
with $\bc+\bc^0$, this proves Proposition \ref{Prop:WC_functors}.
%This equivalence  maps $\Delta_{\hat{c}}(\tau)$
%to $\nabla_{\hat{c}+\psi}(\tau)$ for any $$
\end{proof}

\subsubsection{Behavior of WC on $K_0$}

Here is an easy but useful property of $\WC_{\bc'\leftarrow \bc}$,
see \cite[3.1.1]{Cher_supp}.

\begin{Lem}\label{Lem:WC_K0}
The following is true.
\begin{enumerate}
\item If $\hat{\bc}\in \param_0$ is Weil generic, then
$$\WC_{\hat{\bc}'\leftarrow \hat{\bc}}(\Delta_{\hat{\bc}}(\tau))\cong \nabla_{\hat{\bc}'}(\tau)$$
for any $\tau\in \Irr(W)$.
\item For a Zariski generic $\bc\in \param_0$, the object $\WC_{\bc'\leftarrow \bc}(\Delta_{\bc}(\tau))\in
D^b(\OCat_{\bc'}(W))$ lies in $\OCat_{\bc'}(W)$ and its class in $K_0$ coincides with that of $\Delta_{\bc'}(\tau)$.
\item In particular, $\WC_{\bc'\leftarrow \bc}$ induces the identity map on the $K_0$ groups.
\end{enumerate}
\end{Lem}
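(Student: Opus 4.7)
The plan is to establish (1) directly from semisimplicity at Weil generic parameters combined with the KZ-intertwining, then to upgrade to Zariski generic $\bc$ in (2) via a generic flatness argument applied to the $\param_0$-family $\B_{\param_0}(\psi)$, and finally to deduce (3) as a formal consequence of (2) together with Remark \ref{Rem:change_c}.

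For (1), pick $\hat{\bc}$ Weil generic in $\param_0$ so that Lemma \ref{Lem:O_ss} makes both $\OCat_{\hat{\bc}}(W)$ and $\OCat_{\hat{\bc}+\psi}(W)$ semisimple; then $\Delta_{\hat{\bc}}(\tau)=L_{\hat{\bc}}(\tau)=\nabla_{\hat{\bc}}(\tau)$ and likewise after shift by $\psi$. Verma modules are free over $S(\h^*)$ and hence have full support $\h$, so $\KZ_{\hat{\bc}}$ is faithful on every object of the semisimple category $\OCat_{\hat{\bc}}(W)$. Combined with the intertwining $\KZ_{\hat{\bc}+\psi}\circ \WC_{\hat{\bc}+\psi\leftarrow\hat{\bc}}\cong \KZ_{\hat{\bc}}$ of derived functors, this forces $\WC_{\hat{\bc}+\psi\leftarrow\hat{\bc}}(L_{\hat{\bc}}(\tau))$ to be concentrated in homological degree zero, and then the derived equivalence property yields a bijection of simples. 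Since $\psi=\bc'-\bc\in\underline{\param}_\Z$, the KZ twist is trivial and the induced identifications of $\Irr(\mathcal{H}_{\bf q}(W)\operatorname{-mod}_{fin})$ on both sides agree under the identity on $\Irr(W)$; hence $\WC_{\hat{\bc}+\psi\leftarrow\hat{\bc}}(L_{\hat{\bc}}(\tau))=L_{\hat{\bc}+\psi}(\tau)=\nabla_{\hat{\bc}+\psi}(\tau)$.

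For (2), recall that $\WC_{\bc'\leftarrow\bc}$ is given by the derived tensor product with the specialization $\B_\bc(\psi)$ of the Harish-Chandra bimodule $\B_{\param_0}(\psi)$, and consider the global complex $C_\tau:=\B_{\param_0}(\psi)\otimes^L_{H_{\param_0}}\Delta_{\param_0}(\tau)$. This is a bounded complex of coherent objects in a suitable deformed category over $\param_0$. Generic flatness over the smooth base $\param_0$ produces a nonempty Zariski open subset on which the specialization commutes with $\otimes^L$ and on which the family is flat with cohomology concentrated in degree zero; indeed, upper-semicontinuity of $\operatorname{Tor}$-ranks upgrades the Weil generic vanishing (established via (1)) to Zariski generic vanishing. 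At Weil generic points of this open locus, (1) identifies the specialization with $\nabla_{\hat{\bc}+\psi}(\tau)$, whose image in $K_0(\OCat_{\hat{\bc}+\psi}(W))\cong K_0(W\operatorname{-mod})$ equals $[\tau]$ (since $\nabla$ and $\Delta$ agree on $K_0$ under the fixed identification). Flatness forces the $K_0$-class of the specialization to be constant on the open set, so for Zariski generic $\bc\in\param_0$ the object $\WC_{\bc'\leftarrow\bc}(\Delta_\bc(\tau))$ lies in $\OCat_{\bc'}(W)$ with class $[\tau]=[\Delta_{\bc'}(\tau)]$.

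For (3), the classes $[\Delta_\bc(\tau)]$, $\tau\in\Irr(W)$, form a $\Z$-basis of $K_0(\OCat_\bc(W))$, and under the fixed identification with $K_0(W\operatorname{-mod})$ this basis corresponds to $\{[\tau]\}$; the same holds on the $\bc'$-side. By Remark \ref{Rem:change_c}, replacing $\bc$ with $\bc+\bc^0$ for $\bc^0\in F\cap\underline{\param}_\Z$ does not alter the relevant categories or WC functor, so we may assume $\bc$ is Zariski generic in $\param_0$; then (2) gives $[\WC_{\bc'\leftarrow\bc}]([\Delta_\bc(\tau)])=[\Delta_{\bc'}(\tau)]$, which is the identity on $K_0(W\operatorname{-mod})$. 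The main obstacle is the generic flatness step in (2): controlling the specialization of $\B_{\param_0}(\psi)\otimes^L\Delta_{\param_0}(\tau)$ requires leveraging both the Harish-Chandra property of $\B_{\param_0}(\psi)$ and the coherence of the deformed categories $\OCat_{\param_0}(W)$ as families over $\param_0$, so that semicontinuity of $\operatorname{Tor}$ genuinely promotes the Weil generic conclusion of (1) to a Zariski generic statement.
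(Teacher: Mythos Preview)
Your argument for (1) has a genuine gap: the claim that $\OCat_{\hat{\bc}}(W)$ and $\OCat_{\hat{\bc}+\psi}(W)$ are semisimple for Weil generic $\hat{\bc}\in\param_0$ is false in the interesting cases. Recall that $\param_0=\bc+\operatorname{Span}_{\C}(F)$ is \emph{parallel} to the walls containing the face $F$. The functionals $\varpi_{\xi\tau}$ vanishing on $\operatorname{Span}_{\C}(F)$ are therefore constant on $\param_0$, with value $\varpi_{\xi\tau}(\bc)$; when those values are nonzero integers (which is exactly what happens when $F$ lies on an essential wall, cf.\ \ref{SSS_essent_wall}), Lemma~\ref{Lem:O_ss} does not apply and $\OCat_{\hat{\bc}}(W)$ is genuinely non-semisimple. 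The one-dimensional case $W=\Sym_n$ already shows this: here $\param_0=\{\bc\}$ is a single point, ``Weil generic'' means $\hat{\bc}=\bc$, and $\OCat_\bc(\Sym_n)$ is not semisimple when $\bc$ has denominator $\leq n$. The paper's own discussion of the perverse filtration (proof of Proposition~\ref{Prop:WC_perverse}) makes the same point: at Weil generic $\hat{\bc}\in\param_0$ there are modules with support of every possible dimension.

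The actual mechanism behind (1) is not semisimplicity but the fact that, for Weil generic $\hat{\bc}\in\param_0$, the only surviving highest weight order constraints on $\OCat_{\hat{\bc}}(W)$ come from the walls through $F$, and $\hat{\bc},\hat{\bc}'=\hat{\bc}+\psi$ lie on \emph{opposite} sides of those walls. Thus $\OCat_{\hat{\bc}}(W)$ and $\OCat_{\hat{\bc}'}(W)$ are highest weight covers of the same Hecke category with opposite orders, i.e.\ each is the Ringel dual of the other. One then identifies $\WC_{\hat{\bc}'\leftarrow\hat{\bc}}$ with the Ringel duality functor (both are derived equivalences intertwining the KZ functors, and one invokes a uniqueness argument or a direct comparison of the Harish-Chandra bimodule $\B_{\hat{\bc}}(\psi)$ with the tilting bimodule); Ringel duality sends $\Delta(\tau)$ to $\nabla(\tau)$ by definition. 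Your KZ-intertwining step and the triviality of the KZ twist are still relevant ingredients for matching the labels, but they cannot by themselves force degree-zero concentration when the target category is not semisimple.

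Your arguments for (2) and (3) are fine once (1) is fixed: the generic-flatness passage from Weil generic to Zariski generic, and the deduction of the $K_0$ statement from the standard basis together with Remark~\ref{Rem:change_c}, are exactly the intended route.
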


\subsubsection{Perverse equivalences}
The equivalence $\WC_{\bc'\leftarrow \bc}$ introduced above has an important property, it is
{\it perverse}. Let us explain what this means, following \cite[Section 2.6]{rouquier_ICM}.

Suppose that we have two abelian categories $\Cat^1,\Cat^2$ that are equipped
with finite filtrations by Serre subcategories: $\Cat^j=\Cat^j_0\supset
\Cat^j_1\supset\ldots \supset\Cat^j_n\supset \Cat^j_{n+1}=0$. A derived equivalence
$\varphi: D^b(\Cat^1)\rightarrow D^b(\Cat^2)$ is said to be {\it perverse} with respect to
these filtrations if the following three conditions hold:
\begin{itemize}
\item[(P1)] $\varphi$ restricts to an equivalence
between $D^b_{\Cat_i^1}(\Cat^1)$ and $D^b_{\Cat^2_i}(\Cat^2)$.
Here we write $D^b_{\Cat^j_i}(\Cat^j)$ for the full subcategory of
$D^b(\Cat^j)$ of all objects with homology in $\Cat^j_i$.
\item[(P2)] For $M\in \Cat^1_i$, we have $H_k(\varphi M)=0$
for $k<i$ and $H_{j}(\varphi M)\in \Cat^{2}_{i+1}$ for $k>i$.
\item[(P3)] The functor $M\mapsto H_i(\varphi M)$
defines an equivalence $\Cat^1_i/\Cat^1_{i+1}\xrightarrow{\sim}\Cat^2_i/\Cat^2_{i+1}$.
\end{itemize}

Let us explain how the filtrations on $\Cat^1=\OCat_\bc(W), \Cat^2=\OCat_{\bc'}(W)$.
We have chains of two-sided ideals $H_{\param_0}=I^{\param_0}_{n+1}\supset
I^{\param_0}_n\supset\ldots\supset I^{\param_0}_1\supset I^{\param_0}_0=\{0\},
H_{\param_0+\psi}=I^{\param_0+\psi}_{n+1}\supset I^{\param_0+\psi}_n\supset\ldots\supset
I^{\param_0+\psi}_1\supset I^{\param_0+\psi}_0=\{0\}$ with the following properties.
For a Weil generic $\hat{\bc}\in \param_0$, the specialization
$I^{\hat{\bc}}_i$ (resp., $I^{\hat{\bc}+\psi}_i$) is the intersection of the annihilators of all modules $M\in
\OCat_{\hat{\bc}}(W)$ (resp., $M\in \OCat_{\hat{\bc}+\psi}(W)$) with $\dim \Supp(M)<i$.
In particular, this property implies that \begin{equation}\label{eq:ideals_square}
(I^{\hat{\bc}}_i)^2=I^{\hat{\bc}}_i, (I^{\hat{\bc}+\psi}_i)^2=I^{\hat{\bc}+\psi}_i.\end{equation}

By generic flatness reasons, (\ref{eq:ideals_square}) holds for a Zariski generic $\hat{\bc}$ as well.
In particular, as before, we may assume that it holds for $\hat{\bc}=\bc$. Let $\Cat^1_{i}$
consist of all modules in $\OCat_\bc(W)$ annihilated by $I^\bc_i$, by (\ref{eq:ideals_square}) this subcategory
is closed under extensions. Define $\Cat^2_i\subset \OCat_{\bc'}(W)$ similarly.

\begin{Prop}\label{Prop:WC_perverse}
The equivalence $\WC_{\bc'\leftarrow \bc}:D^b(\OCat_\bc)\xrightarrow{\sim}D^b(\OCat_{\bc'})$
is perverse with respect to the filtrations introduced above.
\end{Prop}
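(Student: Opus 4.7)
The plan is to combine the generic-flatness reduction already used in Proposition \ref{Prop:WC_functors} with an inductive argument on $\dim\h$ based on compatibility of $\WC_{\bc'\leftarrow \bc}$ with the Bezrukavnikov--Etingof restriction functors, reducing the three axioms (P1)--(P3) to the (generic) behaviour of the wall-crossing on Verma modules already recorded in Lemma \ref{Lem:WC_K0}.

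First, I would apply Remark \ref{Rem:change_c} and the generic-flatness machinery from the proof of Proposition \ref{Prop:WC_functors} to replace $\bc$ with a Zariski (indeed Weil) generic parameter in $\param_0$: the ideals $I_i^{\hat{\bc}}$, the Serre subcategories $\Cat^j_i$ cut out by them and the bimodule $\B_{\hat{\bc}}(\psi)$ all depend flatly on $\hat{\bc}$, so perversity is a constructible condition and it suffices to check it at one generic point. At such a $\bc$ we have $\WC_{\bc'\leftarrow \bc}(\Delta_\bc(\tau))\cong \nabla_{\bc'}(\tau)$ by Lemma \ref{Lem:WC_K0}(1), which handles the top-layer statement of (P2)--(P3) at once.

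The technical heart of the argument will be a restriction-compatibility statement of the form
\[
{}^{\OCat}\Res^W_{\underline{W}} \circ \WC_{\bc'\leftarrow \bc} \;\cong\; \WC_{\bc'\leftarrow \bc} \circ {}^{\OCat}\Res^W_{\underline{W}}
\]
for every parabolic $\underline{W}\subset W$, where on the right-hand side $\WC_{\bc'\leftarrow \bc}$ is the wall-crossing functor for $\OCat_?(\underline{W},\underline{\h})$. I expect this to follow from the construction of $\B_{\param_0}(\psi)$ in \cite[Section 5.2]{rouq_der}: being Harish-Chandra, $\B_{\bc}(\psi)$ admits a Bezrukavnikov--Etingof restriction to a Harish-Chandra $H_\bc(\underline{W},\underline{\h})$-bimodule, which can be identified with the wall-crossing bimodule at the lower rank from its characterising properties. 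Combined with Lemma \ref{Lem:Supp_Restr}, this identifies $\Cat^1_i/\Cat^1_{i+1}$, via the restriction functors, with the ``deepest'' layer of $\OCat_\bc(\underline{W},\underline{\h})$ for parabolics $\underline{W}$ whose stratum $X(\underline{W})$ has the appropriate codimension, and converts the action of $\WC_{\bc'\leftarrow \bc}$ on that layer into a wall-crossing for the lower-rank Cherednik category. Property (P1) then comes for free.

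With this compatibility in hand, (P2) and (P3) will be proved by induction on $\dim\h$: the base case $\dim\h=1$ is read off from the computation at the end of \ref{SSS_Supports} together with Lemma \ref{Lem:WC_K0}; the inductive step reduces the claim at the $i$-th layer to the top-layer claim for $\OCat_\bc(\underline{W},\underline{\h})$ with $\dim\underline{\h}=i$, applied to standardly filtered objects there (which, by Lemma \ref{Lem:IndRes_stand}, account for the restrictions of standardly filtered modules on $W$). The biadjointness of $\Res$ and $\Ind$ from Proposition \ref{Prop:ResInd_biadj} then upgrades this to the layer equivalence $\Cat^1_i/\Cat^1_{i+1}\xrightarrow{\sim}\Cat^2_i/\Cat^2_{i+1}$ demanded by (P3). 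The main obstacle will be the restriction compatibility: one has to rule out spurious twists in identifying the restricted bimodule with the wall-crossing bimodule for $\underline{W}$, and, crucially, verify that the cohomological shift picked up when passing from $W$ to a parabolic of corank $i$ is exactly $i$ -- this is precisely what upgrades ``triangulated-compatible-with-the-filtration'' to \emph{perverse}.
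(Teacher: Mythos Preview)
Your proposal has a logical gap in the very first move, and the subsequent strategy diverges substantially from the paper's.

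\textbf{The reduction is in the wrong order.} Remark \ref{Rem:change_c} lets you shift $\bc$ by lattice points $\bc^0\in F\cap\underline{\param}_{\Z}$; this can make $\bc$ Zariski generic in $\param_0$, but never Weil generic (you are moving inside a countable set). Your parenthetical ``(indeed Weil)'' is simply wrong, and the claim that ``perversity is a constructible condition'' is asserted without justification. The paper runs the argument the other way around: one first proves (P1)--(P3) at a \emph{Weil} generic $\hat{\bc}\in\param_0$ (where $\WC_{\hat{\bc}'\leftarrow\hat{\bc}}$ sends $\Delta_{\hat{\bc}}(\tau)$ to $\nabla_{\hat{\bc}'}(\tau)$ and the ideals $I^{\hat{\bc}}_i$ are honestly given by support dimension); one then \emph{translates} (P1)--(P3) into concrete vanishing and nonvanishing statements about $\operatorname{Tor}$'s and $\operatorname{Ext}$'s among the finitely many $H_{\param_0}$-modules $\B_{\param_0}(\psi),\, H_{\param_0}/I^{\param_0}_i,\, H_{\param_0+\psi}/I^{\param_0+\psi}_i$; and finally one invokes generic flatness to conclude that these hold on a Zariski open subset of $\param_0$, into which $\bc$ can be moved via Remark \ref{Rem:change_c}. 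The Tor/Ext reformulation is precisely the step that makes ``perversity is an open condition'' true, and you have skipped it.

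\textbf{The restriction/induction induction is not the paper's route and brings its own hazards.} The paper does not use compatibility of $\WC$ with Bezrukavnikov--Etingof restriction to establish perversity; that compatibility (Proposition \ref{Prop:WC_commut_IndRes}) is recorded later and carries the extra hypothesis that $\hat{\bc}$ and $\hat{\bc}+\psi$ lie in opposite chambers for \emph{both} $W$ and $\underline{W}$, which you would need to verify for every parabolic appearing in your induction. More seriously, your induction presumes an identification of $\Cat^1_i/\Cat^1_{i+1}$ with a category built out of $\OCat_\bc(\underline{W})$ via $\Res^W_{\underline{W}}$; the filtration here is by annihilation by the two-sided ideals $I^\bc_i$, and turning that into the support picture you want requires exactly the Weil-generic interpretation of $I^{\hat{\bc}}_i$ that you have not yet reached. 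Your proposed route may be salvageable, but it is more intricate than the direct Tor/Ext argument the paper uses and does not avoid the Weil-generic step in any case.
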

\begin{proof}[Sketch of proof] First, one proves this claim for a Weil generic $\hat{\bc}$.
Then one translates (P1)-(P3) to statements involving Tor's and Ext's between $\B_\bc(\psi),
H_\bc/I^\bc_i, H^{\bc'}/I^{\bc'}_i$. One uses generic flatness arguments to establish these statements
for a Zariski generic $\bc$. This establishes (P1)-(P3) for such $\bc$.
\end{proof}

\subsubsection{Essential walls}\label{SSS_essent_wall}
Proposition \ref{Prop:WC_perverse} allows to show that some wall-crossing functors
are actually highest weight equivalences. Recall hyperplanes $\Pi_{\xi\tau}\subset \param$
introduced in \ref{SSS_chamb_walls}. We say that such a hyperplane is {\it essential} for a parameter $\bc$ if,
for a Weil generic parameter $\hat{\bc}\in \bc+\Pi_{\xi\tau}$,  the category $\OCat_{\hat{\bc}}(W)$
is not semisimple.

\begin{Cor}\label{Cor:non_essent_WC}
Let $F$ be a codimension $1$ face spanning a non-essential wall for $\bc$. Then the corresponding functor
$\WC_{\bc'\leftarrow \bc}$ restricts to a highest weight equivalence $\OCat_\bc(W)\xrightarrow{\sim}\OCat_{\bc'}(W)$.
\end{Cor}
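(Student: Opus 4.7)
The plan is to combine the perverse equivalence property of $\WC_{\bc'\leftarrow\bc}$ from Proposition~\ref{Prop:WC_perverse} with the observation that on a non-essential wall the support filtration collapses to a single stratum, forcing the perverse equivalence to be $t$-exact.

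First I would reduce to a generic situation: by Remark~\ref{Rem:change_c}, translating $\bc$ and $\bc'$ by a common $\bc^0\in F\cap\underline{\param}_{\Z}$ does not change the categories $\OCat_\bc(W),\OCat_{\bc'}(W)$, so WLOG $\bc$ is Zariski generic in $\param_0=\bc+\operatorname{Span}_{\C}(F)=\bc+\Pi_{\xi\tau}$. Non-essentialness of the wall means that $\OCat_{\hat{\bc}}(W)$ is semisimple for Weil generic $\hat{\bc}\in\param_0$, and since $\hat{\bc}'=\hat{\bc}+\psi$ gives the same Hecke parameter, an analogous argument based on Lemma~\ref{Lem:O_ss} shows $\OCat_{\hat{\bc}'}(W)$ is also semisimple generically on $\param_0+\psi$. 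In both semisimple categories $\Delta=L=\nabla$, so Lemma~\ref{Lem:WC_K0}(1) yields $\WC_{\hat{\bc}'\leftarrow\hat{\bc}}(\Delta_{\hat{\bc}}(\tau))\cong\Delta_{\hat{\bc}'}(\tau)$ for Weil generic $\hat{\bc}$.

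Next I would analyze the support filtrations $\Cat^j_\bullet$ from Proposition~\ref{Prop:WC_perverse}. Semisimplicity forces every simple in $\OCat_{\hat{\bc}}(W)$ to have full support $\h$, so the ideals $I^{\hat{\bc}}_i$ collapse and only one stratum of the filtration is nonzero---the full-support one. Generic flatness arguments, of the type invoked in the proofs of Propositions~\ref{Prop:WC_functors} and~\ref{Prop:WC_perverse}, extend this trivialization from the Weil to the Zariski generic locus on $\param_0$, hence to our $\bc$; the parallel statement applies to $\OCat_{\bc'}(W)$.

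With both filtrations concentrated in a single stratum, the perversity conditions (P1)--(P3) degenerate to $t$-exactness, so $\WC_{\bc'\leftarrow\bc}$ restricts to an equivalence of abelian categories $\OCat_\bc(W)\xrightarrow{\sim}\OCat_{\bc'}(W)$. Treating the complex $\B_{\param_0}(\psi)\otimes^L_{H_{\param_0}}\Delta_{\param_0}(\tau)$ as a coherent family over $\param_0$, its higher cohomologies are supported on a proper closed subset of $\param_0$ and vanish generically, so the Weil-generic isomorphism promotes to $\WC_{\bc'\leftarrow\bc}(\Delta_\bc(\tau))\cong\Delta_{\bc'}(\tau)$; hence $\WC_{\bc'\leftarrow\bc}$ is a highest weight equivalence. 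The main technical obstacle I anticipate is precisely this passage from Weil generic to Zariski generic---both for the filtration collapse and for the isomorphism on standards---which requires the coherent-sheaf arguments over $\param_0$ of the same flavor already used to establish perversity.
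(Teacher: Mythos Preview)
Your approach is the right one and is, as far as one can tell from the paper (which merely cites \cite[Lemma~3.7]{Cher_supp}), the intended route: use that non-essentialness forces semisimplicity at Weil generic points of $\param_0$, hence the perverse filtration has a single stratum, hence $\WC_{\bc'\leftarrow\bc}$ is $t$-exact; then pass from Weil generic to Zariski generic by the same generic-flatness machinery that underlies Propositions~\ref{Prop:WC_functors} and~\ref{Prop:WC_perverse}. You have also correctly isolated the Weil-to-Zariski passage as the genuine technical content.

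Two small points are worth tightening. First, your appeal to Lemma~\ref{Lem:O_ss} to get semisimplicity of $\OCat_{\hat{\bc}'}(W)$ is misplaced: that lemma is only a sufficient criterion, and non-essentialness of the wall is not defined through it. You do not actually need this step. Once you know $\Cat^1_1=0$ at Weil generic $\hat{\bc}$, condition (P1) already forces $D^b_{\Cat^2_1}(\Cat^2)=0$, hence $\Cat^2_1=0$; the collapse on the $\bc'$-side then comes for free from perversity rather than from an independent semisimplicity argument. Second, the final identification $\WC_{\bc'\leftarrow\bc}(\Delta_\bc(\tau))\cong\Delta_{\bc'}(\tau)$ needs slightly more than ``the Weil-generic isomorphism promotes'': having the correct $K_0$-class and sitting in degree~$0$ does not by itself pin down the object. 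One clean way to finish is to observe that once the higher Tors vanish on a Zariski open $U\subset\param_0$, the family $\B_{\param_0}(\psi)\otimes_{H_{\param_0}}\Delta_{\param_0}(\tau)$ is flat over $U$, and so is $\nabla_{\param_0+\psi}(\tau)$; the two agree at Weil generic points by Lemma~\ref{Lem:WC_K0}(1), so their fibre ranks (as graded $\C[\h]$-modules) agree on all of $U$, and the natural map between them (coming from the universal property on the lowest $h$-weight space) is an isomorphism over $U$. Since $\nabla_{\bc'}(\tau)$ and $\Delta_{\bc'}(\tau)$ have the same class and your equivalence is now abelian, this suffices.
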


See \cite[Lemma 3.7]{Cher_supp} for the proof.

\subsubsection{Wall-crossing and restriction functors}
Morally, wall-crossing functors intertwine the restriction and induction functors.
The following result is proved in \cite[Proposition 3.5]{Cher_supp}.

\begin{Prop}\label{Prop:WC_commut_IndRes}
Let $\bc,F,\bc',\psi,\param_0$ be as before. We assume that, for a Weil generic $\hat{\bc}\in \param_0$,
the parameters $\hat{\bc},\hat{\bc}+\psi$ lie in the opposite chambers for \underline{both} $W,\underline{W}$.
Then there are isomorphisms of functors
\begin{align*}
& \underline{\WC}_{\bc'\leftarrow \bc}\circ \Res^W_{\underline{W}}\cong \Res^W_{\underline{W}}\circ\WC_{\bc'\leftarrow \bc},\\
& \WC_{\bc'\leftarrow \bc}\circ \Ind_W^{\underline{W}}\cong \Ind_W^{\underline{W}}\circ\underline{\WC}_{\bc'\leftarrow \bc},
\end{align*}
where we write $\underline{\WC}_{\bc'\leftarrow \bc}$ for the wall-crossing functor associated to $\underline{W}$.
\end{Prop}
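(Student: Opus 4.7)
The plan is to lift both isomorphisms to the level of the Harish-Chandra bimodules defining the wall-crossing functors, reduce to a Weil generic parameter in $\param_0$, and exploit semisimplicity of the generic $\OCat$'s. By Remark \ref{Rem:change_c} we may replace $\bc$ by $\bc+\bc^0$ for a suitable $\bc^0\in F\cap\underline{\param}_{\Z}$ and assume $\bc$ is Zariski generic in $\param_0$. Arguing as in the sketch of Proposition \ref{Prop:WC_functors}, a natural transformation between functors of the form ``derived tensor product with a flat family of Harish-Chandra bimodules'' that is an isomorphism at a Weil generic $\hat{\bc}\in\param_0$ remains an isomorphism after specialization at a Zariski generic $\bc$. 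So it suffices to exhibit the isomorphisms at a Weil generic $\hat{\bc}$.

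At the bimodule level, $\WC_{\hat{\bc}+\psi\leftarrow\hat{\bc}}=\B_{\hat{\bc}}(\psi)\otimes^L_{H_{\hat{\bc}}(W)}\bullet$, and similarly $\underline{\WC}_{\hat{\bc}+\psi\leftarrow\hat{\bc}}=\underline{\B}_{\hat{\bc}}(\psi)\otimes^L_{H_{\hat{\bc}}(\underline{W})}\bullet$ for the analogous Harish-Chandra bimodule $\underline{\B}_{\hat{\bc}}(\psi)$ over $\underline{W}$. The Bezrukavnikov--Etingof restriction construction extends from category $\OCat$ to Harish-Chandra bimodules, producing an operation $\B\mapsto\B_\dagger$ from HC bimodules for $W$ to HC bimodules for $\underline{W}$, and intertwining the derived tensor product with $\Res^W_{\underline{W}}$ via a functorial isomorphism
\begin{equation*}
\Res^W_{\underline{W}}\bigl(\B\otimes^L_{H_{\hat{\bc}}(W)} N\bigr)\;\cong\;\B_\dagger\otimes^L_{H_{\hat{\bc}}(\underline{W})}\Res^W_{\underline{W}}(N).
\end{equation*}
Granting this, the first claimed isomorphism reduces to a bimodule identification $\B_{\hat{\bc}}(\psi)_\dagger\cong\underline{\B}_{\hat{\bc}}(\psi)$. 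To prove this, observe that at a Weil generic $\hat{\bc}$ all four categories $\OCat_{\hat{\bc}}(W), \OCat_{\hat{\bc}+\psi}(W), \OCat_{\hat{\bc}}(\underline{W}), \OCat_{\hat{\bc}+\psi}(\underline{W})$ are semisimple by Lemma \ref{Lem:O_ss}. In this semisimple situation a Harish-Chandra bimodule is determined by the exact functor it induces, which in turn is determined by its values on Verma modules. By Lemma \ref{Lem:WC_K0}(1) both wall-crossing functors act as $\Delta(\tau)\mapsto\nabla(\tau)$, and combining Lemma \ref{Lem:IndRes_stand} with Proposition \ref{Prop:ResInd_dual} shows that both $\underline{\WC}_{\hat{\bc}+\psi\leftarrow\hat{\bc}}\circ\Res^W_{\underline{W}}$ and $\Res^W_{\underline{W}}\circ\WC_{\hat{\bc}+\psi\leftarrow\hat{\bc}}$ send $\Delta_{\hat{\bc}}(\tau)$ to the same direct sum of costandards, with multiplicities $\dim\Hom_{\underline{W}}(\tau',\tau)$.

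The induction identity follows from the restriction identity (applied with $\bc$ and $\bc'$ swapped) by biadjointness. By Proposition \ref{Prop:ResInd_biadj}, $\Ind_W^{\underline{W}}$ is biadjoint to $\Res^W_{\underline{W}}$; and the wall-crossing equivalences $\WC_{\bc\leftarrow\bc'},\underline{\WC}_{\bc\leftarrow\bc'}$ have $\WC_{\bc'\leftarrow\bc},\underline{\WC}_{\bc'\leftarrow\bc}$ as quasi-inverses and hence as both adjoints. Taking right adjoints of $\underline{\WC}_{\bc\leftarrow\bc'}\circ\Res^W_{\underline{W}}\cong\Res^W_{\underline{W}}\circ\WC_{\bc\leftarrow\bc'}$ yields $\Ind_W^{\underline{W}}\circ\underline{\WC}_{\bc'\leftarrow\bc}\cong\WC_{\bc'\leftarrow\bc}\circ\Ind_W^{\underline{W}}$, as required.

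The main technical obstacle is the extension of the Bezrukavnikov--Etingof restriction to Harish-Chandra bimodules together with the displayed derived compatibility: one needs careful control of the formal-neighborhood completion at a generic point of a reflection stratum in $\h$, plus flatness of the family $\B_{\param_0}(\psi)$ over a Zariski open subset of $\param_0$ to justify passage to derived tensor products. Once these ingredients are in place, the semisimple reduction above yields the proposition without further difficulty.
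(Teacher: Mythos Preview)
The paper itself does not give a proof of this proposition; it only writes ``See \cite[3.2.3]{Cher_supp} for the proof and the discussion of the additional assumption on $\param_0,\psi$.'' Your overall architecture --- pass to the Harish-Chandra bimodule level, use a restriction functor $\B\mapsto\B_\dagger$ on HC bimodules compatible with $\Res^W_{\underline{W}}$, reduce via generic flatness from Zariski generic $\bc$ to Weil generic $\hat{\bc}$, and deduce the $\Ind$ statement by adjunction --- is exactly the strategy carried out in that reference. So the shape of the argument is right.

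There is, however, a genuine gap at the step where you claim ``In this semisimple situation a Harish-Chandra bimodule is determined by the exact functor it induces.'' This is not justified, and it is not what the cited proof does. Even when all four categories $\OCat$ are semisimple, the functor $\B\otimes^L_{H}\bullet$ from HC bimodules to endofunctors of $\OCat$ need not be injective on isomorphism classes; HC bimodules live over the full algebras $H_{\hat{\bc}}(W)$, not merely over the block algebras controlling $\OCat$. More to the point, the generic--flatness descent you invoke in the first paragraph requires an actual morphism of bimodules over $\param_0$ (or over a Zariski open subset) whose specialization at a Weil generic $\hat{\bc}$ you then check to be an isomorphism. Knowing only that the two induced functors on $\OCat_{\hat{\bc}}(\underline{W})$ are abstractly isomorphic does not produce such a morphism. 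In \cite{Cher_supp} the identification $\B_{\param_0}(\psi)_\dagger\cong\underline{\B}_{\param_0}(\psi)$ is obtained directly from the construction of the wall-crossing bimodules (as regular bimodules after a suitable localization/translation), and the compatibility of that construction with the completion/restriction procedure for HC bimodules; the semisimple check on Vermas is not used for this step.

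A minor point on your last paragraph: you do not need to invoke $\WC_{\bc\leftarrow\bc'}$ at all. From the restriction isomorphism for $\WC_{\bc'\leftarrow\bc}$ itself, take right adjoints to get $\Ind\circ\underline{\WC}_{\bc'\leftarrow\bc}^{-1}\cong\WC_{\bc'\leftarrow\bc}^{-1}\circ\Ind$ and then conjugate by the equivalences $\WC_{\bc'\leftarrow\bc},\underline{\WC}_{\bc'\leftarrow\bc}$; this avoids any claim that $\WC_{\bc\leftarrow\bc'}$ is the quasi-inverse of $\WC_{\bc'\leftarrow\bc}$, which the paper has not asserted.
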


See \cite[3.2.3]{Cher_supp} for the proof and the discussion of the additional assumption on $\param_0,\psi$.

\subsubsection{Wall-crossing bijections}
An important feature of a perverse equivalence $\varphi:D^b(\Cat^1)\xrightarrow{\sim} D^b(\Cat^2)$
is that it induces a bijection $\Irr(\Cat^1)\xrightarrow{\sim}\Irr(\Cat^2)$.
We have the bijection $\Irr(\Cat^1_i/\Cat^1_{i+1})\xrightarrow{\sim}\Irr(\Cat^2_i/\Cat^2_{i+1})$
given by the equivalence in (P3) for any $i$. These bijections constitute a desired
bijection $\Irr(\Cat^1)\xrightarrow{\sim}
\Irr(\Cat^2)$. For the wall-crossing functor $\WC_{\bc'\leftarrow \bc}$ the corresponding bijection
$\Irr(W)\rightarrow \Irr(W)$ will be called the {\it wall-crossing} bijection and denoted
by $\wc_{\bc'\rightarrow \bc}$.

We have the following two important properties of wall-crossing  bijections
established in \cite{Cher_supp}. The first property is a consequence of the
construction of a wall-crossing functor as the derived tensor product with a
Harish-Chandra bimodule.

\begin{Lem}\label{Lem:wc_supp}
The wall-crossing bijections preserve supports.
\end{Lem}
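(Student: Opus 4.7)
Let $L=L_\bc(\tau)$ and let $\underline{W}\subseteq W$ be a parabolic with $\Supp(L)=\overline{X(\underline{W})}$ (Lemma \ref{Lem:supp_simple}). Set $d=\dim\Supp(L)=\dim\h^{\underline{W}}$ and $i=\dim\h-d=\dim\underline{\h}$; the goal is to show $\Supp(L_{\bc'}(\wc_{\bc'\rightarrow\bc}(\tau)))=\overline{X(\underline{W})}$. The plan is to combine the perverse structure of $\WC:=\WC_{\bc'\leftarrow\bc}$ (Proposition \ref{Prop:WC_perverse}) with the commutation of wall-crossing and restriction (Proposition \ref{Prop:WC_commut_IndRes}), and with the restriction-theoretic description of supports (Lemma \ref{Lem:Supp_Restr}). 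The first step is to identify the filtration $\Cat^j_i$ appearing in Proposition \ref{Prop:WC_perverse} with the support filtration: by the description of the ideals $I^\bc_\bullet$ for a Weil generic $\hat\bc$ and a generic flatness argument as in the proof of Proposition \ref{Prop:WC_perverse}, $\Cat^1_j\subseteq\OCat_\bc(W)$ is the Serre subcategory of modules with support of dimension $<\dim\h-j+1$, and similarly for $\Cat^2_j$. Hence $L\in \Cat^1_i\setminus \Cat^1_{i+1}$, and the equivalence (P3) sends the image of $L$ in $\Cat^1_i/\Cat^1_{i+1}$ to the image of $L_{\bc'}(\wc_{\bc'\rightarrow\bc}(\tau))$ in $\Cat^2_i/\Cat^2_{i+1}$. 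In particular $H_k(\WC L)\in\Cat^2_{i+1}$ for $k\neq i$, and every simple subquotient of $H_i(\WC L)$ other than $L_{\bc'}(\wc_{\bc'\rightarrow\bc}(\tau))$ has support of dimension strictly less than $d$.

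Next I would apply $\Res^W_{\underline{W}}$. Granting the chamber hypothesis of Proposition \ref{Prop:WC_commut_IndRes} (dealt with below), we have $\Res^W_{\underline{W}}\circ\WC\cong \underline{\WC}\circ\Res^W_{\underline{W}}$, where $\underline{\WC}$ is the corresponding wall-crossing functor for $\underline{W}$. By Lemma \ref{Lem:Supp_Restr} the module $\Res^W_{\underline{W}}(L)$ is nonzero and finite dimensional, so it lies in the deepest layer of the support filtration on $\OCat_\bc(\underline{W})$, of codimension $i=\dim\underline{\h}$. The perverse structure for $\underline{\WC}$ then forces $\underline{\WC}(\Res^W_{\underline{W}}(L))$ to be concentrated in cohomological degree $i$, with nonzero finite dimensional cohomology. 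Combining with the intertwining isomorphism and the exactness of $\Res^W_{\underline{W}}$, we conclude that $\Res^W_{\underline{W}}(H_i(\WC L))$ is nonzero and finite dimensional, while $\Res^W_{\underline{W}}(H_k(\WC L))=0$ for $k\neq i$.

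Finally, to pin down the full support rather than just its dimension, I would observe that every simple $L'\in\OCat_{\bc'}(W)$ with $\dim\Supp(L')<d$ satisfies $\Res^W_{\underline{W}}(L')=0$: if $\Supp(L')=\overline{X(W')}$ then $\dim\h^{W'}<\dim\h^{\underline{W}}$, whereas any subgroup $W_0\subseteq\underline{W}$ satisfies $\h^{\underline{W}}\subseteq\h^{W_0}$ and hence $\dim\h^{W_0}\geqslant\dim\h^{\underline{W}}$, so $W'$ is not $W$-conjugate to any subgroup of $\underline{W}$ and Lemma \ref{Lem:Supp_Restr} yields the vanishing. Thus $\Res^W_{\underline{W}}$ annihilates every simple subquotient of $H_i(\WC L)$ except possibly $L_{\bc'}(\wc_{\bc'\rightarrow\bc}(\tau))$, and exactness of $\Res^W_{\underline{W}}$ combined with the previous paragraph forces $\Res^W_{\underline{W}}(L_{\bc'}(\wc_{\bc'\rightarrow\bc}(\tau)))$ to be nonzero and finite dimensional. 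A second application of Lemma \ref{Lem:Supp_Restr} then gives $\Supp(L_{\bc'}(\wc_{\bc'\rightarrow\bc}(\tau)))=\overline{X(\underline{W}')}$ for a parabolic $\underline{W}'$ that is $W$-conjugate to $\underline{W}$, as needed.

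The main obstacle is verifying the hypothesis of Proposition \ref{Prop:WC_commut_IndRes}, that $\hat\bc$ and $\hat\bc+\psi$ lie in opposite chambers simultaneously for $W$ and for $\underline{W}$. This I would arrange either by translating $\bc$ by a suitable $\bc^0\in F\cap\underline{\param}_{\Z}$ (which is permitted by Remark \ref{Rem:change_c} without changing the categories $\OCat$), or by decomposing $\WC$ into a composition of codimension one wall-crossings, for each of which the compatibility with $\underline{W}$ can be checked step by step and the conclusion propagated. A secondary routine technical point is the identification of the filtration $\Cat^j_\bullet$ with the support filtration at the Zariski generic parameter of interest; this is a generic flatness argument of the same kind as the one used in the proof of Proposition \ref{Prop:WC_perverse}.
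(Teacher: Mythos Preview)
Your approach is genuinely different from the paper's, and it has a real gap.

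The paper's route is much shorter and does not use perversity, restriction, or the chamber hypothesis at all. As the sentence immediately preceding the lemma indicates, the point is that $\WC_{\bc'\leftarrow\bc}=\B_\bc(\psi)\otimes^L_{H_\bc}\bullet$ with $\B_\bc(\psi)$ a Harish-Chandra bimodule. The Harish-Chandra condition (local nilpotence of $[a,\cdot]$ for $a\in S(\h^*)^W$) forces $\Supp_{\h/W}(H_k(\B_\bc(\psi)\otimes^L M))\subset\Supp_{\h/W}(M)$ for every $k$; since supports in $\OCat$ are $W$-stable this gives $\Supp(H_k(\WC L))\subset\Supp(L)$. The inverse equivalence is again a derived tensor product with a Harish-Chandra bimodule, so the same inclusion holds in the other direction, and one concludes $\Supp(L_{\bc'}(\wc(\tau)))=\Supp(L_\bc(\tau))$. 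No filtration identification and no compatibility with restriction are needed.

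The gap in your argument is precisely the ``secondary routine technical point'' you flag at the end: the identification of the perversity filtration $\Cat^j_\bullet$ with the support filtration at the Zariski generic parameter $\bc$. The paper only asserts this identification at a \emph{Weil} generic $\hat\bc\in\param_0$; what passes to Zariski generic $\bc$ by the generic flatness argument in Proposition~\ref{Prop:WC_perverse} is the idempotence $(I^\bc_i)^2=I^\bc_i$ and the Tor/Ext statements encoding (P1)--(P3), not the representation-theoretic description of which simples lie in $\Cat^j_i$. There is no reason the specialization $I^\bc_i$ must coincide with the intersection of annihilators of small-support modules at $\bc$, because the set of simples with a given support bound can jump when one moves from Weil generic to Zariski generic. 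Your Steps~1--3 all rely on this identification (to place $L$ in the correct layer, and to bound the supports of the other constituents of $H_i(\WC L)$), so the argument does not go through as written. The chamber hypothesis of Proposition~\ref{Prop:WC_commut_IndRes} is a second genuine obstacle you correctly identify; your proposed fixes (translating by $\bc^0$, or factoring through codimension-one walls) would each require their own nontrivial verification, and the latter does not obviously compose well with the perverse-bijection bookkeeping.
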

This is \cite[Corollary 2.13]{Cher_supp}.

\begin{Prop}\label{Prop:wc_bij_indep}
The wall-crossing bijection $\wc_{\bc+\psi\leftarrow \bc}$ is independent of the choice
of a Zariski generic $\bc\in \param_0$.
\end{Prop}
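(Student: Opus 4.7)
The plan is to realize the map $\bc \mapsto \wc_{\bc+\psi \leftarrow \bc}$ as a locally constant function on a non-empty Zariski open subset $U \subset \param_0$ taking values in the finite set $\operatorname{Bij}(\Irr(W))$, and then to conclude by the connectedness of $U$.

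First, I would work in the universal family over $\param_0$. The algebras $H_{\param_0}$, $H_{\param_0+\psi}$, the Harish-Chandra bimodule $\B_{\param_0}(\psi)$, and the two-sided ideal filtrations $I^{\param_0}_\bullet \subset H_{\param_0}$, $I^{\param_0+\psi}_\bullet \subset H_{\param_0+\psi}$ are all defined algebraically over $\param_0$. Applying generic flatness to $\B_{\param_0}(\psi)$, to the subquotients $I^{\param_0}_i/I^{\param_0}_{i-1}$ and $I^{\param_0+\psi}_i/I^{\param_0+\psi}_{i-1}$, and to the Tor and Ext modules that appear in the proof of Proposition \ref{Prop:WC_perverse}, I would extract a non-empty Zariski open $U \subset \param_0$ over which everything is flat. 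For each $\bc \in U$ the perverse equivalence of Proposition \ref{Prop:WC_perverse} then holds in a uniform way, and the graded categories $\Cat^1_i/\Cat^1_{i+1}$, $\Cat^2_i/\Cat^2_{i+1}$, together with the equivalences between them supplied by (P3), form flat families of abelian categories over $U$.

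Next, I would extract the bijection from this family datum. By definition $\wc_{\bc+\psi \leftarrow \bc}$ sends $\tau \in \Irr(W)$ to the label $\sigma \in \Irr(W)$ for which the equivalence of (P3) takes $L_\bc(\tau)$ to $L_{\bc+\psi}(\sigma)$; here the filtration level $i=\dim\h-\dim\Supp L_\bc(\tau)$ matches on both sides thanks to Lemma \ref{Lem:wc_supp}. Over $U$ the family of simples $L_\bc(\tau)$ is naturally labeled by the fixed set $\Irr(W)$, and the equivalence of graded pieces is induced by a flat family of bimodule subquotients of $\B_{\param_0}(\psi)$. Since $U$ is a non-empty Zariski open subset of the affine space $\param_0$ and is therefore irreducible and connected, the assignment $\bc \mapsto \wc_{\bc+\psi \leftarrow \bc}$, which takes values in the discrete finite set $\operatorname{Bij}(\Irr(W))$, must be constant on $U$. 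This is precisely the claim of the proposition.

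The main obstacle, as I see it, lies in the middle step: rigorously producing the equivalence of graded categories as a family over $U$ and showing that it identifies simples in a way depending only on the label $\tau$ and not on the choice of $\bc \in U$. Concretely, one must glue the fiberwise simples $L_\bc(\tau)$ into flat families of simple objects across $U$ using the constancy of numerical invariants (composition multiplicities in standards, supports, $c$-functions, etc.) provided by flatness, and check that the equivalence induced by the appropriate subquotient of $\B_{\param_0}(\psi)$ respects this gluing. Once the family picture is set up, the connectedness argument in the last step is formal.
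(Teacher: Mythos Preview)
The paper does not prove this proposition in the text; it simply records ``This is \cite[Proposition 3.1]{Cher_supp}.'' So there is no in-paper argument to compare your proposal against directly.

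Your outline---passing to the universal family over $\param_0$, invoking generic flatness to cut out a non-empty Zariski open $U$, and using irreducibility/connectedness of $U$ to force a discrete-valued invariant to be constant---is the natural strategy, and it is exactly in the spirit of how the perversity statement (Proposition~\ref{Prop:WC_perverse}) itself is sketched in the paper. The obstacle you single out is, however, a genuine one and your proposed resolution is not quite right: the simples $L_\bc(\tau)$ do \emph{not} in general form flat families over $U$ (their dimensions and their composition series in terms of $\C[\h]$-modules can jump), so ``gluing the fiberwise simples into flat families'' is not available as stated. The cleaner way to carry out your middle step is to avoid the simples and extract the bijection from objects that \emph{do} deform flatly: the projectives $P_\bc(\tau)$, the standards $\Delta_\bc(\tau)$, or the complexes $\B_\bc(\psi)\otimes^L_{H_\bc}\Delta_\bc(\tau)$. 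One can then characterize $\wc_{\bc+\psi\leftarrow\bc}(\tau)$ by a homological condition (which filtration layer the head of the relevant cohomology lands in, and which label it carries there) expressed entirely in terms of Tor's and Ext's among $\B_\bc(\psi)$, $H_\bc/I^\bc_i$, $H_{\bc+\psi}/I^{\bc+\psi}_i$, and the flat families of projectives. Once those Tor/Ext modules are flat over $U$, the condition is Zariski-locally constant and your connectedness argument finishes the job.
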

This is \cite[Proposition 3.1]{Cher_supp}.

The second property allows to reduce the computation of the wall-crossing bijection
to the case when $\bc$ is Weil generic in $\param_0$. For such a $\bc$, the categories
$\OCat_\bc(W),\OCat_{\bc+\psi}(W)$ simplify and it is easier to compute the wall-crossing
bijections there. We will do some explicit computations in the case of $W=G(\ell,1,n)$
in Section \ref{SS_supp_comb_comput}.

\section{Cyclotomic categories $\mathcal{O}$ and categorification}\label{S_cyclot}
\subsection{Cyclotomic categories $\mathcal{O}$ and Fock spaces}
\subsubsection{Recap}\label{SSS_cyclot_Recap}
From now on we are interested in the groups $W=G(\ell,1,n)$. Recall that
that the set $\Irr(W)$ is identified with the set $\mathcal{P}_\ell(n)$
of the $\ell$-multipartitions of $n$, see Example \ref{Ex:cyclot_irrep}.

We will write $\OCat_\bc(n)$ for $\OCat_\bc(G(\ell,1,n))$.

Recall the parameters $\kappa, h_0,\ldots h_{\ell-1}$, \ref{SSS_Euler}. In the case when $\kappa=0$,
we get $H_\bc=H_\bc(\mu_\ell)^{\otimes n}\# \Sym_n$ and so the category $\OCat_\bc(n)$
coincides with the category of $\Sym_n$-equivariant objects in $\OCat_\bc(1)^{\otimes n}$.
This is an easy case and we are not going to consider it below. So we assume that $\kappa\neq 0$.
In this case we introduce another set of parameters, $s_0,\ldots,s_{\ell-1},$ by
$h_i=\kappa s_i-i/\ell$.  We write ${\bf s}$ for the collection $(s_0,\ldots,s_{\ell-1})$
(note that $s_0,\ldots,s_{\ell-1}$ are defined up to a common summand).
We often write $\OCat_{\kappa,{\bf s}}(n)$ instead of $\OCat_\bc(n)$.

Also recall that the Hecke algebra $\mathcal{H}_{\bf q}(W)$ coincides with the cyclotomic
Hecke algebra, $\mathcal{H}_{q,{\bf s}}(n)$ with parameters $q=\exp(2\pi\sqrt{-1}\kappa)$
and $Q_i:=\exp(2\pi\sqrt{-1}\kappa s_i)$, Example \ref{Ex:cyclot_Hecke}.

\subsubsection{Order on $\mathcal{O}_\bc$}\label{SSS_O_order}
Recall, \ref{SSS_Euler}, that to a box $b$ with coordinates $(x,y)$ in the partition
$\lambda^{(i)}$ we assign the number $c_b=\kappa\ell(x-y)+\ell h_i=\kappa \ell(x-y+s_i)-i$.
We will write $\cont^{\bf s}(b)$ for $x-y+s_i$. We can take the order $\leqslant_\bc$ defined
by $\lambda\leqslant_\bc \mu$ if $\lambda=\mu$ or $\sum_{b\in \lambda} c_b-\sum_{b\in \mu} c_b\in \Z_{>0}$
for a highest weight order on $\OCat_\bc(W)$. It turns out that a rougher order will also work.

Define an equivalence relation on boxes by $b\sim b'$ if $\kappa (\cont^{\bf s}(b)-\cont^{\bf s}(b'))\in \Z$.
We write $b\leqslant b'$ if $b\sim b'$ and $c_b-c_{b'}\in \Z_{\leqslant 0}$. Define the order
$\preceq_\bc$ on $\mathcal{P}_\ell(n)$ as follows: we set $\lambda\preceq_\bc \lambda'$ if one can order
boxes $b_1,\ldots,b_n$ of $\lambda$ and $b_1',\ldots,b_n'$ of $\lambda'$ so that $b_i\leqslant b'_i$
for any $i$. Clearly, $\lambda\preceq_\bc \lambda'$ implies $\lambda\leqslant _\bc \lambda'$.

The following result is due to Dunkl and Griffeth, \cite[Theorem 1.2]{DG}.

\begin{Prop}\label{Prop:better order}
One can take $\preceq_\bc$ for a highest weight order for $\OCat_\bc(n)$.
\end{Prop}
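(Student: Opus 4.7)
The plan is to reduce the statement to the following: if $[\Delta_\bc(\lambda):L_\bc(\mu)]\neq 0$ and $\mu\neq \lambda$, then $\mu\prec_\bc \lambda$. Since $\preceq_\bc$ is coarser than $\leqslant_\bc$ (as noted just before the proposition), and $\leqslant_\bc$ is known to be a highest weight order by Proposition \ref{Prop:HW}, showing the refined statement above is exactly what promotes $\preceq_\bc$ to a highest weight order: it forces every projective cover $P_\bc(\lambda)$ to live, modulo $\Delta_\bc(\lambda)$, entirely in the Serre span of $\{L_\bc(\mu):\mu\succ_\bc \lambda\}$, while the standard $\Delta$ for $\preceq_\bc$ still coincides with the Verma module $\Delta_\bc(\lambda)$.

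To establish this refined condition I would first pass to the universal deformation $H_\param$ from \ref{SSS:deform} and the associated universal Verma module $\Delta_{\param}(\lambda)$. By Lemma \ref{Lem:O_ss} the specialization $\Delta_{\hat{\bc}}(\lambda)$ is simple for Zariski generic $\hat{\bc}$, so at our chosen $\bc$ composition factors can only appear when the contravariant (Shapovalov-type) form on $\Delta_{\param}(\lambda)$, defined via the anti-involution exchanging $\h$ and $\h^*$, becomes degenerate on some weight subspace. A one-parameter flat deformation $R$ through $\bc$ then yields a Jantzen-style filtration whose subquotients recover the composition factors of $\Delta_\bc(\lambda)$.

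The next step is to invoke Dunkl--Griffeth's orthogonal basis of $\Delta_{\param}(\lambda)$ indexed by standard Young tableaux of shape $\lambda$, on which the Dunkl--Cherednik operators act in upper-triangular form. The norm of each basis vector factors as a product of linear forms in $\kappa, s_0,\ldots, s_{\ell-1}$, each of the shape
\begin{equation*}
\kappa\ell\bigl(\cont^{\bs}(b)-\cont^{\bs}(b')\bigr) + m,\qquad m\in \Z,
\end{equation*}
indexed by pairs of boxes $(b,b')$ arising from elementary tableau swaps. Such a factor vanishes at $\bc$ exactly when $b\sim b'$ and $c_b-c_{b'}=-m$. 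A singular vector in $\Delta_\bc(\lambda)$ at the weight of a multipartition $\mu$ therefore requires a collection of these factors to vanish, which combinatorially encodes an exchange of a box $b$ of $\lambda$ for a box $b'$ of $\mu$ with $b'\sim b$ and $c_{b'}\leqslant c_b$, i.e.\ $b'\leqslant b$ in the box order.

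The main obstacle will be assembling these per-singular-vector elementary exchanges into a global bijection realizing $\mu \preceq_\bc \lambda$. For a general composition factor $L_\bc(\mu)$ of $\Delta_\bc(\lambda)$, one obtains a chain of Verma subquotients connecting $\lambda$ to $\mu$, each link providing an elementary box exchange of the above type. Inducting on the length of this chain (or equivalently on $c_\lambda-c_\mu\in \Z_{\geqslant 0}$) and invoking Hall's marriage theorem on the resulting bipartite graph between the boxes of $\lambda$ and of $\mu$ produces the required bijection $b_i\mapsto b_i'$ with $b_i'\leqslant b_i$. The most delicate point is verifying that the sign conventions in the Dunkl--Griffeth factorization are compatible throughout the chain, so that every individual exchange contributes the inequality $c_{b'}-c_b\in \Z_{\leqslant 0}$ in the correct direction; this is precisely where \cite[Theorem 1.2]{DG} does the work.
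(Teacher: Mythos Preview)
The paper does not give its own proof of this proposition; it simply attributes the result to Dunkl and Griffeth and cites \cite[Theorem 1.2]{DG}. Your reduction in the first paragraph is correct: via BGG reciprocity, once every composition factor $L_\bc(\mu)$ of $\Delta_\bc(\lambda)$ satisfies $\mu\preceq_\bc\lambda$, the standard filtration on $P_\bc(\lambda)$ automatically upgrades from $\leqslant_\bc$ to $\preceq_\bc$, and the standards themselves are unchanged. The ingredients you name in the middle paragraphs---the orthogonal basis of $\Delta_{\param}(\lambda)$ built from generalized non-symmetric Jack polynomials, and the explicit factorization of the contravariant norms into linear forms in $\kappa$ and the $h_i$---are indeed the heart of the Dunkl--Griffeth argument.

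Where your sketch becomes shaky is the last paragraph. You assert that a general composition factor $L_\bc(\mu)$ of $\Delta_\bc(\lambda)$ is reached by a chain of Verma subquotients, each link an ``elementary box exchange'' coming from a single singular vector, and then propose to glue these via Hall's marriage. The existence of such a chain is not justified: composition factors of a Verma module need not arise through iterated Verma embeddings, and Jantzen layers can contain simples that do not correspond to any single singular-vector step. Without the chain there is nothing to feed into the matching argument. Dunkl and Griffeth do not argue this way: their intertwining-operator calculus tracks the full joint spectrum of the Cherednik--Dunkl operators on each weight space, and the resulting norm/calibration data already produces a simultaneous matching of all boxes of $\mu$ against boxes of $\lambda$, not a sequence of one-box swaps to be reassembled. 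So while you are pointing at the right machinery, the specific inductive endgame you propose has a real gap; the cleaner route is the direct one in \cite{DG}.
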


\subsubsection{Fock space}
We set $\OCat_{\kappa,{\bf s}}:=\bigoplus_{n\geqslant 0}\OCat_{\kappa,{\bf s}}(n)$.
We have the basis $|\lambda\rangle:=[\Delta_{\kappa,{\bf s}}(\lambda)]$ in
$K_0^{\C}(\OCat_{\kappa,{\bf s}})$ indexed by
$\lambda\in \mathcal{P}_\ell:=\bigsqcup_{n\geqslant 0}\mathcal{P}_\ell(n)$. In other words,
$K_0^{\C}(\OCat_{\kappa,{\bf s}})$ is the level $\ell$ Fock space.

\subsubsection{Decomposition}\label{SSS_decomp1}
For certain values of ${\bf s}$, the category $\mathcal{O}_{\kappa,{\bf s}}$
can be decomposed into the product of categories $\mathcal{O}_{\kappa,?}$ for smaller
$\ell$.

We define an equivalence relation $\sim_{\bc}$ on $\{0,\ldots,\ell-1\}$ by setting
$i\sim_\bc j$ if the $i$th and $j$th partitions can contain equivalent boxes,
i.e., $s_i-s_j\in \kappa^{-1}\Z+\Z$. For an equivalence class $\alpha$, we write
${\bf s}(\alpha)$ for $(s_i)_{i\in \alpha}$ and  $\mathcal{P}_\alpha$
for the subset of all $\lambda\in \mathcal{P}_\ell$ with $\lambda^{(j)}=\varnothing$
for $j\not\in \alpha$. Form the category $\bigotimes_\alpha \OCat_{\kappa,{\bf s}(\alpha)}$.
The simples in this category are labelled by the set $\prod_\alpha \mathcal{P}_\alpha$
that is naturally identified with $\mathcal{P}_\ell$.

\begin{Prop}\label{Prop:cat_O_decomp}
There is a highest weight equivalence $\OCat_{\kappa,{\bf s}}\xrightarrow{\sim} \bigotimes_\alpha
\OCat_{\kappa,{\bf s}(\alpha)}$.
\end{Prop}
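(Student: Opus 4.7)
The plan is to combine the refined highest weight order $\preceq_\bc$ from Proposition \ref{Prop:better order} with the highest weight covering machinery of Section \ref{SS_hw_covers}. First I would verify that $\preceq_\bc$ respects the equivalence classes: if $\lambda\preceq_\bc\mu$ in $\mathcal{P}_\ell(n)$, then for every class $\alpha$ one has $\sum_{i\in\alpha}|\lambda^{(i)}|=\sum_{i\in\alpha}|\mu^{(i)}|$. This is because the defining box matching only pairs boxes $b\leqslant b'$ with $b\sim b'$, while boxes in the $i$th and $j$th components can be equivalent only when $s_i-s_j\in\kappa^{-1}\Z+\Z$, i.e.\ $i\sim_\bc j$. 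Since two simples lie in the same block of a highest weight category only if they are connected in its order, this immediately produces a direct sum decomposition
\[
\OCat_{\kappa,{\bf s}}(n)=\bigoplus_{(n_\alpha):\sum_\alpha n_\alpha=n}\OCat_{\kappa,{\bf s}}^{(n_\alpha)},
\]
where the simples in the $(n_\alpha)$-summand are indexed by $\prod_\alpha\mathcal{P}_\alpha(n_\alpha)$, the same labeling set as the corresponding summand $\bigotimes_\alpha\OCat_{\kappa,{\bf s}(\alpha)}(n_\alpha)$ of the target category.

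The second step is to promote this matching of simples to a highest weight equivalence summand by summand, via the deformation argument of \ref{SSS:deform}. The candidate common quotient is the relevant block of the finite-dimensional Ariki-Koike algebra $\mathcal{H}^f_{q,{\bf s}}(n)$, which (by the classical block decomposition of Ariki-Koike algebras, governed precisely by the equivalence classes $\alpha$) is Morita equivalent, after an idempotent truncation, to $\bigotimes_\alpha\mathcal{H}^f_{q,{\bf s}(\alpha)}(n_\alpha)$. Both $\OCat_{\kappa,{\bf s}}^{(n_\alpha)}$ (via $\KZ_\bc$) and $\bigotimes_\alpha\OCat_{\kappa,{\bf s}(\alpha)}(n_\alpha)$ (via the external tensor product of KZ functors) project to this quotient. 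Deforming ${\bf s}$ along a Zariski generic one-parameter family in $\param$ that avoids the essential walls of \ref{SSS_essent_wall} makes both sides semisimple with tautologically matching labels; the covering machinery -- either the $0$-faithful version of \ref{SSS:deform} or, if this fails, the extended-quotient variant of \ref{SSS_ext_quot} using the additional projectives whose simple quotients have codimension-one support -- then transfers the generic semisimple equivalence to a highest weight equivalence at the original parameter.

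The principal obstacle will be the Hecke-theoretic compatibility: one must check that the KZ functor on the left, restricted to the block with composition $(n_\alpha)$, actually matches the external product of the KZ functors on the right, as functors into the tensor-product Ariki-Koike quotient. Concretely this reduces to identifying the summand of $P_{KZ}$ for $\OCat_{\kappa,{\bf s}}(n)$ lying in the $(n_\alpha)$-block with the external tensor product of the corresponding projectives on the right, together with the analog of Proposition \ref{Prop:KZ_properties}(3) on standardly filtered deformed objects so that conditions (L1)--(L2) of \ref{SS_hw_covers} can be invoked. Once this compatibility is in place, the argument runs in parallel with the proof of Proposition \ref{Prop:cat_equi_hw}; summing the resulting highest weight equivalences over all compositions $(n_\alpha)$ and all $n$ yields the asserted equivalence $\OCat_{\kappa,{\bf s}}\xrightarrow{\sim}\bigotimes_\alpha\OCat_{\kappa,{\bf s}(\alpha)}$.
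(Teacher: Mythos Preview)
Your proposal is correct and follows essentially the same approach the paper indicates: Rouquier's highest weight cover machinery over a deformation, with the extended-quotient refinement of \ref{SSS_ext_quot} to handle the parameters where $\KZ$ fails to be $0$-faithful. The paper does not spell out the argument beyond citing \cite[Section 6]{rouqqsch} for the $0$-faithful case and pointing to \ref{SSS_ext_quot} in general; your sketch fills in precisely those steps, including the Dipper--Mathas-type block decomposition of the Ariki--Koike algebra that makes the common quotient $\Cat$ factor as a tensor product.
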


In the case when the assumptions of (4) of Proposition \ref{Prop:KZ_properties}
are satisfied, this result was established in \cite[Section 6]{rouqqsch}.
In general, it can be proved using the extension of Rouquier's approach explained in
\ref{SSS_ext_quot}.

Using Proposition \ref{Prop:cat_O_decomp}, we can reduce the study of categories
$\mathcal{O}_{\kappa,{\bf s}}$ to the case when we have just one equivalence
class in $\{0,1,\ldots,\ell-1\}$.

\subsubsection{Essential walls}\label{SSS_essent_walls_cyclot}
The definition of an essential wall was given in \ref{SSS_essent_wall}.
Here we are going to describe the essential walls for the groups $G(\ell,1,n)$.
According to  \cite[Lemma 5.5]{Cher_supp}, we have the following essential walls
\begin{enumerate}
\item $\kappa=0$ for the parameters $\bc$, where the $\kappa$-component is a rational
number with denominator between $2$ and $n$.
\item $h_i-h_j=\kappa m$ with $i\neq j$ and $|m|<n$ -- for the parameters
$\bc$ satisfying $s_i-s_j-m\in \kappa^{-1}\Z$.
\end{enumerate}

These walls split $\bc+\param_{\Z}$ into the union of polyhedral chambers
to be called {\it essential chambers}. Recall, Corollary \ref{Cor:non_essent_WC},
that the categories $\mathcal{O}$ with parameters in one essential chamber are
highest weight equivalent.

\subsection{Categorical Kac-Moody action}
\subsubsection{Kac-Moody action on the Fock space}\label{SSS_Fock}
Let $\kappa,{\bf s}$ be the same as in \ref{SSS_cyclot_Recap}.

Define the Kac-Moody algebra $\g_\kappa$ as $\hat{\slf}_e$ if $\kappa$
is rational with denominator $e$ and as $\slf_\infty$ if $\kappa$
is irrational. Here $\slf_\infty$ stands for the Kac-Moody algebra of infinite
rank associated to the type A Dynkin diagram that is infinite in both direction.
When $e=1$, we assume that $\g_\kappa=\{0\}$.

Define the Kac-Moody algebra
$\g_{\kappa,{\bf s}}$ as the product of several copies of $\g_\kappa$, one
per equivalence class for $\sim_\bc$ in $\{0,1,\ldots,\ell-1\}$. This algebra
has generators $e_z, f_z$, where $z$ runs over the subset in $\C/\kappa^{-1}\Z$ of the
elements of the form
$s_i+m$, where $m$ is an integer. We are going to define an action of
$\g_{\kappa,{\bf s}}$ on $\mathcal{F}^\ell$, the Fock space of level $\ell$.

The action is defined as follows. We say that a box $b$ is a $z$-box if
$\cont^{\bf s}(b)$ equals to $z$ in $\C/\kappa^{-1}\Z$. We set
$f_z|\lambda\rangle:=\sum_{\mu}|\mu\rangle$, where the sum is
taken over all $\ell$-partitions $\mu$ that are obtained from
$\lambda$ by adding a $z$-box. Similarly, we set
$e_z|\lambda\rangle=\sum_{\nu}|\nu\rangle$, where the sum is
taken over all $\nu$ obtained from $\lambda$ by removing a
$z$-box.

We write $\mathcal{F}_{\kappa,\bf s}$ for the space $\mathcal{F}^\ell$ equipped
with this $\g_{\kappa}$-action. We note that we have a natural isomorphism
of $\g_\kappa$-modules, $\mathcal{F}_{\kappa,\bf s}=\bigotimes_{i=0}^{\ell-1} \mathcal{F}_{\kappa, s_i}$.

\subsubsection{Type A categorical Kac-Moody action}
Let $\Cat$ be an abelian $\C$-linear category, where all objects have finite length.
A {\it type A categorical Kac-Moody action} on $\Cat$ as defined in
\cite[5.3.7, 5.3.8]{Rouquier_2Kac} consists of the following data:
\begin{enumerate}
\item exact endo-functors $E,F$ of $\Cat$ and a number $q\in \C\setminus \{0,1\}$,
\item adjointness morphisms $\mathsf{1}\rightarrow EF, FE\rightarrow \mathsf{1}$,
\item endomorphisms $X\in \End(E), T\in \End(E^2)$.
\end{enumerate}
These data are supposed to satisfy the axioms to be provided below.
We will need the following notation.
Let $I$ be a subset in $\C^\times$. Define a Kac-Moody algebra $\g_I$
as follows. Define an unoriented graph structure on $I$ by
connecting $z$ and $z'$ if $z'z^{-1}=q^{\pm 1}$. Then $\g_I$ is the Kac-Moody
algebra defined from $I$, it is the product of several copies of $\hat{\slf}_e$
if $q$ is a primitive root of unity of order $e$, and is the product of several
copies of $\slf_\infty$ if $q$ is not a root of unity. For example,
taking $q=\exp(2\pi\sqrt{-1}\kappa)$ and $I=\{Q_0,\ldots,Q_{\ell-1}\}$, we get $\g_I=\g_{\kappa,{\bf s}}$.

Here are the axioms of a categorical action.

\begin{itemize}
\item[(i)] $F$ is isomorphic to the left adjoint of $E$.
\item[(ii)] For any $d$, the map  $X_i\mapsto {\bf 1}^{i-1}X{\bf 1}^{d-i},
T_i\mapsto {\bf 1}^{i-1}T{\bf 1}^{d-1-i}$ extends to a homomorphism
$\mathcal{H}_q^{aff}(d)\rightarrow \End(E^d)$.
\item[(iii)] Let $E=\bigoplus_{z\in \C}E_z$ be the decomposition into eigen-functors according to $X$,
and $F=\bigoplus_{z\in \C}F_z$ be the decomposition coming from (2).
The operators $[E_z],[F_z]$ give rise to an integrable representation of $\g_I$ on $K_0^{\C}(\Cat)$,
where $I:=\{z\in \C^\times| E_z\neq 0\}$.
\item[(iv)] Let $\Cat_\nu$ denote the Serre subcategory of $\Cat$ spanned by the simples $L$ with
$[L]\in K_0^{\C}(\Cat_\nu)$, where $K_0^{\C}(\Cat_\nu)$ is the $\nu$ weight space for the
$\g_I$-module $K_0^{\C}(\Cat)$. Then $\Cat=\bigoplus_\nu \Cat_\nu$.
%The algebra is the product of several copies
%of $\hat{\slf}_e$ is $e$ is the order of $q$,
%and is the product of several copies of
%$\slf_\infty$ if $q$ is not a root of unity.
\end{itemize}

\subsubsection{Example: cyclotomic Hecke algebras}
Let us provide an example that first appeared in \cite[Section 7.2]{CR}.
Let us write $\mathcal{H}^{\bf s}_q(n)$
for the cyclotomic Hecke algebra as in \ref{SSS_cyclot_Recap}.
Set $\Cat:=\bigoplus_{n\geqslant 0}\mathcal{H}^{\bf s}_q(n)\operatorname{-mod}$.

Let us explain the categorification data in this case. We take the same $q$
as the eponymous parameter for our Hecke algebra and
$I:=\{Q_0,\ldots,Q_{\ell-1}\}$.  Let $\,^\mathcal{H}\Res_{n-1}^n$
denote the restriction functor $\mathcal{H}^{\bf s}_q(n)\operatorname{-mod}
\rightarrow \mathcal{H}^{\bf s}_q(n-1)\operatorname{-mod}$ (we set
$\,^\mathcal{H}\Res_{-1}^0=0$) and let $\,^\mathcal{H}\operatorname{Coind}^{n-1}_n$
denote the coinduction functor, the right adjoint of
the restriction functor. We set $E:=\bigoplus_{n=0}^\infty \,^\mathcal{H}\Res_{n-1}^n$
and $F:=\bigoplus_{n=0}^\infty \,^\mathcal{H}\operatorname{CoInd}_{n+1}^n$ so that $F$ is naturally
identified with the right adjoint of $E$.

The endomorphism $X$ on the summand $\,^\mathcal{H}\Res_{n-1}^n$ is given by
the multiplication by $X_n\in \mathcal{H}^{\bf s}_q(n)$. Similarly,
on the summand $\,^\mathcal{H}\Res_{n-2}^n$ of $E^2=\bigoplus_{n\geqslant 0}
\,^\mathcal{H}\Res_{n-2}^n$ we define $T$ as the multiplication by $T_{n-1}$.

It is shown in \cite[Section 13.6]{Ariki} that $K_0^{\C}(\Cat)$ is the irreducible highest
weight $\g_I$-module $L(\omega_{\bf s})$ whose highest weight equals $\omega_{\bf s}:=\sum_{z\in I}n_z \omega_z$,
where $\omega_z$ is the fundamental weight in the vertex $z$ and $n_z$
is the number $s_i$ such that $\exp(2\pi\sqrt{-1}\kappa s_i)=z$.
The category $\Cat$ admits a weight decomposition as in axiom (iv).
And so it indeed carries a categorical $\g_I$-action.

%Moreover, $\Cat$ is a {\it minimal categorification} of , meaning that the is the category of vector spaces.

\subsubsection{Categorical Kac-Moody action on $\mathcal{O}_\bc$}\label{SSS_cat_KM_O}
Now let us proceed to defining a categorical $\g_\bc$-action on $\mathcal{O}_\bc$
that categorifies the $\g_\bc$-action on $\mathcal{F}^\ell$ from \ref{SSS_Fock}.
Here we  follow \cite{Shan}.

Consider the functors $\,^{\OCat}\Res^n_{n-1}:=\,^{\OCat}\Res_{G(\ell,1,n-1)}^{G(\ell,1,n)}$
and $\,^{\OCat}\Ind^n_{n+1}:=\,^{\OCat}\Ind_{G(\ell,1,n+1)}^{G(\ell,1,n)}$. We set
$E:=\bigoplus_{n=0}^\infty \,^{\OCat}\Res^n_{n-1}$ and
$F:=\bigoplus_{n=0}^\infty \,^{\OCat}\Ind^n_{n+1}$. This functor
$F$ is naturally the right adjoint of $E$ but it is also isomorphic
to the left adjoint of $E$ so we have (i).

Now let us explain how to construct the endomorphisms  $X\in \End(E),T\in \End(E^2)$.
We have the KZ functor $\KZ_\bc:\OCat_\bc\twoheadrightarrow \bigoplus_{n=0}^\infty
\mathcal{H}^{\bf s}_q(n)\operatorname{-mod}$, the sum of KZ functors
$\OCat_\bc(n)\twoheadrightarrow \mathcal{H}^{\bf s}_q(n)\operatorname{-mod}$.
By Proposition \ref{Prop:KZ_Res},
\begin{equation}\label{eq:KZ_E_commute} \,^{\mathcal{H}}E\circ \operatorname{KZ}_\bc\cong \operatorname{KZ}_\bc\circ \,^{\mathcal{O}}E.\end{equation}
Since $\KZ_\bc$ is fully faithful on the projectives objects, see (3) of Proposition
\ref{Prop:KZ_properties}, we deduce that isomorphism (\ref{eq:KZ_E_commute})
induces an isomorphism $\End(\,^{\mathcal{H}}E)\cong \End(\,^{\mathcal{O}}E)$
that gives as an element $X$ in the right hand side.
Similarly, we have an isomorphism   $\End(\,^{\mathcal{H}}E^2)\cong \End(\,^{\mathcal{O}}E^2)$
that gives us $T\in \End(\,^{\mathcal{O}}E^2)$.
(ii) follows.

Let us explain how to prove (iii) and (iv) (the same techniques work for cyclotomic
Hecke algebras). First, consider the case when the parameter $\hat{\bc}$ is Weil generic.
Here the category $\mathcal{O}_{\hat{\bc}}$ is semisimple and $\operatorname{KZ}_{\hat{\bc}}$ is an
equivalence. We have $E\Delta_{\hat{\bc}}(\lambda)\cong \bigoplus_{\nu} \Delta_{\hat{\bc}}(\nu)$, where
the sum is taken over all $\nu$ obtained from $\lambda$ by removing a box. The endomorphism
$X$ acts on $\Delta_{\hat{\bc}}(\nu)$ by $z(\hat{\bc},\lambda\setminus \nu):=q^{x-y}Q_i$,
where $\lambda\setminus \nu=(x,y,i)$, see \cite[Section 13.6]{Ariki}.
Now take an arbitrary $\bc$ and pick a generic line $\mathcal{L}$ through $\bc$ so that $\OCat_{\hat{\bc}}$
is semisimple for any $\hat{\bc}\in\ell\setminus \{\bc\}$. The class $[E_z \Delta_\bc(\lambda)]$
coincides with $\sum_\nu  [\Delta_{\hat{\bc}}(\nu)]$, where sum is taken over
all $\nu$ such that the number $z(\hat{\bc},\lambda\setminus \nu)$ approaches $z$
as $\hat{\bc}\in \mathcal{L}$ approaches $\bc$. It follows that $[E_z\Delta_\bc(\lambda)]=\sum_{\nu}[\Delta_\bc(\nu)]$,
where the summation is taken over all $\nu$ such that $\nu\subset \lambda,\lambda\setminus \nu$  is a $z$-box.
Repeating this argument for $F$ and the object $\nabla_\bc(\lambda)$, we get
$[F_z\nabla_\bc(\lambda)]=\sum_{\mu}[\nabla_\bc(\mu)]$, where the sum is taken
over all $\mu$ such that $\lambda\subset \mu$ and  $\mu\setminus \lambda$
is a $z$-box. Since $[\nabla_\bc(\lambda)]=[\Delta_\bc(\lambda)]$, we see that the operators
$[E_z],[F_z]$ define the action of $\g_\bc$ on $\mathcal{F}^\ell$ considered
in \ref{SSS_Fock}. This establishes (iii).

(iv) is shown in a similar fashion. For a multiset $A$ of $\C/\kappa^{-1}\Z$
define the subspace $\mathcal{F}^{\ell}(A)\subset \mathcal{F}^\ell$ (resp.,
the subcategory $\OCat_{\bc}(A)$) to be  the linear span of $|\lambda\rangle$
(resp., the Serre span of $\Delta_\bc(\lambda)$)  with $\{\cont^{\bf s}(\lambda)\}=A$.
We consider the central subalgebras $\C[X_1^{\pm 1},\ldots,X_n^{\pm n}]^{\Sym_n}
\subset \mathcal{H}_q^{aff}(n)$. Since $\operatorname{KZ}_\bc$ is fully faithful on
the projective objects, we see that $\C[X_1^{\pm 1},\ldots,X_n^{\pm n}]^{\Sym_n}$  maps into
the endomorphism algebra of the identity functor of $\OCat_\bc(n)$ and so
functorially acts on objects of $\OCat_\bc(n)$. A multiset $A$ of cardinality
$n$ defines a character $\chi_A:\C[X_1^{\pm 1},\ldots,X_n^{\pm n}]^{\Sym_n}$
that sends $f$ to $f(\exp(2\pi\sqrt{-1}a_1),\ldots, \exp(2\pi\sqrt{-1}a_n)),$
where $A=\{a_1,\ldots,a_n\}$. A degeneration argument similar to the previous
paragraph shows that $\OCat_{\bc}(A)$ consists precisely of the objects
$M\in \OCat_{\bc}(n)$ (where $n=|A|$) such that $\C[X_1^{\pm 1},\ldots, X_n^{\pm 1}]^{\Sym_n}$
acts on $M$ with generalized eigen-character $\chi_A$. From here it follows
that $K_0^{\C}(\OCat_\bc(A))=\mathcal{F}^\ell(A)$.
% The weight spaces for the $\g_\bc$-action are spanned
%by all $|\lambda\rangle$ with the same multiset $\{\cont^{\bf s}(\lambda)\}\subset \C/\kappa^{-1}\Z$.
%The Serre subcategory spanned by the object $\Delta_\bc(\lambda)$ with the fixed content multiset
%is a direct summand of $\OCat_\bc$ categorifying the corresponding weight space in
%$K_0^{\C}(\OCat_c)$. To see this we consider the central subalgebras
%$\C[X_1^{\pm 1},\ldots,X_n^{\pm n}]^{\Sym_n}\subset \mathcal{H}_q^{aff}(n)$.
%Since $\operatorname{KZ}_c$ is fully faithful on the projective objects,
%we see that $\C[X_1^{\pm 1},\ldots,X_n^{\pm n}]^{\Sym_n}$  maps into
%the endomorphism algebra of the identity functor of $\OCat_c(n)$.
%We argue similarly to the previous paragraph to show that the category
%spanned by the objects $\Delta_c(\lambda)$ with fixed content multiset
%coincides with the subcategory of all objects, where $\C[X_1^{\pm 1},\ldots,X_n^{\pm n}]^{\Sym_n}$
%acts with given generalized character.

\subsubsection{Compatibility with highest weight structure}
The categorical $\g_\bc$-action on $\OCat_\bc$ is highest weight in the sense
of \cite{cryst,hw_str}. The definition from \cite{hw_str} is quite
technical, let us explain what it boils down to in our case.
The following result is a direct consequence of \ref{SSS_cat_KM_O}
and Proposition \ref{Prop:ResInd_dual}.

\begin{Lem}\label{Lem:hw_compat}
The object $E_z \Delta_\bc(\lambda)$ has a filtration by $\Delta_\bc(\nu)$,
where $\nu$ runs over all $\ell$-partitions contained in $\lambda$
such that $\lambda\setminus \nu$ is a $z$-box, each $\Delta_\bc(\nu)$
occurs with multiplicity $1$. Similarly,
$F_z \Delta_\bc(\lambda)$ has a filtration by $\Delta_\bc(\mu)$,
where $\mu$ runs over all $\ell$-partitions contained in $\lambda$
such that $\mu\setminus \lambda$ is a $z$-box, each $\Delta_\bc(\mu)$
occurs with multiplicity $1$.
\end{Lem}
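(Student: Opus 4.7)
The plan is to combine Lemma~\ref{Lem:IndRes_stand} with the functorial weight decomposition of $E$ and $F$ and with the $K_0$-level computation already given in~\ref{SSS_cat_KM_O}. By definition $E=\bigoplus_n {}^{\OCat}\Res^n_{n-1}$ and $F=\bigoplus_n {}^{\OCat}\Ind^n_{n+1}$, so Lemma~\ref{Lem:IndRes_stand} yields that $E\Delta_\bc(\lambda)$ and $F\Delta_\bc(\lambda)$ are standardly filtered, with the multiplicity of $\Delta_\bc(\tau')$ equal to $\dim\Hom_{G(\ell,1,n\mp 1)}(V_{\tau'},V_\lambda)$. The branching rule for $G(\ell,1,n)\supset G(\ell,1,n-1)$, obtained from the $\Sym_n$-branching via Clifford theory applied to the normal subgroup $\mu_\ell^n$, gives $V_\lambda|_{G(\ell,1,n-1)}=\bigoplus_\nu V_\nu$, the sum being over the $\ell$-multipartitions $\nu\subset\lambda$ with $|\lambda\setminus\nu|=1$, each appearing once; Frobenius reciprocity produces the analogous statement for induction. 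Hence $E\Delta_\bc(\lambda)$ admits a $\Delta$-filtration in which each such $\Delta_\bc(\nu)$ appears exactly once, and similarly for $F\Delta_\bc(\lambda)$.

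Next I would exploit that $E=\bigoplus_z E_z$ is a decomposition of functors coming from the generalized eigenspace decomposition of $X\in\End(E)$, so that $E_z\Delta_\bc(\lambda)$ is a direct summand of $E\Delta_\bc(\lambda)$; the same holds for $F_z\Delta_\bc(\lambda)$. In a highest weight category the class of standardly filtered objects is closed under direct summands --- this is immediate from the characterization $\Ext^1_{\OCat_\bc(W)}(M,\nabla_\bc(\tau))=0$ for all $\tau\in\Irr(W)$. Thus $E_z\Delta_\bc(\lambda)$ and $F_z\Delta_\bc(\lambda)$ are themselves standardly filtered, and the multiplicities of the standards in any such filtration are completely determined by the classes in $K_0^{\C}(\OCat_\bc)$, since $\{[\Delta_\bc(\tau)]\}_{\tau\in\Irr(W)}$ is a basis.

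Those classes are precisely what was computed in~\ref{SSS_cat_KM_O}: by deformation to a Weil generic $\hat\bc$ on a generic line through $\bc$, where $\OCat_{\hat\bc}$ is semisimple, $E\Delta_{\hat\bc}(\lambda)=\bigoplus_\nu\Delta_{\hat\bc}(\nu)$, and $X$ acts on the $\nu$-summand as the scalar $q^{x-y}Q_i$ with $\lambda\setminus\nu=(x,y,i)$, followed by specialization, one obtains
\[ [E_z\Delta_\bc(\lambda)]=\sum_{\nu}[\Delta_\bc(\nu)] \quad\text{and}\quad [F_z\Delta_\bc(\lambda)]=\sum_{\mu}[\Delta_\bc(\mu)], \]
the sums running over $\nu$ (resp.\ $\mu$) with $\lambda\setminus\nu$ (resp.\ $\mu\setminus\lambda$) a $z$-box. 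Combining with the previous paragraph, each such $\nu$ and each such $\mu$ must appear with multiplicity exactly one in the corresponding filtration. Proposition~\ref{Prop:ResInd_dual} enters only implicitly, as it underlies the costandard half of Lemma~\ref{Lem:IndRes_stand} and permits the same argument to deliver analogous filtrations of $E_z\nabla_\bc(\lambda)$ and $F_z\nabla_\bc(\lambda)$. The only step that requires a moment's thought is the compatibility of the summand decomposition $E=\bigoplus_z E_z$ with the $\Delta$-filtration of $E\Delta_\bc(\lambda)$, but as noted this is automatic; no genuine obstacle arises.
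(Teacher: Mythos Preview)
Your argument is correct and is precisely the intended one: the paper simply records that the lemma ``is a direct consequence of \ref{SSS_cat_KM_O} and Proposition~\ref{Prop:ResInd_dual},'' and what you have written is the natural unpacking of that sentence --- Lemma~\ref{Lem:IndRes_stand} (whose induction half rests on Proposition~\ref{Prop:ResInd_dual} via biadjointness) gives the $\Delta$-filtration of $E\Delta_\bc(\lambda)$ and $F\Delta_\bc(\lambda)$, closure of standardly filtered objects under summands passes this to $E_z$ and $F_z$, and the $K_0$-computation in \ref{SSS_cat_KM_O} pins down the multiplicities. One cosmetic remark: in \ref{SSS_cat_KM_O} the $F_z$-class is actually computed on $\nabla_\bc(\lambda)$ rather than on $\Delta_\bc(\lambda)$ directly, with the passage to $\Delta$ via $[\nabla_\bc(\lambda)]=[\Delta_\bc(\lambda)]$; your citation is slightly compressed here but the conclusion is unaffected.
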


\subsection{Crystal}
\subsubsection{Crystal for a categorical action}
Let us recall the definition of a crystal corresponding to a categorical $\g_I$-action.
The crystal structure will be defined for each $z\in I$ separately
so we will get $I$ copies of an $\slf_2$-crystal.

In this paper, by an $\slf_2$-crystal we mean a set $C$ with maps
$\tilde{e},\tilde{f}:C\rightarrow C\sqcup \{0\}$ such that the following holds.
\begin{itemize}
\item For any $v\in C$ there are $m$ and $n$ such that $\tilde{e}^n v=\tilde{f}^m v=0$.
\item Moreover, for $u,v\in C$,  the equalities $\tilde{e}u=v$ and
$\tilde{f}v=u$ are equivalent.
\end{itemize}

Now let $\Cat$ be a $\g_I$-categorification. We will introduce
a $\g_I$-crystal structure on the set $\Irr(\Cat)$.
Namely, pick $L\in \Irr(\Cat)$ and consider the object $E_zL$.
If it is nonzero, it has simple
head (=maximal semisimple quotient) and simple socle
(=maximal semisimple sub) and those two are isomorphic, \cite[Proposition 5.20]{CR}.
We take that simple object for $\tilde{e}_z L$ if
$E_zL\neq 0$. We set $\tilde{e}_zL=0$ if $E_z L=0$.
We define $\tilde{f}_z L$ similarly. That we get a $\g_I$-crystal
follows from \cite[Proposition 5.20]{CR} combined with
\cite[Section 5]{BK}.

\subsubsection{Computation for $\mathcal{O}_\bc$}
Now let us explain how to compute the crystal on $\mathcal{P}_\ell=\Irr(\OCat_\bc)$,
which was done in \cite{cryst}. In order to compute $\tilde{e}_z\lambda, \tilde{f}_z\lambda$,
we first record the {\it $z$-signature} of $\lambda$ that is a sequence of
$+$'s and $-$'s. Then we perform a certain reduction procedure getting what
we call the {\it reduced $z$-signature}. Based on that signature, we can then
compute $\tilde{e}_z\lambda, \tilde{f}_z\lambda$. What we get is
a crystal on $\mathcal{P}_\ell$ very similar to what was discovered by
Uglov in \cite{Uglov}.

Let us start by constructing the $z$-signature of $\lambda$. Let us take the addable
and removable boxes of $\lambda$ and order them in a decreasing way
according to the order $\prec_\bc$ introduced above. Here is an example
when $\ell=2, \kappa=-1/2, s_0=0, s_1=-1$, we take $z$ to be $0$ modulo $2$
and $\lambda^{(0)}=(2,2), \lambda^{(1)}=(3,1^3)$. We get box $b_1,\ldots,b_5$.

\begin{picture}(120,30)
\put(2,1){\line(0,1){14}}
\put(9,1){\line(0,1){14}}
\put(16,1){\line(0,1){14}}
\put(2,1){\line(1,0){14}}
\put(2,8){\line(1,0){14}}
\put(2,15){\line(1,0){14}}
\put(42,1){\line(0,1){28}}
\put(49,1){\line(0,1){28}}
\put(56,1){\line(0,1){7}}
\put(63,1){\line(0,1){7}}
\put(42,1){\line(1,0){21}}
\put(42,8){\line(1,0){21}}
\put(42,15){\line(1,0){7}}
\put(42,22){\line(1,0){7}}
\put(42,29){\line(1,0){7}}
\put(65,3){$b_1$}
\put(17,3){$b_2$}
\put(11,10){$b_3$}
\put(4,17){$b_4$}
\put(44,24){$b_5$}
\end{picture}

To get the signature we write a $+$ for each addable box and a $-$
for each removable box. In the example above, we get $++-+-$.

Now we are going to reduce the signature using the following steps.
Initially, we have a sequence of $+$'s and $-$'s. We erase consecutive
$-+$ leaving empty spaces. On each next step, if we have $-,+$ in
this order separated by empty spaces, we erase them. We continue until
there is no $-$'s to the left of a $+$. What we get is the {\it reduced
signature}. It is easy to see that it does not depend on the order in
which we perform our steps. In the example above, we get $++-$.

To define $\tilde{e}_z\lambda$ we pick the leftmost $-$ in the reduced
$z$-signature of $\lambda$ and remove the box in the corresponding
position of $\lambda$ (in the reduced signature, we replace this $-$
with a $+$). If the reduced $z$-signature of $\lambda$ consists only of
$+$'s, we set $\tilde{e}_z \lambda=0$. To define $\tilde{f}_z\lambda$
we pick the rightmost $+$ in the reduced signature of $\lambda$
and add the corresponding box to $\lambda$. If there are no $+$'s in
the reduced signature we set $\tilde{f}_z\lambda=0$.

In the example above we remove the box $b_5$ to get $\tilde{e}_z\lambda=((2,2), (3,1^2))$
and add the box  $b_2$ to get  $\tilde{f}_z\lambda=((3,2),(3,1^3))$.

\subsubsection{Special cases}
Here we are going to give a partial description of  several crystals that are going to appear below.

\begin{Ex}\label{Ex:level1_cryst}
Consider the case $\ell=1, \kappa<0$ and let $s_0=0$.
Then in the $i$-signature of $\lambda\in  \mathcal{P}_1$ we
include all addable/removable boxes $(x,y)$ with $x-y$
congruent to $i$ modulo $e$.

We note that in this case all connected components are isomorphic
to the component of $\varnothing$ via an explicit isomorphism.
Namely, the singular vertices are precisely the partitions
divisible by $e$ (meaning that all parts are divisible by $e$).
For such a partition $\mu$, an isomorphism
$\mathcal{P}_{1}(\varnothing)\xrightarrow{\sim} \mathcal{P}_1(\mu)$
is given by $\lambda\rightarrow \lambda+\mu$ (the sum is also taken
component-wise).

The component of $\varnothing$ consists of all partitions, where
each column occurs less than $e$ times.
\end{Ex}

\begin{Ex}\label{Ex:level2_cryst_1}
Now consider the case when $\ell=2$, and $\kappa$ is irrational.
Suppose that $s_1=0, s_2=m$, where $m$ is an integer. A box
$b=(x,y,1)$ is a $\kappa^i$-box if and only if $x-y=i$.
A box $b=(x,y,2)$ is a $\kappa^i$-box if and only if $x-y=i-m$.
Note that a $\kappa^i$-box from the first partition is always
smaller than an $i$-box from the second partition. The $\kappa^i$-signature
of any $\lambda$ consists of at most two elements.

Let us describe the singular bi-partitions $\lambda$ in this case. Those are the
partitions, where all signatures consist of $+$'s only or are equal to
$-+$. This implies, in particular, that $\lambda^{(1)}=\varnothing$.
The only removable box in $\lambda^{(2)}$ is a $1$-box so it has content
$m$. So the singular bi-partitions are the bipartitions of the form
$(\varnothing, ((k+m)^k))$, where $k$ is a positive integer. Here and below,
in the notation for a partition, a superscript means the multiplicity.
\end{Ex}

\begin{Ex}\label{Ex:level2_cryst_2}
We still consider the case $\ell=2, \kappa$ is irrational.
But now take $s_1=0, s_2=m-\kappa^{-1}$. The description
of the $\kappa^i$-boxes is the same but the order is reversed:
a $\kappa^i$-box from the first partition is always
bigger than a $\kappa^i$-box from the second partition.
So the singular bi-partitions are precisely the
bi-partitions of the form $((k^{m+k}),\varnothing)$.
\end{Ex}

\subsection{Type A category $\mathcal{O}$ and $q$-Schur algebra}
In order to proceed to a categorical Heisenberg action, we need to establish an equivalence
of categories $\mathcal{O}_c(\Sym_n)$ and the categories of modules over $q$-Schur algebras.
This equivalence was established by Rouquier in \cite{rouqqsch} when $c\not\in 1/2+\Z$
using the approach described  in Section \ref{SS_abelian_equi}. An alternative approach
based on induction and restriction functors to be explained in this section was found in \cite[Appendix]{VV_proof}.

\subsubsection{$q$-Schur algebras} Fix $n$ and consider $N\geqslant n$. We can form Lusztig's
form (i.e., the form with divided powers)
of the quantum group $U_{\epsilon}(\gl_N)$, where $\epsilon:=\exp(\pi \sqrt{-1}\kappa)$. It makes sense
to speak about polynomial representations of $U_{\epsilon}(\gl_N)$ of degree $n$, those are the modules
where all weights $(w_1,\ldots,w_m)$ satisfy
\begin{equation}\label{eq:polyn_wts} w_N\geqslant 0, w_1+\ldots+w_N=n.\end{equation} The action
of $U_\epsilon(\gl_m)$ on any such module factors through the {\it Schur algebra} $\mathcal{S}_\epsilon(N,n)$
that, by definition, is the image of $U_\epsilon(\gl_N)$ in $\End(V^{\otimes n})$, where $V$
stands for the tautological $U_\epsilon(\gl_N)$-module $\C^N$.

The category of finite dimensional $U_\epsilon(\gl_N)$-modules is known to be highest weight,
where the standard objects are the Weyl modules, and the order is, for example,
the dominance ordering on highest weights (one can also take rougher orders).
Recall that the dominance ordering is defined by $w\leqslant w'$ if
$\sum_{i=1}^m w_i=\sum_{i=1}^m   w_i'$ and $\sum_{i=1}^j w_i\leqslant
\sum_{i=1}^j w_i'$.

It follows that  the weights satisfying (\ref{eq:polyn_wts})
form a poset ideal. So $\mathcal{S}_\epsilon(N,n)$ is a highest weight
subcategory of the category of finite dimensional $U_\epsilon(\gl_N)$-modules.
Also it is easy to see that, for $N\geqslant n$, we have a highest weight
equivalence $\mathcal{S}_\epsilon(N,n)\operatorname{-mod}\xrightarrow{\sim}
\mathcal{S}_\epsilon(N+1,n)\operatorname{-mod}$. The standard object labelled
by a partition $\lambda$ in $\mathcal{S}_\epsilon(N,n)\operatorname{-mod}$
will be denoted by $\Delta^S(\lambda)$.

%The tensor product functor on $U_\epsilon(\gl_m)\operatorname{-mod}_{fin}$ restricts
%to $S_\epsilon(N,n_1)\operatorname{-mod}\boxtimes S_\epsilon(N,n_2)\operatorname{-mod}
%\rightarrow S_\epsilon(N,n_1+n_2)\operatorname{-mod}$.
%
%
%\begin{Lem}\label{Lem:Schur_tens}
%On the level of $K_0$ groups, the functor
%$\otimes: S_v(N,n_1)\operatorname{-mod}\boxtimes S_v(N,n_2)\operatorname{-mod}
%\rightarrow S_v(N,n_1+n_2)\operatorname{-mod}$ is independent of $v\in \C\setminus \{0,1\}$.
%So it coincides with the induction for the symmetric groups.
%%Moreover,   it maps projective objects to projective objects.
%\end{Lem}

%\subsubsection{Alternative definition}\label{SSS_Schur_fun}
There is an alternative definition of the $q$-Schur algebras (up to a Morita
equivalence), see \cite{DJ}. An equivalence with the previous definition is explained
in \cite[Section 6]{Martin}.

Consider the  one-dimensional $\mathcal{H}_q(n)$-module, where
the generator $T_i$ of $\mathcal{H}_q(n)$ acts by $q$. We denote this module by
by $\mathsf{Triv}_{n}$. For $\lambda\in \mathcal{P}(n)$, set
$$IT(\lambda):=\,^\mathcal{H}\Ind^{\Sym_\lambda}_{\Sym_{n}}(\operatorname{Triv}_{\lambda_1}
\boxtimes\ldots\boxtimes\operatorname{Triv}_{\lambda_k}).$$
Here $\lambda=(\lambda_1,\ldots,\lambda_k)$ and we write $\Sym_\lambda$ for the parabolic
subgroup $\Sym_{\lambda_1}\times\ldots\times \Sym_{\lambda_k}\subset \Sym_n$.
Then the  $S_\epsilon(N,n)\operatorname{-mod}$ is equivalent to
the category of right modules over $$\operatorname{End}(\bigoplus_{\lambda\in \mathcal{P}(n)}IT(\lambda)).$$
In other words, the category of indecomposable projectives in $S_\epsilon(N,n)\operatorname{-mod}$
is identified with the category of {\it Young modules} $Y(\lambda)$. By definition,
$Y(\lambda)$ is the unique indecomposable summand of $IT(\lambda)$ that does not
occur in $IT(\lambda')$ for $\lambda'>\lambda$. The projective cover
$P^S(\lambda)$ of $\Delta^S(\lambda)$ corresponds to $Y(\lambda)$.

Note that $\mathcal{H}_q(n)=IT((1^n))$. It follows that we get
a quotient functor $\operatorname{Sh}:S_\epsilon(N,n)\operatorname{-mod}
\twoheadrightarrow \mathcal{H}_q(n)\operatorname{-mod}$ called the Schur functor.
The reason is that, in the quantum group realization of $S_\epsilon(N,n)\operatorname{-mod}$,
this functor is given by $\Hom_{S_\epsilon(N,n)}(V^{\otimes n},\bullet)$,
where $V$ is the tautological $U_\epsilon(\gl_N)$-module. By the second
construction of $S_\epsilon(N,n)\operatorname{-mod}$ this functor is fully
faithful on the projectives.

%The object $V\in S_\epsilon(N,1)$ is projective.
%Therefore object $V^{\otimes n}\in S_\epsilon(N,n)\operatorname{-mod}$ is projective.
%Its endomorphism algebra is known to coincide with $\mathcal{H}_q(n)\operatorname{-mod}$.
%So we get the quotient functor $\operatorname{Sh}:S_\epsilon(N,n)\operatorname{-mod}\twoheadrightarrow
%\mathcal{H}_q(n)\operatorname{-mod}$ given by $\Hom_{S_\epsilon(N,n)}(V^{\otimes n},\bullet)$. The
%equivalences between the categories  $S_\epsilon(N,n)\operatorname{-mod}$ for different $N$
%intertwine the Schur functors.
%
%Here are the following three important properties of the Schur functor:
%\begin{enumerate}
%\item
%For $M_i\in S_\epsilon(N,n_i)\operatorname{-mod}, i=1,2,$ we have
%$$\operatorname{Sh}(M_1\otimes M_2)=
%\,^\mathcal{H}\Ind^{\Sym_{n_1}\times \Sym_{n_2}}_{\Sym_{n_1+n_2}}(\operatorname{Sh}(M_1)\boxtimes %\operatorname{Sh}(M_2)).$$
%\item The functor $\operatorname{Sh}$ is fully faithful on the projective objects.
%\item $\operatorname{Sh}(\Delta^S((n)))$, where we write $(n)$ for the Young diagram
%with one row, is the  one-dimensional $\mathcal{H}_q(n)$-module, where
%the generator $T_i$ of $\mathcal{H}_q(n)$ acts by $q$. Below this module will be denoted
%by $\mathsf{Triv}_{n}$.
%\end{enumerate}

\subsubsection{Equivalence theorem}
Here we are going to prove the following theorem.

\begin{Prop}\label{Prop:equivalence_typeA}
Let $c>0$. There is a highest weight equivalence $\varphi:\OCat_c(n)\xrightarrow{\sim}S_\epsilon(N,n)\operatorname{-mod}$
that intertwines $\operatorname{KZ}_c$ and $\operatorname{Sh}$.
\end{Prop}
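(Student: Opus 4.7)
The plan is to realize both $\OCat_c(n)$ and $S_\epsilon(N,n)\operatorname{-mod}$ as highest weight covers of $\mathcal{H}_q(n)\operatorname{-mod}$ (with $q=\exp(-2\pi\sqrt{-1}c)$) via $\KZ_c$ and $\operatorname{Sh}$ respectively, and then invoke the uniqueness-of-cover machinery of \ref{SS_hw_covers}--\ref{SSS_ext_quot}. First I would fix matching labellings: both categories have simples indexed by $\Part(n)$, and the quotient functors are defined by projective-injective objects. The Schur functor is, by construction, defined by the Young module $Y((1^n))=IT((1^n))=\mathcal{H}_q(n)$; for $\KZ_c$, the defining object is $P_{KZ}$ from Proposition \ref{Prop:KZ_properties}(2), whose multiplicities match those of $IT((1^n))$ (namely $\dim\tau$ on each $\Delta_c(\tau)$). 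For the order, one uses that $c>0$ forces the $c$-function order (or its refinement from Proposition \ref{Prop:better order}) to refine the dominance order on partitions, giving a common highest weight poset structure.

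When $c\notin \tfrac12+\Z$, i.e.\ $q\neq -1$, the functor $\KZ_c$ is $0$-faithful by Proposition \ref{Prop:KZ_properties}(4), and the Schur functor is classically fully faithful on Weyl-filtered modules. In this range I would apply Rouquier's deformation argument of \ref{SSS:deform}: deform both categories over $R=\C[\![t]\!]$ along a generic line through $c$, upgrade $0$-faithfulness to $1$-faithfulness, verify (L1) and (L2) on the deformed quotients (using that both $\operatorname{Frac}(R)$-base changes are semisimple with the same labelling via $\operatorname{Irr}(\mathcal{H}_{\operatorname{Frac}(R),q}(n))$), and specialize the resulting equivalence $\varphi_R$ to the desired $\varphi$.

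For the remaining case $c\in \tfrac12+\Z$, where $\KZ_c$ is no longer $0$-faithful, I would instead work with the extended quotient $\tilde{\Cat}_c$ of \ref{SSS_ext_quot}, which is $0$-faithful by Lemma \ref{Lem:0-faithful_extension}. On the Schur side one must produce a matching enlarged quotient $\tilde{\Cat}^S_c$. The mechanism is the categorical $\mathfrak{g}_\kappa$-action: $\bigoplus_n \OCat_c(n)$ and $\bigoplus_n S_\epsilon(N,n)\operatorname{-mod}$ both carry categorical $\mathfrak{g}_\kappa$-actions categorifying the same level-$1$ Fock space, with $E,F$ given by induction/restriction on the Cherednik side (Section \ref{SSS_cat_KM_O}) and by the classical branching for $U_\epsilon(\gl_N)\supset U_\epsilon(\gl_{N-1})$ on the Schur side. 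The additional indecomposable projectives one adjoins to $P_{KZ}$ are precisely summands of $F_z\mathcal{H}_q(n-1)$ for appropriate $z$, and the commutation of $\KZ_c,\operatorname{Sh}$ with $E,F$ (Proposition \ref{Prop:KZ_Res}, Corollary \ref{Cor:KZ_Ind}, together with Lemma \ref{Lem:hw_compat}) identifies them on the two sides. With both extended covers $0$-faithful and compatibly matched by the categorical action, the deformation argument of the previous paragraph carries through.

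The main obstacle is this last step: one needs to upgrade the Kac-Moody compatibility between $\OCat_c$ and $\mathcal{H}_q\operatorname{-mod}$ to a full matching of the enlarged quotient functors $\overline{\pi}_c$ and $\overline{\pi}^S_c$, including identification of the extra projectives generated by $F_z$ and control of how $E$-$F$ biadjunction interacts with the highest weight filtration. This compatibility check is the technical heart of the induction/restriction approach from \cite[Appendix]{VV_proof}, and is what replaces the direct $0$-faithfulness of $\KZ_c$ in Rouquier's original argument.
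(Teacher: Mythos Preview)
Your approach is essentially Rouquier's original one and is correct when $c\notin\tfrac12+\Z$, but it is considerably more elaborate than what the paper does, and your treatment of the case $c\in\tfrac12+\Z$ remains a genuine gap (as you yourself note). The paper's argument avoids the deformation machinery, the case split on $q=-1$, and the extended quotients entirely.

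The key observation you are missing is that both $\KZ_c$ and $\operatorname{Sh}$ are fully faithful on \emph{all projective objects}, not merely on standardly filtered ones (Proposition~\ref{Prop:KZ_properties}(3) for $\KZ_c$; classical for $\operatorname{Sh}$). This holds for every $c$, so no $0$-faithfulness hypothesis is needed. Given this, it suffices to show $\KZ_c(P_c(\lambda))\cong Y(\lambda)$ for each $\lambda$: then $\End\bigl(\bigoplus_\lambda P_c(\lambda)\bigr)\cong\End\bigl(\bigoplus_\lambda Y(\lambda)\bigr)$ yields an equivalence of module categories matching the labellings, hence a highest weight equivalence intertwining the quotient functors.

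To identify $\KZ_c(P_c(\lambda))$ with $Y(\lambda)$, the paper constructs an explicit projective
\[
I\Delta_c(\lambda):=\,^{\OCat}\Ind^{\Sym_\lambda}_{\Sym_n}\bigl(\Delta_c((\lambda_1))\boxtimes\cdots\boxtimes\Delta_c((\lambda_k))\bigr).
\]
Since $c>0$, each one-row partition $(\lambda_i)$ is maximal in the highest weight order, so $\Delta_c((\lambda_i))$ is projective; induction preserves projectivity. Lemma~\ref{Lem:IndRes_stand} shows $\lambda$ is the unique minimal label in the standard filtration of $I\Delta_c(\lambda)$, whence $I\Delta_c(\lambda)\cong P_c(\lambda)\oplus\bigoplus_{\mu>\lambda}P_c(\mu)^{\oplus ?}$. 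Now $\KZ_c(\Delta_c((m)))\cong\mathsf{Triv}_m$ by \cite[6.2.1]{GGOR}, and Corollary~\ref{Cor:KZ_Ind} gives $\KZ_c(I\Delta_c(\lambda))\cong IT(\lambda)$. Comparing indecomposable summands, $\KZ_c(P_c(\lambda))\cong Y(\lambda)$.

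In summary: your route via deformations and (for $q=-1$) extended quotients and categorical actions can be made to work, but the paper's argument is shorter, uniform in $c>0$, and uses only the parabolic induction/restriction functors together with full faithfulness on projectives. The conceptual gain of the paper's method is that it never needs to control standardly filtered objects under the quotient functor, only projectives.
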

\begin{proof}
Since both $\operatorname{KZ}_c, \operatorname{Sh}$ are fully faithful on the projectives,
it is enough to check that $\operatorname{KZ}_c(P_c(\lambda))\cong Y(\lambda)$.
Let us define the object $I\Delta_c(\lambda)\in \OCat_c(n)$
as follows:
$$I\Delta_c(\lambda)=
\,^{\mathcal{O}}\operatorname{Ind}^{\Sym_{\lambda}}_{\Sym_n}(\Delta_{|\lambda_1|}((\lambda_1))\boxtimes \Delta_{|\lambda_2|}((\lambda_2))\boxtimes\ldots\boxtimes \Delta_{|\lambda_k|}((\lambda_k))).$$

Note that the object $I\Delta_c(\lambda)$ is projective in $\OCat_c(n)$. Indeed,
each $\Delta_c((\lambda_i))$ is projective in $\OCat_c(|\lambda_i|)$ because the label
$(\lambda_i)$ is maximal in the highest weight order. Since the induction functor
maps projective objects to projective objects, we see that $I\Delta_c(\lambda)$
is projective.  Further, from Lemma \ref{Lem:IndRes_stand} we deduce that
$\lambda$ is the minimal label of a standard that appears in the filtration of
$I\Delta_c(\lambda)$ and it occurs once.
From here we deduce that $I\Delta_c(\lambda)\cong P_c(\lambda)\oplus \bigoplus_{\mu>\lambda}
P_c(\mu)^{\oplus ?}$. By \cite[6.2.1]{GGOR}, $\operatorname{KZ}_c(\Delta_c((n))\cong \operatorname{Triv}_n$.
By Corollary \ref{Cor:KZ_Ind}, $$\operatorname{KZ}_c(I\Delta_c(\lambda))\cong IT(\lambda).$$
We conclude that $\operatorname{KZ}_c(\Delta_c(\lambda))=Y(\lambda)$.
\end{proof}

We finish this section with the following lemma proved in \cite[Section B.3]{SV}.

\begin{Lem}
The equivalence $\varphi$ intertwines $\Ind_{\Sym_{n}}^{\Sym_{n_1}\otimes \Sym_{n_2}}$
with $\bullet\otimes \bullet$.
\end{Lem}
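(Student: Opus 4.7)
The plan is to establish the natural isomorphism first on projective pairs $(M_1,M_2)$ and then to propagate it by biexactness. Both functors $\varphi\circ\Ind_{\Sym_n}^{\Sym_{n_1}\times\Sym_{n_2}}(\bullet\boxtimes\bullet)$ and $\varphi(\bullet)\otimes\varphi(\bullet)$ from $\OCat_c(n_1)\times\OCat_c(n_2)$ to $S_\epsilon(N,n)\operatorname{-mod}$ are exact in each variable separately: induction is exact by Proposition~\ref{Prop:ResInd_biadj}, tensor product over $\C$ is exact, and $\varphi$ is an equivalence. Hence, once a functorial isomorphism has been supplied on pairs of projectives, one extends to arbitrary $(M_1,M_2)$ one variable at a time by means of projective presentations.

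Now suppose $M_1,M_2$ are projective. Then $\Ind_{\Sym_n}^{\Sym_{n_1}\times\Sym_{n_2}}(M_1\boxtimes M_2)$ is projective because $\Ind$ is left adjoint to the exact functor $\Res$ (Proposition~\ref{Prop:ResInd_biadj}), while $\varphi(M_1)\otimes\varphi(M_2)$ is projective because Young modules are direct summands of $V^{\otimes n_i}$, so their tensor product is a summand of $V^{\otimes n}$, which is a projective generator. Consequently both $\varphi(\Ind(M_1\boxtimes M_2))$ and $\varphi(M_1)\otimes\varphi(M_2)$ are projective in $S_\epsilon(N,n)\operatorname{-mod}$. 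Since $\operatorname{Sh}$ is fully faithful on projectives, it is enough to produce a natural isomorphism between the images of the two functors under $\operatorname{Sh}$.

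Applying Proposition~\ref{Prop:equivalence_typeA} ($\operatorname{Sh}\circ\varphi\cong\operatorname{KZ}_c$) together with Corollary~\ref{Cor:KZ_Ind} yields
\[
\operatorname{Sh}(\varphi(\Ind(M_1\boxtimes M_2)))\cong \operatorname{KZ}_c(\Ind(M_1\boxtimes M_2))\cong \,^{\mathcal{H}}\Ind\bigl(\operatorname{KZ}_c(M_1)\boxtimes\operatorname{KZ}_c(M_2)\bigr).
\]
The matter therefore reduces to the classical Schur--Weyl compatibility
\[
\operatorname{Sh}(W_1\otimes W_2)\cong \,^{\mathcal{H}}\Ind_{\Sym_{n_1}\times\Sym_{n_2}}^{\Sym_n}\bigl(\operatorname{Sh}(W_1)\boxtimes\operatorname{Sh}(W_2)\bigr),
\]
for polynomial $U_\epsilon(\gl_N)$-modules $W_i$ of degree $n_i$, applied to $W_i:=\varphi(M_i)$. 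This compatibility is proved by a direct manipulation: realise $\operatorname{Sh}(W)$ as the $(1,\ldots,1)$-weight space of $W$, decompose the corresponding weight space of $W_1\otimes W_2$ according to the weights of the two factors, and recognise the resulting $\mathcal{H}_q(n_1)\otimes\mathcal{H}_q(n_2)$-module as the $\mathcal{H}_q(n_1+n_2)$-module induced from $\operatorname{Sh}(W_1)\boxtimes\operatorname{Sh}(W_2)$.

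The main obstacle is precisely this last compatibility, which is the technical heart of the argument and is worked out in detail in \cite[Section B.3]{SV}; some care is needed to match conventions on the Hopf coproduct of $U_\epsilon(\gl_N)$ and on the Hecke-algebra normalisations so that ordinary induction, rather than coinduction, appears on the right-hand side. Everything else in the argument---biexactness, projectivity preservation, full faithfulness of $\operatorname{Sh}$ on projectives, and commutation of $\operatorname{KZ}_c$ with induction---is formal and supplied by results already recalled in the excerpt.
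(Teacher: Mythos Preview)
The paper does not give its own argument for this lemma; it simply records that the statement is proved in \cite[Section B.3]{SV}. Your reduction strategy---restrict to projectives, use full faithfulness of $\operatorname{Sh}$ on projectives, invoke Corollary~\ref{Cor:KZ_Ind}, and then appeal to the Schur--Weyl compatibility between $\operatorname{Sh}$ and Hecke induction---is the natural one and lands on exactly the same reference for the technical core. So in substance your proof and the paper's treatment agree.

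There is, however, one genuine slip in your justification. You claim that $\varphi(M_1)\otimes\varphi(M_2)$ is projective ``because Young modules are direct summands of $V^{\otimes n_i}$''. This is false: $V^{\otimes n}$ is projective in $S_\epsilon(N,n)\operatorname{-mod}$ but it is \emph{not} a projective generator when $\epsilon$ is a root of unity (equivalently, $\operatorname{Sh}$ kills some simples), so not every indecomposable projective $P^S(\lambda)$ is a summand of $V^{\otimes n}$. On the Hecke side this is the familiar fact that Young modules need not be projective $\mathcal{H}_q(n)$-modules. The conclusion you want is nevertheless correct: for fixed projective $W_1$, the functor $W_1\otimes\bullet$ from polynomial degree $n_2$ to polynomial degree $n$ has an exact right adjoint (take $\Hom_\C(W_1,\bullet)$ with its natural $U_\epsilon(\gl_N)$-action and project to the polynomial part of degree $n_2$), hence sends projectives to projectives. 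With this correction the rest of your argument goes through.
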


%\begin{proof}
%The functors $\operatorname{KZ}_c, \operatorname{Sh}$ intertwine these functors
%with $\,^{\mathcal{H}}\Ind_{\Sym_{n}}^{\Sym_{n_1}\otimes \Sym_{n_2}}$. Both these
%functors are fully faithful on the projective objects. Now the claim of this
%corollary follows from Proposition \ref{Prop:equivalence_typeA}.
%\end{proof}

\subsubsection{Modules $L_c(e\mu)$}
Suppose that $n$ is divisible by $e$, the denominator of $c$.
The quantum Frobenius homomorphism $U_\epsilon(\gl_N)\twoheadrightarrow U_{(-1)^e}(\gl_N)$
(see \cite[35.1]{Lusztig}), gives rise to the epimorphism $S_\epsilon(N,n)\twoheadrightarrow S_{(-1)^e}(N,n/e)$.
This gives rise to an exact functor $S_{-1}(N,n/e)\operatorname{-mod}\rightarrow
S_\epsilon(N,n)\operatorname{-mod}$ to be denoted by $\operatorname{Fr}^*$.
The objects in the essential image of $\operatorname{Fr}^*$ are the
sums of the simple objects $L^S(e\mu)$.

Thanks to Proposition \ref{Prop:equivalence_typeA}, we can determine the class of $L_c(e\mu)$
in $K_0(\OCat_c(n))$. Recall that the latter is identified with
$K_0(\Sym_n\operatorname{-mod})$ that in its turn is identified
with the space of degree $n$ symmetric polynomials in $\Z[z_1,z_2,\ldots]$ so that the class $[\lambda]$
becomes the Schur polynomial $s_\lambda$.

The following result is implicit in \cite[Proposition 5.13]{SV} and is more explicit in
\cite[Theorem 1.4]{EGL}.

\begin{Lem}\label{Lem:min_supp_class}
The class of $[L_c(e\mu)]$ in $K_0(\OCat_c(n))$ corresponds to
$s_\mu(z_1^e,z_2^e,\ldots)$.
\end{Lem}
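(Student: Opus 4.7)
The plan is to transfer the computation from $\OCat_c(n)$ to $S_\epsilon(N,n)\operatorname{-mod}$ via the highest weight equivalence $\varphi$ of Proposition \ref{Prop:equivalence_typeA}, and then exploit the quantum Frobenius to reduce to a character computation. Since $\varphi$ is highest weight, it sends $\Delta_c(\lambda)\mapsto \Delta^S(\lambda)$, hence identifies $K_0(\OCat_c(n))$ with $K_0(S_\epsilon(N,n)\operatorname{-mod})$ in a way compatible with the symmetric function identification $[\Delta^S(\lambda)]\leftrightarrow s_\lambda$ (taking $N\geqslant n$ and using the highest weight stabilization as $N$ grows). It therefore suffices to show that $[L^S(e\mu)]$ corresponds to $s_\mu(z_1^e,z_2^e,\ldots)$.

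The next step is to identify $L^S(e\mu)$ as $\operatorname{Fr}^*(L'(\mu))$, where $L'(\mu)$ is the simple object in $S_{(-1)^e}(N,n/e)\operatorname{-mod}$ labeled by $\mu$. This is the content of the quantum Steinberg/Frobenius description recalled just before the lemma: the essential image of $\operatorname{Fr}^*$ consists of sums of the $L^S(e\mu)$, and on simples $\operatorname{Fr}^*$ is the natural bijection $\mu\mapsto e\mu$. The key character-theoretic input is that quantum Frobenius rescales weights by $e$: a weight vector of $U_{(-1)^e}$-weight $w$ becomes a vector of $U_\epsilon$-weight $ew$. Consequently, viewing characters as polynomials in the torus variables $z_1,\ldots,z_N$,
\begin{equation*}
\operatorname{ch}\bigl(L^S(e\mu)\bigr)(z_1,\ldots,z_N)=\operatorname{ch}\bigl(L'(\mu)\bigr)(z_1^e,\ldots,z_N^e).
\end{equation*}

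Now one uses that the simple $L'(\mu)$ in $S_{(-1)^e}(N,n/e)\operatorname{-mod}$ coincides with its Weyl module $\Delta^S(\mu)$ and hence has Schur character $s_\mu(z_1,\ldots,z_N)$. For $e$ even this is the classical Schur algebra over $\mathbb{C}$ (which is semisimple), so every standard is simple. For $e$ odd, the same conclusion holds for $U_{-1}(\gl_N)$ in characteristic zero: the quantum Frobenius quotient reduces to a semisimple situation on polynomial weights. Assembling these facts,
\begin{equation*}
\operatorname{ch}\bigl(L^S(e\mu)\bigr)=s_\mu(z_1^e,\ldots,z_N^e),
\end{equation*}
and since characters are linearly independent on $K_0(S_\epsilon(N,n)\operatorname{-mod})$ (being determined by the standard basis $[\Delta^S(\lambda)]\leftrightarrow s_\lambda$), this identifies $[L^S(e\mu)]$ with $s_\mu(z_1^e,\ldots,z_N^e)$. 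Stabilizing in $N$ and pulling back through $\varphi$ yields the claim for $[L_c(e\mu)]$.

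The only delicate point is the character assertion for $L'(\mu)$ in the Frobenius quotient $S_{(-1)^e}(N,n/e)\operatorname{-mod}$: one must verify that at $q=(-1)^e$ in characteristic zero the simple polynomial representations coincide with Weyl modules on the weights of interest. This is the main obstacle and is where one cites \cite[Proposition 5.13]{SV} or \cite[Theorem 1.4]{EGL}; everything else is formal manipulation with the equivalence $\varphi$, the weight-rescaling property of quantum Frobenius, and the Frobenius characteristic identification of $K_0(\Sym_n\operatorname{-mod})$ with symmetric functions.
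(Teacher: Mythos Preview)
The paper does not supply its own proof of this lemma; it simply records that the result is implicit in \cite[Proposition 5.13]{SV} and explicit in \cite[Theorem 1.4]{EGL}. So there is no in-paper argument to compare your proposal against. Your outline --- transport along the highest weight equivalence $\varphi$ of Proposition~\ref{Prop:equivalence_typeA}, identify $L^S(e\mu)=\operatorname{Fr}^*(L'(\mu))$, and use that the quantum Frobenius rescales weights by $e$ so that $\operatorname{ch}L^S(e\mu)(z)=\operatorname{ch}L'(\mu)(z^e)$ --- is exactly the standard route and is essentially what the cited references do.

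One small comment on your final paragraph. You say the delicate point is the equality $L'(\mu)=\Delta'(\mu)$ in $S_{(-1)^e}(N,n/e)\operatorname{-mod}$, and that this is ``where one cites \cite{SV} or \cite{EGL}''. That is slightly circular, since those are precisely the references the paper invokes for the entire lemma. The fact you actually need is more elementary and independent of those sources: for $e$ even the target is $S_1(N,n/e)$, which is the classical Schur algebra over $\C$ and hence semisimple; for $e$ odd the target is $S_{-1}(N,n/e)$, and here one uses that Lusztig's form of $U_{-1}(\gl_N)$ in characteristic zero has a further Frobenius to $U_1(\gl_N)$ identifying the categories of polynomial representations (see \cite[Chapter 35]{Lusztig}), so again $L'(\mu)$ has character $s_\mu$. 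With that in hand, your argument is complete and self-contained.
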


\subsection{Categorical Heisenberg action}
Let $\kappa<0$ be a rational number with denominator $e$. Suppose that $\kappa e s_0,\ldots,\kappa e s_{\ell-1}$
be integers. These data define an  action of $\hat{\slf}_e$ on the level $\ell$
Fock space $\mathcal{F}^\ell$. Recall that we write $\mathcal{F}_{\kappa,\bf s}$ for $\mathcal{F}^\ell$
equipped with this action.

\subsubsection{Heisenberg action on the Fock space}
There is an action of the Heisenberg algebra $\mathfrak{Heis}$ on $\mathcal{F}_{\kappa,\bf s}$
commuting with the action of $\hat{\slf}_e$. Recall that $\mathfrak{Heis}$ has generators
$b_i, i\in \Z$, where $[b_i,b_j]=i\delta_{i,j}b_0$. We are only interested in the
representations, where $b_0$ acts by $1$.
%Note that, as an $\hat{\slf}_e$-module,
%$\mathcal{F}_{\bf s}$ is naturally identified with the tensor product
%$\bigotimes_{i=0}^{\ell-1}\mathcal{F}_{s_i}$.

So let us start by defining a Heisenberg action on the level one Fock $\mathcal{F}_{\kappa,a}, a\in \Z$
(it will be independent of the choice of $a$).
Let $i>0$. Define an operator $b_i$ on $\mathcal{F}_{\kappa,a}=\bigoplus_{n\geqslant 0}
K_0^{\C}(\Sym_n)$ by
$$[V_\lambda]\mapsto \sum_{j=0}^{ie-1}[\Ind^{\Sym_n\times \Sym_{ie}}_{\Sym_{n+ie}}V_{\lambda}\boxtimes
V_{(ie-j,1^{j})}].$$
Define the operator $b_{-i}$ as the adjoint of $b_i$ with respect to the symmetric bilinear
form on $\mathcal{F}_{\kappa,a}$ given by $(|\lambda\rangle, |\mu\rangle)=\delta_{\lambda\mu}$.
One can check that this indeed defines a representation of $\mathfrak{Heis}$ in
$\mathcal{F}_{\kappa,a}$.

In the realization of  $\sum_{a\in \Z}\mathcal{F}_{\kappa,a}$ as
the semi-infinite wedge $\Lambda^{\infty/2}\C^{e}[z^{\pm 1}]$, the operator
$b_i, i>0,$ is given by $$v_{i_1}\wedge v_{i_2}\wedge\ldots\mapsto
\sum_{j=1}^\infty v_{i_1}\wedge\ldots \wedge v_{i_{j-1}}\wedge
v_{i_j+ei}\wedge v_{i_{j+1}}\wedge\ldots$$
This realization shows that the actions of $\mathfrak{Heis}$
and $\hat{\slf}_e$ commute.

We take the tensor product representation of $\mathfrak{Heis}$ in $\mathcal{F}_{\kappa,\bf s}=\bigotimes_{i=0}^{\ell-1}
\mathcal{F}_{\kappa,s_i}$. It commutes with the representation of $\hat{\slf}_e$.

\subsubsection{Functors $A_\mu$}
Let $\mu$ be a partition of some positive integer $d$.
Following \cite[Section 5.1]{SV}, we consider  functors $A_\mu: \OCat_\bc\rightarrow \OCat_\bc$
mapping $\OCat_\bc(n)$ to $\OCat_\bc(n+de)$. These functors are defined by
$$A_\mu(M):=\Ind^{G(\ell,1,n)\times \Sym_{de}}_{G(\ell,1,n+de)}M\boxtimes L_c(e\mu).$$
These functors have derived right adjoint functors
$$RA_\mu^*(N):=R\Hom_{\OCat_{c}(de)}(L_c(e\mu),\Res_{G(\ell,1,n)\times \Sym_{de}}^{G(\ell,1,n+de)}).$$

The following results were established in \cite{SV}. The first part follows
from Proposition 5.4 there and the second part is Proposition 5.15.

\begin{Prop}\label{Prop:Heis_fun}
The following is true.
\begin{itemize}
\item $A_\mu\circ A_{\mu'}\cong A_{\mu'}\circ A_\mu$ for any partitions $\mu,\mu'$.
\item $A_\mu\circ E_i\cong E_i\circ A_\mu, A_\mu\circ F_i=F_i\circ A_\mu$.
\end{itemize}
\end{Prop}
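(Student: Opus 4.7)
The plan is to treat the two bullets separately, using transitivity of induction (Corollary \ref{Cor:Res_trans}) together with a categorical Mackey-type decomposition, and exploiting two special properties of $L_c(e\mu)$: via the equivalence $\varphi$ of Proposition \ref{Prop:equivalence_typeA} it is a Frobenius pullback of $L^S(\mu)$, and (by Lemma \ref{Lem:min_supp_class}) its class lies in the minimally-supported part of $K_0$, so in fact $L_c(e\mu)$ is finite-dimensional over $\Sym_{de}$.

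For the commutativity $A_\mu \circ A_{\mu'} \cong A_{\mu'} \circ A_\mu$, Corollary \ref{Cor:Res_trans} lets me rewrite both compositions as inductions from the common parabolic $G(\ell,1,n)\times\Sym_{de}\times\Sym_{d'e}$, so it suffices to produce an isomorphism
\begin{equation*}
\Ind^{\Sym_{(d+d')e}}_{\Sym_{de}\times\Sym_{d'e}}\bigl(L_c(e\mu)\boxtimes L_c(e\mu')\bigr)\;\cong\;\Ind^{\Sym_{(d+d')e}}_{\Sym_{d'e}\times\Sym_{de}}\bigl(L_c(e\mu')\boxtimes L_c(e\mu)\bigr).
\end{equation*}
Under the equivalence $\varphi$ and the lemma immediately after Proposition \ref{Prop:equivalence_typeA} (which says $\varphi$ intertwines parabolic induction with the tensor product on the Schur side), this becomes the commutativity of $L^S(e\mu)\otimes L^S(e\mu')$ in $S_\epsilon(N,(d+d')e)\operatorname{-mod}$. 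Since $L^S(e\mu)=\operatorname{Fr}^*L^S(\mu)$ and $\operatorname{Fr}^*$ is a tensor functor, this reduces further to the commutativity of $L^S(\mu)\otimes L^S(\mu')$ over $S_{(-1)^e}$, which holds by the classical flip isomorphism.

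For $E_i\circ A_\mu\cong A_\mu\circ E_i$, I would apply the categorical Mackey formula to $E\circ A_\mu$. For the parabolics $G(\ell,1,n)\times\Sym_{de}$ and $G(\ell,1,n+de-1)$ inside $G(\ell,1,n+de)$, the two double cosets (corresponding to whether the removed index lies in the $G(\ell,1,n)$ factor or the $\Sym_{de}$ factor) yield
\begin{equation*}
E\circ A_\mu(M)\;\cong\;A_\mu(EM)\;\oplus\;\Ind^{G(\ell,1,n+de-1)}_{G(\ell,1,n)\times\Sym_{de-1}}\bigl(M\boxtimes\Res^{\Sym_{de}}_{\Sym_{de-1}}L_c(e\mu)\bigr).
\end{equation*}
The second summand vanishes: since $L_c(e\mu)$ is finite-dimensional its support is $\{0\}$, and Lemma \ref{Lem:Supp_Restr} applied to $\Sym_{de-1}\subset\Sym_{de}$ gives $\Res L_c(e\mu)=0$ (no conjugate of $\Sym_{de}$ sits inside $\Sym_{de-1}$). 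Therefore $E\circ A_\mu\cong A_\mu\circ E$, and to extract the $E_i$-statement one projects to the generalized $q^i$-eigenspace of the Jucys-Murphy-type element $X_{n+de}$, which the Mackey isomorphism respects. For $F_i\circ A_\mu\cong A_\mu\circ F_i$, plain transitivity of induction identifies both $F\circ A_\mu(M)$ and $A_\mu\circ F(M)$ with $\Ind^{G(\ell,1,n+de+1)}_{G(\ell,1,n)\times\Sym_{de}}(M\boxtimes L_c(e\mu))$; alternatively the $F$-statement can be deduced from the $E$-statement via (bi)adjointness.

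The main obstacle, shared by both halves of the second bullet, is verifying $X$-equivariance of these identifications: the natural isomorphisms provided by Mackey and by transitivity must intertwine the Jucys-Murphy endomorphisms that produce the $E_i$ and $F_i$ decompositions. For $F\circ A_\mu\cong A_\mu\circ F$, the two $X$-actions come from different positions ($X_{n+de+1}$ versus $X_{n+1}$) on the common induced module, and one must check that the resulting generalized eigenspace decompositions coincide. I expect this to follow either from a direct affine-Hecke-algebra argument (the two elements are conjugate by a braid acting on the induced module in a way that preserves the eigenspace filtration) or by transporting the identity to the cyclotomic Hecke side via Proposition \ref{Prop:KZ_Res} and Corollary \ref{Cor:KZ_Ind}, where the analogous compatibility is a standard affine Hecke manipulation.
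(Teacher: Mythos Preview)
Your overall strategy matches what \cite{SV} does (and hence what the paper cites): reduce the first bullet to symmetry of the tensor product on the Schur side via Proposition~\ref{Prop:equivalence_typeA} and the lemma after it, and handle the second bullet by a Mackey-type decomposition in which the cross term vanishes because $E\,L_c(e\mu)=0$. Two points deserve correction or sharpening.

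\textbf{The module $L_c(e\mu)$ is not finite dimensional for $|\mu|>1$.} Your claim that $\Supp L_c(e\mu)=\{0\}$ is false: by \cite{BEG2} the only finite dimensional simple in $\OCat_c(\Sym_m)$ occurs for $m=e$. The correct support is $\overline{X(\Sym_e^{\,d})}$ (this is exactly the statement $q_\kappa(e\mu)=d$, cf.\ Proposition~\ref{Prop:amu_bijection}). Your conclusion $\Res^{\Sym_{de}}_{\Sym_{de-1}}L_c(e\mu)=0$ is nevertheless right, but the reason is that no $\Sym_{de}$-conjugate of $\Sym_e^{\,d}$ lies inside $\Sym_{de-1}$ (every subgroup of $\Sym_{de-1}$ fixes a point, while $\Sym_e^{\,d}$ does not, as $e\geqslant 2$), so Lemma~\ref{Lem:Supp_Restr} applies. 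Equivalently and more directly: $e\mu$ is singular in the $\hat{\slf}_e$-crystal by Example~\ref{Ex:level1_cryst}, hence $E_i L_c(e\mu)=0$ for all $i$ and so $E\,L_c(e\mu)=0$.

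\textbf{The $X$-equivariance issue dissolves if you use the weight decomposition.} You worry about matching generalized $X$-eigenspaces through the Mackey and transitivity isomorphisms. There is a cleaner route that avoids any computation with Jucys--Murphy elements. Recall (axiom (iv) in \ref{SSS_cat_KM_O}) the decomposition $\OCat_\bc=\bigoplus_A \OCat_\bc(A)$ by content multisets, and that $E_i$ (resp.\ $F_i$) is precisely the component of $E$ (resp.\ $F$) sending $\OCat_\bc(A)$ to $\OCat_\bc(A\setminus\{i\})$ (resp.\ $\OCat_\bc(A\sqcup\{i\})$). Since $L_c(e\mu)$ lies in a single block, $A_\mu$ sends $\OCat_\bc(A)$ to $\OCat_\bc(A\sqcup B_\mu)$ for a fixed multiset $B_\mu$. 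Any natural isomorphism $E\circ A_\mu\cong A_\mu\circ E$ of endofunctors of $\OCat_\bc$ automatically respects this block decomposition, and comparing target blocks one sees that the component of $E_i\circ A_\mu$ on $\OCat_\bc(A)$ lands in $\OCat_\bc((A\sqcup B_\mu)\setminus\{i\})$, which equals the target $\OCat_\bc((A\setminus\{j\})\sqcup B_\mu)$ of $A_\mu\circ E_j$ only when $i=j$. Thus $E\circ A_\mu\cong A_\mu\circ E$ restricts, block by block, to $E_i\circ A_\mu\cong A_\mu\circ E_i$. The same argument handles $F_i$ once you have $F\circ A_\mu\cong A_\mu\circ F$ from transitivity. (Your biadjointness alternative does not work as stated: taking adjoints of $E_i\circ A_\mu\cong A_\mu\circ E_i$ yields a statement about $F_i$ and $RA_\mu^*$, not $F_i$ and $A_\mu$.)

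Finally, you invoke a categorical Mackey decomposition for the Bezrukavnikov--Etingof functors without justification; this is a genuine ingredient that is established in \cite{SV} and is not a formal consequence of the material in the present paper.
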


By adjointness, we get the following claim.

\begin{Cor}\label{Cor:Heis_fun}
We have  $RA_{\mu}^*\circ RA_{\mu'}^*\cong RA^*_{\mu'}\circ RA_\mu^*$ and $RA_\mu^*\circ E_i\cong E_i\circ RA_\mu^*, RA^*_\mu\circ F_i\cong F_i\circ RA_\mu^*$.
\end{Cor}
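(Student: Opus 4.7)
The plan is to obtain the corollary as a purely formal consequence of Proposition \ref{Prop:Heis_fun} by passing to derived right adjoints. The first thing I would note is that all the functors in sight extend cleanly to bounded derived categories: $A_\mu$ is exact (induction is exact by Proposition \ref{Prop:ResInd_biadj}, and $\bullet\boxtimes L_c(e\mu)$ is exact), so it descends to $D^b(\OCat_\bc)$ without derivation, and by construction $RA_\mu^*$ is its derived right adjoint on $D^b$. Likewise, $E_i$ and $F_i$ are exact, and by Proposition \ref{Prop:ResInd_biadj} the functors $\Res$ and $\Ind$ are biadjoint at the abelian level, so $E_i$ and $F_i$ are biadjoint on the derived categories as well.

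Next I would invoke two elementary facts about adjunctions: (a) isomorphic functors have isomorphic right adjoints, and (b) if $G,H$ admit right adjoints $G^R, H^R$, then $(G\circ H)^R\cong H^R\circ G^R$. Combining (a) and (b), each isomorphism in Proposition \ref{Prop:Heis_fun} yields an isomorphism of the corresponding right adjoint compositions.

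Concretely, applying this procedure to $A_\mu\circ A_{\mu'}\cong A_{\mu'}\circ A_\mu$ gives
\[
RA_{\mu'}^*\circ RA_\mu^*\;\cong\;RA_\mu^*\circ RA_{\mu'}^*,
\]
which is the first assertion after relabeling. Applying it to $A_\mu\circ E_i\cong E_i\circ A_\mu$, and using that $F_i$ is the right adjoint of $E_i$, yields
\[
F_i\circ RA_\mu^*\;\cong\;RA_\mu^*\circ F_i,
\]
the third assertion. Applying it to $A_\mu\circ F_i\cong F_i\circ A_\mu$, and using that $E_i$ is the right adjoint of $F_i$, yields
\[
E_i\circ RA_\mu^*\;\cong\;RA_\mu^*\circ E_i,
\]
the second assertion.

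There is essentially no obstacle here; the only point that needs a moment of reflection is the passage from abelian to derived categories, which is harmless because the functors involved are all exact and the adjunctions transport trivially. Thus the corollary is immediate from Proposition \ref{Prop:Heis_fun} together with Proposition \ref{Prop:ResInd_biadj} and the construction of $RA_\mu^*$ as the derived right adjoint of $A_\mu$.
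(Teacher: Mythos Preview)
Your argument is correct and is precisely the intended one: the paper's proof consists of the single phrase ``By adjointness,'' and what you have written is a careful unpacking of that phrase using Proposition~\ref{Prop:Heis_fun}, the biadjointness of $E_i$ and $F_i$ from Proposition~\ref{Prop:ResInd_biadj}, and the standard fact that right adjoints of isomorphic composites are isomorphic in the reversed order.
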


\subsubsection{From functors $A_\mu$ to a Heisenberg action}
Let $h_d$ denote the power symmetric polynomial of degree $d$, i.e.,
$h_d=\sum_{i=1}^\infty z_i^d$. We have the following identity for symmetric functions:
\begin{equation}\label{eq:symm_fun_id} h_d=\sum_{i=0}^{d-1}(-1)^i s_{(d-i,1^{i})}.\end{equation}
From Lemma \ref{Lem:min_supp_class} combined with (\ref{eq:symm_fun_id}) it follows that
\begin{equation}\label{eq:K_0_equality}\sum_{i=0}^{ed-1} (-1)^i [\Delta_\bc((d-i,1^i))]=\sum_{i=0}^{d-1}(-1)^i[L_\bc((d-i)e,e^{i-1})].
\end{equation}

\begin{Cor}\label{Cor:Heis_oper}
The operator $\sum_{i=0}^{d-1} (-1)^i[A_{(d-i,1^i)}]$ on $K_0(\OCat_\bc)=\mathcal{F}_{\kappa,\bf s}$
coincides with the Heisenberg operator $b_d$. The operator
$\sum_{i=0}^{d-1} (-1)^i[RA^*_{(d-i,1^i)}]$ coincides with $b_{-d}$.
\end{Cor}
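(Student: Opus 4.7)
The plan is to use (\ref{eq:K_0_equality}) to rewrite $\sum_{i=0}^{d-1}(-1)^i[A_{(d-i,1^i)}]$ in terms of induction with ordinary symmetric-group modules, and then to identify the result with the tensor-product Heisenberg action on $\mathcal{F}_{\kappa,\bs}=\bigotimes_{i=0}^{\ell-1}\mathcal{F}_{\kappa,s_i}$.

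First, each $A_\mu$ is exact, so at the level of $K_0$ it is the operator $[M]\mapsto[\Ind^{G(\ell,1,n+de)}_{G(\ell,1,n)\times\Sym_{de}}(M\boxtimes L_c(e\mu))]$, depending linearly on $[L_c(e\mu)]\in K_0(\Sym_{de}\operatorname{-mod})$. By (\ref{eq:K_0_equality}) the virtual class $\sum_{i=0}^{d-1}(-1)^i[L_c(e\cdot(d-i,1^i))]$ equals $\sum_{j=0}^{de-1}(-1)^j[V_{(de-j,1^j)}]$, which under the Frobenius characteristic isomorphism $\bigoplus_m K_0(\Sym_m\operatorname{-mod})\cong\Lambda$ is the power sum $p_{de}$. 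Hence the first statement reduces to the assertion that, for any $V_\pi\in\Sym_{de}\operatorname{-mod}$, the operator on $\Lambda^{\otimes\ell}=\mathcal{F}_{\kappa,\bs}$ sending $[M]\mapsto[\Ind^{G(\ell,1,n+de)}_{G(\ell,1,n)\times\Sym_{de}}(M\boxtimes V_\pi)]$ is multiplication by $[\Ind^{G(\ell,1,de)}_{\Sym_{de}}V_\pi]\in\Lambda^{\otimes\ell}_{de}$, and that at $[V_\pi]=p_{de}$ this class equals $\sum_{k=0}^{\ell-1}1\otimes\cdots\otimes p_{de}\otimes\cdots\otimes 1$, i.e.\ the level-$\ell$ Heisenberg generator $b_d$.

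The key technical ingredient is a Mackey computation identifying $[\Ind^{G(\ell,1,de)}_{\Sym_{de}}]$ on $K_0$ with the $\ell$-fold coproduct $\Delta^\ell\colon\Lambda_{de}\to\Lambda^{\otimes\ell}_{de}$. The easier direction is restriction: applying Mackey to $V_\mu=\Ind^{G(\ell,1,de)}_{G(\ell,1,\mu)}(V^{(0)}_{\mu^{(0)}}\boxtimes\cdots\boxtimes V^{(\ell-1)}_{\mu^{(\ell-1)}})$ produces a single double coset $\Sym_{de}\backslash G(\ell,1,de)/G(\ell,1,\mu)$ with intersection $\prod_i\Sym_{|\mu^{(i)}|}$, and the character twists defining $V^{(i)}_{\mu^{(i)}}$ trivialise on $\Sym_{|\mu^{(i)}|}$, yielding $\Res^{G(\ell,1,de)}_{\Sym_{de}}V_\mu\cong\Ind^{\Sym_{de}}_{\prod_i\Sym_{|\mu^{(i)}|}}\bigl(V_{\mu^{(0)}}\boxtimes\cdots\boxtimes V_{\mu^{(\ell-1)}}\bigr)$. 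Frobenius transport carries this to the $\ell$-fold product $\Lambda^{\otimes\ell}_{de}\to\Lambda_{de}$, whose adjoint with respect to the Hall pairing is $\Delta^\ell$. Since $p_{de}$ is primitive in the Hopf algebra $\Lambda$, $\Delta^\ell(p_{de})=\sum_k 1\otimes\cdots\otimes p_{de}\otimes\cdots\otimes 1$. Combining this with the standard identification of $\Ind^{G(\ell,1,n+de)}_{G(\ell,1,n)\times G(\ell,1,de)}$ with multiplication in $\Lambda^{\otimes\ell}$, and with the fact that $b_d$ on the level-one factor $\mathcal{F}_{\kappa,s_i}=\Lambda$ is multiplication by $p_{de}$, completes the identification of the first operator with $b_d$.

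The claim about $b_{-d}$ follows by adjointness. The functor $RA_\mu^*$ is by construction the right derived right adjoint of $A_\mu$, so its class $[RA_\mu^*]$ in $K_0$ is the adjoint of $[A_\mu]$ for the Euler pairing $([M],[N])\mapsto\sum_i(-1)^i\dim\Ext^i_{\OCat_\bc}(M,N)$. Under the Fock space identification this pairing agrees with the natural form making the standard basis $|\lambda\rangle=[\Delta_\bc(\lambda)]$ dual to the costandard basis $[\nabla_\bc(\lambda)]$, and since $[\Delta_\bc(\lambda)]=[\nabla_\bc(\lambda)]$ in $K_0$, this is the standard symmetric form $(|\lambda\rangle,|\mu\rangle)=\delta_{\lambda\mu}$ for which $b_{-d}$ was defined to be the adjoint of $b_d$; taking adjoints of the first identity delivers the second. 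The main obstacle is the Mackey computation above, particularly verifying the single-double-coset statement and the trivialisation of the $\mu_\ell$-twist characters upon restricting to $\Sym_{de}$.
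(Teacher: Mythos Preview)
Your argument is correct and follows the same line as the paper's proof: use (\ref{eq:K_0_equality}) to convert the alternating sum of $[A_{(d-i,1^i)}]$ into induction with the virtual module corresponding to the power sum $p_{de}$, then match this with the definition of $b_d$; for $b_{-d}$ you invoke exactly the same Euler-form adjointness argument the paper gives. The only difference is that you spell out the Mackey/coproduct verification showing that induction from $\Sym_{de}$ to $G(\ell,1,de)$ realizes $\Delta^\ell$ on symmetric functions (and hence that the tensor-product Heisenberg action agrees with the induction formula), a step the paper absorbs into the phrase ``follows from (\ref{eq:K_0_equality}) and the definition of $b_d$.''
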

\begin{proof}
The claim about the operators $b_d$ follows from (\ref{eq:K_0_equality}) and the
definition of $b_d$. To prove the claim about the operators $b_{-d}$ we observe
that the symmetric form on $\mathcal{F}_{\kappa,\bf s}$ is the Euler form on $K_0$:
$([M],[N])=\sum_{i\geqslant 0}(-1)^i \dim \Ext^i(M,N)$. To see the coincidence of
the forms, we recall that $[\nabla_\bc(\mu)]=[\Delta_\bc(\mu)]$ and $\dim\Ext^i(\Delta_\bc(\lambda),
\nabla_\bc(\mu))=\delta_{\lambda \mu}\delta_{i0}$.
\end{proof}

\subsubsection{Application to finite dimensional modules in $\OCat_\bc(n)$}
The Heisenberg categorical action (meaning the functors $A_\mu, RA^*_\nu$)
was used in \cite{SV} to determine the filtration by supports
on $K_0(\OCat_\bc(n))$. We will discuss this in more detail in
Section \ref{S_supp}. For now we will just explain the first step in
this direction.

\begin{Prop}\label{Prop:fin_dim_class}
Let $L\in \Irr(\OCat_\bc(n))$ be finite dimensional. Then $e_i[L]=b_{-d}[L]=0$.
\end{Prop}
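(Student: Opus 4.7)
The plan is to reduce both vanishing statements to a single geometric observation: restriction from $G(\ell,1,n)$ to any proper parabolic annihilates a finite dimensional simple. Both $E_i$ and $RA^*_\mu$ are built out of such restrictions, so both will kill $[L]$.

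First I would record the support: since $L$ is finite dimensional, its support in $\h=\C^n$ is $\{0\}$, and the only parabolic subgroup $W'\subset G(\ell,1,n)$ with $\overline{X(W')}=\{0\}$ is the full group $W'=G(\ell,1,n)$ (this is the content of Lemma \ref{Lem:supp_simple} applied to $L$). Now let $\underline{W}\subsetneq W$ be any proper parabolic subgroup. By Lemma \ref{Lem:Supp_Restr}, the support of $\Res^W_{\underline{W}}(L)$ is the union of strata $\underline{X}(\underline{W}')$ over $\underline{W}'\subset \underline{W}$ that are $W$-conjugate to $W'=W$; no proper subgroup of $W$ can be conjugate to $W$, so this union is empty and $\Res^W_{\underline{W}}(L)=0$.

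With this in hand, the first vanishing $e_i[L]=0$ is immediate: by the construction in \ref{SSS_cat_KM_O}, $E=\bigoplus_n \,^{\OCat}\Res^n_{n-1}$ and $E_i$ is a direct summand of $E$ cut out by generalized eigenspaces of $X$. Taking $\underline{W}=G(\ell,1,n-1)\subsetneq G(\ell,1,n)$ gives $EL=0$, hence $E_iL=0$ and therefore $e_i[L]=[E_iL]=0$. For the second vanishing, I would use Corollary \ref{Cor:Heis_oper}, which writes
\[
b_{-d}=\sum_{i=0}^{d-1}(-1)^{i}[RA^{*}_{(d-i,1^{i})}].
\]
For any partition $\mu$ of $d\geqslant 1$ the functor $RA^{*}_\mu$ applied to $L\in \OCat_\bc(n)$ is
\[
RA_\mu^*(L)=R\Hom_{\OCat_c(de)}\!\bigl(L_c(e\mu),\,\Res^{G(\ell,1,n)}_{G(\ell,1,n-de)\times \Sym_{de}}(L)\bigr),
\]
and $G(\ell,1,n-de)\times \Sym_{de}$ is a proper parabolic of $G(\ell,1,n)$ (it is the stabilizer of a point whose first $n-de$ coordinates lie in $\h^{reg}$ for $G(\ell,1,n-de)$ and whose last $de$ coordinates all coincide with a single nonzero value in one $\mu_\ell$-orbit). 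By the observation above, the inner restriction vanishes, so $RA^{*}_\mu(L)=0$ for every $\mu\vdash d$, and the alternating sum gives $b_{-d}[L]=0$.

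There is essentially no real obstacle: the only thing to verify carefully is that $G(\ell,1,n-de)\times \Sym_{de}$ is genuinely realized as a point stabilizer in $\h=\C^n$, so that Lemma \ref{Lem:Supp_Restr} applies to the restriction appearing in the definition of $RA^*_\mu$. Everything else is a direct application of the support formula for restriction.
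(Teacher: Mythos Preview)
Your argument is correct and follows the paper's proof essentially verbatim: finite dimensionality forces $\Res^W_{\underline{W}}L=0$ for every proper parabolic $\underline{W}$, hence $EL=0$ and $RA^*_\mu L=0$, and Corollary \ref{Cor:Heis_oper} finishes it. One small slip in the place you said to be careful: the point whose stabilizer is $G(\ell,1,n-de)\times \Sym_{de}$ has its first $n-de$ coordinates equal to $0$ (not in $\h^{reg}$) and its last $de$ coordinates equal to a common nonzero value; with your description the stabilizer would only be $\Sym_{de}$.
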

\begin{proof}
The condition that $L$ is finite dimensional is equivalent to $\Res^W_{\underline{W}}L=0$
for any proper parabolic subgroup $\underline{W}\subset W:=G(\ell,1,n)$.
In particular, we see that $EL=0$ and $RA_\mu^*L=0$ for any $\mu$.
The former equality implies $E_iL=0$ for any $i$, while the latter
yields $b_d [L]=0$ for any $d$, thanks to Corollary \ref{Cor:Heis_oper}.
\end{proof}

\section{Supports of simple modules}\label{S_supp}
In this section we study the supports of simple modules in the category $\mathcal{O}_{\kappa,{\bf s}}$.
We determine all theoretically possible supports in Section \ref{SS_pos_support}. Then we relate the supports
to crystal structures on $\mathcal{P}_\ell$. Then we introduce and compute the filtration by supports
on $K_0^{\C}(\OCat_{\kappa,{\bf s}})$ following \cite{SV}. Finally, we explain combinatorial recipes to
compute the support of $L_{\kappa,{\bf s}}(\lambda)$ following \cite{Cher_supp}.

\subsection{Possible supports}\label{SS_pos_support}
Let $e$ denote the denominator of $\kappa$. Set $W=G(\ell,1,n)$. For non-negative integers
$p,q$ satisfying $p+eq\leqslant n$ (if $\kappa$ is irrational, we assume that $e=+\infty$,
and so $q$ is automatically $0$), we set $W_{p,q}=G(n-p-eq,1,\ell)\times \Sym_e^{q}$,
this is the stabilizer of the point of the form $(x_1,\ldots,x_p,y_{1},\ldots,y_1,\ldots,
y_q,\ldots,y_q,0,\ldots,0)$, where $x_1,\ldots,x_p,y_1,\ldots,y_q$ are pairwise different
complex numbers and each $y_1,\ldots,y_q$ occurs $e$ times.

When  $e=1$, we assume that $p=0$.

Recall the subvariety $\overline{X(W_{p,q})}:=W\h^{W_{p,q}}\subset \h$.

\begin{Lem}\label{Lem:poss_supports}
Let $L\in \Irr(\OCat_{\kappa,{\bf s}}(n))$. Then there are $p,q$ with $p+eq\leqslant n$
such that $\Supp(L)=\overline{X(W_{p,q})}$. If $\ell=1$, then $p=0$.
\end{Lem}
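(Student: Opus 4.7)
The starting point is Lemma~\ref{Lem:supp_simple}, which gives $\Supp(L) = \overline{X(\underline{W})}$ for a parabolic $\underline{W} \subset W$, uniquely determined up to $W$-conjugacy. The task is to show $\underline{W}$ is conjugate to $W_{p,q}$ for some $p,q$, and that one may take $p=0$ when $\ell=1$. First I would classify parabolic subgroups of $W = G(\ell,1,n)$: direct analysis of point stabilizers in $\h = \C^n$ shows every parabolic is $W$-conjugate to
\[
G(\ell,1,a) \times \Sym_{n_1} \times \cdots \times \Sym_{n_k}, \qquad a + \textstyle\sum_j n_j = n,
\]
with the $G(\ell,1,a)$-factor coming from $a$ zero coordinates (on which the full cyclic $\mu_\ell$-action survives) and each $\Sym_{n_j}$ coming from a nonzero $\mu_\ell$-orbit of $n_j$ equal coordinates.

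Next I would apply the restriction functor. By Lemma~\ref{Lem:Supp_Restr} with $W' = \underline{W}$, the object $\Res^W_{\underline{W}}(L)$ is nonzero and finite-dimensional in $\OCat_\bc(\underline{W})$. The parabolic category factors as $\OCat_\bc(\underline{W}) \cong \OCat_\bc(G(\ell,1,a)) \otimes \bigotimes_j \OCat_c(\Sym_{n_j})$, and any simple finite-dimensional object in a tensor product of categories is a tensor product of simple finite-dimensional objects. Hence each factor $\OCat_c(\Sym_{n_j})$ admits a finite-dimensional simple, and the classical Berest--Etingof--Ginzburg theorem — which says $\OCat_c(\Sym_m)$ contains a finite-dimensional simple iff $m=1$ or $c = r/m$ with $\gcd(r,m)=1$ — combined with the hypothesis that $c$ has denominator exactly $e$, forces each $n_j \in \{1,e\}$.

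Writing $q$ for the number of $\Sym_e$-blocks and $p$ for the number of $\Sym_1$-blocks, the parabolic $\underline{W}$ is $W$-conjugate to $G(\ell,1,a) \times \Sym_e^q$ (the $\Sym_1^p$ factors are trivial as groups but genuinely contribute to the $W$-conjugacy class by marking $p$ ``free'' coordinates with trivial stabilizer). This is precisely $W_{p,q}$, finishing the first assertion. For $\ell=1$, the factor $G(1,1,a) = \Sym_a$ is itself a symmetric-group factor, so the BEG argument applies equally to it, forcing $a \in \{1,e\}$. One then absorbs: an $a=e$ block merges into $\Sym_e^{q+1}$ and an $a=1$ block into the singleton count, so up to conjugacy one may choose the representative with no singleton factors at all, i.e.\ $p=0$.

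\textbf{Main obstacle.} The delicate step is the $\ell=1$ consolidation: the restriction argument alone yields a Young subgroup of shape $(e^q,1^p)$, whereas $W_{0,q'}$ has shape $(n-eq',e^{q'})$, and these partitions only coincide when $p\in\{0,1\}$. To rule out a parabolic with two or more genuine singleton blocks for $\ell=1$ requires going beyond pure restriction — likely by invoking the compatibility of the Kac--Moody restriction functors $E_z$ (or the Heisenberg functors $A_\mu$) with support closures, which forces the shape of the support to be coherent with the categorical $\g_{\kappa}$-action constructed in Section~\ref{SSS_cat_KM_O}. This is the step I would expect to grind on, and I would handle it by reducing to parabolics of rank one, where the statement is transparent.
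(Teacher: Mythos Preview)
Your core argument—Lemma~\ref{Lem:supp_simple} to pin down $\underline{W}$, the classification of parabolics in $G(\ell,1,n)$ as $G(\ell,1,a)\times\Sym_{n_1}\times\cdots\times\Sym_{n_k}$, finiteness of $\Res^W_{\underline{W}}L$ via Lemma~\ref{Lem:Supp_Restr}, and the Berest--Etingof--Ginzburg constraint forcing each nontrivial $n_j$ to equal $e$—is exactly the paper's proof. The paper says it in two sentences: the parabolic is $G(\ell,1,k)\times\Sym_{m_1}\times\cdots\times\Sym_{m_s}$ (with $k=0$ when $\ell=1$), and BEG gives $m_j=e$.

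Where you go astray is the $\ell=1$ discussion. Your ``absorption'' step is wrong: the conjugacy class of a Young subgroup in $\Sym_n$ is its partition of $n$, so once BEG hands you the shape $(e^q,1^{n-eq})$ you cannot ``choose a representative with no singleton factors''—the singletons are intrinsic. What you have actually shown is that the parabolic is $W_{n-eq,\,q}$ (the $G(\ell,1,0)$ factor being trivial), which already establishes the first sentence of the lemma. Your ``main obstacle'' then correctly observes that $(e^q,1^p)$ and $(n-eq',e^{q'})$ are genuinely different partitions when $p\geqslant 2$, but the remedy you propose—invoking the Kac--Moody or Heisenberg functors—is not what the paper does and is not needed. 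The paper's proof contains \emph{no} additional step for $\ell=1$ beyond the parenthetical ``$k=0$''.

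The clause ``if $\ell=1$, then $p=0$'' is best read as the bookkeeping convention that for $\ell=1$ there is no genuine $G(\ell,1,\cdot)$-factor to record (this is precisely the ``$k=0$'' in the proof), rather than as a further constraint on supports requiring proof. Read literally as a claim about the pair $(p,q)$ in $W_{p,q}$, it is in tension with Proposition~\ref{Prop:cryst_depth_p}, where $p$ equals the crystal depth and is typically nonzero for $\ell=1$. So drop the categorical-action detour: your steps through BEG already constitute the full argument the paper gives.
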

\begin{proof}
We know, Lemma \ref{Lem:supp_simple}, that $\Supp(L)=\overline{X(\underline{W})}$ for some parabolic subgroup
$\underline{W}\subset W$. Also we know that $\Res^W_{\underline{W}}(L)$ is nonzero
finite dimensional. By \cite{BEG2},  there is a finite dimensional representation
in $\mathcal{O}_c(m)$ if and only if $m$ is the denominator of $c$.
The subgroup $\underline{W}$ is conjugate to $G(k,1,\ell)\times \Sym_{m_1}\times\ldots\Sym_{m_s}$
for some $k,m_1,\ldots,m_s$ (when $\ell=1$ we assume that $k=0$). We must have
$m_1=\ldots=m_s=e$. This implies the claim of the lemma.
\end{proof}

\subsection{Supports vs crystals}\label{SS_supp_cryst}
For $\lambda\in \mathcal{P}_\ell(n)$, we write $p_{\kappa,{\bf s}}(\lambda), q_{\kappa,{\bf s}}(\lambda)$
for the numbers $p,q$ such that $\Supp(L_{\kappa,{\bf s}}(\lambda))=\overline{X}(W_{p,q})$.

\subsubsection{$p_{\kappa,{\bf s}}(\lambda)$ and $\g_{\kappa,{\bf s}}$-crystal}\label{SSS_KM_crystal}
Let $\g_I$ be a type A Kac-Moody algebra and let $C$ be a highest weight $\g_I$-crystal,
where ``highest weight'' means that for every $v\in C$ there is $k\in \Z_{\geqslant 0}$
such that $\tilde{e}_{i_1}\ldots\tilde{e}_{i_k}v=0$ for any $i_1,\ldots,i_k\in I$.
We define the {\it depth} of $v$ as $k-1$ for the minimal such $k$. For example, $v$
has depth $0$ if and only if $\tilde{e}_i v=0$ for all $i\in I$.

The $\g_{\kappa,{\bf s}}$-crystal $\mathcal{P}_{\kappa,{\bf s}}$ is highest weight.
The following result was obtained in \cite[Section 5.5]{cryst}.

\begin{Prop}\label{Prop:cryst_depth_p}
The number $p_{\kappa,{\bf s}}(\lambda)$ coincides with the depth of $\lambda$
in $\mathcal{P}_{\kappa,{\bf s}}$.
\end{Prop}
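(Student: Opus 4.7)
The strategy is to show $\mathrm{depth}(\lambda)=p_{\kappa,\bs}(\lambda)$ by identifying both sides with $\max\{k:E^kL_{\bc}(\lambda)\ne 0\}$, where $E=\,^{\OCat}\!\Res^{G(\ell,1,n)}_{G(\ell,1,n-1)}$. First I would rewrite $p_{\kappa,\bs}(\lambda)$ in these terms. By Corollary~\ref{Cor:Res_trans}, $E^k=\,^{\OCat}\!\Res^{G(\ell,1,n)}_{G(\ell,1,n-k)}$, and by Lemma~\ref{Lem:Supp_Restr}, $E^kL_{\bc}(\lambda)\ne 0$ iff the support stabilizer $W_{p,q}$ is $G(\ell,1,n)$-conjugate to a parabolic of $G(\ell,1,n-k)$. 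Any such parabolic has the form $G(\ell,1,m')\times \Sym_e^{q'}\times \Sym_1^{p'}$ with $m'+eq'+p'=n-k$; its embedding into $G(\ell,1,n)$ acquires an additional $\Sym_1^k$ factor from the fixed coordinates of $G(\ell,1,n-k)$, and matching parabolic types against $W_{p,q}=G(\ell,1,m)\times \Sym_e^{q}\times \Sym_1^{p}$ forces $m'=m,\ q'=q$ and $p'=p-k\ge 0$. Hence $p_{\kappa,\bs}(\lambda)=\max\{k:E^kL_{\bc}(\lambda)\ne 0\}$.

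The easy inequality $\mathrm{depth}(\lambda)\le p_{\kappa,\bs}(\lambda)$ then follows at once: since $\tilde{e}_zL$ is by definition the simple head of $E_zL$, a nonzero crystal string $\tilde{e}_{z_k}\cdots\tilde{e}_{z_1}\lambda$ forces $E_{z_k}\cdots E_{z_1}L_{\bc}(\lambda)\ne 0$, and in particular $E^kL_{\bc}(\lambda)\ne 0$.

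For the reverse inequality $\mathrm{depth}(\lambda)\ge p_{\kappa,\bs}(\lambda)$, I plan an induction on $p:=p_{\kappa,\bs}(\lambda)$; the base $p=0$ is immediate from the step above. For the inductive step, Lemma~\ref{Lem:Supp_Restr} gives $\Supp(\Res L_{\bc}(\lambda))=\overline{\underline{X}(W_{p-1,q})}$, so some JH constituent of $\bigoplus_{z}E_zL_{\bc}(\lambda)$ is of the form $L_{\bc}(\mu)$ with $p_{\kappa,\bs}(\mu)=p-1$. The delicate task---the main obstacle of the proof---is to choose $z$ so that this ``generic'' simple occurs precisely as $\tilde{e}_zL_{\bc}(\lambda)$, i.e., as the head of some $E_zL_{\bc}(\lambda)$; given this, the inductive hypothesis applied to $\mu$ produces a crystal path of length $p$ from $\lambda$. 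I would handle this via the highest weight compatibility of the categorical action (Lemma~\ref{Lem:hw_compat}) combined with the general highest weight categorification machinery of \cite{cryst,hw_str}, which controls the heads of the $E_z$-components combinatorially and ensures that the generic stratum surfaces on top. An equivalent packaging uses biadjointness (Proposition~\ref{Prop:ResInd_biadj}) to pass to $F^{p}L_{\bc}(\lambda')$ for a depth-zero simple $L_{\bc}(\lambda')$ appearing in $E^{p}L_{\bc}(\lambda)$, and then reconstructs a length-$p$ crystal path $\lambda'\rightsquigarrow\lambda$ from the fact that, in a highest weight categorification, the simple quotients of $F_{z_p}\cdots F_{z_1}L_{\bc}(\lambda')$ are prescribed by iterated $\tilde{f}$-operators.
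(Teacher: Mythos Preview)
Your first two steps are fine: the identification $p_{\kappa,\bs}(\lambda)=\max\{k:E^kL_\bc(\lambda)\ne 0\}$ via Corollary~\ref{Cor:Res_trans} and Lemma~\ref{Lem:Supp_Restr} is correct, and the easy inequality $\mathrm{depth}(\lambda)\le p_{\kappa,\bs}(\lambda)$ follows exactly as you say. (The paper itself does not prove the proposition; it only cites \cite[Section~5.5]{cryst}.)

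The gap is in your hard direction. You frame the problem as having to locate, among the composition factors of $EL_\bc(\lambda)$, one with $p$-invariant equal to $p-1$ \emph{and} arrange that it appears as the head of some $E_zL_\bc(\lambda)$. You then defer this to unspecified ``highest weight categorification machinery'' that makes ``the generic stratum surface on top''. No such statement is available in the paper, and the alternative packaging via $F^pL_\bc(\lambda')$ has the same defect: knowing that $L_\bc(\lambda)$ is a composition factor of $F^pL_\bc(\lambda')$ does not by itself give a crystal path, since the composition factors of $F_{z_p}\cdots F_{z_1}L_\bc(\lambda')$ are not all of the form $L_\bc(\tilde f_{z_p}\cdots\tilde f_{z_1}\lambda')$.

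In fact the difficulty you identify is illusory, and the missing ingredient is Lemma~\ref{Lem:Supp_Ind}, which you do not use. Take \emph{any} $z$ with $E_zL_\bc(\lambda)\ne 0$ and set $\mu:=\tilde e_z\lambda$. Since $L_\bc(\mu)$ is the socle of $E_zL_\bc(\lambda)$, adjunction gives an embedding $L_\bc(\lambda)\hookrightarrow F_zL_\bc(\mu)$. Now Lemma~\ref{Lem:Supp_Ind} says that every sub of $\Ind^{G(\ell,1,n-1)}_{G(\ell,1,n)}L_\bc(\mu)$ has support $\overline{X(\underline W')}$, where $\underline W'\subset G(\ell,1,n-1)$ is the support parabolic of $L_\bc(\mu)$; viewed inside $G(\ell,1,n)$ this parabolic acquires one extra trivial factor, i.e.\ $W_{p_\mu,q_\mu}$ becomes $W_{p_\mu+1,q_\mu}$. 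Hence $p_{\kappa,\bs}(\lambda)=p_{\kappa,\bs}(\mu)+1$ automatically, for every such $z$. Combined with your Step~1 (which gives that $p_{\kappa,\bs}(\lambda)>0$ forces some $E_zL_\bc(\lambda)\ne 0$), the induction goes through with no further choices and no appeal to \cite{cryst,hw_str}.
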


\subsubsection{The map $\tilde{a}_\mu$}
Now we start to explain how to compute $q_{\kappa,{\bf s}}$.
Assume for the time being that $\kappa$ is a negative rational number with denominator $e$
and $\kappa e s_1,\ldots, \kappa e s_\ell$ are integers. The general case can be reduced
to this one, we will explain how to do this below.

Let $\mathcal{P}^{sing}_{\kappa,{\bf s}}$ denote the set of depth $0$
elements in the $\g_{\kappa,{\bf s}}$-crystal $\mathcal{P}_{\kappa,{\bf s}}$.
We write $\mathcal{P}^{sing,q}_{\kappa,{\bf s}}$ for the subset of all $\lambda\in
\mathcal{P}^{sing,q}_{\kappa,{\bf s}}$ such that $q_{\kappa,{\bf s}}(\lambda)=q$.

We are going to define a map $(\mu,\lambda)\mapsto \tilde{a}_\mu \lambda$
from $\mathcal{P}_1(q)\times \mathcal{P}^{sing,0}_{\kappa,{\bf s}}$
to $\mathcal{P}^{sing,q}_{\kappa,{\bf s}}$ following \cite[Section 5]{SV} that happens
to be a bijection.

Pick $\lambda\in \mathcal{P}^{sing,0}_{\kappa,{\bf s}}(n)$ (i.e., $\lambda\in \mathcal{P}_\ell(n)$
such that $L_{\kappa,{\bf s}}(\lambda)$ is finite dimensional) and consider the object
$$A_\mu L_{\kappa,{\bf s}}(\lambda)=\Ind^{G(n,1,\ell)\times \Sym_{e|\mu|}}_{G(n+e|\mu|,1,\ell)}L_{\kappa,{\bf s}}(\lambda)\boxtimes L_{\kappa}(e\mu).$$

The following is a key result about the structure of $A_\mu L_{\kappa,{\bf s}}(\lambda)$,
see \cite[Sections 5.4-5.6]{SV}.

\begin{Prop}\label{Prop:Amu_structure}
The following is true:
\begin{enumerate}
\item  The head and the socle of $A_\mu L_{\kappa,{\bf s}}(\lambda)$ are isomorphic
simple objects with label to be denoted by $\tilde{a}_\mu\lambda$.
We have $\tilde{a}_\mu\lambda\in \mathcal{P}^{sing,|\mu|}_{\kappa,{\bf s}}$.
\item The multiplicity of $L_{\kappa,{\bf s}}(\tilde{a}_\mu\lambda)$
in $A_\mu L_{\kappa,{\bf s}}(\lambda)$ equals $\ell^{|\mu|}\dim V_{\mu}$,
where we write $V_{\mu}$ for the irreducible $\Sym_{|\mu|}$-module
labelled by $\mu$.
\item Let $\lambda'\neq\tilde{a}_\mu\lambda$ be such that $L_{\kappa,{\bf s}}(\lambda')$
is a simple constituent of $A_\mu L_{\kappa,{\bf s}}(\lambda)$. Then
$p_{\kappa,{\bf s}}(\lambda')=0$ and $q_{\kappa,{\bf s}}(\lambda')<|\mu|$.
\end{enumerate}
\end{Prop}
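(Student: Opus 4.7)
The plan is to establish the three claims in the order (3), (1), (2): first use support constraints and the Heisenberg/Kac--Moody commutation to cut down the possible labels of composition factors, then identify head and socle via adjunction and duality, and finally compute the multiplicity through the categorical Heisenberg action on $K_0$.

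Support and singularity. Since $\lambda$ is singular of depth zero, $L_{\kappa,{\bf s}}(\lambda)$ is finite dimensional with support $\{0\}$, and the module $L_c(e\mu)\in\OCat_c(\Sym_{de})$ has support $\overline{X(\Sym_e^{d})}$ (via the Frobenius realization underlying Lemma~\ref{Lem:min_supp_class}). Hence $L_{\kappa,{\bf s}}(\lambda)\boxtimes L_c(e\mu)$ has support equal to the closure of the stratum for $\{1\}\times\Sym_e^{d}\subset G(\ell,1,n)\times\Sym_{de}$, and Lemma~\ref{Lem:Supp_Ind} then guarantees that every sub and every quotient of $A_\mu L_{\kappa,{\bf s}}(\lambda)$ has support exactly $\overline{X(W_{0,d})}$; in particular every simple in the head or socle has $p=0$ and $q=|\mu|$. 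On the other hand, Proposition~\ref{Prop:fin_dim_class} gives $E_i L_{\kappa,{\bf s}}(\lambda)=0$, and the commutation $E_i A_\mu\cong A_\mu E_i$ from Proposition~\ref{Prop:Heis_fun} implies $E_i L_{\kappa,{\bf s}}(\lambda')=0$ for every composition factor $L_{\kappa,{\bf s}}(\lambda')$. By Proposition~\ref{Prop:cryst_depth_p} this forces $p(\lambda')=0$, the first half of (3); combined with the support statement above, any composition factor not in the head or socle must then have support strictly inside $\overline{X(W_{0,d})}$, and with $p=0$ this forces $q(\lambda')<|\mu|$, giving (3).

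Head, socle, and multiplicity. For (1), the biadjunction $\Hom(A_\mu L_{\kappa,{\bf s}}(\lambda),L_{\kappa,{\bf s}}(\lambda'))=\Hom(L_{\kappa,{\bf s}}(\lambda)\boxtimes L_c(e\mu),\Res L_{\kappa,{\bf s}}(\lambda'))$ identifies simple quotients with surjections from $L_{\kappa,{\bf s}}(\lambda)\boxtimes L_c(e\mu)$ onto a constituent of $\Res L_{\kappa,{\bf s}}(\lambda')$; the constraints $p(\lambda')=0,\,q(\lambda')=|\mu|$ from the previous paragraph, combined with the characterization of $L_c(e\mu)$ via the quantum Frobenius through Proposition~\ref{Prop:equivalence_typeA}, pin down $\lambda'$ to a unique label, which we name $\tilde{a}_\mu\lambda$. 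The naive-duality intertwining of Proposition~\ref{Prop:ResInd_dual} (composed with the isomorphism $H_\bc\cong H_{\bc^*}^{\rm opp}$) then yields the same description of the socle, so both equal $L_{\kappa,{\bf s}}(\tilde{a}_\mu\lambda)$. For the multiplicity in (2), the class $[A_\mu L_{\kappa,{\bf s}}(\lambda)]\in \mathcal{F}_{\kappa,{\bf s}}$ is determined by the Heisenberg action: Corollary~\ref{Cor:Heis_oper} pins down $[A_{(d-i,1^i)}]$ on power sums, and standard symmetric-function arithmetic extends this to an identification of $[A_\mu]$ with the Schur operator $s_\mu$ of the Heisenberg algebra. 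Extracting the $\overline{X(W_{0,d})}$-supported layer from this formula, one reads off the multiplicity $\ell^{|\mu|}\dim V_\mu$ of $L_{\kappa,{\bf s}}(\tilde{a}_\mu\lambda)$.

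Main obstacle. The hard step is the combined content of (1) and (2): the uniqueness of $\tilde{a}_\mu\lambda$ and the exact value of the multiplicity. Uniqueness requires a delicate analysis of $\Res L_{\kappa,{\bf s}}(\lambda')$ in the stratum $p=0,\,q=|\mu|$ which, via the $q$-Schur equivalence of Proposition~\ref{Prop:equivalence_typeA}, reduces to how Young modules decompose under the quantum Frobenius; and the multiplicity count demands tracking the Heisenberg operator $[A_\mu]$ through the support stratification of $\mathcal{F}_{\kappa,{\bf s}}$, which is the technical core of the argument in~\cite{SV}.
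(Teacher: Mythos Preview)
The paper does not prove this proposition; it simply cites \cite[Sections 5.4--5.6]{SV}. So there is no ``paper's own proof'' to compare against, only the original argument of Shan--Vasserot. With that caveat, let me assess your outline on its own terms.

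The easy parts are fine. The argument that every composition factor $L(\lambda')$ has $p(\lambda')=0$ is correct: $E_iA_\mu\cong A_\mu E_i$ gives $E_i(A_\mu L(\lambda))=0$, and exactness of $E_i$ pushes this to all subquotients. Likewise, Lemma~\ref{Lem:Supp_Ind} does give $\Supp M=\overline{X(W_{0,|\mu|})}$ for every sub or quotient $M$, so any simple in the head or socle has $q=|\mu|$; and the containment $\Supp L(\lambda')\subset\overline{X(W_{0,|\mu|})}$ forces $q(\lambda')\leqslant |\mu|$ for all composition factors.

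The genuine gap is in the passage from $q(\lambda')\leqslant|\mu|$ to the sharp statement of (3), and in the uniqueness claim of (1). Lemma~\ref{Lem:Supp_Ind} controls only subs and quotients, not arbitrary subquotients, so it does \emph{not} show that composition factors ``in the middle'' have $q<|\mu|$. What (3) actually asserts is that \emph{every} composition factor with $q=|\mu|$ has the single label $\tilde{a}_\mu\lambda$; this is inseparable from (1) and (2), and your sketch does not supply it. Your adjunction argument for (1) reduces to understanding $\Hom\bigl(L(\lambda)\boxtimes L_c(e\mu),\Res L(\lambda')\bigr)$ for all $\lambda'$ with $p=0,\,q=|\mu|$, but the phrase ``pin down $\lambda'$ to a unique label'' hides the real work: one needs the structure theorem that the minimal-support subcategory of $\OCat_c(\Sym_{de})$ is equivalent (via Frobenius) to $\Sym_d\operatorname{-mod}$, together with a Mackey-type analysis of $\Res\circ\Ind$ across the parabolic $G(\ell,1,n)\times\Sym_{de}$, which is what \cite{SV} carries out.

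Your proposed route to the multiplicity (2) is also suspect. ``Extracting the $\overline{X(W_{0,d})}$-supported layer'' from a $K_0$ identity presupposes that you know the support filtration on $K_0$, i.e.\ Proposition~\ref{Prop:K0_filtr}; but in \cite{SV} that proposition is proved \emph{using} the present one, so invoking it here is circular. The multiplicity $\ell^{|\mu|}\dim V_\mu$ in \cite{SV} comes instead from the direct computation of $\Res^{G(\ell,1,n+de)}_{G(\ell,1,n)\times\Sym_{de}}$ on the relevant simples (again via the Frobenius/Mackey analysis), not from a $K_0$ extraction. Your ``Main obstacle'' paragraph correctly identifies where the difficulty lies, but the outline above it does not yet bridge it.
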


So we indeed get a map between the required sets. The following is \cite[Proposition 5.33]{SV}.

\begin{Prop}\label{Prop:amu_bijection}
The map $(\mu,\lambda)\mapsto \tilde{a}_\mu \lambda$ is a bijection
$$\mathcal{P}_1(q)\times \mathcal{P}^{sing,0}_{\kappa,{\bf s}}
\xrightarrow{\sim}\mathcal{P}^{sing,q}_{\kappa,{\bf s}}.$$
\end{Prop}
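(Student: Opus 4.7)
My plan is to pass to $K_0^{\C}(\OCat_{\kappa,{\bf s}})=\mathcal{F}_{\kappa,{\bf s}}$ and exploit the commuting actions of $\hat{\slf}_e$ and the Heisenberg algebra $\mathfrak{Heis}$ (the latter realised on $K_0$ via Corollary \ref{Cor:Heis_oper}). The structural input I would invoke is the bimodule decomposition
\[
\mathcal{F}_{\kappa,{\bf s}}\;\cong\;\bigoplus_{\lambda_0\in\mathcal{P}^{sing,0}_{\kappa,{\bf s}}} L_{\hat{\slf}_e}(\omega_{\lambda_0})\otimes F_{\mathfrak{Heis}},
\]
a boson--fermion-type correspondence for the tensor factorisation $\mathcal{F}_{\kappa,{\bf s}}=\bigotimes_i\mathcal{F}_{\kappa,s_i}$; the simultaneous highest weight vectors are identified with the classes $[L(\lambda_0)]$, $\lambda_0\in\mathcal{P}^{sing,0}_{\kappa,{\bf s}}$, using Proposition \ref{Prop:fin_dim_class} (they are annihilated both by all $e_i$ and by all $b_{-d}$). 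Consequently $\mathcal{F}_{\kappa,{\bf s}}^{\hat{\slf}_e\text{-sing}}$ is free over $\C[b_d:d>0]$ with basis $\{[L(\lambda_0)]\}$, so that $\{b_\mu[L(\lambda_0)]\}_{(\mu,\lambda_0)\in\mathcal{P}_1\times\mathcal{P}^{sing,0}}$, with $b_\mu:=\prod_i b_{\mu_i}$, is a $\C$-basis of that subspace. The same subspace has another basis $\{[L(\nu)]\}_{\nu\in\mathcal{P}^{sing}=\bigsqcup_q\mathcal{P}^{sing,q}}$ by Proposition \ref{Prop:cryst_depth_p}, because $\tilde e_i\nu=0$ is equivalent to $E_iL(\nu)=0$, hence to $e_i[L(\nu)]=0$ in $K_0$.

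The bijection will then come from matching these two bases via the map $(\mu,\lambda_0)\mapsto\tilde{a}_\mu\lambda_0$. Proposition \ref{Prop:Amu_structure} yields
\[
[A_\mu L(\lambda_0)]\;=\;\ell^{|\mu|}(\dim V_\mu)\,[L(\tilde{a}_\mu\lambda_0)]\;+\;(\text{classes }[L(\nu')]\text{ with }p(\nu')=0,\;q(\nu')<|\mu|).
\]
Combining Corollary \ref{Cor:Heis_oper} with the classical Murnaghan--Nakayama identity $p_d=\sum_{i=0}^{d-1}(-1)^i s_{(d-i,1^i)}$ shows that each $b_d$ is a $\Z$-linear combination of commuting operators $[A_{\nu}]$ of total degree $d$; the invertibility over $\Q$ of the character-table transition between power sums $p_\mu$ and Schur functions $s_\mu$ of degree $|\mu|$ then lets me express every monomial $b_\mu$ as a $\Q$-linear combination of $[A_{\nu}]$'s with $|\nu|=|\mu|$. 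Substituting into the displayed formula gives the filtered identity
\[
b_\mu[L(\lambda_0)]\;=\;c_\mu\,[L(\tilde{a}_\mu\lambda_0)]\;+\;(\text{classes }[L(\nu')]\text{ with }q(\nu')<|\mu|),\qquad c_\mu\neq 0.
\]

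Since the left-hand sides form a basis of $\mathcal{F}^{\hat{\slf}_e\text{-sing}}$ and the right-hand side is upper triangular with nonzero diagonal with respect to the filtration by $q$, the vectors $\{[L(\tilde{a}_\mu\lambda_0)]\}$ themselves form a basis; in particular the labels $\tilde{a}_\mu\lambda_0$ are pairwise distinct, yielding injectivity. Comparing this basis with $\{[L(\nu)]:\nu\in\mathcal{P}^{sing}\}$ and noting from Proposition \ref{Prop:Amu_structure}(1) that $\tilde{a}_\mu\lambda_0\in\mathcal{P}^{sing,|\mu|}$, the $q$-grading is respected on both sides and the map is a bijection with image exactly $\mathcal{P}^{sing,q}_{\kappa,{\bf s}}$. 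The main obstacle I anticipate is securing the bimodule decomposition above with the correct identification of the distinguished vacua: this reduces, via $\mathcal{F}_{\kappa,{\bf s}}=\bigotimes_i\mathcal{F}_{\kappa,s_i}$, to the level-one boson-fermion correspondence, but pinning down $\mathcal{P}^{sing,0}_{\kappa,{\bf s}}$ as the set of simultaneous vacua requires the finite-dimensionality criterion of \cite{BEG2} together with a careful compatibility check between the categorical and Fock-space highest weight vectors.
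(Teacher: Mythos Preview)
Your strategy is sound and close in spirit to \cite{SV}, but the ``filtered identity''
\[
b_\mu[L(\lambda_0)]\;=\;c_\mu\,[L(\tilde a_\mu\lambda_0)]\;+\;(\text{terms with }q<|\mu|),\qquad c_\mu\neq 0,
\]
does not follow from Proposition \ref{Prop:Amu_structure}. Writing $b_\mu=\sum_{|\nu|=|\mu|}c_{\mu,\nu}[A_\nu]$ and applying that proposition yields
\[
b_\mu[L(\lambda_0)]\;=\;\ell^{|\mu|}\sum_{|\nu|=|\mu|}c_{\mu,\nu}\,(\dim V_\nu)\,[L(\tilde a_\nu\lambda_0)]\;+\;(\text{terms with }q<|\mu|),
\]
a linear combination of \emph{all} the classes $[L(\tilde a_\nu\lambda_0)]$ with $|\nu|=|\mu|$, not a single one indexed by $\mu$. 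To collapse this sum to one term you would already need the labels $\tilde a_\nu\lambda_0$ to be pairwise distinct, which is precisely the injectivity you are trying to prove. So the upper-triangularity argument is circular: if two labels coincided, a suitable combination of $[A_\nu L(\lambda_0)]$'s would drop into $F_{|\mu|-1}$, and you could no longer conclude that the $[L(\tilde a_\mu\lambda_0)]$ form a basis.

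Knowing only that $\{[A_\nu L(\lambda_0)]\}$ is a basis of $\mathcal{F}_{\kappa,{\bf s}}^{\hat{\slf}_e\text{-sing}}$ with each element in $F_{|\nu|}\setminus F_{|\nu|-1}$ is not enough linear algebra to force the map on labels to be bijective; one needs in addition that $\operatorname{span}\{[A_\nu L(\lambda_0)]:|\nu|\le q\}=F_q$ for every $q$. This is essentially the content of Proposition \ref{Prop:K0_filtr}, which in \cite{SV} is established together with the present statement rather than as prior input. The missing piece is a direct surjectivity argument: for $\lambda'\in\mathcal{P}^{sing,q}_{\kappa,{\bf s}}$ one uses Lemma \ref{Lem:Supp_Restr} to see that $\Res^{G(\ell,1,n)}_{W_{0,q}}L(\lambda')$ is nonzero and finite-dimensional, and then extracts a pair $(\mu,\lambda_0)$ with $\tilde a_\mu\lambda_0=\lambda'$ from this restriction via the adjunction between $A_\mu$ and $RA_\mu^*$. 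Once surjectivity is in hand, the dimension count you set up (two bases of the same space, graded by $n$) gives injectivity as well.
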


\subsubsection{Heisenberg crystal}\label{SSS_Heis_cryst}
The set $\mathcal{P}_1$ comes with a natural $\slf_\infty$-crystal structure
that is the same as on $\mathcal{P}_\kappa$ for irrational $\kappa$.
The bijection $\mathcal{P}_1\times \mathcal{P}^{sing,0}_{\kappa,{\bf s}}\xrightarrow{\sim}
\mathcal{P}^{sing}_{\kappa,{\bf s}}$ allows to carry this structure over to
$\mathcal{P}^{sing}_{\kappa,{\bf s}}$.  Then we have the following result
proved in \cite[5.1.2]{Cher_supp}.

\begin{Prop}\label{Prop:slf_infty_crystal}
There is a unique $\slf_\infty$-crystal structure on $\mathcal{P}_{\kappa,{\bf s}}$ that commutes
with the $\hat{\slf}_e$-crystal structure and extends the $\slf_\infty$-crystal structure
on $\mathcal{P}_{\kappa,{\bf s}}^{sing}$ introduced in the previous paragraph.
The number $q_{\kappa,{\bf s}}(\lambda)$ coincides with the depth of $\lambda$
in this crystal.
\end{Prop}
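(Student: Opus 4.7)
My plan is to establish uniqueness, existence, and the depth equality in that order. For uniqueness, the key point is that the $\hat{\slf}_e$-crystal on $\mathcal{P}_{\kappa,\bs}$ is highest-weight: every connected component has a unique source, and each such source lies in $\mathcal{P}^{sing}_{\kappa,\bs}$. Thus any $\lambda\in \mathcal{P}_{\kappa,\bs}$ is of the form $\tilde{f}^{\hat{\slf}_e}_{i_1}\cdots\tilde{f}^{\hat{\slf}_e}_{i_k}\lambda^0$ for some $\lambda^0\in \mathcal{P}^{sing}_{\kappa,\bs}$. If two $\slf_\infty$-crystal structures both commute with the $\hat{\slf}_e$-one and agree on $\mathcal{P}^{sing}_{\kappa,\bs}$, commutation forces $\tilde{e}^{\slf_\infty}_j\lambda=\tilde{f}^{\hat{\slf}_e}_{i_1}\cdots\tilde{f}^{\hat{\slf}_e}_{i_k}(\tilde{e}^{\slf_\infty}_j\lambda^0)$ and similarly for $\tilde{f}^{\slf_\infty}_j$, so they coincide on all of $\mathcal{P}_{\kappa,\bs}$.

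For existence I would define the Kashiwara operators by precisely the formula appearing in uniqueness, using the categorical Heisenberg action to justify well-definedness. The functors $A_\mu$ and $RA^*_\mu$ commute with $E_i,F_i$ by Proposition~\ref{Prop:Heis_fun} and Corollary~\ref{Cor:Heis_fun}, so any $\slf_\infty$-Kashiwara operators one extracts from the Heisenberg side automatically commute with the $\hat{\slf}_e$-crystal. Concretely, I would define $\tilde{e}^{\slf_\infty}_j,\tilde{f}^{\slf_\infty}_j$ on $\mathcal{P}_{\kappa,\bs}$ by taking simple heads and socles of suitable Heisenberg-derived endofunctors applied to $L_{\kappa,\bs}(\lambda)$, in the same style as the Kac-Moody case recalled in \ref{SSS_cat_KM_O}; word-independence of the extension formula across $\hat{\slf}_e$-orbits is then built in from the categorical definition. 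The crystal axioms (partial-inverseness on each node, and finiteness of $\tilde{e}$- and $\tilde{f}$-strings) reduce, via the $\hat{\slf}_e$-orbit structure and the bijection of Proposition~\ref{Prop:amu_bijection}, to the corresponding axioms for the level-one $\slf_\infty$-crystal on $\mathcal{P}_1$, where they are standard.

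For the depth identification, note first that on $\mathcal{P}^{sing}_{\kappa,\bs}$ the $\slf_\infty$-depth of $\tilde{a}_\mu\lambda^0$ is $|\mu|$ (the standard depth in $\mathcal{P}_1$), which coincides with $q_{\kappa,\bs}(\tilde{a}_\mu\lambda^0)$ by Proposition~\ref{Prop:Amu_structure}(1). For general $\lambda$, commutation of the two crystal structures forces the $\slf_\infty$-depth to be constant along $\hat{\slf}_e$-orbits; the matching invariance of $q_{\kappa,\bs}$ along such orbits follows from Lemma~\ref{Lem:Supp_Restr}, since the operators $E_i,F_i$ modify only the $G(k,1,\ell)$-factor of the stabilizer (the $p$-part of $(p,q)$) and leave the number of $\Sym_e$-blocks (the $q$-part) unchanged. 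The main obstacle I foresee is the well-definedness in the existence step: a purely combinatorial definition of $\tilde{e}^{\slf_\infty}_j$ via transport across $\hat{\slf}_e$-orbits depends a priori on a choice of word realizing $\lambda$ as $\tilde{f}^{\hat{\slf}_e}_w\lambda^0$, and word-independence is not automatic from crystal axioms alone. Bypassing this obstacle is precisely what the commuting categorical Heisenberg action on $\OCat_{\kappa,\bs}$ provides, yielding functorial operators that descend unambiguously to crystal operators on $\mathcal{P}_{\kappa,\bs}$; this is the route I would follow.
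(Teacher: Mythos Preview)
The paper does not give its own proof of this proposition; it simply cites \cite[5.1.2]{Cher_supp}. So there is no in-paper argument to compare against, and I can only assess your proposal on its merits.

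Your uniqueness argument and your depth argument are fine. The depth part is essentially correct: $q_{\kappa,\bs}$ is constant along $\hat{\slf}_e$-crystal strings by the support behavior of induction/restriction (you cite Lemma~\ref{Lem:Supp_Restr}; you also need Lemma~\ref{Lem:Supp_Ind} for the $\tilde f_i$-direction), and the $\slf_\infty$-depth is constant along such strings by commutation, so the equality reduces to the singular case, where it is Proposition~\ref{Prop:Amu_structure}(1) together with Proposition~\ref{Prop:amu_bijection}.

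The gap is in your existence step. You propose to define $\tilde e^{\slf_\infty}_j,\tilde f^{\slf_\infty}_j$ on all of $\mathcal{P}_{\kappa,\bs}$ by ``taking simple heads and socles of suitable Heisenberg-derived endofunctors applied to $L_{\kappa,\bs}(\lambda)$''. No such functors are available in this setup. The categorical Heisenberg action consists of the functors $A_\mu$ indexed by partitions, and on $K_0$ these produce the Heisenberg generators $b_d$, not $\slf_\infty$ Chevalley generators $e_j,f_j$; there is no decomposition of $A_\mu$ into $j$-indexed pieces analogous to $E=\bigoplus E_z$. Furthermore, Proposition~\ref{Prop:Amu_structure} only gives the simple-head/socle property for $A_\mu$ applied to \emph{finite-dimensional} simples, not to arbitrary $L_{\kappa,\bs}(\lambda)$. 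So the commutation in Proposition~\ref{Prop:Heis_fun} and Corollary~\ref{Cor:Heis_fun} does not by itself manufacture well-defined $\slf_\infty$-crystal operators on general $\lambda$ in the way you suggest.

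What actually makes the transport formula $\tilde e^{\slf_\infty}_j(\tilde f^{\hat{\slf}_e}_{i_1}\cdots\tilde f^{\hat{\slf}_e}_{i_k}\lambda^0):=\tilde f^{\hat{\slf}_e}_{i_1}\cdots\tilde f^{\hat{\slf}_e}_{i_k}(\tilde e^{\slf_\infty}_j\lambda^0)$ well-defined is a crystal-theoretic fact, not a direct categorical one: the functor $A_\mu$ commutes with all $E_i,F_i$ and shifts the $\hat{\slf}_e$-weight by $-|\mu|\delta$ (since $L_c(e\mu)$ contains $|\mu|$ boxes of each residue). Hence for singular $\lambda^0$ the element $\tilde e^{\slf_\infty}_j\lambda^0\in\mathcal{P}^{sing}_{\kappa,\bs}$ has $\hat{\slf}_e$-highest weight differing from that of $\lambda^0$ only by a multiple of the null root $\delta$. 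Irreducible highest-weight $\hat{\slf}_e$-crystals with highest weights differing by $\Z\delta$ are canonically isomorphic, so the $\hat{\slf}_e$-components of $\lambda^0$ and of $\tilde e^{\slf_\infty}_j\lambda^0$ are identified by a \emph{unique} crystal isomorphism; this is what guarantees independence of the word $(i_1,\ldots,i_k)$. That is the missing ingredient you should supply in place of the appeal to nonexistent ``$\slf_\infty$-endofunctors''.
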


Since this $\slf_\infty$-crystal is a crystal analog of a Heisenberg algebra action,
we call it the {\it Heisenberg crystal}.

Comparing to the $\hat{\slf}_e$-crystal operators, the crystal operators for the $\hat{\slf}_\infty$-crystal
are hard to compute. A relatively easy case is when one of the parameters $s_i$ is much less than
the others. In order to compute the crystal operators in this case,
let us recall the division with remainder for Young diagrams.
For a Young diagram $\nu=(\nu_1,\ldots,\nu_k)$ and an integer $e>1$ we get the partial quotient
$\nu'=(\nu'_1,\ldots,\nu'_k)$ and the remainder $\nu''=(\nu''_1,\ldots,\nu''_k)$, both are Young diagrams. The rule
to determine $\nu',\nu''$ is that $\nu_i=e\nu'_i+\nu_i''$ for all $i$ and that $|\nu''|$ is maximal
possible. For example, if $\nu=(7,3,1)$ and $e=3$, then we get $\nu'=(1),\nu''=(4,3,1)$.
Note  that the partition $\nu$ lies in the $\hat{\slf}_e$-crystal component of
$e \nu'$ and goes to $\nu''$ under the isomorphism of the components of $\varnothing$
and of $e\nu'$, see Example \ref{Ex:level1_cryst}.

\begin{Prop}\label{Prop:Heis_crystal_domin}
Assume that $j\in \{0,1,\ldots,\ell-1\}$ is such that $s_j<s_i-n$ for any other $i$.
Let $\lambda\in \mathcal{P}_{\kappa,{\bf s}}(n)$. Let $\lambda^{(j)}=e\nu'+\nu''$
(division with remainder). Then $q_{\kappa,{\bf s}}(\lambda)=|\nu'|$. Moreover,
the crystal operator $\tilde{e}_j, j\in \Z,$ for $\slf_\infty$ sends
$\lambda$ to
$$\underline{\lambda}:=(\lambda^{(1)},\ldots,\lambda^{(j-1)}, e(\tilde{e}_j\nu')+\nu'',\lambda^{(j+1)},
\ldots, \lambda^{(\ell)}),$$
if $\tilde{e}_j\nu'\neq 0$ and to $0$ else. Here $\tilde{e}_j\nu'$ is computed in the
$\slf_\infty$-crystal on the set of partitions.
\end{Prop}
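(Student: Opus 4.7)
The plan is to use the uniqueness assertion of Proposition \ref{Prop:slf_infty_crystal}: the Heisenberg crystal is the unique $\slf_\infty$-crystal on $\mathcal{P}_{\kappa,\bs}$ that commutes with the $\hat{\slf}_e$-crystal and restricts correctly on $\mathcal{P}^{sing}_{\kappa,\bs}$. So I would define an operation on $\mathcal{P}_{\kappa,\bs}$ by the formula in the proposition and verify that (a) it commutes with the $\hat{\slf}_e$-crystal, and (b) on $\mathcal{P}^{sing}_{\kappa,\bs}$ it agrees with the crystal transferred via $\tilde a$. The depth formula $q_{\kappa,\bs}(\lambda)=|\nu'|$ is then immediate, since $|\nu'|$ is manifestly the depth of $\lambda$ under the proposed operation and depth equals $q_{\kappa,\bs}$ by Proposition \ref{Prop:slf_infty_crystal}.

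For (a) I would first analyze the $\hat{\slf}_e$-signatures under the dominance hypothesis. The inequality $s_j<s_i-n$ is arranged so that, within each residue class modulo $\kappa^{-1}\Z$, every addable or removable box of $\lambda^{(j)}$ precedes, in the order $\prec_{\kappa,\bs}$, every such box in the other components. Consequently the $z$-signature of $\lambda$ decomposes as the level-one $z$-signature of $\lambda^{(j)}$ (computed with the single shift $s_j$) followed by contributions from the remaining components. By Example \ref{Ex:level1_cryst}, the $\slf_\infty$-crystal on $\mathcal{P}_1$ acting on the $e$-quotient $\nu'$ commutes with the level-one $\hat{\slf}_e$-crystal on $\lambda^{(j)}=e\nu'+\nu''$. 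Since the proposed operation modifies only $\lambda^{(j)}$, and only through the level-one $\slf_\infty$-action on $\nu'$, the commutation in the multi-partition setting reduces to the level-one commutation together with the signature decomposition.

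For (b) I would identify the bijection $\tilde a$ explicitly in this regime: I expect $\tilde a_\mu\lambda_0=\lambda$ where $\lambda^{(j)}=\lambda_0^{(j)}+e\mu$ (componentwise sum of partitions) and $\lambda^{(i)}=\lambda_0^{(i)}$ for $i\neq j$. Granting this, for $\lambda\in\mathcal{P}^{sing}_{\kappa,\bs}$ the $e$-quotient of $\lambda^{(j)}$ satisfies $\nu'(\lambda^{(j)})=\mu+\nu'(\lambda_0^{(j)})$, and the Heisenberg $\slf_\infty$-crystal acting on the $\mu$-factor (via the bijection) translates to the $\slf_\infty$-crystal on $\mathcal{P}_1$ acting on the $\nu'$-part of $\lambda^{(j)}$, which is exactly the formula in the proposition. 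The identification of $\tilde a_\mu\lambda_0$ would be established by computing the head of $A_\mu L_{\kappa,\bs}(\lambda_0)=\Ind^{G(\ell,1,|\lambda_0|+e|\mu|)}_{G(\ell,1,|\lambda_0|)\times\Sym_{e|\mu|}} L_{\kappa,\bs}(\lambda_0)\boxtimes L_\kappa(e\mu)$, using Proposition \ref{Prop:equivalence_typeA} to control the structure of $L_\kappa(e\mu)$ via the quantum Frobenius and the $c$-function ordering together with the highest weight order $\preceq_\bc$ to pin down the head.

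The main obstacle I expect is this last identification: although Proposition \ref{Prop:Amu_structure} guarantees $\tilde a_\mu\lambda_0\in\mathcal{P}^{sing,|\mu|}_{\kappa,\bs}$, extracting its precise label requires analyzing how the induction interacts with both the $\hat{\slf}_e$-weight and the $c$-function ordering under dominance. In particular, one needs to rule out other singular candidates of the correct $\hat{\slf}_e$-weight, which under dominance should reduce to a combinatorial check: adding $e\mu$'s worth of boxes to $\lambda_0^{(j)}$ minimizes the relevant $c$-function among singular candidates (the $j$-th position is the $\prec_{\kappa,\bs}$-greatest), forcing the new boxes contributed by the tensor factor $L_\kappa(e\mu)$ to concentrate in the $j$-th partition.
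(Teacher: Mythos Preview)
The paper does not prove this proposition; it simply defers to \cite[Section 5.2]{Cher_supp}. Your overall strategy---define the candidate $\slf_\infty$-operators by the displayed formula and then invoke the uniqueness clause of Proposition~\ref{Prop:slf_infty_crystal}---is the right one, and is presumably what is done in \cite{Cher_supp} (where Proposition~\ref{Prop:slf_infty_crystal} is established first, in Section~5.1.2). Step~(a) is essentially correct: the dominance hypothesis $s_j<s_i-n$ forces every addable/removable box of $\lambda^{(j)}$ to precede those of the other components in the box order, so the $z$-signature of $\lambda$ is the concatenation of the level-one $z$-signature of $\lambda^{(j)}$ with the contribution from the rest. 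This exhibits the $\hat{\slf}_e$-crystal on $\mathcal P_{\kappa,\bs}(\leqslant n)$ as the crystal tensor product of the level-one crystal on the $j$th slot with a level-$(\ell-1)$ crystal on the others, and commutation reduces to level one. Just note that Example~\ref{Ex:level1_cryst} only records the parametrization of components; the actual level-one input you need is that $\rho\mapsto e\mu+\rho$ is an $\hat{\slf}_e$-crystal embedding for every partition $\mu$, which is a separate (easy) signature check.

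Step~(b) is where your sketch has a genuine gap. You are right that the crux is computing $\tilde a_\mu\lambda_0$ explicitly and showing it equals the multipartition with $\lambda_0^{(j)}$ replaced by $\lambda_0^{(j)}+e\mu$, but the method you propose does not do this. The object $A_\mu L_{\kappa,\bs}(\lambda_0)$ is the induction of a \emph{simple}, not a standard, so it is not standardly filtered and its head cannot be read off from the $c$-function order on a standard filtration. Proposition~\ref{Prop:Amu_structure} pins down the support of the head and the multiplicity with which it occurs, but not its label; and trying to extract the label from $K_0$ runs into circularity, since distinguishing the head among the constituents by the value of $q_{\kappa,\bs}$ is exactly what you are trying to establish. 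The argument in \cite{Cher_supp} instead uses a categorical reduction specific to the asymptotic chamber (morally, $\OCat_{\kappa,\bs}$ behaves like a product with one tensor factor governed by the type-$A$ category on the $j$th slot), together with the explicit description of minimal-support type-$A$ simples from Lemma~\ref{Lem:min_supp_class}, to see directly that the induction lands in the $j$th component. Your intuition that the boxes ``concentrate in the $j$th partition'' is correct, but it is a structural fact about the category in the asymptotic chamber rather than a consequence of the highest-weight order alone.
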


This is proved in \cite[Section 5.2]{Cher_supp}. Below, we will explain how to compute the crystal
operators for $\slf_\infty$, in general, using the wall-crossing bijections.

\subsubsection{Reduction}
Let us explain how to reduce the computation of $q_{\kappa,{\bf s}}$ to the case
when $\kappa<0$ and $\kappa e s_i\in \Z$ for all $i$.

First of all, assume that $\kappa>0$. Let $\chi$ be the one-dimensional character
of $G(\ell,1,n)$ that is the identity of $\mu_\ell$ and is the sign on $\Sym_n$.
Then we have an equivalence $\OCat_{\kappa,{\bf s}}\xrightarrow{\sim} \OCat_{-\kappa,-{\bf s}}$
that sends $\Delta(\lambda)$ to $\Delta(\lambda^t)$ (and hence $L(\lambda)$
to $L(\lambda^t)$), where $\bullet^t$ means the component wise transpose.
This reduces the computation of $q_{\kappa,{\bf s}}$ to the case when $\kappa<0$.

Now let us explain what to do if not all $\kappa e s_0,\ldots \kappa e s_{\ell-1}$
are integers (up to adding the same summand to $s_0,\ldots,s_{\ell-1}$). This is
precisely the case when there is more than one equivalence class for $\sim_\bc$
in $\{0,1,\ldots,\ell-1\}$ (see \ref{SSS_decomp1} for the definition of $\sim_\bc$).
The following result follows easily from Proposition \ref{Prop:cat_O_decomp},
see \cite[Corollary 4.3]{Cher_supp}.

\begin{Lem}\label{Lem:q_decomp}
We have $q_{\kappa,{\bf s}}(\lambda)=\sum_{\alpha} q_{\kappa,{\bf s}^\alpha}(\lambda^\alpha)$.
\end{Lem}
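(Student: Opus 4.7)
The plan is to use Proposition~\ref{Prop:cat_O_decomp} together with Proposition~\ref{Prop:slf_infty_crystal} to reduce the claim to additivity of depth in the Heisenberg crystal. Under the highest weight equivalence $\OCat_{\kappa,{\bf s}} \xrightarrow{\sim} \bigotimes_\alpha \OCat_{\kappa,{\bf s}(\alpha)}$, the simple $L_{\kappa,{\bf s}}(\lambda)$ is identified with $\boxtimes_\alpha L_{\kappa,{\bf s}(\alpha)}(\lambda^\alpha)$, and the set of labels decomposes as $\mathcal{P}_{\kappa,{\bf s}} = \prod_\alpha \mathcal{P}_\alpha$. Since $\g_{\kappa,{\bf s}} = \prod_\alpha \g_\kappa$ acts factor-wise under this equivalence, the Kac-Moody crystal on $\mathcal{P}_{\kappa,{\bf s}}$ is a product of those on the $\mathcal{P}_\alpha$, so in particular $\mathcal{P}^{sing}_{\kappa,{\bf s}} = \prod_\alpha \mathcal{P}^{sing}_{\kappa,{\bf s}(\alpha)}$, and $\mathcal{P}^{sing,0}_{\kappa,{\bf s}} = \prod_\alpha \mathcal{P}^{sing,0}_{\kappa,{\bf s}(\alpha)}$ because $L_{\kappa,{\bf s}}(\lambda)$ is finite dimensional precisely when each factor $L_{\kappa,{\bf s}(\alpha)}(\lambda^\alpha)$ is.

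Next, I would invoke Proposition~\ref{Prop:slf_infty_crystal} to replace $q_{\kappa,{\bf s}}(\lambda)$ with the depth of $\lambda$ in the Heisenberg $\slf_\infty$-crystal on $\mathcal{P}_{\kappa,{\bf s}}$, and similarly for each $q_{\kappa,{\bf s}(\alpha)}(\lambda^\alpha)$. Write $\lambda = \tilde{a}_\mu \lambda_0$ via the bijection of Proposition~\ref{Prop:amu_bijection}, so $q_{\kappa,{\bf s}}(\lambda) = |\mu|$, and likewise $\lambda^\alpha = \tilde{a}_{\mu^\alpha} \lambda_0^\alpha$ on each factor, so $q_{\kappa,{\bf s}(\alpha)}(\lambda^\alpha) = |\mu^\alpha|$. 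The claim then becomes $|\mu| = \sum_\alpha |\mu^\alpha|$. To get this, I would trace through the categorical definition of $\tilde{a}_\mu$ (via the head/socle of $A_\mu L_{\kappa,{\bf s}}(\lambda_0)$ from Proposition~\ref{Prop:Amu_structure}) using Corollary~\ref{Cor:Heis_oper}: on $K_0$, the Heisenberg generators $b_d$ act diagonally on the Fock space $\mathcal{F}_{\kappa,{\bf s}} = \bigotimes_\alpha \mathcal{F}_{\kappa,{\bf s}(\alpha)}$, and this diagonal action together with the factorization of singular vertices yields that the data $(\mu,\lambda_0)$ parametrizing $\lambda$ determine (and are determined by) the tuple $(\mu^\alpha, \lambda_0^\alpha)_\alpha$ with $|\mu| = \sum_\alpha |\mu^\alpha|$.

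The main obstacle is precisely this compatibility of the Heisenberg categorification with the block decomposition: unlike the Kac-Moody side, the functors $A_\mu$ do not literally decompose as external tensor products under Proposition~\ref{Prop:cat_O_decomp}, because $A_\mu$ is defined by parabolic induction tensored with the symmetric-group simple $L_\kappa(e\mu)$, an object that does not belong to any one tensor factor. One must therefore argue combinatorially rather than via a direct functorial factorization. The key supporting fact is Proposition~\ref{Prop:Heis_fun}: since $A_\mu$ commutes with the Kac-Moody functors $E_i, F_i$, it preserves the $(n_\alpha)$-grading imposed by $\sim_\bc$, so the head $L_{\kappa,{\bf s}}(\tilde{a}_\mu \lambda_0)$ lies in the same block as $\lambda_0$, and this block information together with the diagonal action of the Heisenberg algebra on the factorized Fock space is enough to pin down the correspondence $\mu \leftrightarrow (\mu^\alpha)_\alpha$ and prove the sought additivity.
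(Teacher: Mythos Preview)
Your proposal has a circularity problem. The Heisenberg crystal machinery you invoke---Propositions~\ref{Prop:Amu_structure}, \ref{Prop:amu_bijection}, and \ref{Prop:slf_infty_crystal}---is set up in the paper \emph{only} under the standing assumption that $\kappa<0$ and all $\kappa e s_i$ are integers, i.e., that there is a single equivalence class for $\sim_\bc$. Lemma~\ref{Lem:q_decomp} sits in the ``Reduction'' subsection whose purpose is exactly to reduce the general (multi-class) case to that single-class situation. So you cannot appeal to the Heisenberg-crystal description of $q_{\kappa,{\bf s}}(\lambda)$ on the left-hand side: at the point where this lemma is needed, that description is not available for the full parameter $(\kappa,{\bf s})$.

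The paper's intended argument is far more direct and does not go through the Heisenberg crystal at all. The number $q_{\kappa,{\bf s}}(\lambda)$ is defined purely via the support of $L_{\kappa,{\bf s}}(\lambda)$, and supports can be read off from the vanishing pattern of the Bezrukavnikov--Etingof restriction functors (Lemma~\ref{Lem:Supp_Restr}): $q$ is the largest integer with $\Res^{G(\ell,1,n)}_{G(\ell,1,n-eq)\times\Sym_e^q}L(\lambda)\neq 0$. The highest weight equivalence of Proposition~\ref{Prop:cat_O_decomp} intertwines the KZ functors, hence (via Proposition~\ref{Prop:KZ_Res} and the full-faithfulness of KZ on projectives) intertwines the restriction functors as well. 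Under the equivalence, $L_{\kappa,{\bf s}}(\lambda)\cong\boxtimes_\alpha L_{\kappa,{\bf s}(\alpha)}(\lambda^\alpha)$, and the restriction to $G(\ell,1,n-eq)\times\Sym_e^q$ decomposes over all ways of distributing the $q$ copies of $\Sym_e$ among the tensor factors; it is nonzero for some distribution if and only if $q\leqslant\sum_\alpha q_{\kappa,{\bf s}(\alpha)}(\lambda^\alpha)$. This gives the additivity immediately. Even ignoring the circularity, your attempted workaround for the non-factorization of $A_\mu$ is only a $K_0$-level heuristic, whereas $\tilde{a}_\mu$ is defined via heads and socles; a $K_0$ argument does not pin down the bijection.
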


\subsection{Filtration  by support on $K_0$}\label{SS_filtr_K0}
Propositions \ref{Prop:Amu_structure}, \ref{Prop:amu_bijection} together with standard properties of
$\hat{\slf}_e$-crystals were used in \cite{SV} to describe the filtration
by supports on $K_0^{\C}(\OCat_{\kappa,{\bf s}})=\mathcal{F}_{\kappa,{\bf s}}$.
Namely, for $p,q\in \Z_{\geqslant 0}$, let $\OCat_{\kappa,{\bf s}}^{p,q}$ denote
the span of $[L_{\kappa,{\bf s}}(\lambda)]$ with $p_{\kappa,{\bf s}}(\lambda)\leqslant p,
q_{\kappa,{\bf s}}(\lambda)\leqslant q$. Further, we let $\mathcal{F}_{\kappa,{\bf s}}^{p,q}$
to denote the span of elements of the form $$f_{i_1}\ldots f_{i_{p}}b_{j_1}\ldots b_{j_k}v,$$
where $p'\leqslant p, j_1+\ldots+j_k\leqslant q$ and $v\in \mathcal{F}_{\kappa,{\bf s}}$
is a vector annihilated by all  $e_i$ and all $b_{-j}$.

The following result was obtained in \cite{SV}, see the proof of Proposition 6.5 there.

\begin{Prop}\label{Prop:K0_filtr}
We have $K_0^{\C}(\mathcal{O}^{p,q}_{\kappa,{\bf s}})=\mathcal{F}^{p,q}_{\kappa,{\bf s}}$.
\end{Prop}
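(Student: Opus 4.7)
The plan is to establish the two inclusions separately; the forward inclusion is essentially direct, while the reverse proceeds by double induction on $(p,q)$. The preliminary step common to both directions is to identify the space of joint highest weight vectors in $\mathcal{F}_{\kappa,\bs}$, i.e.\ those $v$ killed by every $e_i$ and every $b_{-j}$, with $\operatorname{span}\{[L_{\kappa,\bs}(\rho)] : \rho \in \Part^{sing,0}_{\kappa,\bs}\}$. One direction is Proposition \ref{Prop:fin_dim_class}; the other follows from the standard decomposition of $\mathcal{F}_{\kappa,\bs}$ as a $\g_{\kappa,\bs}\oplus \mathfrak{Heis}$-module together with a dimension count on weight spaces, using Proposition \ref{Prop:amu_bijection} to match the joint highest weight multiplicities with the count of finite-dimensional simples in each $\OCat_{\kappa,\bs}(n)$.

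For $\mathcal{F}^{p,q}_{\kappa,\bs} \subseteq K_0^\C(\OCat^{p,q}_{\kappa,\bs})$ I take a joint highest weight vector $v = [L_{\kappa,\bs}(\rho)]$ with $\rho\in \Part^{sing,0}_{\kappa,\bs}$ and analyze $b_{j_1}\cdots b_{j_k}v$. By Corollary \ref{Cor:Heis_oper} and Proposition \ref{Prop:Heis_fun}(i), the commutative graded subalgebra of $\End(K_0^\C(\OCat_{\kappa,\bs}))$ generated by the classes $[A_\mu]$ coincides with the one generated by the $b_d$'s; an elementary symmetric-function identity expresses $b_{j_1}\cdots b_{j_k}[L(\rho)]$ as a linear combination of classes $[A_\mu L(\rho)]$ with $|\mu|=j_1+\cdots+j_k\leqslant q$, and each such class lies in $K_0^\C(\OCat^{0,|\mu|}_{\kappa,\bs})$ by Proposition \ref{Prop:Amu_structure}. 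Applying $f_{i_1}\cdots f_{i_{p'}}$ then raises the Kac-Moody crystal depth by at most $p'$ (since the categorified crystal operators of \cite{CR} have $L(\tilde{f}_i\lambda)$ as head and socle of $F_iL(\lambda)$ and all other constituents of strictly smaller depth) while preserving $q_{\kappa,\bs}$: this last point is because $F_i$ commutes with every $A_\mu$ by Proposition \ref{Prop:Heis_fun}(ii), so $f_i$ commutes with every $b_{\pm d}$ on $K_0^\C$.

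For the reverse inclusion I take $\lambda$ with $p_{\kappa,\bs}(\lambda)=p'\leqslant p$ and $q_{\kappa,\bs}(\lambda)=q'\leqslant q$. Proposition \ref{Prop:cryst_depth_p} produces a string $\tilde{e}_{i_1}\cdots\tilde{e}_{i_{p'}}$ carrying $\lambda$ to a Kac-Moody singular element $\mu$, and the commutativity of the two crystals in Proposition \ref{Prop:slf_infty_crystal} forces $q_{\kappa,\bs}(\mu)=q'$. Proposition \ref{Prop:amu_bijection} then writes $\mu=\tilde{a}_\nu\rho$ for a unique $(\nu,\rho)\in\Part_1(q')\times \Part^{sing,0}_{\kappa,\bs}$, and Proposition \ref{Prop:Amu_structure}(2)(3) rearranges to $[L(\mu)]=(\ell^{|\nu|}\dim V_\nu)^{-1}[A_\nu L(\rho)]$ modulo $K_0^\C(\OCat^{0,q'-1}_{\kappa,\bs})$. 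Induction on $q'$ combined with the symmetric-function expansion from the previous paragraph places $[L(\mu)]$ in $\mathcal{F}^{0,q}_{\kappa,\bs}$, and the triangular formula $[F_iL(\nu)]=[L(\tilde{f}_i\nu)]+(\text{lower-depth terms})$ recovers $[L(\lambda)]\in\mathcal{F}^{p',q}_{\kappa,\bs}$ by induction on $p'$. The main obstacle is the last triangularity: Lemma \ref{Lem:Supp_Ind} controls only the sub and quotient of an induced module, so one must upgrade this to a statement about all Jordan-H\"older constituents of $F_iL(\lambda)$, which I expect to do by combining the categorification of \cite{CR} with the crystal-theoretic identification of depth in Proposition \ref{Prop:cryst_depth_p}.
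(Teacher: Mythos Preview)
Your overall architecture matches the argument in \cite{SV}: identify the joint highest weight vectors with classes of finite-dimensional simples, then run a double induction using Propositions~\ref{Prop:Amu_structure} and~\ref{Prop:amu_bijection} together with the Kac--Moody crystal. The reverse inclusion is handled essentially as you outline, and the triangularity you flag at the end is indeed the point that needs care; it is settled in \cite{SV} by combining Lemma~\ref{Lem:Supp_Ind} with the $\slf_2$-categorification theory of \cite{CR}.

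There is, however, a gap in your forward inclusion that you have \emph{not} flagged. You claim that applying $f_{i_1}\cdots f_{i_{p'}}$ to a class in $K_0^\C(\OCat^{0,q}_{\kappa,\bs})$ lands in $K_0^\C(\OCat^{p',q}_{\kappa,\bs})$, and you justify the preservation of $q_{\kappa,\bs}$ by saying ``$f_i$ commutes with every $b_{\pm d}$ on $K_0^\C$''. This is not enough: commutation of operators on $K_0$ says nothing about the supports of individual Jordan--H\"older constituents of $F_iL$. What one actually needs is that every constituent $L'$ of $F_iL$ satisfies $q_{\kappa,\bs}(L')=q_{\kappa,\bs}(L)$. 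The support of $F_iL$ alone (via Lemma~\ref{Lem:Supp_Ind}) only gives the inequality $q_{\kappa,\bs}(L')\geqslant q_{\kappa,\bs}(L)$, because $\overline{X(W_{p',q'})}\subset\overline{X(W_{p+1,q})}$ forces $q'\geqslant q$ in the parabolic combinatorics of $G(\ell,1,n+1)$. To obtain the opposite inequality one uses that whenever $L'$ occurs in $F_iL$ then $L$ occurs in $E_iL'$ (this follows from the biadjointness of $E_i,F_i$ together with the duality of Proposition~\ref{Prop:ResInd_dual}), and then applies Lemma~\ref{Lem:Supp_Restr} to $E_iL'$, which yields $q_{\kappa,\bs}(L)\geqslant q_{\kappa,\bs}(L')$. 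This two-sided support argument is precisely what underlies \cite[Proposition~2.7]{SV} and the passage you cite from the proof of \cite[Proposition~6.5]{SV}; once it is in place, both your forward inclusion and the ``lower-depth'' triangularity in your reverse inclusion follow.
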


\subsection{Combinatorial computation of supports via wall-crossing bijections}\label{SS_supp_comb_comput}
We assume that $\kappa$ is a negative rational number with denominator $e$
and the numbers $\kappa e s_0,\ldots, \kappa e s_{\ell-1}$ are integers.

Thanks to Proposition \ref{Prop:Heis_crystal_domin} we can compute the numbers
$q_{\kappa,{\bf s}}(\lambda)$ in the case when one of the numbers $s_j$
is much less than the others. We call (essential) chambers, see \ref{SSS_essent_walls_cyclot},
satisfying these conditions {\it asymptotic}. In order to compute $q_{\kappa,{\bf s}}(\lambda)$
(and the crystal operators for the Heisenberg crystal) we will ``move''
a general parameter $(\kappa,{\bf s})$ to an asymptotic chamber by applying
wall-crossing bijections through the essential walls $s_i-s_j=m$, where $m$ is an integer
(with fixed residue modulo $e$). We will fix $j$ and  use different $i$.

Two crucial properties that allow us to compute this wall-crossing bijection
are as follows.

\begin{Prop}
Let $\bc=(\kappa, {\bf s})$ and $\bc'=(\kappa', {\bf s}')$ be two parameters with
integral difference whose chambers are separated by the wall $\Pi$ given by
$h_i-h_j=\kappa m$ for $m\in \Z$. Let $\wc_{\bc'\leftarrow \bc}:\mathcal{P}_{\ell}
\rightarrow \mathcal{P}_\ell$. Then the following is true.
\begin{enumerate}
\item For a Zariski generic $\tilde{\bc}\in \bc+\Pi, \tilde{\bc}':=\tilde{\bc}+(\bc'-\bc)$,
the bijection $\wc_{\tilde{\bc}'\leftarrow \tilde{\bc}}$ is independent of the choice of $\tilde{\bc}$.
\item The bijection $\wc_{\tilde{\bc}'\leftarrow \tilde{\bc}}$ commutes with
the $\g_{\tilde{\bc}}$-crystal.
\end{enumerate}
\end{Prop}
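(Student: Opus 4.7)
The plan is to prove (1) as a direct application of Proposition \ref{Prop:wc_bij_indep}, and to prove (2) by showing that the wall-crossing functor $\WC_{\tilde{\bc}'\leftarrow \tilde{\bc}}$ intertwines the categorical Kac-Moody action and then passing from functors to the crystal via the perverse equivalence structure.

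For (1), take $\param_0 := \bc + \Pi$: this is an affine subspace of $\param$ containing $\bc$, and $\psi := \bc' - \bc$ is transverse to it (the chambers of $\bc,\bc'$ are separated by $\Pi$ by hypothesis). Then (1) is exactly the content of Proposition \ref{Prop:wc_bij_indep} applied to this $\param_0$ and $\psi$: for Zariski generic $\tilde{\bc}\in \bc+\Pi$, the bijection $\wc_{\tilde{\bc}+\psi\leftarrow \tilde{\bc}}$ is independent of the choice of $\tilde{\bc}$.

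For (2), the key observation is that the wall $\Pi:h_i-h_j=\kappa m$ is cut out by a linear condition on the parameters $(\kappa,\bs)$ alone, with no reference to $n$. Consequently it is simultaneously an essential wall for every group $G(\ell,1,n')$, see \ref{SSS_essent_walls_cyclot}, so the chambers of $\tilde{\bc}$ and $\tilde{\bc}'$ are opposite across $\Pi$ not only for $W=G(\ell,1,n)$ but also for the parabolic subgroup $\underline{W}=G(\ell,1,n-1)$ that defines the Kac-Moody restriction. The hypothesis of Proposition \ref{Prop:WC_commut_IndRes} is therefore satisfied, and that proposition gives
\[
\underline{\WC}_{\tilde{\bc}'\leftarrow \tilde{\bc}}\circ \,^{\OCat}\Res^n_{n-1}\cong \,^{\OCat}\Res^n_{n-1}\circ \WC_{\tilde{\bc}'\leftarrow \tilde{\bc}},
\]
together with the analogous identity for induction. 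Equivalently, $\WC_{\tilde{\bc}'\leftarrow \tilde{\bc}}$ intertwines the categorical $\g_{\tilde{\bc}}$-action on $D^b(\OCat_{\tilde{\bc}})$ with that on $D^b(\OCat_{\tilde{\bc}'})$, and in particular commutes with each eigenfunctor $E_z$ and $F_z$. To translate this into commutation of the bijection $\wc_{\tilde{\bc}'\leftarrow \tilde{\bc}}$ with the crystal operators $\tilde e_z,\tilde f_z$, I would use the perverse structure of $\WC$ from Proposition \ref{Prop:WC_perverse}: the support filtration is preserved by $E_z,F_z$ up to the controlled modifications of Lemmas \ref{Lem:Supp_Restr} and \ref{Lem:Supp_Ind}, so the categorical action descends to the subquotients $\Cat^1_i/\Cat^1_{i+1}$, and the abelian equivalences produced by the perverse structure intertwine the induced $\g_{\tilde{\bc}}$-actions. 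Since $\tilde e_z,\tilde f_z$ are functorially determined by $E_z,F_z$ on abelian categories via heads and socles, the bijection assembled from these abelian equivalences commutes with them.

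The main obstacle I anticipate is this last passage: one must check carefully that the perverse subquotient equivalences transport heads and socles of $E_zL,F_zL$ faithfully, even though $\tilde e_z,\tilde f_z$ themselves do not preserve the support filtration. The cleanest way to handle this is to use part (1) to reduce to a Weil generic $\tilde{\bc}\in \bc+\Pi$, where $\OCat_{\tilde{\bc}}$ and $\OCat_{\tilde{\bc}'}$ have a much simpler structure and the wall-crossing functor admits a direct description through the Harish-Chandra bimodule $\B_{\param_0}(\psi)$. In that setting the commutation with the crystal can be read off from the action of $\WC$ on standards via Lemma \ref{Lem:WC_K0}, combined with the natural $\g_{\tilde{\bc}}$-module structure on the Fock space $\mathcal{F}_{\kappa,\bs}=K_0^{\C}(\OCat_{\tilde{\bc}})$ and the uniqueness of the crystal basis arising from a highest weight categorical action.
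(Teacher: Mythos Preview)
Your proposal is correct and follows the same approach as the paper: part (1) is a direct application of Proposition \ref{Prop:wc_bij_indep}, and part (2) is deduced from Proposition \ref{Prop:WC_commut_IndRes}. The paper's own proof is just these two references without elaboration; your additional justification that the wall $h_i-h_j=\kappa m$ is independent of $n$ (so the hypothesis of Proposition \ref{Prop:WC_commut_IndRes} holds for both $G(\ell,1,n)$ and $G(\ell,1,n-1)$) and your discussion of the passage from functorial commutation to crystal commutation are correct expansions of what the paper leaves implicit.
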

The first part is a special case of Proposition \ref{Prop:wc_bij_indep}.
The second part is an easy consequence of \ref{Prop:WC_commut_IndRes}.

The bijection $\wc_{\bc'\leftarrow \bc}$ is then computed as follows. We choose a Weil generic
parameter $\tilde{\bc}$. In this case there are $\ell-1$ equivalence classes for
$\sim_{\tilde{\bc}}$, and $i\sim_{\tilde{\bc}}j$. Furthermore,   $\g_{\tilde{\bc}}=\slf_\infty^{\ell-1}$
and the crystal structure on $\mathcal{P}_{\tilde{\bc}}$ is the product of $\ell-2$
of the level $1$ crystal for $\slf_\infty$ and one copy of a level $2$ crystal
for $\slf_\infty$, the latter affects partitions number $i$ and $j$. In fact,
there is a unique isomorphism  of crystals  $\mathcal{P}_{\tilde{\bc}}\rightarrow
\mathcal{P}_{\tilde{\bc'}}$ that maps $\mathcal{P}_{\tilde{\bc}}(n)$ to $\mathcal{P}_{\tilde{\bc}'}(n)$,
this can be deduced from Examples \ref{Ex:level2_cryst_1},\ref{Ex:level2_cryst_2}.
This uniqueness allows to recover $\wc_{\bc'\leftarrow \bc}$. For details the reader is referred
to \cite[Section 5.4]{Cher_supp}.

\section{Category equivalences and multiplicities}\label{S_cat_equi}
\subsection{Kazhdan-Lusztig category}\label{SS_KL}
Recall that we have a highest weight category equivalence $\OCat_{c}(n)\xrightarrow{\sim}
\mathcal{S}_\epsilon(m,n)\operatorname{-mod}$, where $\mathcal{S}_{\epsilon}(m,n)\operatorname{-mod}$
is the Serre subcategory in $U_\epsilon(\gl_m)\operatorname{-mod}_{fin}$ generated by simples
$L(\lambda)$, where $\lambda$ is a highest weight of the form $(\lambda_1,\ldots,\lambda_m)$,
where $\lambda_1\geqslant \lambda_2\geqslant\ldots\geqslant \lambda_m\geqslant 0$.

The category $U_\epsilon(\gl_m)$ has an alternative realization, the so called {\it Kazhdan-Lusztig
category} of representations of the affine Lie algebra $\hat{\g}:=\hat{\gl}_m$.
Namely, let $\hat{G}$ denote the corresponding Kac-Moody group, a central
extension of $\operatorname{GL}_m(\C((t)))$, $I\subset \hat{G}$
be the Iwahori subgroup. We consider the category $\mathcal{O}^{\mathfrak{b}}_{\kappa}$
consisting of all $I$-integrable $\hat{\g}$-modules with weights bounded from above
and such that the standard central element $C\in \hat{\g}$ acts by $\kappa^{-1}-m$
(here we assume that $\kappa$ is negative).
Set $G_+:=\operatorname{GL}_m(\C[[t]])\subset \hat{G}$. By the Kazhdan-Lusztig
category (to be denoted by $\mathcal{O}^{\g}_{\kappa}$)
one means the full subcategory of $\mathcal{O}^{\mathfrak{b}}_\kappa$
consisting of $G_+$-integrable modules. This is a highest weight category
whose standard objects are the so called Weyl modules $$W(\lambda):=U_{\kappa^{-1}-m}(\hat{\g})\otimes_{U(\g_+)}V(\lambda),$$
where we write $V(\lambda)$  for the finite dimensional irreducible $\g$-module with highest weight $\lambda$.

A deep result of Kazhdan and Lusztig (valid for any reductive group)
is that there is a highest weight category equivalence $\OCat^{\g}_\kappa\cong
U_\epsilon(\g)\operatorname{-mod}_{fin}$. In order to produce this equivalence,
Kazhdan and Lusztig introduce a braided monoidal structure on $\OCat^{\g}_\kappa$,
the so called {\it fusion product} $\dot{\otimes}: \OCat^{\g}_{\kappa}\times
\OCat^{\g}_{\kappa}\rightarrow \OCat^{\g}_{\kappa}$. In order to compute
the fusion product of modules $V_1,\ldots,V_k\in \OCat^{\g}_{\kappa}$
one needs to fix different points $z_1,\ldots,z_k$; the different choices
of points lead to isomorphic products $V_1\dot{\otimes}V_2\dot{\otimes}\ldots
\dot{\otimes}V_k$ that form an $\Sym_k$-equivariant  local system on $\{(z_1,\ldots,z_k)\in \C^k| z_i\neq z_j\}$
(the $\Sym_k$ action permutes both coordinates and the tensor factors).
The monodromy of this system gives the braided structure.

\subsection{Truncated affine parabolic category $\mathcal{O}$}
Now let $m\geqslant n$ and let $\g=\mathfrak{gl}_m$. Let $\OCat^{\g}_\kappa(n)$
denote the Serre subcategory of $\OCat^\g_\kappa$ spanned by the Weyl modules
$W(\lambda)$ with $\lambda_m\geqslant 0, \sum_{i=1}^m \lambda_i=n$.
We have a highest weight equivalence $\mathcal{S}_\epsilon(m,n)\operatorname{-mod}
\xrightarrow{\sim} \OCat^{\g}_\kappa(n)$ and hence a highest weight
equivalence $\OCat_{\kappa}(n)\xrightarrow{\sim} \OCat^{\g}_\kappa(n)$.
The point of emphasizing the latter equivalence is that it can be
generalized to $\ell>1$ as was conjectured by Varagnolo-Vasserot in
\cite{VV} and proved in \cite{RSVV,VV_proof}.

Let us explain the conjecture by introducing  ``higher level'' analogs of
the categories $\OCat^{\g}_\kappa(n)\subset \OCat^{\g}_\kappa$.
We assume that $\kappa=-\frac{1}{e}, s_0,\ldots,s_{\ell-1}$
are integers, in the end of the section we will explain what to
do in the general case.

We   modify $s_0,\ldots,s_{\ell-1}$ by adding a common summand
so that $s_i>n$ for all $i$. Set $m:=s_0+\ldots+s_{\ell-1}$. Let
$\p\subset \g$ be the parabolic subalgebra of block upper triangular
matrices, where the blocks have sizes $s_0,s_1,\ldots,s_{\ell-1}$
(in this order). We have the corresponding parahoric subgroup $\hat{P}
\subset \hat{G}$. Consider the full subcategory $\OCat^{\p}_{\kappa}\subset
\OCat^{\g}_{\kappa}$ consisting of all $\hat{P}$-integrable modules.
This is ``almost'' a highest weight category in the following sense.

Consider the set $\Z^{\bf s}=\{(a_1,\ldots,a_m)| a_1>\ldots>a_{s_0}, a_{s_0+1}>\ldots>a_{s_0+s_1},\ldots,
a_{s_0+\ldots+s_{\ell-2}+1}>\ldots>a_m\}$. For $A\in \Z^{\bf s}$, let $\Delta^{\p}(A)$ denote the parabolic
Verma module with $\rho$-shifted highest weight $A$ so that $\OCat^{\p}_\kappa$
is the Serre span of $\Delta^{\p}(A)$. There is a so called {\it linkage order} on $\Z^{\bf s}$
(that we are not going to define). It has the property that for all $A\in \Z^{\bf s}$ the poset ideals
$\{A'\in \Z^{\bf s}| A'\leqslant A\}$ are finite. It turns out that, for any finite poset ideal
$\Lambda\subset \Z^{\bf s}$, the Serre span of $\Delta^{\p}(A), A\in \Lambda,$ is a highest
weight category with standard objects $\Delta^{\p}(A)$. The category $\mathcal{O}^{\p}_{\kappa}$
does not have enough projective objects (but has enough projective pro-objects) and so
it is not highest weight in the sense introduced in \ref{SSS_HW}.
%Consider the dominance ordering on $\Z^{\bf s}$:
%$A=(a_1,\ldots,a_m)\geqslant A'=(a_1',\ldots,a_m')$ if $\sum_{i=1}^m a_i=\sum_{i=1}^m a_i'$
%and $\sum_{i=1}^k a_i\geqslant \sum_{i=1}^k a_i'$ for all $k$.

Now recall the poset $\mathcal{P}_\ell(n)$ with order $\preceq_\bc$, see \ref{SSS_O_order}.
We can embed $\mathcal{P}_\ell(n)$ into $\Z^{\bf s}$
by $(\lambda^{(0)},\ldots,\lambda^{(\ell-1)})\mapsto A_\lambda$, where $A_\lambda$ is given by
$$(s_0+\lambda^{(0)}_1,s_0-1+\lambda^{(0)}_2,s_0-2+\lambda^{(0)}_3,\ldots,1, s_1+\lambda^{(1)}_1,s_1+\lambda^{(1)}_2-1,
\ldots,1,\ldots, s_{\ell-1}+\lambda^{(\ell-1)}_1,\ldots,1).$$
Here we use the inequalities $n\leqslant s_i$ for all $i$.

One can show that $\mathcal{P}_\ell(n)\subset \Z^{\bf s}$ is a poset ideal with respect to the linkage order
and, moreover, the order $\preceq_\bc$ on $\mathcal{P}_\ell(n)$ refines the restriction of the  linkage
order, see, e.g., \cite[Section 2.3]{VV_proof}.

It is convenient for us to view elements of $\Z^{\bf s}$ as some infinite to the left analogs
of Young diagrams. Namely, we view an element of $\Z^{\bf s}$ as a sequence of $\ell$ collections of boxes
on the plane, where $i$th collection is infinite to the left, consists of $s_i$ rows,
and the rightmost positions of boxes in each row (non-strictly) decrease from top to bottom.
The embedding $\mathcal{P}_\ell(n)\hookrightarrow \Z^{\bf s}$ takes an $\ell$-tuple of diagrams
and adjoins the columns with nonpositive numbers of height $s_i$ to $\lambda^{(i)}$.

For $\OCat^{\p}_\kappa(n)$ we take the Serre span of $\Delta^{\p}(A_\lambda),\lambda\in \mathcal{P}_\ell(n)$.

The following conjecture was made in \cite[8.8]{VV} (under unnecessary restrictions on parameters).

\begin{Conj}\label{Conj:VV}
There is a highest weight equivalence $\OCat_{\kappa,{\bf s}}(n)\xrightarrow{\sim} \OCat^{\p}_{\kappa}(n)$.
\end{Conj}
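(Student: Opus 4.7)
The plan is to realize both $\OCat_{\kappa,\bs}(n)$ and $\OCat^{\p}_\kappa(n)$ as highest weight covers, in the sense of \ref{SSS_ext_quot}, of a common Hecke-theoretic target, and then to invoke the extended Rouquier uniqueness theorem. The target will be (a suitable extension of) $\mathcal{H}^{\bs}_q(n)\text{-mod}_{fin}$: on the Cherednik side it is essentially the KZ functor extended by the projective covers of simples with codimension-one support, as in \ref{SSS_ext_quot}; on the parabolic side the analogous quotient will come from the Kazhdan-Lusztig/quantum-group correspondence of Section \ref{SS_KL} restricted to $U_\epsilon(\gl_{\bs}):=\bigotimes_{i} U_\epsilon(\gl_{s_i})$, followed by a product of Schur functors as in Proposition \ref{Prop:equivalence_typeA}.

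First, I will equip $\OCat^{\p}_\kappa(n)$ with a categorical $\hat{\slf}_e$-action categorifying $\bigotimes_{i=0}^{\ell-1}\mathcal{F}_{\kappa,s_i}$, induced by Kazhdan-Lusztig fusion with the defining representation $V=\C^m$ of $\hat{\g}$ and its dual: the functors $E:=V\otimes(-)$, $F:=V^{*}\otimes(-)$ decompose under the central character into eigenfunctors $E_z,F_z$ whose action on $[\Delta^{\p}(A_\lambda)]$ is given by addition and removal of $z$-boxes. Under the bijection $\Delta_\bc(\lambda)\leftrightarrow \Delta^{\p}(A_\lambda)$, this matches the categorical action on $\OCat_{\kappa,\bs}(n)$ in the precise form of Lemma \ref{Lem:hw_compat}, and in particular both KZ-type quotient functors intertwine these categorical actions with the action on $\bigoplus_n \mathcal{H}^{\bs}_q(n)\text{-mod}_{fin}$ discussed in the excerpt.

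Next, I will enlarge the two KZ-type quotients by adjoining the projective covers of simples with codimension-one support, producing $\overline{\pi}_\bc:\OCat_{\kappa,\bs}(n)\twoheadrightarrow \tilde{\Cat}^{\mathrm{Ch}}$ and $\overline{\pi}^{\p}:\OCat^{\p}_\kappa(n)\twoheadrightarrow \tilde{\Cat}^{\p}$; these are $0$-faithful by Lemma \ref{Lem:0-faithful_extension} on the Cherednik side, and by the analogous argument applied to rank-one blocks of $\hat{\gl}_{s_i}$ on the parabolic side. I then deform both categories over a formal power series ring $R=\C[\ell']^{\wedge_\bc}$ along a generic line $\ell'$ through $\bc$: following \ref{SSS:deform}, the extended quotients become $1$-faithful over $R$, their base change to $\operatorname{Frac}(R)$ is semisimple, and the two identifications of $\Irr(\tilde{\Cat}^{\mathrm{Ch}}_{\operatorname{Frac}R})$ and $\Irr(\tilde{\Cat}^{\p}_{\operatorname{Frac}R})$ with $\Irr(G(\ell,1,n))$ coincide because the KZ twist is trivial for $G(\ell,1,n)$ (\cite[6.4.7]{GL}). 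Conditions (L1) and (L2) for the deformed extended covers will then yield an equivalence $\tilde{\Cat}^{\mathrm{Ch}}_R\xrightarrow{\sim}\tilde{\Cat}^{\p}_R$ intertwining the deformed quotients, which specializes to the desired highest weight equivalence.

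The hard part will be the comparison of highest weight orders: the linkage order on $\Z^{\bs}$, restricted to $\mathcal{P}_\ell(n)$, must be shown to be compatible with (and refined by) the order $\preceq_\bc$ of Proposition \ref{Prop:better order}, so that both categories are highest weight with respect to a common order and the cover-uniqueness argument applies. A secondary technical difficulty is the lack of enough projective objects in $\OCat^{\p}_\kappa$ itself: the arguments of \ref{SSS:deform} must be carried out with projective pro-objects and their flat deformations over $R$, and one must verify that the generic flatness inputs and the $\Ext^1$-vanishing behind $1$-faithfulness of the deformed extended quotient survive in this pro-categorical setting.
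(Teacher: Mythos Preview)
Your overall architecture is right and matches the paper's: both sides are realized as highest weight covers of the cyclotomic Hecke category, the quotient on the parabolic side comes from a categorical $\hat{\slf}_e$-action built out of Kazhdan--Lusztig fusion, and one passes through extended quotients by adjoining the projectives with codimension-one support. But there is a genuine gap in the deformation step.

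You propose a \emph{one}-parameter deformation along a generic line through $\bc$, and then claim that $0$-faithfulness of the extended quotient on the parabolic side follows ``by the analogous argument applied to rank-one blocks of $\hat{\gl}_{s_i}$''. This is precisely where the proof is hard, and the paper explicitly says that one-parameter deformations do not seem to suffice. On the Cherednik side Lemma \ref{Lem:0-faithful_extension} works because one has Bezrukavnikov--Etingof induction from rank-one parabolics and can reduce to $\dim\h=1$; on the affine parabolic side there is no such clean reduction, and $0$-faithfulness of $\bar\pi^{\p}$ at the closed point is not known a priori. The paper's fix is to deform over a \emph{two}-dimensional base $R$ and to use the weaker input that $\pi^1_\p$ is $(-1)$-faithful at every codimension-$1$ point of $\operatorname{Spec}(R)$; this upgrades to $0$-faithfulness of $\pi^1_R$ by Proposition \ref{Prop:minus1_faith}. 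Establishing that $(-1)$-faithfulness (Proposition \ref{Prop:-1_faith}) is itself the main technical content: it uses the full categorical $\g_{\tilde\bc}$-action on the specializations, crystal and dual-crystal combinatorics, and only goes through in the asymptotic regime $s_i\gg n$. None of this is visible in your sketch.

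A second, related gap: to run the cover-uniqueness argument you must first identify the two extended-quotient \emph{targets}, i.e.\ show $\End(\bar P^1_R)\cong\End(\bar P^2_R)$. You seem to conflate this with the output of (L1)--(L2). In the paper this identification is a separate step, reduced to explicit computations (a1), (b1), (a2), (b2) in small rank; (b1) and (b2) in particular require constructing $P^1_R(\nu)$ via fusion with $\Delta^1_{A,R}((2))$ and are not formal. Finally, the comparison of orders that you flag as ``the hard part'' is in fact already handled in Section 4.2: $\preceq_\bc$ refines the linkage order on the image of $\mathcal{P}_\ell(n)\hookrightarrow\Z^{\bf s}$, so this is not where the difficulty lies.
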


In particular, this conjecture allows to prove that the category $\OCat_{\kappa,{\bf s}}(n)$
is standard Koszul and describe its Koszul dual, \cite[Section 7.3]{RSVV}, see also
\cite[Section 6]{Webster}.

%On the set $\Z^{\bf s}$ we have the so called
%{\it linkage ordering} which is defined as follows. For a real root $\beta=\epsilon_i-\epsilon_j+n\delta$
%of $\hat{\g}$ and $A=(a_1,\ldots,a_m)\in \Z^{m}$ define $s_\beta A=(a_1',\ldots,a_m')$
%by $a_k':=a_k, k\neq i,j,$, $a_i':=a_j+en, a_j':=a_i-en$. The assignment $A\mapsto s_\beta A$
%extends to an action of the Weyl group $\hat{W}$ of $\hat{\g}$ on $\Z^m$. Consider the subgroup
%$$W_{\bf s}:=\Sym_{s_0}\times\ldots\times \Sym_{s_{\ell-1}}\subset
%\Sym_{m}\ltimes \{(x_1,\ldots,x_m)\in \Z^m| \sum_{i=1}^m x_i=0\}=\hat{W}$$
%Now define the linkage order on $\Z^{\bf s}$ by setting $A>A'$ if there
%are elements $A^0=A,A^1,\ldots, A^k=A'\in \Z^{\bf s}$ such that
%there exist positive real roots $\beta_i$ and elements $\sigma_i\in W_{\bf s}$, $i=1,\ldots,k,$
%satisfying such that $A^i=\sigma_i s_{\beta_i} A^{i-1}$

\subsection{Categorical action on affine parabolic category}
In order to prove Conjecture \ref{Conj:VV} we will use techniques similar to what was
used in Section \ref{SS_abelian_equi}. First of all, we need to produce a
quotient functor $\pi:\OCat^{\p}_{\kappa}(n)\twoheadrightarrow \mathcal{H}_{q,{\bf s}}(n)\operatorname{-mod}$.
This is one of the places in the proof of Conjecture \ref{Conj:VV}, where a categorical
$\hat{\slf}_e$-action is used.

\subsubsection{Cartan component functor}\label{SSS_Cartan_functor}
Let $\omega_i, i\in \Z/e\Z,$ denote the fundamental weight for $\hat{\slf}_e$.
Set $\omega_{\bf s}:=\sum_{i=0}^{\ell-1}\omega_{s_i}$. Now let $\Cat$ be an $\hat{\slf}_{e}$-categorification.
Assume that $\Cat_{\omega_{\bf s}}\cong \operatorname{Vect}$, while $\Cat_{\eta}\neq 0$ implies
$\eta\leqslant \omega_{\bf s}$. In particular, $[\Cat]$ is a highest weight integrable
representation with highest weight $\omega_{\bf s}$. It follows that the irreducible
$\hat{\slf}_e$-module $V(\omega_{\bf s})$ with highest weight $\omega_{\bf s}$
is a direct summand of $[\Cat]$ (we call it the {\it Cartan component}, by analogy with
tensor product modules). The following result of Rouquier, \cite[5.1.2]{Rouquier_2Kac} should
be thought as a categorical analog of the existence of the Cartan component.

\begin{Prop}\label{Prop:Rouquier_unique}
Let ${\bf 1}$ denote the indecomposable projective object in $\Cat_{\omega_{\bf s}}$.
Then $\operatorname{End}_{\Cat}(F^n{\bf 1})^{opp}\cong \mathcal{H}^{{\bf s}}_{q}(n)$.
In particular, the projective object $\bigoplus_{n\geqslant 0} F^n {\bf 1}$
defines a quotient functor $\Cat\twoheadrightarrow \bigoplus_{n\geqslant 0} \mathcal{H}^{{\bf s}}_{q}(n)$
that is a morphism of $\hat{\slf}_e$-categorifications.
\end{Prop}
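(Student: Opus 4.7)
The plan is to produce a homomorphism from the cyclotomic Hecke algebra to $\End_\Cat(F^n\mathbf{1})^{opp}$ using the axioms of the $\hat{\slf}_e$-categorification, promote it to an isomorphism, and then derive the quotient-functor assertion as a formal consequence.

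Construction of the map. Axiom (ii) furnishes a homomorphism $\mathcal{H}_q^{aff}(n)\to \End(E^n)$. By the mate construction applied to the adjunction in axiom (i), this transports to a homomorphism $\mathcal{H}_q^{aff}(n)\to \End(F^n)^{opp}$; evaluating at $\mathbf{1}$ yields a map $\Psi_n:\mathcal{H}_q^{aff}(n)\to \End_\Cat(F^n\mathbf{1})^{opp}$. To show $\Psi_n$ factors through the cyclotomic quotient $\mathcal{H}_{q}^{\bf s}(n)$ I need to verify $\prod_{i=0}^{\ell-1}(X_1-Q_i)=0$ on $F\mathbf{1}$, where $Q_i=q^{s_i}$. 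The hypothesis $\Cat_{\omega_{\bf s}}\cong\operatorname{Vect}$ together with maximality of $\omega_{\bf s}$ gives $E_z\mathbf{1}=0$ for every $z$, and then by axiom (iii) the weight of $F_z\mathbf{1}$ supports a nontrivial summand of $[\Cat]$ only when $z$ is one of $Q_0,\ldots,Q_{\ell-1}$. The $\slf_2$-categorification machinery of \cite{CR} applied at each such vertex $z$ — exploiting that $\mathbf{1}$ is $E_z$-highest and projective — then shows that $X$ acts on the $z$-isotypic summand of $F\mathbf{1}$ with vanishing nilpotent part, i.e.\ as the scalar $Q_i$. This upgrades the \emph{a priori} weaker statement ``eigenvalues contained in $\{Q_i\}$'' to the genuine cyclotomic relation, and hence $\Psi_n$ descends to a map $\mathcal{H}_{q}^{\bf s}(n)\to \End_\Cat(F^n\mathbf{1})^{opp}$.

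Bijectivity. The $\hat{\slf}_e$-submodule of $[\Cat]$ generated by $[\mathbf{1}]$ is, by the hypothesis that $\omega_{\bf s}$ is maximal with one-dimensional weight space, the irreducible highest-weight module $V(\omega_{\bf s})$, identified with the level-$\ell$ Fock space $\mathcal{F}_{\kappa,\bf s}$. Its weight multiplicities coincide with the ranks of $K_0^\C(\bigoplus_n\mathcal{H}_{q}^{\bf s}(n)\operatorname{-mod})$ by Ariki's theorem, as recalled in the cyclotomic Hecke algebra example. The operators $X,T$ already generate enough of $\End_\Cat(F^n\mathbf{1})^{opp}$ to separate the indecomposable summands of $F^n\mathbf{1}$ (this uses projectivity of $F^n\mathbf{1}$ together with the fact that the weight decomposition (iv) matches the block decomposition on the Hecke side), and a dimension count against $\dim\mathcal{H}_{q}^{\bf s}(n)$ then pinches the map to an isomorphism. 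Setting $P:=\bigoplus_{n\geqslant 0}F^n\mathbf{1}$, we get $\End_\Cat(P)^{opp}=\bigoplus_n\mathcal{H}_{q}^{\bf s}(n)$, and $\Hom_\Cat(P,\bullet)$ is automatically a quotient functor onto $\bigoplus_n\mathcal{H}_{q}^{\bf s}(n)\operatorname{-mod}$. Compatibility with the categorical $\hat{\slf}_e$-structure is built in: $F$ tautologically sends $F^n\mathbf{1}$ to $F^{n+1}\mathbf{1}$, its adjoint $E$ is intertwined with the Hecke restriction via the mate construction of (i), and the endomorphisms $X,T$ used on both sides agree by construction of $\Psi_n$.

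Main obstacle. The subtle step is the semisimplicity of $X$ on $F\mathbf{1}$: eigenvalue containment alone would only produce a power of the cyclotomic polynomial, and extracting the exact relation requires genuine $\slf_2$-categorification input applied to the highest-weight projective $\mathbf{1}$. The dimension count for bijectivity is the other nontrivial ingredient, where one must identify $V(\omega_{\bf s})$ precisely inside $[\Cat]$ — this is where the assumption $\Cat_{\omega_{\bf s}}\cong\operatorname{Vect}$, as opposed to merely $\dim\Cat_{\omega_{\bf s}}=1$ on the level of $K_0$, is essential to rule out a Jordan-Hölder ambiguity on the categorical side.
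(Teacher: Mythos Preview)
The paper does not give its own proof of this proposition: it is quoted as a result of Rouquier, with the citation \cite[5.1.2]{Rouquier_2Kac}, and no argument is supplied in the survey. So there is no in-paper proof to compare your attempt against.

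That said, your sketch has a real gap in the bijectivity step. You write that ``a dimension count against $\dim\mathcal{H}_q^{\bf s}(n)$ pinches the map to an isomorphism,'' after invoking Ariki's theorem to match weight multiplicities of $V(\omega_{\bf s})$ with ranks of $K_0$ on the Hecke side. But weight multiplicities only tell you how many indecomposable projective summands $F^n\mathbf{1}$ has, not $\dim\End_\Cat(F^n\mathbf{1})$; the latter depends on the Cartan matrix of the ambient block of $\Cat$, which you have no control over from $K_0$ data alone. Rouquier's actual argument does not proceed by a raw dimension count: it uses that the cyclotomic Hecke (equivalently KLR) categorification is the \emph{universal} integrable $2$-representation with highest weight $\omega_{\bf s}$, so that any such $\Cat$ receives a morphism of $2$-representations from it, and this morphism is fully faithful on the images of $F^n$ applied to the highest-weight object. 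That full faithfulness is precisely what forces $\End_\Cat(F^n\mathbf{1})\cong\mathcal{H}_q^{\bf s}(n)$, and it is established via the explicit relations in the $2$-category, not via $K_0$.

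Your treatment of the cyclotomic relation also needs correction. You claim the Chuang--Rouquier machinery forces $X$ to act on each $F_z\mathbf{1}$ as the scalar $z$. When the $Q_i$ are distinct this is fine, but when some coincide with multiplicity $n_z>1$ it is false: in that case $(X-z)$ is genuinely nilpotent of order $n_z$ on $F_z\mathbf{1}$, not zero. What one actually needs (and what holds) is $(X-z)^{n_z}|_{F_z\mathbf{1}}=0$, which still yields $\prod_i(X_1-Q_i)=0$; but this comes from the nil-Hecke/KLR relations built into Rouquier's $2$-Kac--Moody formalism, not from the $\slf_2$-theory of \cite{CR}.
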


\subsubsection{Categorical action on $\OCat^{\p}_\kappa$}
So we would like to realize $\OCat^{\p}_{\kappa}(n)$ as the degree $n$
component in a categorification $\Cat$ like in \ref{SSS_Cartan_functor}.
We will do something weaker to be explained in \ref{SSS_cat_truncn}. Now we
are going to produce a categorical $\hat{\slf}_e$-action on  the whole
category $\OCat^{\p}_\kappa$.

For $M\in \OCat^{\g}_\kappa, N\in \OCat^{\p}_\kappa$, we can still form
the fusion product $M\dot{\otimes}N$ and this will be an object
of $\OCat^{\p}_{\kappa}$. If $M$ is standardly filtered, then
the endo-functor $M\dot{\otimes}\bullet$ is exact. Moreover, there is
a contravariant duality $D$ on $(\OCat_\kappa^{\p})^{\Delta}$
that maps $W(\mu)$ to $W(\mu^*)$ (where $\mu^*$ denotes the highest weight
of $V(\mu)^*$), see \cite[Sections 2.6,2.9]{VV_proof}. The functor $DM\dot{\otimes}\bullet$
is biadjoint to $M\dot{\otimes}\bullet$. See \cite[Corollary 7.3]{VV} for the proofs.

Moreover, we have an $\Sym_n$-equivariant local system $M^{\dot{\otimes} n}\dot{\otimes}N$
on $(\C^\times\setminus \{0\})^n$ (we place the $n$ copies of $M$ to different nonzero
points $z_1,\ldots,z_n\in \C$ and $N$ to $0$). So the affine braid group $B_n^{aff}$
acts by automorphisms on the functor $M^{\dot{\otimes} n}\dot{\otimes}\bullet$. The action of
the subgroup $B_n\subset B^{aff}_n$ comes from the $B_n$-action on $M^{\dot{\otimes} n}$.

Now let us define the categorical $\hat{\slf}_e$-action on $\OCat^{\p}_\kappa$.
Let $\mu$ be the highest weight of $\C^m$. Set $F:=W(\mu)\dot{\otimes}\bullet,
E:=W(\mu^*)\dot{\otimes}\bullet$. We have already mentioned that these
functors are biadjoint so we get (i). The $B^{aff}_n$-action on $F^n$
factors through $\mathcal{H}^{aff}_q(n)$. This is because the $B_n$-action
on $W(\mu)^{\dot{\otimes n}}$ factors through $\mathcal{H}_q(n)$,
which follows from results of Kazhdan and Lusztig recalled
\ref{SS_KL}.  So we get (ii).

In order to establish (iii) and (iv), we need to understand the behavior
of $F,E$ on the standard objects $\Delta^{\p}(A)$. There is a general
result of Varagnolo and Vasserot, \cite[Proposition A2.6]{VV}, that
$W(\mu)\dot{\otimes}\Delta^{\p}(A)$ is standardly filtered, and the
multiplicity of $\Delta^{\p}(A')$ in this
filtration coincides with the multiplicity of $\Delta^{\p}_{fin}(A')$ in $V(\lambda)\otimes
\Delta^{\p}_{fin}(A)$, where we write $\Delta^{\p}_{fin}(A)$ for the parabolic Verma
module for $(\g,\p)$ with $\rho$-shifted highest weight $A$.
It follows that $F\Delta^{\p}(A)$ has a filtration with all
possible quotients $\Delta^{\p}(A+\epsilon_i)$, where $\epsilon_i$
is the $i$th coordinate vector, each quotient occurs once. A similar
result holds for $E\Delta^{\p}(A)$ (we need to use $A-\epsilon_i$
instead of $A+\epsilon_i$). In other words, for a virtual $\ell$-partition
$\lambda$, the object $F\Delta^{\p}(\lambda)$ is filtered with
$\Delta^{\p}(\lambda')$, where $\lambda'$ is obtained from $\lambda$
by adding a box. One can show that the action of $X$ on the subquotient
$\Delta^{\p}(\lambda')$ is by $q^d$, where $d$ is the content of the box
$\lambda'\setminus \lambda$.
%Moreover, the eigenvalue of the endomorphism
%$X$ on the subquotient $\Delta^{\p}(A+\epsilon_i)$ of $F\Delta^{\p}(A)$
%is $q^{a_i}$\footnote{Check}.

In particular, we have the following result. Consider the level $0$ $\hat{\slf}_e$-module
$\C^{\Z}$ with basis $v_i, i\in \Z,$ and the action given by $f_i v_j=\delta_{i,j\,\operatorname{mod}\,e}v_{j+1},
e_i v_{j+1}=\delta_{i,j\,\operatorname{mod}\,e}v_j$.

\begin{Cor}\label{Cor:K0_affine_parab}
We have an isomorphism $K_0^{\C}(\OCat^{\p}_\kappa)\cong \bigwedge^{s_0}\C^\Z\otimes \bigwedge^{s_1}\C^\Z\otimes\ldots\otimes
\bigwedge^{s_{\ell-1}}\C^\Z$ of $\hat{\slf}_e$-modules that maps $[\Delta^{\p}(A)]$ to the monomial vector
$$v_{a_1}\wedge v_{a_2}\ldots \wedge v_{a_{s_0}}\otimes v_{a_{s_0}+1}\wedge\ldots\wedge v_{a_{s_0+s_1}}\otimes\ldots$$
\end{Cor}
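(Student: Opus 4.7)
The strategy is to verify the isomorphism on a natural pair of bases and then check that it intertwines the two Chevalley generators $f_z,e_z$ on each side. On the left, $K_0^{\C}(\OCat^{\p}_\kappa)$ has a $\C$-basis $\{[\Delta^{\p}(A)] : A\in\Z^{\bf s}\}$ because $\OCat^{\p}_\kappa$ is the Serre span of the parabolic Vermas $\Delta^{\p}(A)$ over finite poset ideals. On the right, $\bigotimes_{i=0}^{\ell-1}\bigwedge^{s_i}\C^\Z$ has the monomial basis consisting of tensor products of wedges $v_{a_1}\wedge\cdots\wedge v_{a_{s_0}}\otimes v_{a_{s_0+1}}\wedge\cdots\otimes\cdots\wedge v_{a_m}$ in which the indices within each block $(a_1,\ldots,a_{s_0})$, $(a_{s_0+1},\ldots,a_{s_0+s_1}),\ldots$ are strictly decreasing. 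The set of such tensors is visibly in bijection with $\Z^{\bf s}$, which gives the candidate $\C$-linear isomorphism.

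To check that this bijection intertwines the $\hat{\slf}_e$-actions, I would apply the key formula recalled just before the corollary: $F\Delta^{\p}(A)$ is standardly filtered, with $\Delta^{\p}(A+\epsilon_j)$ appearing exactly once for each $j$ such that $A+\epsilon_j\in\Z^{\bf s}$, and $X$ acts on that subquotient by $q^{a_j}$. Breaking $F$ into its eigen-summands $F_z$ therefore gives
$$[F_z\Delta^{\p}(A)]=\sum_{\substack{j:\ q^{a_j}=z\\ A+\epsilon_j\in\Z^{\bf s}}}[\Delta^{\p}(A+\epsilon_j)].$$
On the wedge-tensor side, $f_z$ acts as a derivation across the $\ell$ tensor factors, sending each $v_{a_j}$ with $a_j$ in the appropriate residue class modulo $e$ to $v_{a_j+1}$. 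The crucial combinatorial point is that since each block of $A$ is strictly decreasing, replacing $a_j$ by $a_j+1$ either keeps the block strictly decreasing (and hence needs no reordering and acquires no sign) or produces $a_{j-1}=a_j+1$, in which case the wedge has a repeated factor and vanishes; and this is exactly the condition $A+\epsilon_j\notin\Z^{\bf s}$. Hence $f_z$ acts on monomial tensors by exactly the same combinatorial rule as $[F_z]$, and the same argument with $\epsilon_j$ replaced by $-\epsilon_j$ handles $[E_z]$ and $e_z$. The main thing to be careful about is the matching of conventions: one must confirm that the eigenvalue $q^{a_j}$ of $X$ on the new subquotient agrees with the mod-$e$ grading used to define $f_z$ on $\C^\Z$, and that the derivation rule on the tensor product of wedges (as opposed to a nontrivial coproduct) is the correct one at level zero; both are straightforward given the explicit bases, so no genuine obstacle remains.
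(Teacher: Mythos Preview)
Your proposal is correct and follows essentially the same approach as the paper: the corollary is stated there as an immediate consequence of the preceding computation of $[F\Delta^{\p}(A)]$ and $[E\Delta^{\p}(A)]$ (each is a sum of $[\Delta^{\p}(A\pm\epsilon_j)]$ with the $X$-eigenvalue determining the residue), together with the explicit definition of the $\hat{\slf}_e$-action on $\C^{\Z}$ given just before the corollary. You have simply spelled out the details of that deduction, including the vanishing-wedge observation that handles the boundary condition $A\pm\epsilon_j\in\Z^{\bf s}$, which the paper leaves implicit.
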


In particular, (iii) follows.

Let us explain why (iv) is true. The category $\OCat^{\p}_\kappa$ splits into the direct sum of blocks.
If two parabolic Verma modules $\Delta^{\p}(A)$ and $\Delta^{\p}(A')$ lie in the same block,
then $A\mod e$ and $A'\mod e$ are $\Sym_m$-conjugate. On the other hand, $[\Delta^{\p}(A)],[\Delta^{\p}(A')]$
lie in the same weight space if and only if $A\mod e, A'\mod e$ are $\Sym_m$-conjugate.
So for the weight subcategories we can take suitable sums of blocks.

\subsubsection{Categorical truncation}\label{SSS_cat_truncn}
Consider the subcategory $\OCat^{\p}_{\kappa}(\leqslant n):=\bigoplus_{j=0}^n
\OCat^{\p}_{\kappa}(j)$, where, recall, we assume that $n<s_i$. The functors $F_i$
map $\OCat^{\p}_{\kappa}(j)$ to  $\OCat^{\p}_{\kappa}(j+1)$ for $j<n$. The
functors $E_i, i\neq 0,$ send $\OCat^{\p}_{\kappa}(j)$ to $\OCat^{\p}_{\kappa}(j-1)$.
This is because all  removable $i$-boxes in the diagram representing $A_\lambda$ are
the same as in $\lambda$. However, the diagram of $A_\lambda$ contains $\ell$ removable
zero boxes  that are not in $\lambda$, those are the boxes $(0,s_i,i)$. So $E_0$
does not map $\OCat^{\p}_{\kappa}(j)$ to $\OCat^{\p}_{\kappa}(j-1)$.

Fortunately, this can be fixed because the $\ell$-boxes $(0,s_i,i)$ are the minimal
removable $0$-boxes in the diagram of $A_\lambda$. It follows from results of
\cite[Section 5]{hw_str} that the functor $F_0:\OCat^{\p}_\kappa(j-1)\rightarrow
\OCat^{\p}_\kappa(j)$ admits a biadjoint functor to be denoted by $\underline{E}_0$.
The functors $F: \OCat_{\kappa}^{\p}(\leqslant n-1)\rightarrow
\OCat^{\p}_{\kappa}(\leqslant n), \underline{E}:=\underline{E}_0\oplus \bigoplus_{i\neq 0}E_i:
\OCat^{\p}_{\kappa}(\leqslant n)\rightarrow \OCat^{\p}_{\kappa}(\leqslant n-1)$
give $\OCat^{\p}_{\kappa}(\leqslant n)$ the structure of a {\it restricted}
categorical $\hat{\slf}_e$-action. This categorical action categorifies
the restriction of the $\hat{\slf}_e$-action to $\mathcal{F}^\ell(\leqslant n):=
\bigoplus_{i=0}^n \mathcal{F}^\ell(n)$.

Proposition \ref{Prop:Rouquier_unique} is still true for restricted categorifications.
In particular, we get a quotient functor $\OCat^{\p}_\kappa(n)\twoheadrightarrow
\mathcal{H}^{\bf s}_q(n)\operatorname{-mod}$.

\subsection{Deformations}
So now we have quotient functors $\pi^1:\OCat^\p_{\kappa}(n)\twoheadrightarrow \mathcal{H}_\kappa^{\bf s}(n)\operatorname{-mod}, \pi^2:\OCat_{\kappa,{\bf s}}(n)\twoheadrightarrow \mathcal{H}_{\kappa}^{\bf s}(n)\operatorname{-mod}$. Similarly to Section \ref{SS_abelian_equi}, in order to produce
a highest weight equivalence $\OCat^\p_{\kappa}(n)\xrightarrow{\sim} \OCat_{\kappa,{\bf s}}(n)$
we will need to consider deformations of $\OCat^\p_{\kappa}(n), \OCat_{\kappa,{\bf s}}(n),
\mathcal{H}^{\bf s}_{\kappa}(n)\operatorname{-mod}$.  Unlike in {\it loc.cit.}, it does not seem
that one-parameter deformations are sufficient. Following a key idea of \cite{RSVV}, we will use
two parameter deformations.

Deformations of $\OCat_{\kappa,{\bf s}}(n), \mathcal{H}^{\bf s}_{\kappa}(n)\operatorname{-mod}$
are basically the same as in \ref{SSS_deform_cher}, but instead of a generic line through $\bc$
we need to take a generic plane. Let us explain how to deform $\OCat^{\p}_{\kappa}(n)$ together
with the quotient functor $\pi^2:\OCat^{\p}_\kappa(n)\twoheadrightarrow \mathcal{H}_{\kappa}^{\bf s}(n)\operatorname{-mod}$.

Pick formal variables $x_0,\ldots,x_\ell$ with $x_1+\ldots+x_\ell=0$. Set
$\tilde{R}=\C[[x_0,\ldots,x_\ell]]/(x_1+\ldots+x_\ell)$. We will deform our categories over
$\tilde{R}$ (and then we can specialize to a suitable algebra of formal power series in two variables).
First, let us define the deformed category $\OCat^{\p}_{\kappa,R}$. It  consists
of all $\hat{\g}\otimes \tilde{R}$-modules $M$ subject to the following conditions:
\begin{itemize}
\item The action of $\hat{\g}\otimes \tilde{R}$ on $M$ is $\tilde{R}$-linear. Moreover,
$M$ is finitely generated over $U(\hat{\g})\otimes \tilde{R}$.
\item The level of $M$ is $(x_0-\frac{1}{e})^{-1}$.
\item The action of the parabolic subalgebra $\hat{\p}\subset \hat{\g}$ is locally finite,
meaning that every element of $M$ is contained in a finitely generated $R$-submodule
stable under $\hat{\p}$.
\item For any $i$, the element $\operatorname{id}_i\in \gl_{s_i}\subset \p$
acts on $M$ diagonalizably with eigenvalues in $\Z+x_i$. Moreover, after specializing
the elements $x_i$ to $0$, the action of $\hat{\p}$ on $M/(x_0,\ldots,x_\ell)M$
integrates to $\hat{P}$.
\end{itemize}
This category still has parabolic Verma modules $\Delta^{\p}(A)$ labelled by
elements of $\Z^{\bf s}$. So we can form the highest weight subcategory
$\OCat^\p_{\kappa,\tilde{R}}(n)\subset \OCat^\p_{\kappa,\tilde{R}}$ whose poset
is $\mathcal{P}_\ell(n)$. We note that this category is equivalent to
$A_{\tilde{R}}\operatorname{-mod}$ for some $\tilde{R}$-algebra $A_{\tilde{R}}$
that is a free $\tilde{R}$-module of finite rank. So we can consider
the base change $\OCat^{\p}_{\kappa,S}(n)$ for any $\tilde{R}$-algebra $S$.

We still have a fusion product functor $\bullet\dot{\otimes}\bullet:
\OCat^\g_{\kappa,\tilde{R}}\boxtimes\OCat^\p_{\kappa,\tilde{R}}\rightarrow
\OCat^\p_{\kappa,\tilde{R}}$. Using this we can define the functors
$E,F$ and endomorphisms $X\in \End(F),T\in \End(F^2)$. This gives rise
to a quotient functor $\OCat^\p_{\kappa,\tilde{R}}(n)\twoheadrightarrow
\mathcal{H}^{\bf s}_{\kappa,\widetilde{R}}(n)\operatorname{-mod}$.
Here $\mathcal{H}^{\bf s}_{\kappa,\widetilde{R}}(n)$ is the cyclotomic
Hecke algebra over $\tilde{R}$ with parameters $q:=\exp(2\pi\sqrt{-1}(\kappa^{-1}+x_0)),
Q_i:=\exp(2\pi\sqrt{-1}(\kappa^{-1}+x_0)(s_i+x_i))$.

Using the formulas $x_0\mapsto x_0-1/e, x_i\mapsto (x_0-1/e)(s_i+x_i)-i/\ell$
we define an embedding  $\operatorname{Spec}(\tilde{R})\hookrightarrow \param$.
Let $\param_0$ be a generic plane through $\bc\in \param$.
Let $R$ be the quotient of $\tilde{R}$ that is the algebra of functions
on the preimage of $\param_0$ in $\operatorname{Spec}(\tilde{R})$.
So $R$ is isomorphic to the algebra of formal power series in
two variables.

We have two highest weight categories $\OCat^1_R:=\OCat^{\p}_{\kappa,R}(n),
\OCat^2_R:=\OCat_{\kappa,{\bf s},R}(n)$ with the same posets of simples
and with quotient functors $\pi^i_R:\OCat^i_R\twoheadrightarrow
\Cat_R:=\mathcal{H}^{\bf s}_{q,R}(n)\operatorname{-mod}$.

%Below we will explain  approaches to prove the deformed version of
%Conjecture \ref{Conj:VV} from \cite{RSVV,VV_proof}. The approaches
%use similar ideas but are quite different from the point of view of
%implementation.

It is not difficult to see
that both $\pi^i_R$ become equivalences after changing the base ring from $R$
to the fraction field $\operatorname{Frac}(R)$. However, establishing
faithfulness properties of $\pi^1_R$ that are needed to apply techniques
described in \ref{SSS:deform} is difficult. The main reason why we need
a two-parametric deformation is that one can show that the functor
$\pi^1_R$ is $0$-faithful (=fully faithful on standardly filtered objects).
The following result is an extension of  \cite[Proposition 4.42]{rouqqsch}.

\begin{Prop}\label{Prop:minus1_faith}
Assume that for every point $\p\in \operatorname{Spec}(R)$ of codimension $1$,
the functor $\pi^1_{\p}:\OCat^1_{\p}\twoheadrightarrow \Cat_{\p}$ is $(-1)$-faithful,
i.e., faithful on standardly filtered objects. Then $\pi^1_R$ is $0$-faithful.
\end{Prop}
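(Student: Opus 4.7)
The plan is to show, for standardly filtered $M, N \in \OCat^1_R$, that the natural map
$$\alpha \colon F := \Hom_{\OCat^1_R}(M,N) \longrightarrow G := \Hom_{\Cat_R}(\pi^1_R M, \pi^1_R N)$$
is an isomorphism. I proceed by first recording freeness properties of the relevant $R$-modules, then upgrading the $(-1)$-faithfulness hypothesis to an isomorphism of $\alpha$ over each DVR $R_\p$, and finally gluing to an isomorphism over the whole ring $R$ via a reflexivity argument.

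The highest-weight-over-$R$ structure on $\OCat^1_R$ makes $F$ a free $R$-module of finite rank. For standardly filtered $M$, the module $\pi^1_R M$ is $R$-free: each parabolic standard $\Delta^{\p}_R(A_\lambda)$ maps under $\pi^1_R$ to an $R$-free Specht-type module in $\Cat_R = \mathcal{H}^{\bf s}_{q,R}(n)\operatorname{-mod}$, and since $\Ext^1_R$ vanishes between free $R$-modules, extensions of such modules in $\Cat_R$ remain $R$-free. Consequently $G$ embeds into the free $R$-module $\Hom_R(\pi^1_R M, \pi^1_R N)$ and is $R$-torsion-free. Because $\pi^1_{\operatorname{Frac}(R)}$ is an equivalence, $\alpha$ becomes an isomorphism after base change to $\operatorname{Frac}(R)$, so $F$ and $G$ have a common generic rank $r$. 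Now fix a height-one prime $\p$ with DVR $R_\p$ and residue field $k(\p)$. Both $F_{R_\p}$ and $G_{R_\p}$ are torsion-free of generic rank $r$, hence free of rank $r$. In any choice of bases, $\alpha_{R_\p}$ is represented by an $r \times r$ matrix whose reduction modulo the maximal ideal is the matrix of $\alpha_{k(\p)}$; by the $(-1)$-faithfulness hypothesis this reduction is injective, and being a square linear map of $r$-dimensional $k(\p)$-spaces it is invertible. Therefore $\det(\alpha_{R_\p})$ is a unit and $\alpha_{R_\p}$ is an isomorphism.

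To globalize, let $G^{**}$ denote the reflexive hull of $G$. Over the two-dimensional regular local ring $R$, every finitely generated reflexive module is free, so $G^{**}$ is free of rank $r$, and the canonical inclusion $G \hookrightarrow G^{**}$ has cokernel of codimension $\geqslant 2$. The composite $\beta \colon F \xrightarrow{\alpha} G \hookrightarrow G^{**}$ is then a map of free $R$-modules of rank $r$ whose localization at every height-one prime coincides with $\alpha_{R_\p}$ and is thus an isomorphism. Since $R$ is a UFD, $\det(\beta) \in R$ is a unit if and only if it lies in no height-one prime, which is now automatic; hence $\beta$ is an isomorphism. This forces $G = G^{**}$ and $\alpha$ surjective, whence $\alpha$ is an isomorphism. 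The main obstacle is the flatness assertion in the second paragraph, namely that $\pi^1_R$ sends standardly filtered objects to $R$-free modules in $\Cat_R$; this requires a precise understanding of the deformed images of parabolic standards and standard structural results for Specht-type modules of the deformed cyclotomic Hecke algebra. Once that is in hand, the remaining steps are a clean reflexivity argument that crucially exploits that $R$ is a two-dimensional regular local ring.
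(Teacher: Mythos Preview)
The paper does not prove this proposition in the text; it simply cites \cite[Proposition 3.1]{VV_proof}. Your argument is correct and is the expected one for such results over a two-dimensional regular local ring: freeness of $\Hom$ between standardly filtered objects, torsion-freeness of the target Hom, control at height-one primes via the $(-1)$-faithfulness hypothesis, and a reflexivity/determinant argument to globalize. Two small points are worth tightening. First, the $R$-freeness of $\pi^1_R(M)$ for standardly filtered $M$ does not require any Specht-module input: since $\pi^1_R=\Hom_{\OCat^1_R}(P_R,-)$ with $P_R$ projective, and $P_R$ is a direct summand of a free $A^1_R$-module, $\pi^1_R(M)$ is an $R$-module direct summand of a power of $M$; standardly filtered $M$ are $R$-free, and summands of free modules over a local ring are free. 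Second, the reduction of $\alpha_{R_\p}$ modulo the maximal ideal is the map $F\otimes k(\p)\to G\otimes k(\p)$, which is not literally the functorial map $\alpha_{k(\p)}\colon \Hom_{\OCat^1_{k(\p)}}(M_{k(\p)},N_{k(\p)})\to \Hom_{\Cat_{k(\p)}}(\pi^1 M_{k(\p)},\pi^1 N_{k(\p)})$; rather, $\alpha_{k(\p)}$ is the composite of the reduction with the base-change map $G\otimes k(\p)\to \Hom_{\Cat_{k(\p)}}(\ldots)$. This does not affect your argument: injectivity of the composite forces injectivity of the reduction, and since both $F\otimes k(\p)$ and $G\otimes k(\p)$ have dimension $r$ (the latter because $G_{R_\p}$ is free of rank $r$), the reduction is bijective and the rest goes through.
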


This is proved in \cite[Proposition 3.1]{VV_proof}, related but weaker results
can be found in \cite[2.4.2]{RSVV}.

%The ways to prove Proposition \ref{Prop:minus1_faith}
%and to apply it to proving Conjecture \ref{Conj:VV} used in
%\cite{RSVV,VV_proof} are different. We concentrate on the
%approach from \cite{VV_proof}

\subsection{Category equivalence}
In this section, we will explain how to prove a slightly weaker (``asymptotic'') version of Conjecture \ref{Conj:VV}.
\subsubsection{Equivalence theorem}
Here we are going to provide a general result on an equivalence of two highest weight
covers of the same category.

Let $R$ be an algebra of formal power series over $\C$ and let $p$ denote the closed
point in $\operatorname{Spec}(R)$. Suppose that we have two highest weight categories
$\OCat^i_R, i=1,2,$ over $R$ such that the base changes $\OCat^i_{\operatorname{Frac}(R)}$
are split semisimple.

Let $\bar{P}^i_R\in \OCat^i_R$
be projective objects with a fixed isomorphism $\End(\bar{P}^1_R)\xrightarrow{\sim}\End(\bar{P}^2_R)$.
Let $\Cat_R$ denote the category of right $\End(\bar{P}^i_R)$-modules. So we have
quotient functors $\bar{\pi}^i_R:\OCat^i_R\rightarrow \Cat_R$. Further, let $P^i_R\subset \bar{P}^i_R, i=1,2,$
be  a direct summand with the following properties.
\begin{itemize}
\item $P^i_R$ deforms an injective object in $\OCat^i_p$.
\item We have $\bar{\pi}^1_R(P^1_R)=\bar{\pi}^2_R(P^2_R)$.
\item The base change $P^i_{\operatorname{Frac}(R)}$ generates $\OCat^i_{\operatorname{Frac}(R)}$.
\item Under the identification $\operatorname{Irr}(\OCat^1_{\operatorname{Frac}(R)})\cong
\operatorname{Irr}(\Cat_{\operatorname{Frac}(R)})\cong \operatorname{Irr}(\OCat^2_{\operatorname{Frac}(R)})$,
there is a common highest weight order on the labeling sets
$\operatorname{Irr}(\OCat^i_{\operatorname{Frac}(R)})$.
\end{itemize}

\begin{Prop}\label{Prop:hw_equiv_gen2}
Suppose that the following is true.
\begin{itemize}
\item The functor $\pi^1_R$ is $0$-faithful.
\item The functor $\pi^2_R$ is $1$-faithful.
\item For any projective $\hat{P}^2_R\in \OCat^2_R$, there is an embedding
$\hat{P}^2_R\hookrightarrow (P^2_R)^{\oplus m}$ (for some $m$)
with standardly filtered cokernel.
\end{itemize}
Then there is a highest weight equivalence $\OCat^1_R\xrightarrow{\sim}\OCat^2_R$
intertwining the quotient functors $\bar{\pi}^i_R$.
\end{Prop}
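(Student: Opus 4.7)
The plan is to follow Rouquier's template \cite[Section 4.2]{rouqqsch} for producing highest weight equivalences of deformed highest weight covers, adapting it to the asymmetric situation here where $\bar{\pi}^1_R$ is only assumed $0$-faithful. The strategy is to first match standards across the two covers at the level of $R$, then build an equivalence between the standardly filtered subcategories $\OCat^{i,\Delta}_R$, and finally to extend to the full categories by exploiting the distinguished summand $P^i_R$ and the tilting-type embedding condition (iii).

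The first step is to upgrade the agreement of simple-object identifications at the generic point to the isomorphism $\bar{\pi}^1_R(\Delta^1_L) \cong \bar{\pi}^2_R(\Delta^2_L)$ in $\Cat_R$ for every $L$. This is the analogue of \cite[Lemma 4.48]{rouqqsch}: over $\operatorname{Frac}(R)$ both sides of the prospective equivalence are split semisimple and $\bar{\pi}^i_{\operatorname{Frac}(R)}$ are equivalences, so the hypothesis that the two identifications $\operatorname{Irr}(\OCat^i_{\operatorname{Frac}(R)}) \cong \operatorname{Irr}(\Cat_{\operatorname{Frac}(R)})$ agree produces the required generic-fibre isomorphism; since each $\Delta^i_R$ is the unique deformation of $\Delta^i_p$ with no higher self-extensions, the isomorphism descends from the generic fibre to $R$. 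Once this matching is in place, the $1$-faithfulness of $\bar{\pi}^2_R$ realises $\OCat^{2,\Delta}_R$ as the full subcategory $\Cat^{\bar{\pi}^2(\Delta)}_R \subset \Cat_R$ of objects filtered by the $\bar{\pi}^2_R(\Delta^2_L)$, and the $0$-faithfulness of $\bar{\pi}^1_R$ combined with the matching gives a fully faithful functor $\varphi^\Delta: \OCat^{1,\Delta}_R \to \OCat^{2,\Delta}_R$ sending $\Delta^1_L$ to $\Delta^2_L$.

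The heart of the proof, and the main obstacle, is upgrading $\varphi^\Delta$ to a highest weight equivalence $\varphi: \OCat^1_R \xrightarrow{\sim} \OCat^2_R$ intertwining the quotient functors. Here the asymmetric faithfulness is resolved using the projective summand $P^i_R$ and condition (iii): for any projective $\hat{P}^2_R \in \OCat^2_R$ there is a short exact sequence $0 \to \hat{P}^2_R \to (P^2_R)^{\oplus m} \to C \to 0$ with $C$ standardly filtered. Applying $\bar{\pi}^2_R$ and then using the equality $\bar{\pi}^1_R(P^1_R) = \bar{\pi}^2_R(P^2_R)$ together with $\varphi^\Delta$ applied to $C$, one constructs the candidate preimage $\varphi^{-1}(\hat{P}^2_R)$ as the kernel of a lifted map $(P^1_R)^{\oplus m} \to (\varphi^\Delta)^{-1}(C)$ in $\OCat^1_R$. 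The subtle point, to be handled by deformation-theoretic rigidity, is to verify that this preimage is independent of the presentation, functorial in $\hat{P}^2_R$, and actually projective in $\OCat^1_R$ (rather than merely standardly filtered); the two-parameter nature of the deformation $R$, as opposed to a one-parameter one, enters precisely to make this rigidity available, in the spirit of \cite[Proposition 3.1]{VV_proof}. Once this is checked, the induced isomorphism of endomorphism algebras of projective generators produces, via Morita theory, the desired highest weight equivalence $\varphi$; its compatibility $\bar{\pi}^2_R \circ \varphi \cong \bar{\pi}^1_R$ is built into the construction.
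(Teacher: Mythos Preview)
The paper does not supply a proof of this proposition: it is stated in the ``Equivalence theorem'' subsection as a black-box tool (the argument belongs to \cite{VV_proof}), and the subsequent subsections only explain how to verify its hypotheses in the specific Cherednik/affine-parabolic situation. So there is no in-paper proof to compare against directly; I can only assess whether your plan is viable.

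Your outline follows Rouquier's template and correctly isolates the main steps: matching of standards over $R$ via the split-semisimple generic fibre, the equivalence $\OCat^{2,\Delta}_R\simeq\Cat_R^{\bar{\pi}(\Delta)}$ coming from $1$-faithfulness of $\bar{\pi}^2_R$, and the fully faithful embedding $\varphi^\Delta:\OCat^{1,\Delta}_R\hookrightarrow\OCat^{2,\Delta}_R$ from $0$-faithfulness of $\bar{\pi}^1_R$. The difficulty, as you say, is promoting $\varphi^\Delta$ to an equivalence.

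Two points in your third paragraph do not hold up. First, when you write ``$(\varphi^\Delta)^{-1}(C)$'' for the standardly filtered cokernel $C$ from hypothesis (iii), you are presupposing that $C$ lies in the essential image of $\varphi^\Delta$. But $\varphi^\Delta$ is at this stage only fully faithful; $0$-faithfulness gives a bijection on $\Hom$, not on $\Ext^1$, so an arbitrary iterated extension of standards in $\OCat^{2,\Delta}_R$ need not lift to $\OCat^{1,\Delta}_R$. You cannot simply pull $C$ back. Second, your proposed remedy --- invoking ``the two-parameter nature of the deformation $R$, \dots\ in the spirit of \cite[Proposition 3.1]{VV_proof}'' --- is misplaced. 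The proposition is formulated for an \emph{arbitrary} formal power series ring $R$, and in the paper the two-parameter hypothesis is used only to \emph{establish} the $0$-faithfulness input (Proposition~\ref{Prop:minus1_faith}), not inside the proof of the present statement. Whatever argument closes the gap must work from the three listed hypotheses alone; in particular it must exploit that $P^i_R$ deforms an injective (so that $P^i_R$ is tilting-like) together with condition (iii), rather than any special feature of $\dim R$. Your sketch does not indicate how that goes.
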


\subsubsection{Categories $\Cat^i_R$}
We will take $P^i_R:=F^n \Delta_R^{i}(\varnothing)$. The choice of the objects
$\bar{P}^i_R$ is more subtle. Let $p$ denote the closed point of $R$. It follows
from \cite[Proposition 4.42]{rouqqsch} that the functor $\bar{\pi}^2_R$ is $1$-faithful
provided the functor $\bar{\pi}^2_p$ is $0$-faithful. Take
$$\bar{P}^2_R:=P^2_R\oplus \bigoplus_\lambda P^2_R(\lambda),$$
where the summation is taken over all $\lambda\in \mathcal{P}_\ell(n)$
such that $\operatorname{codim}_\h \Supp L_p(\lambda)=1$. Using results
from \ref{SSS_KM_crystal}, \ref{SSS_Heis_cryst}, it is not difficult to describe
the set of $\lambda$ with this property explicitly. First of all, note
that $P^2_R(\lambda)$ appears as a summand of $P^2_R$ if and only if
$\Supp L^2_p(\lambda)=\h$ if and only if
$\lambda$ lies in the crystal component of $\varnothing$. Further, $\operatorname{Supp}L_p(\lambda)$
has codimension $1$ if and only if one of the following conditions holds:
\begin{itemize}
\item $\lambda$ lies in the connected component of the crystal that contains
a singular multipartition of $1$.
\item for $e=2$, $\lambda$ lies in the crystal component of $\nu$, which is
a minimal multipartition of $2$ that is not a column.
\end{itemize}
One can show that $\bar{\pi}^2_p$ is $0$-faithful, see the proof of
\cite[Proposition 8.1]{VV_proof} or \cite[Lemma 2.8]{RSVV}.

Now set $\bar{P}^1_R:=P^1_R\oplus \bigoplus_{\lambda}P^1_R(\lambda)$, where
the summation is taken over the same set of $\lambda$. What we need to prove
is an isomorphism $\End(\bar{P}^1_R)=\End(\bar{P}^2_R)$. As was shown in
\cite[Section 7]{VV_proof}, this reduces to proving the following four claims
\begin{itemize}
\item[(a1)] $\pi^1_R(P^1_R(\lambda))\cong \pi^2_R(P^2_R(\lambda))$ for any
singular $\lambda$ with $|\lambda|=1$.
\item[(b1)] $\pi^1_R(P^2_R(\nu))\cong \pi^2_R(P^2_R(\nu))$ for $\nu$ as above.
\item[(a2)] $\Hom_{\OCat^1_R(1)}(P^1_R(\lambda),P^1_R(\lambda'))\xrightarrow{\sim}
\Hom_{\mathcal{H}^{\bf s}_{q,R}(1)}(\pi^1_R P^1_R(\lambda),\pi^1_R P^1_R(\lambda'))$
for any singular $\lambda,\lambda'$ with $|\lambda|=|\lambda'|=1$.
\item[(b2)] $\End_{\OCat^1_R(2)}(P^1_R(\nu))\xrightarrow{\sim}
\Hom_{\mathcal{H}^{\bf s}_{q,R}(2)}(\pi^1_R P^1_R(\nu))$.
\end{itemize}
Recall that ``singular'' means ``annihilated by all $\tilde{e}_i$''.

The proofs of (a1),(a2) are easy. In order to prove (b1) and (b2) we need an explicit construction
of $P^1_R(\nu), P^2_R(\nu)$. It is not difficult to see that $P^2_R(\nu)$ is the projection
to the block corresponding to $\nu$ of $\operatorname{Ind}_{G(\ell,1,2)}^{\Sym_2}\Delta^2_{A,R}((2))$,
where we write $\Delta^2_{A,R}((2))$ for the Verma module over $H_R(\Sym_2)$ corresponding
to the trivial $\Sym_2$-module.  This motivates us to consider the object $\Delta^1_{A,R}((2))\dot{\otimes}\Delta^1_R(\varnothing)$ in $\OCat^1_R(2)$
and define $Q^1_R(\nu)$ as its projection to the block corresponding to $\nu$.
Clearly, (b1) holds if we replace $P^1_R(\nu)$ with $Q^1_R(\nu)$.
It turns out that $Q^1_R(\nu)\cong P^1_R(\nu)$, \cite[Proposition 7.5]{VV_proof}.
% but unlike the corresponding
%statement for tilting objects, this seems to be a difficult result.

\subsubsection{Checking $(-1)$-faithfulness}
Now let us explain how to check $(-1)$-faithfulness of the functor $\pi^1_{\p}$ at codimension
$1$ points of $\operatorname{Spec}(R)$. This is done using categorical actions and their crystals.
This approach  only allows to establish a weaker version of Conjecture \ref{Conj:VV}: we need
to assume that $s_i\gg n$ for all $i$.

Namely, we still have functors $\underline{E},F$ on $\OCat^1_{\p}(\leqslant N)$ (for some $N\gg n$)
and the natural  transformations $X\in \End(F),T\in \End(F^2)$. These data give rise to a
(restricted) categorical Kac-Moody action on $\OCat^1_{\p}(\leqslant N)$. The algebra
acting is $\g_{\tilde{\bc}}$, where $\tilde{\bc}\in \param\otimes \mathbf{k}_{\p}$ is the element
corresponding to the homomorphism $\C[\param]\rightarrow R\rightarrow \mathbf{k}_{\p}$.
Equivalently, if $\p$ is an intersection of a hyperplane in $\param$ with $\operatorname{Spec}(R)$,
then we can take  a Weil generic point $\tilde{\bc}$ in this hyperplane.
So we have three options.
\begin{itemize}
\item[(i)] $\g_{\tilde{\bc}}=\mathfrak{sl}_\infty^{\oplus \ell}$. In this case,
$\pi^1_{\p}$ is an equivalence.
\item[(ii)] $\p$ is the intersection of the hyperplane $y_0=-\frac{1}{e}$
with $\operatorname{Spec}(R)$. In this case $\g_{\tilde{\bc}}=\hat{\mathfrak{sl}}_e^{\oplus \ell}$.
\item[(iii)] $\p$ is the intersection of $\operatorname{Spec}(R)$ with the hyperplane of the form $s_i-s_j=m$ (where $m$
is an integer with $|m|<N$ and $i,j$ are different elements of $\{0,1,\ldots,\ell-1\}$).
In this case $\g_{\tilde{\bc}}=\hat{\mathfrak{sl}}_\infty^{\oplus \ell-1}$.
\end{itemize}

In (i), there is nothing to prove. To deal with (ii) and (iii) we use the following
result.

\begin{Prop}\label{Prop:-1_faith}
Let $\OCat(\leqslant N)$ be a restricted highest
weight $\g_{\tilde{\bc}}$-categorification (over an arbitrary field
$\mathbf{k}$) of the level $\ell$
Fock space $\mathcal{F}_{\tilde{\bc}}$ as in (ii) or (iii).
Pick $n\ll N$. Then the quotient functor $\pi$ defined by
the projective $F^n{\bf 1}$ is $(-1)$-faithful.
\end{Prop}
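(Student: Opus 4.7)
The plan is to reduce $(-1)$-faithfulness of the abstract quotient $\pi$ to that of a concrete Cherednik model. The model I would take is the cyclotomic rational Cherednik category $\OCat_{\bc'}(\leqslant N)$ at a Weil generic parameter $\bc'\in\param$ chosen so that $\g_{\bc'}=\g_{\tilde{\bc}}$ and so that the splitting of $\g_{\bc'}$ into simple factors matches that of $\g_{\tilde{\bc}}$. In case (ii), one takes $\kappa'=-1/e$ with the $s'_i$ in pairwise distinct equivalence classes for $\sim_{\bc'}$, so that by Proposition \ref{Prop:cat_O_decomp} the Cherednik model factors as a tensor product of $\ell$ level-one categorifications of $\mathcal{F}_{\kappa',s'_i}$. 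In case (iii), one places one pair of indices $(i,j)$ on the relevant wall $s_i-s_j=m$ and keeps the remaining $s'_k$ in distinct classes, producing one level-two factor (for $G(2,1,\cdot)$) and $\ell-2$ level-one factors, matching the $\ell-1$ simple summands of $\g_{\tilde{\bc}}=\hat{\slf}_\infty^{\oplus(\ell-1)}$.

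Next, I would invoke the uniqueness of restricted highest weight categorifications of the level $\ell$ Fock space from \cite{hw_str} (the relevant generalization of Proposition \ref{Prop:Rouquier_unique} to reducible highest weight modules). This yields a highest weight equivalence $\OCat(\leqslant N)\xrightarrow{\sim}\OCat_{\bc'}(\leqslant N)$ intertwining the restricted $\g_{\tilde{\bc}}$-categorifications. In particular, the equivalence intertwines the defining projectives $F^n{\bf 1}$ on the two sides, and hence intertwines $\pi$ with the KZ functor $\operatorname{KZ}_{\bc'}$ (extended by zero outside degree $n$). Checking $(-1)$-faithfulness of $\pi$ thus reduces to checking it for $\operatorname{KZ}_{\bc'}$ on standardly filtered objects of $\OCat_{\bc'}(n)$ with $n\ll N$.

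The third step is essentially automatic from the structure of the Cherednik category. By Proposition \ref{Prop:KZ_properties}(1), the kernel of $\operatorname{KZ}_{\bc'}$ is the Serre subcategory $\OCat_{\bc',tor}$ of $\C[\h]$-torsion modules. Every Verma $\Delta_{\bc'}(\tau)$ is a free $S(\h^*)$-module by triangular decomposition, so any standardly filtered object, and any subobject of such, is $\C[\h]$-torsion-free. If $\phi\colon M\to N$ is a morphism of standardly filtered objects with $\operatorname{KZ}_{\bc'}(\phi)=0$, exactness of $\operatorname{KZ}_{\bc'}$ gives $\operatorname{KZ}_{\bc'}(\operatorname{im}\phi)=0$, so $\operatorname{im}\phi\subset N$ is simultaneously torsion-free (as a subobject of $N$) and torsion (as it lies in $\OCat_{\bc',tor}$), hence zero. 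Thus $\phi=0$.

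The main obstacle is the uniqueness step: one must verify that the Cherednik model, as decomposed by Proposition \ref{Prop:cat_O_decomp}, really does carry the same restricted highest weight $\g_{\tilde{\bc}}$-categorification structure as the abstract $\OCat(\leqslant N)$, and then invoke a form of the uniqueness theorem from \cite{hw_str} that applies to truncated Fock-space categorifications of types (ii) and (iii). For case (iii) this requires the level-two computations for $\hat{\slf}_\infty$ on $G(2,1,\cdot)$ coming from \cite{cryst}. Once these structural inputs are in place, the reduction and the final torsion argument fit together cleanly.
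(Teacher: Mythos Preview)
Your strategy is quite different from the paper's, and the gap is exactly where you locate it yourself: the uniqueness step. The reference \cite{hw_str} is about the structure theory of highest weight $\slf_2$-categorifications (filtrations, behaviour of standards and tiltings under $E_i,F_i$, etc.); it does not contain a uniqueness theorem for restricted highest weight categorifications of a truncated level-$\ell$ Fock space. No such uniqueness is available at this point in the logical development, and for good reason: if it were, the entire Varagnolo--Vasserot conjecture would follow almost immediately, since both $\OCat^{\p}_{\kappa}(\leqslant N)$ and $\OCat_{\kappa,{\bf s}}(\leqslant N)$ are such categorifications. The very category to which this proposition is applied in the paper is the affine parabolic category specialised at a codimension-$1$ point; asserting that it agrees with a Cherednik model is (a special case of) the theorem one is trying to prove. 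Note also that the proposition is stated over an arbitrary field $\mathbf{k}$, so even setting up a Cherednik model on the same footing requires care. Your Step~3 (that $\operatorname{KZ}$ is faithful on standardly filtered objects because Vermas are $S(\h^*)$-free) is correct and pleasant, but you cannot reach it.

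The paper's argument avoids any comparison with a concrete model and works entirely inside the abstract categorification. It reduces $(-1)$-faithfulness to the vanishing of $\Hom_{\OCat}(L(\lambda),T(\mu))$ for $\lambda$ singular (in the $\g_{\tilde{\bc}}$-crystal) and $\lambda\neq\varnothing$; using the \emph{dual} crystal structure from \cite{hw_str} one further reduces to the case where $\mu$ is cosingular. The key categorical input is that Weyl-group reflections $\sigma_i$ (on singular $\lambda$) and their duals $\sigma_i^*$ (on cosingular $\mu$) preserve $\dim\Hom(L(\lambda),T(\mu))$. In case~(iii) one checks directly from the level-$2$ crystal combinatorics (Examples~\ref{Ex:level2_cryst_1}, \ref{Ex:level2_cryst_2}) that $\lambda\not\preceq\mu$, so the Hom vanishes. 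In case~(ii) one finds $w$ with $w\lambda\not\preceq w^*\mu$ and uses the dimension equality; it is here that $n\ll N$ is used.
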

\begin{proof}
We are going to sketch the proof (a complete proof can be found in
\cite[Sections 6,9.2]{VV_proof}). The proof is in several steps,
all but the last one work for all $\bc$.

{\it Step 1}. Let $\mathcal{P}_\ell^0$ denote the connected component
of $\varnothing$ in the $\g_\bc$-crystal. What we need to show is that
if $\lambda\not\in\mathcal{P}_\ell^0$, then $\Hom_{\OCat}(L(\lambda),T(\mu))=0$.
It is easy to reduce to the case when $\lambda$ is singular. Note that in
this case $\Hom_{\OCat}(L(\lambda),T(\mu))=0$ provided $T(\mu)$
is a direct summand of a tilting object of the form $F_i T(\mu')$.
Using results from \cite{hw_str}, we can determine one summand
of $F_i T(\mu')$ combinatorially starting from $\mu'$. For this, we need
a so  called {\it dual} crystal structure on $\mathcal{P}_\ell$.
It is defined as follows. We write down the $i$-signature of $\mu'$ but now cancel
$+-$ rather than $-+$. In this way we get crystal operators $\tilde{e}_i^*,
\tilde{f}_i^*$. By \cite[Lemma 4.2]{VV_proof},
$T(\tilde{f}_i^* \mu')$ is a direct summand of $F_i T(\mu')$.
We conclude that, for singular $\lambda$, we have $\Hom_{\OCat}(L(\lambda),T(\mu))=0$
unless $\tilde{e}_i^* \mu=0$ for all $i$. In this case, we say that
$\mu$ is {\it cosingular}. So we need to show that $\Hom_{\OCat}(L(\lambda),T(\mu))=0$
when $\lambda\neq \varnothing$ is singular, and $\mu$ is cosingular.

{\it Step 2}. For $\lambda$ with $\tilde{e}_i\lambda=0$ we can define
its reflection $\sigma_i\lambda$ by $\sigma_i\lambda:=\tilde{f}_i^k \lambda$,
where $k$ is minimal with $\tilde{f}_i^{k+1}\lambda=0$. Similarly,
for $\mu$ with $\tilde{e}_i^*\mu=0$, we can define $\tilde{\sigma}_i^*\mu$.
When $\lambda$ is singular, and $w$ is in the Weyl group of $\g_\bc$
(such that $|w\lambda|\leqslant N$), then $w\lambda$ is well-defined,
in particular, is independent of the choice of a reduced expression for
$w$. Similarly, $w^*\mu$ is well-defined for a cosingular $\mu$.
A key observation for us is that, if $\tilde{e}_i\lambda=0,\tilde{e}_i^*\mu=0$,
then $\dim \Hom_{\OCat}(L(\lambda),T(\mu))=\dim \Hom_{\OCat}(L(\sigma_i\lambda),
T(\sigma_i^*\mu))$. It follows that for a singular $\lambda$ and a cosingular
$\mu$ we have
\begin{equation}\label{eq:dim_Hom}
\dim \Hom_{\OCat}(L(\lambda),T(\mu))=\dim \Hom_{\OCat}(L(w\lambda), T(w^*\mu)).
\end{equation}

{\it Step 3}. If we are in situation (iii), then actually $\lambda\not\leqslant \mu$
for any singular $\lambda$ and any cosingular $\mu$, this can be deduced from
Examples \ref{Ex:level2_cryst_1},\ref{Ex:level2_cryst_2}. So we are done by Step 1.
In case (ii), there is $w$ such that $w\lambda\not\leqslant w\mu$. So we are done
by (\ref{eq:dim_Hom}). It is here that we use that $N$ is large enough.
\end{proof}

\subsection{Multiplicities}
Conjecture \ref{Conj:VV} allows to compute the multiplicities
in $\OCat_{\kappa,{\bf s}}(n)$ because the multiplicities in
$\OCat_\kappa^{\p}$ are known, they are given by (the values
at $q=1$ of) suitable parabolic Kazhdan-Lusztig polynomials.

There is an alternative way to present this result. Namely, in
\cite{Uglov}, Uglov defined a  $U_v(\hat{\slf}_e)$-action
on $\mathcal{F}_{\kappa,\bf s}(v)$, where $U_v(\hat{\slf}_e)$ is the
quantum group over $\C(v)$. He introduced
a $\C[v^{-1}]$-lattice in   $\mathcal{F}_{\kappa,\bf s}(v)$
and a bar-involution $\mathcal{F}_{\kappa,\bf s}(v)$. Using these data,
one can define a so called dual canonical basis,
$b_{\bf s}(\lambda)\in \mathcal{F}_{\kappa,\bf s}[v^{-1}]$
indexed by $\lambda\in \mathcal{P}_\ell$. As was checked
in \cite[Section A4]{VV}, the specialization of $b_{\bf s}(\lambda)$
to $v=1$ coincides $[L_{\kappa,{\bf s}}(\lambda)]\in K_0^{\C}(\OCat_{\kappa,{\bf s}})$.
This was conjectured by Rouquier in \cite[Section 6]{rouqqsch}.

\subsection{Complements}
Above in this section we were dealing with the case when $\kappa=-\frac{1}{e}$,
$s_0,\ldots,s_{\ell-1}\in \Z$. Let us explain how to describe the category
$\mathcal{O}_{\kappa,{\bf s}}(n)$ in general. We can assume that
there is a single equivalence class in $\{0,1,\ldots,\ell-1\}$ with respect
to $\sim_{\kappa,{\bf s}}$. We can always reduce to this case
using Proposition \ref{Prop:cat_O_decomp}.

Let us consider the case when $\kappa$ is irrational. In this case
the category $\mathcal{O}_{\kappa,{\bf s}}(n)$ is equivalent
to the sum of suitable blocks in a suitable parabolic
category $\mathcal{O}$ for $\mathfrak{gl}_m$ (for some $m$).
This was checked in \cite[Section 6]{GL} under  the faithfulness restrictions
on the parameters $s_0,\ldots,s_{\ell-1}$. These restrictions
can be removed by considering extended quotients, similarly
to \ref{SSS_ext_quot}.

Now consider the case when $\kappa$ is rational. Using the equivalence
relating positive and negative $\kappa$, we can assume that $\kappa=-\frac{r}{e}$,
where $r>0$ is coprime to $e$. The condition that $\{0,1,\ldots,\ell-1\}$
is a single conjugacy class means that $r s_i\in \Z$ (up to a common shift
of the $s_i$'s, as usual). We are going to prove the following result.

\begin{Prop}\label{Prop:orders}
The category $\mathcal{O}_{\kappa,{\bf s}}(n)$ is equivalent to the category
$\mathcal{O}_{-1/e,{\bf s}'}(n)$ for a suitable collection ${\bf s}'\in \Z^\ell$.
The equivalence sends $\Delta_{\kappa,{\bf s}}(\lambda)$ to
$\Delta_{-1/e, {\bf s}'}(\sigma \lambda)$, where $\sigma\in \Sym_\ell$
and $\sigma\lambda$ is the $\ell$-partition obtained from $\lambda$
by permuting $\lambda^{(0)},\ldots,\lambda^{(\ell-1)}$ according to $\sigma$.
\end{Prop}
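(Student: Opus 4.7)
The plan is to identify the two categories as highest weight $\hat{\slf}_e$-categorifications of the same $\hat{\slf}_e$-module (after a permutation of tensor factors) and then invoke a rigidity principle for highest weight categorifications.

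I would begin by expressing the categorical $\hat{\slf}_e$-action on both sides in a common combinatorial language. By \ref{SSS_cat_KM_O}, the vertex set of the Dynkin diagram is identified with a subset of $\C^\times$ via the eigenvalues of $X\in\End(E)$, explicitly $z(\bc,(x,y,i))=q^{x-y}Q_i=\exp(2\pi\sqrt{-1}\kappa(x-y+s_i))$. For $\kappa=-r/e$ with $\gcd(r,e)=1$ and $rs_i\in\Z$, this image is the full group $\mu_e$ of $e$-th roots of unity, so $\g_\bc=\hat{\slf}_e$; under the identification $\mu_e\cong\Z/e\Z$ sending $\zeta^{-k}\leftrightarrow k$ (with $\zeta=\exp(2\pi\sqrt{-1}/e)$), the box $(x,y,i)$ corresponds to the vertex $r(x-y+s_i)$ modulo $e$. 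For $\OCat_{-1/e,\bs'}$, the analogous box corresponds to $(x-y+s'_i)$ modulo $e$. I would then choose $\sigma\in\Sym_\ell$ and integers $s'_i$ with $s'_i\equiv rs_{\sigma^{-1}(i)}$ modulo $e$, so that the naive bijection $\lambda\mapsto\sigma\lambda$ preserves the vertex labeling on boxes and induces an isomorphism of $\hat{\slf}_e$-modules $\mathcal{F}_{\kappa,\bs}\xrightarrow{\sim}\mathcal{F}_{-1/e,\bs'}$ (both are Fock spaces of the same highest weight). One may further shift each $s'_i$ by multiples of $e$; this crosses only non-essential walls in the sense of \ref{SSS_essent_walls_cyclot}, so by Corollary \ref{Cor:non_essent_WC} the category $\OCat_{-1/e,\bs'}(n)$ is unchanged up to highest weight equivalence, and we may therefore assume the $s'_i$ are as generic as needed.

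With the combinatorial matching in hand, the core step is to lift the Fock space isomorphism to a highest weight equivalence of categorifications. Both $\OCat_{\kappa,\bs}(\leqslant n)$ and $\OCat_{-1/e,\bs'}(\leqslant n)$ carry restricted categorical $\hat{\slf}_e$-actions (see \ref{SSS_cat_truncn}), and Lemma \ref{Lem:hw_compat} shows that these categorifications are highest weight in the sense of \cite{cryst,hw_str}. Appealing to the uniqueness of highest weight categorifications of a level-$\ell$ Fock space developed in those references, one obtains a highest weight equivalence intertwining the two categorical actions, and the induced bijection on simples is forced by the combinatorial setup to be $\lambda\mapsto\sigma\lambda$.

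The main obstacle will be the previous step: reconciling the highest weight orders on the two sides. The $c$-function order and even the coarser Dunkl--Griffeth order $\preceq_\bc$ of \ref{SSS_O_order} for $\OCat_{\kappa,\bs}(n)$ are \emph{not} intertwined by $\sigma$ with their counterparts for $\OCat_{-1/e,\bs'}(n)$, because replacing $\kappa$ by $-1/e$ rescales contents by a factor of $r$ and alters both the equivalence relation $\sim$ on boxes and the contributions $-r\ell(x-y)/e-i$ to $c_b$. To apply the rigidity theorem one instead has to use a common, coarser highest weight order determined intrinsically by the categorical $\hat{\slf}_e$-crystal together with Lemma \ref{Lem:hw_compat}; that this is a valid highest weight order on each side follows from the fact that the $c$-function order refines it. Once this common order is available, the uniqueness result of \cite{hw_str} closes the proof.
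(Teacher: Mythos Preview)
Your approach via rigidity of highest weight $\hat{\slf}_e$-categorifications is genuinely different from the paper's, and it contains a real gap.

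The paper does not use the categorical action at all. It chains together abelian equivalences that move the parameter through $\param$: first a perturbation lemma (Lemma~\ref{Lem:param_perturb}) replaces $\bc$ by a Zariski generic spherical parameter $\bc^1$ with the same box order $\preceq_\bc$; then the spherical permutation equivalence $\OCat_{\bc^1}\xrightarrow{\sim}\OCat_{\sigma.\bc^1}$ of Lemma~\ref{Lem:equiv_known} (proved in \cite{GL} via Morita theory for spherical subalgebras) supplies the bijection $\lambda\mapsto\sigma\lambda$; further order-preserving moves then land at $(-1/e,\bs')$.

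Your combinatorial matching is where the argument breaks. With your identification $\mu_e\cong\Z/e\Z$, the box $(x,y,j)$ in $\lambda$ sits at vertex $r(x-y)+rs_j\bmod e$ on the $(\kappa,\bs)$ side, while the box $(x,y,\sigma(j))$ in $\sigma\lambda$ sits at $(x-y)+s'_{\sigma(j)}\bmod e$ on the $(-1/e,\bs')$ side. Your condition $s'_{\sigma(j)}\equiv rs_j$ then forces $(r-1)(x-y)\equiv 0\bmod e$ for all $x,y$, hence $r=1$. Concretely, for $\ell=1$, $e=3$, $r=2$, $s_0=s'_0=0$ one computes $f_\zeta f_1|\varnothing\rangle=|(2)\rangle$ in $\mathcal{F}_{-2/3,0}$ but $=|(1,1)\rangle$ in $\mathcal{F}_{-1/3,0}$, so the identity on partitions is not an $\hat{\slf}_3$-module map. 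The equivalence asserted in the proposition (which for $\ell=1$ sends $\Delta(\lambda)$ to $\Delta(\lambda)$) therefore \emph{does not} intertwine the categorical $\hat{\slf}_e$-actions, and no categorification-rigidity argument can produce it in the stated form $\Delta(\lambda)\mapsto\Delta(\sigma\lambda)$.

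A smaller point: your order argument is inverted. Being highest weight for a \emph{coarser} order is a \emph{stronger} hypothesis; the fact that the $c$-function order refines a crystal-determined order does not by itself make the latter admissible.
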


Since the proof has not appeared elsewhere, we are going to provide it here
in full.

We start with a lemma that was proved in \cite{cher_abloc} (the proof is an elementary combinatorics).

\begin{Lem}\label{Lem:param_perturb}
Let $\bc=(\kappa, {\bf s})$ be a parameter with $\kappa=-\frac{r}{e}, {\bf s}\in (\frac{1}{r}\Z)^\ell$.
Then, for $M\gg 0$ and integers $0<m_0<\ldots<m_{\ell-1}\ll M$,
the orders on the boxes coincide and hence the orders $\preceq_\bc, \preceq_{\bc'}$ are equivalent, where $\bc':=(M\kappa, s_0+m_0/\kappa, s_1+m_1/\kappa,\ldots, s_{\ell-1}+m_{\ell-1}/\kappa)$.
\end{Lem}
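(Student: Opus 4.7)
The plan is to verify the claim by direct combinatorial computation. Substituting $\kappa \mapsto M\kappa$ and $s_i \mapsto s_i + m_i/\kappa$ into the formula $c_b = \kappa\ell(x-y+s_i) - i$ of \ref{SSS_Euler} gives $c_b^{\bc'} = M c_b^{\bc} + (M-1)i + M\ell m_i$, and hence
\[
c_b^{\bc'} - c_{b'}^{\bc'} = M(c_b^{\bc} - c_{b'}^{\bc}) + (M-1)(i-i') + M\ell(m_i - m_{i'})
\]
for two boxes $b = (x,y,i)$ and $b' = (x',y',i')$, while the content difference shifts by $(m_i - m_{i'})/\kappa$. The goal is then to argue that on the finite set of boxes appearing in partitions in $\Part_\ell(n)$ these formulas, combined with $M\gg 0$ and $m_i\ll M$, force both the equivalence classes $\sim$ and the restricted order $\leqslant$ on boxes to coincide for $\bc$ and $\bc'$; the conclusion about $\preceq_\bc, \preceq_{\bc'}$ will then follow from the definition of these orders via box matchings.

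First I would match the equivalence classes. Writing $u := \cont^{\bf s}(b) - \cont^{\bf s}(b')$ and using that $M(m_i-m_{i'})\in\Z$, the condition $b\sim_{\bc}b'$ is equivalent to $\kappa u \in \Z$, and $b\sim_{\bc'}b'$ is equivalent to $M\kappa u\in\Z$. Since ${\bf s}\in (\tfrac{1}{r}\Z)^\ell$ and $x-y-x'+y'\in\Z$, one has $ru\in\Z$, so these two conditions become $ru\equiv 0\pmod e$ and $Mru\equiv 0\pmod e$; the latter is equivalent to the former whenever $\gcd(M,e)=1$, and such $M$ can be arranged among arbitrarily large $M$. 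Next I would compare the order within each equivalence class. When $i=i'$ the correction terms cancel, leaving $c_b^{\bc'}-c_{b'}^{\bc'} = M(c_b^{\bc}-c_{b'}^{\bc})$, so signs trivially agree. When $i\neq i'$, the strict order $0<m_0<\cdots<m_{\ell-1}$ yields $\mathrm{sgn}(m_i-m_{i'})=\mathrm{sgn}(i-i')$, and a finite case analysis on the bounded range of $(c_b^{\bc}-c_{b'}^{\bc}, i-i', m_i-m_{i'})$ arising from boxes in partitions of size $\leqslant n$ pins down the sign of the right-hand side in the regime $m_i\ll M$.

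Once the orders on boxes coincide, the partition orders $\preceq_{\bc}$ and $\preceq_{\bc'}$ coincide by the definition via box matchings. The main obstacle is the cross-component step ($i\neq i'$): the corrections $(M-1)(i-i')$ and $M\ell(m_i-m_{i'})$ are of size $\Theta(M)$, the same order as the principal term $M(c_b^{\bc}-c_{b'}^{\bc})$, so the sign cannot be read off from the leading behavior alone, and one has to verify explicitly on the finitely many possible triples that the principal term is not reversed by the corrections. The asymptotic hypotheses $M\gg 0$ and $m_i\ll M$ are exactly what allow this bounded combinatorial data to stabilize to a common form on both sides, which is the content of the ``elementary combinatorics'' invoked from \cite{cher_abloc}.
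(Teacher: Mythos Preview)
Your setup, the verification that the equivalence relations $\sim_{\bc}$ and $\sim_{\bc'}$ agree (for $\gcd(M,e)=1$), and the same-diagram case $i=i'$ are all correct and in line with the ``elementary combinatorics'' the paper invokes. The genuine gap is the cross-diagram case $i\neq i'$: you assert that a finite case analysis pins down the sign, but you do not carry it out, and in fact the assertion fails for the statement as written here.

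Take $\ell=2$, $s_0=s_1=0$, and boxes $b=(1,1,0)$, $b'=(1,1,1)$. Then $c_b^{\bc}-c_{b'}^{\bc}=0-(-1)=1>0$, while your own formula gives
\[
c_b^{\bc'}-c_{b'}^{\bc'}=M\cdot 1+(M-1)(-1)+2M(m_0-m_1)=1+2M(m_0-m_1)<0
\]
for every $M\geq 1$ and $m_0<m_1$. Thus the box order is reversed, and with it the order on the multipartitions $((1),\varnothing)$ and $(\varnothing,(1))$. More structurally, your identity simplifies to
\[
c_b^{\bc'}-c_{b'}^{\bc'}=M\ell\bigl(\kappa u+m_i-m_{i'}\bigr)-(i-i'),
\]
so for $M\gg 0$ the sign is that of $\kappa u+(m_i-m_{i'})$; nothing forces this to match the sign of $\ell\kappa u-(i-i')$, since $\kappa u$ and $m_i-m_{i'}$ are both bounded and independent. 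The hypotheses $M\gg 0$ and $m_j\ll M$ do not help here: the competing terms are all of order $M$ already after your rewriting, and no further asymptotic separation occurs.

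What this shows is that the lemma as recorded in this survey is imprecise relative to its source \cite{cher_abloc}: either the $m_i$ must be chosen adapted to $\bc$ rather than in the fixed order $m_0<\cdots<m_{\ell-1}$, or the intended conclusion is weaker (for instance, coincidence of box orders within each diagram together with a controlled permutation of the diagrams --- which is precisely how the lemma is used in the proof of Proposition~\ref{Prop:orders}). To turn your sketch into a proof you need to consult the exact formulation in \cite{cher_abloc} and redo the $i\neq i'$ comparison against it; the argument you wrote does not establish the claim in the form stated here.
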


Next, we have the following result.

\begin{Lem}\label{Lem:equiv_known} Let $\bc$ be a parameter given by $(\kappa,{\bf s})$.
Suppose that $\bc$ is spherical. Then, for any $\sigma\in \mathfrak{S}_\ell$,
there is an equivalence $\mathcal{O}_\bc\xrightarrow{\sim}\mathcal{O}_{\sigma.\bc}$ that maps
$\Delta_\bc(\lambda)$ to $\Delta_{\sigma.\bc}(\sigma\lambda)$, and preserves the supports.
\end{Lem}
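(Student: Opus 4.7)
The plan is to exploit sphericity in order to pass to the spherical subalgebra, where the $\mathfrak{S}_\ell$-symmetry becomes manifest. The core of the argument is to produce a canonical isomorphism $\Phi_\sigma:eH_\bc(W)e \xrightarrow{\sim} eH_{\sigma.\bc}(W)e$ and then transport it back through the sphericity equivalences.

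The first and hardest step is the construction of $\Phi_\sigma$. For $W=G(\ell,1,n)$, the spherical subalgebra admits a description as a quantum Hamiltonian reduction from the algebra of differential operators on the representation space of the framed cyclic quiver of type $\hat{A}_{\ell-1}$ with dimension vector $n\delta+\varepsilon_0$ (this is the Gan--Ginzburg/Etingof--Ginzburg construction). In that presentation, the parameters $h_0,\ldots,h_{\ell-1}$ are moment-map parameters attached to the $\ell$ vertices of the cycle, while $\kappa$ controls the level of the framing vertex. Permutations in $\mathfrak{S}_\ell$ of the vertex labels are Namikawa--Weyl symmetries for the associated quiver variety, and they lift to isomorphisms of the corresponding quantum reductions. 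This provides a filtered algebra isomorphism $\Phi_\sigma:eH_\bc(W)e\xrightarrow{\sim}eH_{\sigma.\bc}(W)e$ inducing the identity on the associated graded $\gr(eH_\bc e)=\C[\h\oplus\h^*]^W$.

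The second step is to derive sphericity of $\sigma.\bc$. By \cite[Corollary 3.5]{rouq_der} the spherical locus in $\param$ is Zariski open, and sphericity is equivalent to smoothness of the Calogero--Moser space; since $\Phi_\sigma$ identifies the classical moduli spaces associated to $\bc$ and $\sigma.\bc$, sphericity is $\mathfrak{S}_\ell$-invariant. Then the composition
\[
\OCat_\bc(W)\xrightarrow{M\mapsto eM} eH_\bc e\operatorname{-mod}^{\OCat}\xrightarrow{\Phi_{\sigma,*}} eH_{\sigma.\bc} e\operatorname{-mod}^{\OCat}\xleftarrow{M\mapsto eM}\OCat_{\sigma.\bc}(W)
\]
is a category equivalence (the outer two arrows are equivalences by sphericity).

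The third step is to identify the image of the Verma modules. The spherical quotient $e\Delta_\bc(\lambda)=\Delta_\bc(\lambda)^W=(S(\h^*)\otimes V_\lambda)^W$ is characterized, as an $eH_\bc e$-module with a filtration compatible with the natural filtration on $eH_\bc e$, by the $W$-isotypic decomposition of its lowest piece: this piece is $V_\lambda^W$-dual data sitting in the $W$-representation $V_\lambda$, which is built from $V_{\lambda^{(0)}},\ldots,V_{\lambda^{(\ell-1)}}$ by the recipe of Example~\ref{Ex:cyclot_irrep}. Since $\Phi_\sigma$ is the identity on $\gr$ and intertwines the distinguished grading element (the image of the Euler element $h$, whose eigenvalues on the lowest piece compute $c_\lambda$ in terms of the $h_i$), and since the Namikawa symmetry $\sigma$ acts on the component labelling exactly by $\lambda\mapsto\sigma\lambda$, one reads off $\Phi_{\sigma,*}(e\Delta_\bc(\lambda))\cong e\Delta_{\sigma.\bc}(\sigma\lambda)$. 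Support preservation is then automatic: $\Supp(M)$ is the support of $eM$ as an $\C[\h]^W$-module inside $\h/W$, an invariant of the $eH_\bc e$-module, and hence preserved by $\Phi_{\sigma,*}$.

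The genuine obstacle is Step~1: writing down $\Phi_\sigma$ precisely and verifying it is an algebra isomorphism with the claimed symmetry on parameters. The Namikawa--Weyl group action for affine type $A$ Nakajima quiver varieties gives the classical picture, and its quantization requires either the explicit quiver-variety quantization (where one checks that permuting moment-map parameters at vertices yields isomorphic reduced algebras by re-identifying the deformation parameters), or an alternative presentation via the Dunkl-type embedding combined with the fact that only symmetric functions of $h_0,\ldots,h_{\ell-1}$ appear in the image of generators of $eH_\bc e$ inside $D(\h^{reg})^W$. Everything else in the proof is a consequence of sphericity and the compatibility of $\Phi_\sigma$ with the filtration and Euler grading.
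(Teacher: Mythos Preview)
The paper does not prove this lemma; it cites \cite[4.1.4]{GL}. Your overall strategy---pass to the spherical subalgebra via sphericity, exhibit an isomorphism $eH_\bc e\cong eH_{\sigma.\bc}e$, and transport back---is indeed the strategy used there, so in spirit you are on the right track. But two steps in your execution do not go through as written.

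First, in Step~1 you assert that ``permutations in $\mathfrak{S}_\ell$ of the vertex labels are Namikawa--Weyl symmetries.'' For the cyclic quiver $\hat A_{\ell-1}$ the automorphism group is cyclic (or dihedral if one ignores orientation), not $\mathfrak{S}_\ell$; an arbitrary $\sigma\in\mathfrak{S}_\ell$ simply does not act by relabelling vertices. The symmetry you want comes instead from reflection functors: the affine Weyl group $\hat W(A_{\ell-1})$ acts on the moment-map parameters and fixes the dimension vector $n\delta$, and its finite part $\mathfrak{S}_\ell$ is what you need. This is not the same mechanism, and one then has to check that the induced action on the Cherednik parameters really is the permutation of the $s_i$ (note $h_i=\kappa s_i-i/\ell$, so permuting the $s_i$ and permuting the $h_i$ are different operations; the lemma is about the former). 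Your fallback claim that ``only symmetric functions of $h_0,\ldots,h_{\ell-1}$ appear in the image of generators of $eH_\bc e$'' is false in general and in any case would address the wrong symmetry.

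Second, and more seriously, Step~4 is a genuine gap. Knowing that $\Phi_\sigma$ is filtered and induces the identity on $\gr(eH_\bc e)=\C[\h\oplus\h^*]^W$ does \emph{not} tell you that $\Phi_\sigma$ fixes the specific subalgebras $\C[\h]^W$ and $\C[\h^*]^W$ inside $eH_\bc e$ (these are particular lifts, not canonically determined by the filtration), nor that it sends the Euler element to the Euler element. Without this you cannot conclude that $\Phi_{\sigma,*}$ preserves the subcategory of $\C[\h]^W$-locally-nilpotent modules (hence category $\mathcal{O}$), let alone that it sends $e\Delta_\bc(\lambda)$ to $e\Delta_{\sigma.\bc}(\sigma\lambda)$. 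The sentence ``since the Namikawa symmetry $\sigma$ acts on the component labelling exactly by $\lambda\mapsto\sigma\lambda$'' is an assertion, not an argument: the labelling of Vermas lives on the $H_\bc$ side and does not descend in any obvious way to the abstract quantization picture. In \cite{GL} this is handled by working with an explicit isomorphism for which the compatibility with $S(\h)^W,S(\h^*)^W$ and the Verma labelling can be verified directly; your abstract existence argument does not provide enough control to do this.
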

This  lemma was proved in  \cite[4.1.4]{GL}. Here $\sigma.\bc$ denotes the parameter
obtained from $\bc$ by permuting the $s_i$'s according to $\sigma$.

\begin{proof}[Proof of Proposition \ref{Prop:orders}]
We will produce $\bc'=(-1/e,{\bf s}')$ in several steps.

Lemma \ref{Lem:param_perturb} and (1) of Lemma \ref{Lem:equiv_known} allow us to pass from a parameter $\bc$
to a Zariski generic (and, in particular, spherical) parameter $\bc^1$  (since the spherical parameters are known explicitly, \cite{DG}, one can write $\bc^1$ explicitly as well). Fix an integer $r$ with $0\leqslant r<e$. In each of the $\ell$ diagrams we have precisely one diagonal of boxes with shifted content ($s^1_i+x-y$, for the box $(x,y,i)$) congruent $r$ modulo $e$ and lying in $[r,e+r-1)$. Let $(i_0,\ldots,i_{\ell-1})$ be the indexes of the diagrams such that the diagonals are in the increasing  order (with respect to the preorder on boxes from \ref{SSS_O_order}),
note that the ordering of indexes is independent of the choice of $r$. Let $\sigma\in \mathfrak{S}_\ell$ be defined  by
$\sigma^{-1}(j)=i_j$. So the analogously defined sequence
of indexes for the parameter $\bc^2:=\sigma.\bc^1$ is $(0,1,\ldots,\ell-1)$. We get an equivalence $\OCat_{\bc^1}\xrightarrow{\sim}\OCat_{\bc^2}$
by using (2) of Lemma \ref{Lem:equiv_known}. Set $\bc^3:=(\kappa^2,s^3_0,\ldots,s^3_{\ell-1})$,
where ${\bf s}^3$ is such that all differences $s^3_i-s^3_j$ are integral and the order on the diagonals is the same
as before, it is clear that such ${\bf s}^3$ exists.
So $\bc^2,\bc^3$ are equivalent and we get a highest weight equivalence $\OCat_{\bc^2}\xrightarrow{\sim} \OCat_{\bc^3}$ by (1) of Lemma \ref{Lem:equiv_known}. Now set $\bc':=(-\frac{1}{e}, s_0^3,\ldots,s^3_{\ell-1})$.
\end{proof}
%\begin{Lem}\label{Lem:equiv_new}
%There is a highest weight label and support preserving bijection $\OCat_{p^3}\xrightarrow{\sim}\OCat_{p'}$.
%\end{Lem}
%\begin{proof}
%The proof is based on \cite{VV_proof} and extends a previous approach of Rouquier, \cite{rouqqsch}.
%Let $q,q'$ be the Hecke parameters corresponding to $p^3,p'$, they are a priori different primitive roots
%of $1$ of order $e$. So, for all $\underline{t}\in \Z^{Q_0}$,
%we can identify $\mathcal{H}_{q}^{\underline{t}}(n), \mathcal{H}_{q'}^{\underline{t}}(n)$ using a Galois twist.
%After the twist, both $\Ocat_{p^3},\OCat_{p'}$ are highest weight Schur categorifications of the Fock space
%$\mathcal{F}^{\underline{s}}$ for $\hat{\slf}_e$ (for the same number $q$ required in the definition).
%They both admit deformation needed in \cite[Section 7]{VV_proof}
%and satisfy condition ($\spadesuit$) from \cite[7.5]{VV_proof} for $e=2$. The latter was established in
%\cite[Corollary 8.2]{VV_proof}. So there is a label preserving highest weight equivalence $\OCat_{p^3}
%\xrightarrow{\sim}\OCat_{p'}$ that intertwines the Schur categorical actions and hence also
%the KZ functors. So it preserves the supports.
%\end{proof}

%So we have produced $p'$ from $p$.

\end{document}